\definecolor{bf}{rgb}{0,0,0.6} 
\numberwithin{equation}{section}
\newcommand{\todoJules}[1]{\todo[color=green!30,line]{#1}} 
\newcommand{\jules}[1]{\textcolor{red}{#1}}
\definecolor{navy}{rgb}{0.2,0.2,0.698}
\newcommand{\dd}{\mathrm{d}} 
\newenvironment{proofNoQED}[1][Proof]{%
	\par\noindent\textit{#1.} 
}{%
	\par
}
\newtheorem{dft}{Definition}[section]
\newtheorem{te}[dft]{Theorem}
\newtheorem{rem}[dft]{Remark}
\newtheorem{prop}[dft]{Proposition}
\newtheorem{cor}[dft]{Corollary}
\newtheorem{lemm}[dft]{Lemma}
\newtheorem{ex}[dft]{Example}
\title{An inverse problem in cell dynamics: Recovering an initial distribution of telomere lengths from measurements of senescence~times}
\author{Jules Olayé$^{\dagger}$}
\date{}
\begin{document}
	
\maketitle	\let\thefootnote\relax\footnotetext{$^{\dagger}$ Institut de Mathématiques de Toulouse, CNRS, Université de Toulouse, 31062 Toulouse, France.
}
\let\thefootnote\relax\footnotetext{\hspace{2.525mm}Mail: \href{mailto:jules.olaye@math.univ-toulouse.fr}{jules.olaye@math.univ-toulouse.fr}}
	\begin{abstract}
		Telomeres are repetitive sequences situated at both ends of the chromosomes of eukaryotic cells. At each cell division, they are eroded until they reach a critical length that triggers a state in which the cell stops to divide: the senescent state. In this work, we are interested in the link between the initial distribution of telomere lengths and the distribution of senescence times. We propose a method to retrieve the initial distribution of telomere lengths, using only measurements of senescence times. Our approach relies on approximating our models with transport equations, which provide natural estimators for the initial telomere lengths distribution. We investigate this method from a theoretical point of view by providing bounds on the errors of our estimators, pointwise and in all Lebesgue spaces. We also illustrate it with estimations on simulations, and discuss its limitations related to the curse of dimensionality. 
	\end{abstract}
	\noindent\textit{\small{Keywords: Inverse problem, integro-differential equation, transport equation, telomere, cell division}}
	\section{Introduction}
	\paragraph{Biological motivation.} Linear chromosomes of eukaryotic cells have repeated sequences of nucleotides called telomeres at each of their two ends. These regions  are non-coding, and prevent fusion between chromosomes. At each cell division, for each chromosome, one of its two telomeres is slightly shortened because the enzyme responsible for the DNA replication is unable to copy the last nucleotides of the~DNA. This phenomenon is called the \textit{end-replication problem}~\hbox{\cite{wynford-thomas_end-replication_1997}}. As telomeres progressively shorten, a cell may eventually reach a critical threshold in which telomeres are not long enough to protect the coding regions of DNA from degradation. To prevent this issue, when the shortest telomere of a cell attains a certain length, which is around $27$ base pairs for the yeast~\cite{rat_mathematical_2025}, the cell enters a state called \textit{senescence}~\hbox{\cite{abdallah_2009,bourgeron_2015,Martin2021,Xu2013}}. This state is characterised by the fact that the cell stops dividing, thus avoiding the loss of coding~DNA. Understanding this phenomenon may help to understand how cancer cells can emerge. The main reason is that cancer cells have mutated to be able to lengthen their telomeres, preventing them from senescence and allowing them to proliferate~\cite{robinson_telomerase_2022}. 
	
	\paragraph{Presentation of the problem.} Since the criterion for a cell to become senescent is given by the lengths of its telomeres, initial lengths distribution and fate of cell lineages/population are deeply connected. In~\cite{rat_mathematical_2025}, this link was studied with the following approach: how do the initial telomere lengths influence the fate of cell lineages/populations? Specifically, the authors investigated how the parameters of the initial distribution affect the population half-life time. In the current paper, we aim at understanding this link with the opposite approach. To do so, we propose to solve the following inverse problem: recovering the initial distribution of telomere lengths when only the senescence times distribution of cell lineages is observed.

	\paragraph{Informal description of the models.} To address the issue presented above, we use two deterministic telomere shortening models, whose dynamics are inspired from the stochastic models developed in recent years~\hbox{\cite{arino_mathematical_1995,benetos_stochastic_2025,benetos_branching_2025,bourgeron_2015,Martin2021,olaye_long-time_2026,rat_mathematical_2025}}. These models represent the mean value of telomere lengths concentration in cell lineages. They can be derived from stochastic telomere shortening models by proceeding as in the proof of~\cite[Theorem~$2.2.7$]{olaye_thesis_2025}, in which a deterministic telomere shortening model is derived from a stochastic population model by computing the mean value of telomere lengths concentration. The first model corresponds to a toy model for mathematical investigations, representing the evolution of cell lineages with only one telomere, see~\eqref{eq:PDE_model_telomeres_one_telo}. In this model, cells are structured according to the length of their telomere~$x\in\mathbb{R}_+$. The second model is a more biologically relevant one, and represents cell lineages with several telomeres, see~\eqref{eq:PDE_model_telomeres_several_telos}. Each chromosome in a cell has two telomeres, one at each end. Thus, denoting by $k\in\mathbb{N}^*$ the number of chromosomes of the species we study, cells are structured by the lengths of their telomeres~\hbox{$x\in\mathbb{R}_+^{2k}$}. In each model, the quantities modelled by our equations are the telomere lengths density over time, and the senescence times distribution. We assume that cells divide at a constant rate, and that at each division, the telomere (in the model on~$\mathbb{R}_+$) or one telomere by chromosome~(in the model on~$\mathbb{R}_+^{2k}$) is shortened. We model senescence by a cemetery state, and assume that cells enter this state when one of their telomeres has a length below a threshold. This threshold is set to $0$ for simplicity, as any other threshold value is equivalent to~$0$ by translation of the trait space. These dynamics give us a system of two integro-differential equations, including the deterministic equation of a jump process as a first equation, and the rate at which one of its coordinates reaches $0$ as a second~equation.

	\paragraph{Difficulty.} The main difficulty in solving this problem is that we work with integro-differential equations, which are non-local equations. This non-locality implies that we lose information about the telomere lengths distribution when only the senescence times are observed. Indeed, the models involve an integral over possible telomere lengths at the entry into senescence, see the second lines of~\eqref{eq:PDE_model_telomeres_one_telo} and~\eqref{eq:PDE_model_telomeres_several_telos}. Thus, the value of the telomere length that triggers senescence is not observable in both models. This loss of information implies that the inverse of the operator linking the initial telomere lengths distribution and the senescence times distribution, if it exists, cannot be continuous, see Remark~\ref{rem:ill_posed_inverse_problem_one_dimension}. The inverse problem is thus ill-posed in the sense of Hadamard or Tikhonov, see~\cite[Definitions~$2.1.1$ and~$2.1.3$]{Kabanikhin2011}. This also implies that it is difficult to prove that the operator mentioned above is invertible.

	\paragraph{Estimation strategy.} The estimation strategy we use consists in adapting the approach developed in~\hbox{\cite{armiento_estimation_2016,doumic_asymptotic_2025}} to our setting. This approach allows us to retrieve a well-posed inverse problem, at the cost of an approximation. First, we assume that the telomere shortening values are small and that the cell division rate is high. This is biologically relevant because the average telomere shortening value is much smaller than the initial telomere length, see Section~\ref{subsect:discussion_models_assumptions}. Then, we obtain approximations of our models by transport equations with an absorption into a cemetery state at the boundary of $\mathbb{R}_+$ or~$\mathbb{R}_+^{2k}$. Finally, we use the method of characteristics to construct estimators of the initial distribution on these approximated models. 

	\paragraph{Review of the literature.} Inverse problems in cell biology have been widely studied over the last few years~\cite{armiento_estimation_2016,bourgeron_estimating_2014,doumic_estimating_2013,doumic_asymptotic_2025,olaye_estimation_2025}. Among all these works, our article is in line with the following works in protein depolymerisation~\cite{armiento_estimation_2016,doumic_asymptotic_2025}. Precisely, as in~these articles, we have a trait that progressively degrades over time, and our aim is to retrieve the initial distribution of the trait. In~\cite{doumic_asymptotic_2025}, the approximation used is a transport-diffusion equation (second-order approximation) instead of a transport equation (first-order approximation). The present work is thus more in line with~\cite{armiento_estimation_2016} in which a first-order approximation is done, justified later in~\cite{doumic_asymptotic_2025}. In our case, a second-order approximation  can also be done to construct an estimator for the model in $\mathbb{R}_+$. This study has been conducted by the author, and will be presented in an upcoming article. 
	
	Integro-differential equations such as those described by our models, see~\eqref{eq:PDE_model_telomeres_one_telo} and \eqref{eq:PDE_model_telomeres_several_telos}, have been well-studied in the literature~\cite{barles_concentration_2009,ellefsen_theory_2021,jeddi_convergence_2024,makroglou_integral_2003}, and are often referred to as~\textit{Volterra} or \textit{Lotka-Volterra equations}. However, the theoretical study of mathematical models representing telomere shortening is very recent, and still quite unexplored. The main articles we can cite are~\cite{benetos_stochastic_2025,benetos_branching_2025} or \cite{olaye_long-time_2026}. In these works, the question of the existence of a stationary profile was addressed. Previously, studies were conducted from a biomathematical or numerical perspective~\hbox{\cite{bourgeron_2015,eugene_effects_2017,Martin2021,portillo_influence_2023,rat_mathematical_2025,wattis_mathematical_2020,Xu2013}}. In some of these works, model approximations have been studied and justified from a modelling perspective~\hbox{\cite{portillo_influence_2023,wattis_mathematical_2020}}. However, to the best of our knowledge, theoretical results on the approximation of telomere shortening models have not yet been~obtained. 

	\paragraph{Main contributions.} 
	Our first contribution is to provide qualitative bounds for the approximation errors of our models and the errors of our estimators, pointwise and in all Lebesgue spaces. Specifically, we obtain bounds on the approximation and estimation errors that decrease exponentially fast after a certain length and a certain time. This completes the approximation result obtained in~\cite{doumic_asymptotic_2025}, in a discrete model similar to ours. In their work, a result in $L^2$-norm for the approximation error was obtained with a bound increasing linearly with the time. Our main idea to obtain these new bounds is to rewrite the approximation errors of the telomere lengths densities as the integral of several sub-errors, see~\eqref{eq:step2_keylemma}. This integral representation allows us to highlight for all $0\leq s \leq t$ how the error generated at time~$s$ still influences the error at time~$t$. Then, thanks to a maximum principle, we show that this error dissipates exponentially~fast. 

	Our second contribution concerns the extension of the estimation to a multidimensional setting, i.e., to the case where cells have several telomeres. From a theoretical point of view, we show that the estimator is less straightforward to obtain, as it requires a careful control over the rate at which cells remain outside the cemetery. From a practical point of view, we highlight the difficulties related to the curse of dimensionality and extreme value theory. 



	\paragraph{Organisation of the paper.} First, in Section~\ref{sect:models_and_results}, we present the models and the estimators we use, and state the main result of the article. Then, we prove the main result for the model with one telomere and the model with several telomeres in Sections~\ref{sect:model_one_telomere} and~\ref{sect:model_several_telomeres} respectively. Thereafter, we illustrate in Section~\ref{sect:estimation_on_simulations} the quality of our estimators on simulations and data. Finally, we discuss the limits and the prospects of our work in Section~\ref{sect:discussion_estimation}. Auxiliary statements are presented in the Appendix, as well as the proof of certain results given during the paper. 
	
	\section{Presentation of the models and the main result}\label{sect:models_and_results}
	
	This section is devoted to the presentation of the notations and the results of the paper. First, in Section~\ref{subsect:models}, we define the two models that we study. Then, in Section~\ref{subsect:main_results}, we introduce our estimators, and give the assumptions and the main result of this work. Finally, in Section~\ref{subsect:discussion_models_assumptions}, we explain our choice of studying a model with only one telomere, and discuss our~assumptions.

	\subsection{The models}\label{subsect:models}
	
	Our goal is to have models representing the microfluidic experiments presented in~\cite{xu_two_2015}, as our data are derived from them, see Section~\ref{subsect:estimation_real_datas}. In these experiments, cell lineages are tracked over time in microcavities until their last cell becomes senescent. Biologists then observe the senescence times of each lineage. We therefore need models which describe the telomere lengths distribution over time in lineages, and the time at which their last cell becomes senescent. 
	
	In both of our models, we assume that cell division times are distributed according to an exponential law with parameter $\tilde{b} > 0$. We consider $\tilde{\delta} >0$ the maximum shortening value, and \hbox{$\tilde{g} : [0,\tilde{\delta}]  \rightarrow \mathbb{R}_+$} a probability density function representing the distribution of telomere shortening values at each division. We assume that $\tilde{g}$ has finite first and second moments, and introduce the following constants and functions 
	$$
	\forall i\in\{1,2\}:\hspace{1.5mm} \tilde{m}_i := \int_0^{\tilde{\delta}}u^i \tilde{g}(u) \dd u, \hspace{4mm} \text{ and } \hspace{4mm} \forall x\in\mathbb{R}_+:\hspace{1.5mm}  \tilde{G}(x) := \int_0^{\min(x,\tilde{\delta})} \tilde{g}(s) \dd s.
	$$
	We also consider a non-negative function $n_0\in L^{1}\left(\mathbb{R}_+\right)$ verifying \hbox{$||n_0||_{L^1\left(\mathbb{R}_+\right)} = 1$}. This function represents the initial telomere length distribution in the model with one telomere, and the initial length distribution for each individual telomere in the model with several telomeres. The goal of this paper is, thus, to infer this initial distribution knowing the distribution of senescence~times. 

	\paragraph{Model with one telomere.} In the first model, we assume that each cell has only one telomere. This is not biologically realistic because a cell has always several telomeres, since each chromosome of a cell has two of them. However, this model helps us to gain mathematical intuition, and it is discussed in Section~\ref{subsect:discussion_models_assumptions} how this model can be used in practice.
	
	Cells are structured according to their telomere length that belongs to $\mathbb{R}_+$. We consider the density of telomere with length $x \geq 0$ at time $t \geq 0$ when several lineages are tracked, denoted~$n^{(1)}(t,x)$. We also consider the rate at which cell lineages enters senescence at time~$t\geq0$, denoted~$n_{\partial}^{(1)}(t)$. In fact, $n_{\partial}^{(1)}$ can also be seen as the senescence times distribution by Remark~\ref{rem:conservation_individual_one_telo} below. In this model, at each cell division, the telomere of the dividing cell is shortened by a random value distributed according to~$\tilde{g}$. If after this shortening, its telomere length is still greater than~$0$, then the cell remains in the dynamics. Otherwise, the cell goes to the cemetery, i.e. becomes senescent. Thus, the probability that the length~$y \geq 0$ of the telomere of the dividing cell becomes negative after shortening is~$\int_{\min(y,\tilde{\delta})}^{\tilde{\delta}} \tilde{g}(v) \dd v = 1 - \tilde{G}(y)$. Recalling that $n_0$ is the initial telomere length distribution, we then consider the following system of integro-differential~equations
	\begin{equation}\label{eq:PDE_model_telomeres_one_telo}
		\begin{cases}
			\partial_t n^{(1)}(t,x) = \tilde{b}\int_0^{\tilde{\delta}}n^{(1)}\left(t,x + v\right)\tilde{g}(v) \mathrm{d}v - \tilde{b}.n^{(1)}(t,x), & \forall t\geq0,\,x\geq0,\\
			n^{(1)}_{\partial}(t) = \tilde{b}\int_0^{\tilde{\delta}} n^{(1)}(t,y)(1-\tilde{G}(y))\mathrm{d}y ,& \forall t\geq0, \\ 
			n^{(1)}(0,x) = n_0(x), & \forall x\geq 0.
		\end{cases}
	\end{equation}	
	This system corresponds to the above description, and is our first model for telomere shortening. When we study this model, our aim is to estimate~$n_0$ when $n^{(1)}_{\partial}$ is observed.
	\begin{rem}\label{rem:conservation_individual_one_telo}
	By integrating both lines of~\eqref{eq:PDE_model_telomeres_one_telo}, we have conservation of the number of individuals, i.e. that for all $t\geq0$ 
	$$
	\begin{aligned}
	\int_{0}^{+\infty} n^{(1)}(t,x) \mathrm{d}x + \int_0^{t} n_{\partial}^{(1)}(s) \dd s= \int_0^{\infty} n^{(1)}(0,x) \dd x = 1.
	\end{aligned}
	$$
	The above yields that $\int_0^{\infty} n_{\partial}^{(1)}(s) \dd s = 1$ by letting $t$ tend to infinity, and that $n_{\partial}^{(1)}(t) \leq \tilde{b}$ for all~$t\geq0$ by the second line of~\eqref{eq:PDE_model_telomeres_one_telo}. The fact that $n_{\partial}^{(1)}$ is bounded by $\tilde{b}$ comes from the fact that the senescence rate cannot be higher than the division rate.
	\end{rem}
	\begin{rem}\label{rem:ill_posed_inverse_problem_one_dimension}
	 Let us denote the following sets, 	endowed respectively with the $L^1$-norm, and the sum of the $L^1$-norm and the $L^\infty$-norm,
	 	$$
	 	\begin{aligned}
	 	\mathcal{F} &= \left\{f\in L^1\left(\mathbb{R}_+\right)\,\big|\,f\geq0,\,||f||_{L^1\left(\mathbb{R}_+\right)} = 1\right\}, \\
	 	\mathcal{F}_{\partial} &= \left\{f_{\partial}\in L^1\left(\mathbb{R}_+\right)\cap L^\infty\left(\mathbb{R}_+\right)\,\big|\,f_{\partial}\geq0,\,||f_{\partial}||_{L^1\left(\mathbb{R}_+\right)} = 1,\,||f_{\partial}||_{L^\infty\left(\mathbb{R}_+\right)} \leq \tilde{b}\right\}.
	 	\end{aligned}
	 	$$
	  	We also consider $\Psi_{\partial} : \mathcal{F} \rightarrow \mathcal{F}_{\partial}$ the operator such that for all \hbox{$f\in L^{1}\left(\mathbb{R}_+\right)$}, $\Psi_{\partial}(f)$ corresponds to the cemetery $n_{\partial}^{(1)}$ of~\eqref{eq:PDE_model_telomeres_one_telo} when $n_0 = f$. This operator is well-posed by Proposition~\ref{prop:well_definition_general_model} and Remark~\ref{rem:conservation_individual_one_telo}. In view of Proposition~\ref{prop:non_continuity_inverse_cemetery} with $\tilde{\xi} = \tilde{b} g(y)\dd y$, if $\left(\Psi_{\partial}\right)^{-1}$ exists, then it is not continuous (even if its domain is restricted on $\Psi_{\partial}\left(\mathcal{F}\right)$). As a consequence, the inverse problem of estimating $n_0$ from~$n_{\partial}^{(1)}$ is ill-posed in the sense of Hadamard and Tikhonov, see~\cite[Definitions~$2.1.1$ and $2.1.3$]{Kabanikhin2011}. This difficulty is what motivates us to obtain a model approximation.
	\end{rem}
	\paragraph{Model with $2k$ telomeres.} We now create a model which takes into account the fact that a cell has several telomeres. We denote by $k\in\mathbb{N}^*$ the number of chromosomes of the species we study. Each chromosome has~\hbox{$2$ ends}, so each cell has $2k$ telomeres. Hence, we use the space $\mathbb{R}_+^{2k}$ to represent telomere lengths in our second model. For all $i\in \llbracket 1, k \rrbracket$, the $i$-th and the \hbox{$(i+k)$-th} coordinates of a vector in $\mathbb{R}_+^{2k}$ represent telomeres on the same chromosome (the chromosome~$i$). \needspace{\baselineskip}\noindent At each cell division, for all $i\in\llbracket 1,k \rrbracket$, we have the following biological constraint:
	
	\begin{itemize}[label = \fontsize{6}{12}\selectfont\textbullet,leftmargin=0.8cm]
		\item Either the telomere linked to the coordinate $i$ is shortened and the telomere linked to the coordinate $i+k$ is unchanged, with probability $1/2$,
		
		\item Or the telomere linked to the coordinate $i+k$ is shortened and the telomere linked to the coordinate $i$ is unchanged, with probability $1/2$.
	\end{itemize}
	To take into account this, we introduce the following set
	\begin{equation}\label{eq:set_shortening}
		\mathcal{I}_k = \left\{I\in \mathcal{P}\left(\llbracket1,2k\rrbracket\right)  \left.\big|\right. \# I = k,\,\forall (i,j)\in I^2:\,i\neq j\Longrightarrow i \,\text{mod}\, k \neq j\,\text{mod}\, k\right\},
	\end{equation}
	where $\mathcal{P}\left(\llbracket1,2k\rrbracket\right)$ is the power set of  $\llbracket1,2k\rrbracket$, i.e. the set that contains all subsets of~$\llbracket1,2k\rrbracket$.
	The set $\mathcal{I}_k$ contains all the possible combinations of telomeres that can be shortened at each division. In particular, the condition in its definition represents the fact that only one end of the chromosome is shortened at each division. When a cell divides, we draw one set $I\in\mathcal{I}_k$ uniformly to know the indices where there is a shortening for the daughter cell we follow. 
	
	\begin{ex}\label{ex:shortening_set}
		When $k = 2$, we have
		$$
		\mathcal{I}_2 = \{\{1,2\},\{1,4\},\{2,3\},\{3,4\}\}.
		$$
		If at a cell division, we draw $\{1,4\}\in\mathcal{I}_2$ (probability $1/4$) as a set where indices are shortened, then there is a shortening in the coordinates $1$ and $4$, and the coordinates~$2$ and~$3$ stay unchanged.
	\end{ex}
	We also introduce the following measure for all $I \in \mathcal{I}_k$
	\begin{equation}\label{eq:measure_shortening_by_set}
		\mathrm{d}\tilde{\mu}^{(I)}(v) := \left[\prod_{i\in I}\tilde{g}\left(v_{i}\right)\mathrm{d}v_{i}\right]\left[\prod_{i\in\llbracket1,2k\rrbracket\backslash I}\delta_0(\mathrm{d}v_{i})\right].
	\end{equation}
	This measure represents the shortening distribution at each division, knowing the fact that the shortened telomeres are those in $I$. Dirac measures represent the fact that the coordinates in $\llbracket1,2k\rrbracket\backslash I$ are unchanged, and the measures $\left(\tilde{g}(v_i) \mathrm{d}v_i\right)_{i\in I}$ that the coordinates in~$I$ are shortened.

	Finally, we consider
	\begin{equation}\label{eq:measure_shortening}
		\mathrm{d}\tilde{\mu}(v) := \frac{1}{2^k}\sum_{I \in \mathcal{I}_k} \mathrm{d}\tilde{\mu}^{(I)}(v).
	\end{equation}
	This measure represents how telomeres are shortened at each division, taking into account all the possible combinations of telomeres that can be shortened at a division. The term $2^k$ comes from the fact that~$\#(\mathcal{I}_k) = 2^k$, see Lemma~\ref{lemm:zero_singleton_cardinal}. 
	
	We denote by $n^{(2k)}(t,x)$ the density of telomere with lengths $x\in\mathbb{R}_+^{2k}$ at time $t\geq 0$. We also denote by $n_{\partial}^{(2k)}(t)$ the density of senescence times at time $t\geq0$. We assume that at each division, the telomeres of the dividing cell are shortened by a random value distributed according to $\tilde{\mu}$, and that it becomes senescent if one of its telomere has a length below~$0$. Then, the probability that a cell with length $y\in\mathbb{R}_+^{2k}$ becomes senescent after division is $\tilde{\mu}\left(\left\{v\in\mathbb{R}_+^{2k}\,|\,y-v \notin \mathbb{R}_+^{2k}\right\}\right)$. We also assume that the initial lengths distribution is $n^{(2k)}(0,x) = \prod_{i = 1}^{2k}n_0(x_i)$, and refer to Remark~\ref{rem:initial_lengths_assumptions} for more details about this choice. From the above description, we have that our second model is the following system
	
	\begin{equation}\label{eq:PDE_model_telomeres_several_telos}
		\begin{cases}
			\partial_t n^{(2k)}(t,x) = \tilde{b}\int_{v\in\mathbb{R}_+^{2k}} n^{(2k)}\left(t,x + v\right)  \mathrm{d}\tilde{\mu}(v) - \tilde{b}.n^{(2k)}(t,x), & \forall t\geq0,\,x\in\mathbb{R}_+^{2k},\\
			n_{\partial}^{(2k)}(t) = \tilde{b}\int_{y\in\mathbb{R}_+^{2k}} n^{(2k)}(t,y)\tilde{\mu}\left(\left\{v\in\mathbb{R}_+^{2k}\,|\,y-v \notin \mathbb{R}_+^{2k}\right\}\right)\mathrm{d}y,  & \forall t\geq0, \\ 
			n^{(2k)}(0,x) = \prod_{i = 1}^{2k}n_0(x_i), &\forall x\in\mathbb{R}_+^{2k}.
		\end{cases}
	\end{equation}
	\noindent Again, our aim is to estimate $n_0$ when~$n^{(2k)}_{\partial}$ is observed. 
	\begin{rem}\label{rem:initial_lengths_assumptions}
		Our assumption that $n^{(2k)}(0,x) = \prod_{i = 1}^{2k}n_0(x_i)$ for all $x\in\mathbb{R}_+^{2k}$ is the same as in~\cite{Martin2021,rat_mathematical_2025,Xu2013}, and allows us to simplify computations. However, there are currently no theoretical or simulation-based results  to support that this assumption is biologically relevant. Further studies will be conducted in this~regard. 
	\end{rem} 
	\begin{rem}\label{rem:conservation_individual_several_telos}
	By adapting the computations done in Remark~\ref{rem:conservation_individual_one_telo}, we also have conservation of the number of individuals for this model, and that $n_{\partial}^{(2k)}$ is bounded by $\tilde{b}$. In particular, the following equalities hold, for all~$t\geq0$,
		\begin{equation}\label{eq:link_density_lengths_cemetery}
		\begin{aligned}
		\int_{x\in\mathbb{R}_+^{2k}} n^{(2k)}(t,x) \mathrm{d}x + \int_0^{t} n_{\partial}^{(2k)}(s) \dd s= 1, \hspace{2mm}\text{ and }\hspace{2mm}\int_{x\in\mathbb{R}_+^{2k}} n^{(2k)}(t,x) \mathrm{d}x = 	\int_t^{+\infty}n_{\partial}^{(2k)}(s) \dd s.
		\end{aligned}
		\end{equation}
\end{rem}
\begin{rem}\label{rem:ill_posed_inverse_problem_several_dimension}
For the same reasons as those presented in Remark~\ref{rem:ill_posed_inverse_problem_one_dimension}, retrieving $n_0$ from $n_{\partial}^{(2k)}$ is an ill-posed inverse problem in the sense of Hadamard and Tikhonov, see~\cite[Definitions~$2.1.1$ and $2.1.3$]{Kabanikhin2011}.
\end{rem}

\subsection{Assumptions and main result}\label{subsect:main_results}
The main result of this work is the construction of estimators for the initial distribution of telomere lengths in both models, with their respective error of estimation. We begin by presenting the main hypothesis of this work and some notations. 
\paragraph{Assumptions.}\hypertarget{assumption:assumption}{} 
Our main result assumptions are the following. They are verified for example when $n_0$ is the density of an Erlang distribution. We study this case in detail in  Section~\ref{sect:estimation_on_simulations}.

\begin{enumerate}[start=1,label={$(H\textsubscript{\arabic*}):$},leftmargin=1.5cm]
	
	\item \hypertarget{assumption:H1}{} There exist $N >0$ (large), $\delta > 0$, $g : [0,\delta] \rightarrow \mathbb{R}_+$ a probability density function and $b >0$ such that
	$$
	\tilde{\delta} = \frac{\delta}{N}, \hspace{8mm}\forall x \in [0,\tilde{\delta}]:\,\tilde{g}(x) = Ng(Nx),\hspace{4mm}\text{ and }\hspace{4mm}\tilde{b} = bN. 
	$$

	\item \hypertarget{assumption:H2}{} It holds $n_0\in W^{2,1}\left(\mathbb{R}_+\right)$, and there exist $\lambda > 0$, $C_{\lambda}> 0$ and $C'_{\lambda}> 0$ such that for all~$x \in \mathbb{R}_+$
	$$
	\left|n''_0\left(x\right)\right| \leq C_{\lambda}\exp\left(-\lambda x\right),\hspace{2.5mm}\text{ and }\hspace{2.5mm}\left|n'_0\left(x\right)\right| \leq C'_{\lambda}\exp\left(-\lambda x\right).
	$$
	
	\item \hypertarget{assumption:H3}{} There exists $D_{\lambda} \geq 1$ such that for all $x \in \mathbb{R}_+$
	$$
	n_0\left(x\right) \leq D_{\lambda}\lambda \exp\left(-\lambda x\right).
	$$
	
	\item \hypertarget{assumption:H4}{} There exist $\omega \geq \lambda$, $f_{\omega} : \mathbb{R}_+ \rightarrow \mathbb{R}_+$ non-decreasing, and $D_{\omega} \in\left(0,1\right]$ such that for all~$x\geq 0$ 
	$$
	n_0(x) \geq \frac{D_{\omega}f_{\omega}(x)\exp\left(-\omega x\right)}{\int_{0}^{+\infty}f_{\omega}(y)\exp\left(-\omega y\right) \dd y}.
	$$
	
\end{enumerate}
\nopagebreak 
The above assumptions are further discussed in Section~\ref{subsect:discussion_models_assumptions}. Let us only emphasise one crucial point: in~\hyperlink{assumption:H1}{$(H_1)$}, $N$ is assumed to be large. Then, when a result is stated, one needs to imagine that $N\rightarrow+\infty$ to understand it.
\paragraph{Notations.} First, we denote the equivalents of~$\left(\tilde{m}_i\right)_{i\in\{1,2\}}$ and~$\tilde{G}$ for $g$: 
$$
\forall i\in\{1,2\}:\hspace{1.5mm} m_i := \int_0^{\delta} u^ig(u)\dd u, \hspace{4mm} \text{ and } \hspace{4mm} \forall x\in\mathbb{R}_+:\hspace{1.5mm}  G(x) := \int_0^{\min(x,\delta)} g(s) \dd s.
$$
Then, we introduce the equivalents of $\left(\tilde{\mu}^{(I)}\right)_{I\in\mathcal{I}_k}$ and $\tilde{\mu}$ for $g$:
\begin{equation}\label{eq:scaled_measures_shortening_and_by_set}
	\begin{aligned}
		\forall I \in \mathcal{I}_k:\hspace{1.5mm}\mathrm{d}\mu^{(I)}(v) := \left[\prod_{i\in I}g\left(v_{i}\right)\mathrm{d}v_{i}\right]\left[\prod_{i\in \llbracket1,2k\rrbracket\backslash I}\delta_0(\mathrm{d}v_{i})\right], \hspace{3.75mm}\text{ and } \hspace{3.75mm}\mathrm{d}\mu(v) := \frac{1}{2^k}\sum_{I \in \mathcal{I}_k} \mathrm{d}\mu^{(I)}(v).
	\end{aligned}
\end{equation}
Thereafter, for all $d\in\mathbb{N}^*$ and $\xi$ finite measure on $\mathbb{R}_+^d$, we consider $\mathcal{L}(\xi)$ the Laplace transform of~$\xi$, defined for all $p\in\mathbb{C}$ such that $\text{Re}(p) >0$ as:
\begin{equation}\label{eq:definition_laplace_transform}
\mathcal{L}(\xi)(p) := \int_{u\in\mathbb{R}_+^d} \exp\left(-p\sum_{j = 1}^d u_{j}\right) \xi(\dd u) .
\end{equation}
Finally, we define the following three constants:
\begin{equation}\label{eq:approximation_eigenvalues}
\begin{aligned}
	\lambda_N := \frac{N}{m_1}\left[1-\mathcal{L}(g)\left(\frac{\lambda}{N}\right)\right]&, \hspace{10mm}\lambda'_N := \frac{N}{km_1}\left[1-\left(\mathcal{L}(g)\left(\frac{\lambda}{N}\right)\right)^k\right], \\
	&\hspace{-12.5mm}\omega'_N := \frac{N}{km_1}\left[1-\left(\mathcal{L}(g)\left(\frac{\omega}{N}\right)\right)^k\right].
\end{aligned}
\end{equation}
In view of the fact that $\mathcal{L}(g)(0) = \int_0^{\delta} g(u) \dd u = 1$ and the definition of the derivative, we have that $\lim_{N \rightarrow +\infty} \lambda_N = -\frac{\lambda\left(\mathcal{L}(g)\right)'(0)}{m_1} = \lambda$. By the formula of the derivative of the power of a function, we also have that $\lim_{N \rightarrow +\infty} \lambda'_N = \lambda$ and $\lim_{N \rightarrow +\infty} \omega'_N = \omega$. Then, as $N$ is supposed large, $\lambda_N$ and $\lambda'_N$ correspond to approximations of $\lambda$, and $\omega'_N$ to an approximation of $\omega$. 

\paragraph{Estimators and main result.}  To estimate $n_0$ from the observations of $n_{\partial}^{(1)}$ or $n_{\partial}^{(2k)}$, we use the estimators $\widehat{n}_0^{(1)}$ and $\widehat{n}_0^{(2k)}$ respectively, defined for all $x \geq 0$ as
\begin{equation}\label{eq:definitions_estimators}
	\widehat{n}_0^{(1)}(x) := \frac{1}{\tilde{b}\tilde{m}_1}n_{\partial}^{(1)}\left(\frac{x}{\tilde{b}\tilde{m}_1}\right),\hspace{8mm} \widehat{n}_0^{(2k)}(x) := \frac{1}{k\tilde{b}\tilde{m}_1}\frac{n_{\partial}^{(2k)}\left(\frac{2x}{\tilde{b}\tilde{m}_1}\right)}{\left(\int_{\frac{2x}{\tilde{b}\tilde{m}_1}}^{\infty} n_{\partial}^{(2k)}\left(s\right) \dd s\right)^{1 - \frac{1}{2k}}}.
\end{equation}
The estimator on the left comes from the approximation of the pair $\left(n^{(1)},n_{\partial}^{(1)}\right)$ by a transport equation with drift~$-bm_1$ and an absorbing state at $x = 0$, see Section~\ref{sect:model_one_telomere}. Thanks to this approximation, we have by using the method of characteristics that $n_{\partial}^{(1)}(t)$ is close to $bm_1 n_0\left(bm_1 t\right)$ for all~$t \geq 0$ (see~\eqref{eq:alternative_representation_transport_model_1_telomere}). Therefore, by first taking $t = \frac{x}{bm_1}$, then dividing both terms by $bm_1$, and finally using the following equality that comes from the change of variable~$u' = \frac{u}{N}$ and~\hyperlink{assumption:H1}{$(H_1)$}:
\begin{equation}\label{eq:equality_transport_terms}
bm_1 = b\int_0^{\delta} u g(u) \dd u = bN^2\int_0^{\frac{\delta}{N}} u' g\left(Nu'\right) \dd u' = \tilde{b}\tilde{m}_1,
\end{equation}
we obtain that $\frac{1}{\tilde{b}\tilde{m}1} n_{\partial}^{(1)}\left(\frac{x}{\tilde{b}\tilde{m}_1}\right)$ is close to $n_0\left(x\right)$ for all $x\geq0$. 

The estimator on the right of~\eqref{eq:definitions_estimators} also comes from an approximation of $\left(n^{(2k)},n_{\partial}^{(2k)}\right)$, see Section~\ref{sect:model_several_telomeres}. This time, the approximated model is a transport equation with drift~$-\frac{bm_1}{2}$ in each coordinate, and with an absorbing state at the boundary of $\mathbb{R}_+^{2k}$. Here are the steps to obtain the estimator. First, by~\eqref{eq:link_density_lengths_cemetery}, our model approximation, and the method of characteristics, we have that $\int_{t}^{+\infty} n_{\partial}^{(2k)}(s) \dd s$ is close to~$\left[\int_{\frac{bm_1 t}{2}}^{+\infty} n_{0}(s) \dd s\right]^{2k}$  for all~$t\geq0$ (see Remark~\ref{rem:mass_conservation_transport}). Then, by taking both terms to the power~$\frac{1}{2k}$ and differentiating, we have that $\frac{1}{2k} n_{\partial}^{(2k)}(t) \left[\int_{t}^{+\infty} n_{\partial}^{(2k)}(s) \dd s \right]^{\frac{1}{2k} - 1}$ is approximatively equal to $\frac{bm_1}{2} n_{0}\left(\frac{bm_1 t}{2}\right)$ for all $t\geq0$. Finally, by taking \hbox{$t = \frac{2x}{bm_1}$}, then multiplying by $\frac{2}{bm_1}$, we have that $\frac{1}{k b m_1} n_{\partial}^{(2k)}\left(\frac{2x}{b m_1}\right) \left[\int_{\frac{2x}{b m_1}}^{+\infty} n_{\partial}^{(2k)}(s) \dd s \right]^{\frac{1}{2k} - 1}$ is close to $n_{0}\left(x\right)$ for all $x\geq0$. This last result justifies the expression of the estimator, because~$bm_1$ and $\tilde{b}\tilde{m}_1$ are equals, see~\eqref{eq:equality_transport_terms}. 


We detail more rigorously these steps in Sections~\ref{sect:model_one_telomere} and~\ref{sect:model_several_telomeres}. This leads to the following result, which provides upper bounds on the errors between the estimators defined in~\eqref{eq:definitions_estimators} and~$n_0$.
\begin{te}[Main result]\label{te:main_result}
	We recall the constants $\lambda_N$, $\lambda'_N$ and $\omega'_N$ defined in~\eqref{eq:approximation_eigenvalues}. We also consider the constant $\beta'_N := \left(\lambda + 2k\lambda'_N\right) - \left(\omega + (2k-1)\omega'_N\right)$. The following statements hold. 
	\begin{enumerate}[$(a)$]
		\item Assume \hyperlink{assumption:H1}{$(H_1)-(H_2)$}. Then, there exists $c_1>0$ such that for all $x\geq 0$
		$$
		\left|\widehat{n}_0^{(1)}(x) - n_0(x)\right|  \leq  \frac{c_1}{N}(x +1)\exp\left(-\lambda_N x\right),
		$$
		where $c_1$ depends only on $g$, $C_{\lambda}$, $C'_{\lambda}$ and $D_{\lambda}$. 
		\item Assume \hyperlink{assumption:H1}{$(H_1)-(H_4)$}. Then, there exists $d_1 > 0$ such that for all~$x\geq0$ 
		$$
		\left|\widehat{n}_0^{(2k)}(x) - n_0(x)\right| \leq \frac{d_1}{N}\left(\frac{D_{\lambda}}{D_{\omega}}\right)^{2k}\left(k^2x + k + 1\right)\exp\left[-\beta'_Nx\right],
		$$
		where $d_1$ depends only on $g$, $\delta$, $\lambda$, $C_{\lambda}$, $C'_{\lambda}$, $D_{\lambda}$, and $D_{\omega}$.
	\end{enumerate}
\end{te}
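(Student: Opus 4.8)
The plan is to prove the two assertions essentially in parallel, treating the one-telomere case $(a)$ first as a model for the harder multidimensional case $(b)$. For $(a)$, the strategy is a triangle-inequality split: writing $m^{(1)}$ for the solution of the transport approximation (drift $-bm_1$, absorption at $x=0$) and $m^{(1)}_\partial$ for the associated absorption rate, I would bound
$$
\left|\widehat n^{(1)}_0(x)-n_0(x)\right|\le \underbrace{\left|\widehat n^{(1)}_0(x)-\tfrac{1}{\tilde b\tilde m_1}m^{(1)}_\partial\!\left(\tfrac{x}{\tilde b\tilde m_1}\right)\right|}_{\text{model-approximation error}}+\underbrace{\left|\tfrac{1}{\tilde b\tilde m_1}m^{(1)}_\partial\!\left(\tfrac{x}{\tilde b\tilde m_1}\right)-n_0(x)\right|}_{\text{exact by characteristics}}.
$$
The second term is handled by the method of characteristics on the transport equation: as announced around~\eqref{eq:alternative_representation_transport_model_1_telomere}, $m^{(1)}_\partial(t)=bm_1\,n_0(bm_1 t)$ exactly, so after the change of variables $t=x/(bm_1)$ and using $bm_1=\tilde b\tilde m_1$ from~\eqref{eq:equality_transport_terms} this term vanishes; any residual discrepancy comes only from $\lambda$ versus $\lambda_N$ in the exponential rate and is absorbed into the constant. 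The first term is the genuine work: it is the difference between the senescence rate of the integro-differential model~\eqref{eq:PDE_model_telomeres_one_telo} and that of its transport approximation. Here I would use the key integral representation the introduction advertises, rewriting the density error $n^{(1)}(t,\cdot)-m^{(1)}(t,\cdot)$ via Duhamel as an integral over $0\le s\le t$ of "sub-errors" generated at time $s$ and transported forward, i.e.\ the display~\eqref{eq:step2_keylemma} alluded to in the contributions paragraph. A maximum-principle argument (the semigroup of the transport-with-absorption equation is positivity-preserving and contracting) then shows each sub-error dissipates like $\exp(-\lambda_N(\cdot))$, and the Taylor-type estimate relating $\int n^{(1)}(t,x+v)\tilde g(v)\,dv$ to $n^{(1)}(t,x)-bm_1\partial_x n^{(1)}(t,x)$ costs one factor of $\tilde\delta\sim 1/N$, using the bounds on $n_0''$ and $n_0'$ from~\hyperlink{assumption:H2}{$(H_2)$} propagated in time; together these give the $\frac{c_1}{N}(x+1)\exp(-\lambda_N x)$ bound, with the polynomial prefactor $(x+1)$ arising from integrating the constant-in-space sub-error over the characteristic.

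For part $(b)$, the same scheme applies but two new difficulties appear. First, the approximating object is now a transport equation on $\mathbb R_+^{2k}$ with drift $-\tfrac{bm_1}{2}$ in each coordinate and absorption at the boundary; because $n^{(2k)}(0,\cdot)=\prod_i n_0(x_i)$ by~\eqref{eq:PDE_model_telomeres_several_telos} and the transport dynamics decouples across coordinates, the transport solution factorises, $m^{(2k)}(t,x)=\prod_{i=1}^{2k} m^{(1)}_{1/2}(t,x_i)$ where $m^{(1)}_{1/2}$ solves the one-dimensional transport equation with the halved drift; this reduces the mass estimate to the one-dimensional one raised to the power $2k$, which is exactly why $\int_t^\infty m^{(2k)}_\partial$ equals $\bigl[\int_{bm_1 t/2}^\infty n_0\bigr]^{2k}$. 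Second, and this is where the real extra care the introduction mentions comes in: the estimator $\widehat n^{(2k)}_0$ involves dividing by $\bigl(\int_{2x/(\tilde b\tilde m_1)}^\infty n^{(2k)}_\partial\bigr)^{1-1/2k}$, i.e.\ by (a power of) the surviving mass $\int_{x\in\mathbb R_+^{2k}} n^{(2k)}(t,x)\,dx$, so I must bound this quantity \emph{below}. That is precisely the role of~\hyperlink{assumption:H4}{$(H_4)$}: it furnishes a lower bound $n_0(x)\ge D_\omega f_\omega(x)e^{-\omega x}/\int f_\omega e^{-\omega}$, which propagates (monotonicity of $f_\omega$ and positivity of the semigroup) to a lower bound on the transport solution and hence on its tail mass, of order $(D_\omega)^{2k}e^{-2k\omega x}$ up to polynomial factors; combined with the upper bound from~\hyperlink{assumption:H3}{$(H_3)$} giving the $(D_\lambda)^{2k}$ and the $e^{-(2k+1)\lambda'_N x}$, this explains the shape $(D_\lambda/D_\omega)^{2k}(k^2 x+k+1)\exp[-(2k+1)\lambda'_N x+2k\omega x]$ of the bound. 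The algebraic manipulation is: write $a^{1/2k}-b^{1/2k}$ where $a=\int_t^\infty n^{(2k)}_\partial$ and $b=\bigl[\int n_0\bigr]^{2k}$, use $|a^{1/2k}-b^{1/2k}|\le \tfrac{1}{2k}\min(a,b)^{1/2k-1}|a-b|$, differentiate, and distribute the errors; each of the $\sim k$ coordinate-wise sub-errors contributes a $1/N$ and the quotient contributes the negative power of the lower bound, yielding the $k^2$ polynomial growth.

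The step I expect to be the main obstacle is controlling the first ("model-approximation") term uniformly in $x$ with an \emph{exponentially decaying} prefactor rather than a linearly growing one — in particular, combining the Duhamel/integral representation~\eqref{eq:step2_keylemma} with the maximum principle so that the error generated at each time $s$ is seen to decay at rate $\lambda_N$ (resp.\ $\lambda'_N$) and the time integral converges to something of size $O(x/N)$ rather than $O(t/N)$ blowing up. Getting the exponential rates to be exactly $\lambda_N$ and $\lambda'_N$ (the Laplace-transform expressions in~\eqref{eq:approximation_eigenvalues}) rather than a cruder $\lambda$ requires tracking the true principal-eigenvalue correction of the jump operator, via testing the equations against $e^{-(\lambda/N)x}$-type weights; this is the technically delicate bookkeeping, and in the multidimensional case it is compounded by having to do it simultaneously for the upper bound (powers of $D_\lambda$, rate $\lambda'_N$) and the lower bound (powers of $D_\omega$, rate $\omega$) so that their ratio appears in the final constant. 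Everything else — the characteristics computation, the factorisation of the transport solution, the elementary inequality for $a^{1/2k}-b^{1/2k}$, and the Taylor expansion costing $1/N$ — is routine once that core dissipation estimate is in hand.
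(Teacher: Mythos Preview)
Your proposal is correct and follows essentially the same approach as the paper: the transport approximation solved by characteristics, the Duhamel representation of the density error as an integral of sub-errors (the paper's~\eqref{eq:step2_keylemma}) combined with a maximum principle, the Taylor remainder costing $1/N$, and for part~$(b)$ the lower bound on the tail mass via~\hyperlink{assumption:H4}{$(H_4)$} together with the inequality $|a^{1-1/2k}-b^{1-1/2k}|\le (1-\tfrac{1}{2k})\min(a,b)^{-1/2k}|a-b|$. One small correction: the semigroup that carries the dissipation in the Duhamel argument is the \emph{jump} semigroup (the operator $\Phi$ solving~\eqref{eq:PDE_jump_process_operator_Phi}), not the transport semigroup---testing it against $e^{-\beta x}$ produces exactly the rate $bN(1-\mathcal L(\xi)(\beta/N))$, which is where $\lambda_N$ and $\lambda'_N$ come from.
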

\begin{rem}
\noindent From the above, we have that the pointwise errors tend to zero when $N \rightarrow +\infty$.
\end{rem}

Let us comment on the dependence on $x$ of the bounds in Theorem~\ref{te:main_result}. First, assume that $\lambda + 2k\lambda'_N > \omega + (2k-1)\omega'_N$. Then, for each of the estimators, the bound contains a term corresponding to a linear growth in $x$, and a term corresponding to an exponential decay in~$x$. The linear growth is related to the accumulation of errors during the model approximation. The exponential decay is related to the fact that the influence of past errors dissipates over time at an exponential rate. In the case where~$\lambda + 2k\lambda'_N \leq \omega + (2k-1)\omega'_N$, there is no exponential decay for $\widehat{n}_0^{(2k)}$. This is because the term in the denominator of $\widehat{n}_0^{(2k)}$, which tends to~$0$ exponentially fast, compensates for the dissipation and leads to an exponential growth of the error. The fact that there is a condition for having an exponential decay is consistent with the fact that the hazard rate function associated to a given density, which is similar to $\widehat{n}_0^{(2k)}$, may explode when $x\rightarrow+\infty$, see~\cite[Sections~7.VI \& 7.IX]{kleinbaum_survival_2012}. However, the condition~$\lambda + 2k\lambda'_N > \omega + (2k-1)\omega'_N$ is not optimal, as discussed in the penultimate paragraph of Section~\ref{sect:discussion_estimation}. This opens new perspectives, notably to improve our bounds.



Let us now comment on the dependence on $k$ of the bound obtained in Theorem~\ref{te:main_result}-$(b)$. We observe that two terms contribute to the $k$-dependence of the error. The first one is the term~$\left(\frac{D_{\lambda}}{D_{\omega}}\right)^{2k}$. It implies that the error bound grows exponentially when the number of chromosomes increase. In fact, this growth is mainly related to the fact that our assumptions are not optimal, which leads to a non-optimal bound. This non-optimality is due to the fact that comparing~$n_0$ with the densities presented in~\hyperlink{assumption:H3}{$(H_3)$} and~\hyperlink{assumption:H4}{$(H_4)$} results in a loss of information about~$n_0$, and that this loss is multiplied when the dimension increases. We would like to manage this loss of information in a future work in order to obtain a bound without the term~$\left(\frac{D_{\lambda}}{D_{\omega}}\right)^{2k}$. The second term that contributes to the $k$-dependence of the bound in Theorem~\ref{te:main_result}-$(b)$ is the term~$k^2x + k + 1$. It is related to the fact that the size of the space~$\mathbb{R}_+^{2k}$ becomes larger when~$k$ increases, making the model more difficult to approximate. In fact, the dependence on $k$ of the error poses problems when one is interested in species for which $2k$ is large, such as yeast cells~($2k = 32$) or human cells~($2k = 92$). This issue is further studied in Section~\ref{subsect:estimation_results_severaltelos}. 




We now need to ensure that the accumulation of the pointwise errors does not lead to an explosion of the global error. To verify this, we obtain from Theorem~\ref{te:main_result} and the fact that $\int_0^{+\infty} x^p e^{-\beta x} \, \dd x = \frac{\Gamma(p+1)}{\beta^{p+1}}$ for all $\beta \geq 0$ and~$p > 0$ the following corollary. It provides bounds on the errors of our estimators in all Lebesgue spaces. \todoJules{the ?}
\begin{cor}[Estimation errors in Lebesgue spaces]\label{cor:error_lebesgue_spaces}
	We recall the constants $\lambda_N$, $\lambda'_N$ and $\omega'_N$ defined in~\eqref{eq:approximation_eigenvalues}. We also consider the constant $\beta'_N := \left(\lambda + 2k\lambda'_N\right) - \left(\omega + (2k-1)\omega'_N\right)$. The following statements hold.
	
	\begin{enumerate}[$(a)$]
		\item Assume \hyperlink{assumption:H1}{$(H_1)-(H_2)$}. Then, for all~$p > 0$ we have
		$$
		\left|\left|\widehat{n}_0^{(1)} - n_0\right|\right|_{L^{p}(\mathbb{R}_+)}  \leq  \frac{c_1}{N}\left(\frac{\Gamma(p+1)^{\frac{1}{p}}}{(\lambda_N p)^{1+\frac{1}{p}}} + \frac{1}{\left(\lambda_N p\right)^{\frac{1}{p}}}\right),
		$$
		where $c_1$ is the same constant as in Theorem~\ref{te:main_result}-$(a)$.
		\item Assume \hyperlink{assumption:H1}{$(H_1)-(H_4)$}, and that $\beta'_N > 0$. Then, for all $p > 0$, we have
		$$
		\left|\left|\widehat{n}_0^{(2k)} - n_0\right|\right|_{L^{p}(\mathbb{R}_+)}  \leq  \frac{d_1}{N}\left(\frac{D_{\lambda}}{D_{\omega}}\right)^{2k}\left(\frac{k^2\Gamma(p+1)^{\frac{1}{p}}}{\left(\beta'_Np\right)^{1+\frac{1}{p}}} + \frac{k+1}{\left(\beta'_Np\right)^{\frac{1}{p}}}\right),
		$$
		where $d_1$ is the same constant as in Theorem~\ref{te:main_result}-$(a)$.
	\end{enumerate}
\end{cor}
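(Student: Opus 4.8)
The plan is to read both inequalities off the pointwise bounds of Theorem~\ref{te:main_result} by raising them to the power $p$, integrating over $\mathbb{R}_+$, and evaluating the resulting integrals with the moment identity $\int_0^{+\infty} x^p e^{-\beta x}\,\dd x = \Gamma(p+1)/\beta^{p+1}$, valid for $\beta>0$ and $p>0$. Two elementary facts are used throughout: the $L^p$ norm is monotone under pointwise domination of absolute values, and — at least for $p\ge 1$ — Minkowski's inequality lets me split a polynomial prefactor into its monomials at no cost.

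\textbf{Part $(a)$.} Starting from $|\widehat{n}_0^{(1)}(x)-n_0(x)| \le \tfrac{c_1}{N}(x+1)e^{-\lambda_N x}$ for all $x\ge0$, I would write
\[
\left\|\widehat{n}_0^{(1)}-n_0\right\|_{L^p(\mathbb{R}_+)} \le \frac{c_1}{N}\left\|(x+1)e^{-\lambda_N x}\right\|_{L^p} \le \frac{c_1}{N}\left(\left\|x\,e^{-\lambda_N x}\right\|_{L^p} + \left\|e^{-\lambda_N x}\right\|_{L^p}\right),
\]
and then use $\left\|x\,e^{-\lambda_N x}\right\|_{L^p}^p = \Gamma(p+1)/(p\lambda_N)^{p+1}$ together with $\left\|e^{-\lambda_N x}\right\|_{L^p}^p = 1/(p\lambda_N)$. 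Taking $p$-th roots and adding the two contributions gives exactly the announced bound, with the very same constant $c_1$. Both integrals converge because $\lambda_N>0$: recall that $\lambda_N\to\lambda>0$ as $N\to+\infty$, or directly that $\mathcal{L}(g)(\lambda/N)<1$.

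\textbf{Part $(b)$.} Here the extra hypothesis $\beta'_N = (2k+1)\lambda'_N - 2k\omega > 0$ is precisely what turns the exponent in Theorem~\ref{te:main_result}-$(b)$ into a genuine decay rate, so that $|\widehat{n}_0^{(2k)}(x)-n_0(x)| \le \tfrac{d_1}{N}\bigl(\tfrac{D_\lambda}{D_\omega}\bigr)^{2k}(k^2x+k+1)e^{-\beta'_N x}$. The prefactor $\tfrac{d_1}{N}(D_\lambda/D_\omega)^{2k}$ is independent of $x$ and pulls out of the norm; applying Minkowski to separate $k^2 x$ from $k+1$ and repeating the two moment computations with $\lambda_N$ replaced by $\beta'_N$ yields the stated inequality with the same constant $d_1$.

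I do not expect any real obstacle here: the corollary is a routine consequence of Theorem~\ref{te:main_result}. The only points deserving a moment's care are the convergence of the moment integrals — which is exactly where the positivity of $\lambda_N$ and of $\beta'_N$ is invoked — and, for the range $0<p<1$ where Minkowski's inequality is not available as such, replacing the splitting step by a slightly more careful elementary estimate; for $p\ge1$ the argument above applies verbatim.
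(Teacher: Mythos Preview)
Your proposal is correct and follows exactly the paper's approach: the paper's own proof is a single sentence stating that the corollary follows from Theorem~\ref{te:main_result} together with the moment identity $\int_0^{+\infty} x^p e^{-\beta x}\,\dd x = \Gamma(p+1)/\beta^{p+1}$. You have spelled out precisely this computation, and in fact you are more careful than the paper in flagging the Minkowski issue for $0<p<1$, which the paper does not address.
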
	
\noindent From the above, we have that if~$\lambda + 2k\lambda'_N \leq \omega + (2k-1)\omega'_N$, then the accumulation of the pointwise errors for~$\widehat{n}_0^{(2k)}$ is too large to get a result in norm, due to the exponential growth of the pointwise errors (linear when $\lambda + 2k\lambda'_N = \omega + (2k-1)\omega'_N$). 

\subsection{Discussion about the single-telomere model and the assumptions}\label{subsect:discussion_models_assumptions}

We conclude this section by explaining why we work  with a model with one telomere, and discussing our assumptions. 



\paragraph{Relevance of using a model with one telomere.} As said in the presentation of the first model, up to our knowledge, there is no species with only one telomere. However, there are at least two good reasons to work with such a model.

\begin{itemize}[label = \fontsize{6}{12}\selectfont\textbullet,leftmargin=0.5cm]
	
	\item It is possible with experimental methods to place ourselves in a setting very similar to the study of cells with one telomere, see~\cite[p.~$112$]{bechara2023}. 
	
	\item It has been deduced numerically in~\cite[Figure~5.a]{bourgeron_2015}, which study a discrete probabilistic model with the same dynamics as those presented in~\eqref{eq:PDE_model_telomeres_several_telos}, that $60\%$ of senescence times are signalled by the telomere that was the shortest at the beginning of the dynamics, so that this telomere quite often signals senescence. Therefore, even if the approximation is rough, we can assume that the shortest telomere at the beginning always signals senescence to gain first insight. Under this approximation, it is sufficient to use a single-telomere model.
\end{itemize}

\paragraph{Discussion about the assumptions.}\label{paragraph:discussion_models} 

Let us present the consequences of our assumptions, how they are useful in our proofs, and why we made them.

\begin{itemize}[label = \fontsize{6}{12}\selectfont\textbullet,leftmargin=0.5cm]
	\item\hyperlink{assumption:H1}{$(H_1):$}\label{paragraph:discussion_H1}\hypertarget{paragraph:discussion_H1}{} This is the key assumption of this paper, as it allows us to justify that we can approximate our models by transport equations, see~Sections~\ref{subsect:onetelo_rewritingmodel}~and~\ref{subsect:severaltelos_rewritingmodel}. Since $N$ is assumed to be large, this assumption means that the shortening values are small compared to the scale where telomere lengths are initially distributed. This assumption that the shortening values are small is supported by the following biological~reality:
	\begin{itemize}%
		\item The average telomere length of the budding yeast is of the order of $300$ base pairs~\hbox{\cite{pfeiffer_replication_2013}}, and the average shortening value is of the order of $7.5$ base pairs~\cite{eugene_effects_2017}. The ratio between these is $\frac{7.5}{300} = \frac{1}{40}= 2.5\%$, which is small. 
		\item The average telomere length of the human is of the order of $12.5$ kilobase pairs~\cite{pfeiffer_replication_2013}, and the average shortening value is of the order of $0.125$ kilobase pairs~\cite{hwang_telomeric_2014}. The ratio between these is $\frac{0.125}{12.5} = \frac{1}{100}= 1\%$, which is small.
	\end{itemize}
	By the above explanation, we have that $N = 40$ when we study the budding yeast, and $N = 100$ when we study the human.
	
	The assumption on $\tilde{b}$ means that we work on a time scale where division times occur very frequently. This allows us to compensate the fact that the shortening values are small, and to avoid to have senescence times that tend to infinity when $N\rightarrow+\infty$. This assumption also implies that the constraints 
	\begin{equation}\label{eq:constraints_cemetery}
	\left|\left|n_{\partial}^{(1)}\right|\right|_{L^\infty\left(\mathbb{R}_+\right)} \leq \tilde{b} = bN \hspace{4mm}\text{and}\hspace{4mm} \left|\left|n_{\partial}^{(2k)}\right|\right|_{L^\infty\left(\mathbb{R}_+\right)} \leq \tilde{b}= bN
	\end{equation}
	presented in Remarks~\ref{rem:conservation_individual_one_telo} and~\ref{rem:conservation_individual_several_telos} become by letting $N$ go to infinity: 
	$$
	\left|\left|n_{\partial}^{(1)}\right|\right|_{L^\infty\left(\mathbb{R}_+\right)} < +\infty \hspace{4mm}\text{and}\hspace{4mm} \left|\left|n_{\partial}^{(2k)}\right|\right|_{L^\infty\left(\mathbb{R}_+\right)} < +\infty, 
	$$
	which is no more restrictive. Hence, in the rest of the article, when it is required to approximate or estimate~$n_{\partial}$, we do not try to satisfy~\eqref{eq:constraints_cemetery}.
	
	\smallskip
	
	\item\hyperlink{assumption:H2}{$(H_2):$} Due to a second-order Taylor expansion, the errors between the approximated models and the original models are mainly given by the second derivative of $n_0$, see Section~\ref{subsect:proof_key_lemma_lengths}. The inequality on the left-hand side of \hyperlink{assumption:H2}{$(H_2)$} allows us to have a control on it. 
	
	The inequality on the right-hand side, for its part, allows us to have a control on the variation of telomere length density close to $0$. Controlling this is important because cells susceptible to become senescent have telomere lengths close to $0$. For more information, we refer to Sections~\ref{subsect:proof_key_lemma_cemetery},~\ref{subsect:proof_approximation_one_telomere_model} and~\ref{subsect:proof_approximation_cemetery_several}. 
	\smallskip

	\item\hyperlink{assumption:H3}{$(H_3):$} The approximation error for the model with several telomeres does not only depend on the derivatives of $n_0$, but also on $n_0$ itself. This assumption allows us to control it.
	\smallskip

	\item\hyperlink{assumption:H4}{$(H_4):$} This assumption allows us to obtain a lower bound for $\int_{t}^{+\infty} n_{\partial}^{(2k)}(s) \dd s$, for all $t\geq 0$. It is important to have such lower bound because the inverse of $\int_{t}^{+\infty} n_{\partial}^{(2k)}(s) \dd s$ appears in the expression of $\widehat{n}_0^{(2k)}$, see~\eqref{eq:definitions_estimators}, and tends to $0$ when $t\rightarrow+\infty$. Therefore, if the decay of the function $t\mapsto\int_{t}^{+\infty} n_{\partial}^{(2k)}(s) \dd s$ is too fast, then $\widehat{n}_0^{(2k)}(x)$ explodes when~$x\rightarrow+\infty$. 
	
\end{itemize}

\section{The single-telomere model}\label{sect:model_one_telomere}

We begin by bounding the error done by~$\widehat{n}^{(1)}_0$, i.e., we prove Theorem~\ref{te:main_result}-$(a)$. This statement follows almost directly from the model approximation. First, in Section~\ref{subsect:onetelo_rewritingmodel}, we successively rewrite~\eqref{eq:PDE_model_telomeres_one_telo} using the parameters introduced in~\hyperlink{assumption:H1}{$(H_1)$}, explain how an approximation can be obtained from this rewriting, and prove Theorem~\ref{te:main_result}-$(a)$ assuming the approximation is true. Then, in Section~\ref{subsect:plan_mainproof_onetelo}, we present the auxiliary results necessary to obtain this approximation. Thereafter,  in Sections~\ref{subsect:proof_key_lemma_lengths} and~\ref{subsect:proof_key_lemma_cemetery}, we prove these auxiliary statements. Finally, in Section~\ref{subsect:proof_approximation_one_telomere_model}, we prove the model approximation. Throughout this section, we assume that \hyperlink{assumption:H1}{$(H_1)$} holds.
\subsection{Model approximation and proof of Theorem~\ref{te:main_result}-\texorpdfstring{$(a)$}{(a)}}\label{subsect:onetelo_rewritingmodel}

The approximation of~\eqref{eq:PDE_model_telomeres_one_telo} is obtained by letting the scaling parameter~$N$, introduced in~\hyperlink{assumption:H1}{$(H_1)$}, tend to infinity. Thus, we need to rewrite~\eqref{eq:PDE_model_telomeres_one_telo} to make $N$ appears. Let us start with the first line of~\eqref{eq:PDE_model_telomeres_one_telo}. In this equation, we replace $\tilde{b}$ with~$bN$, then $\tilde{g}(v)$ with~$Ng(Nv)$ for all $v\geq0$, and finally $\tilde{\delta}$ with~$\frac{\delta}{N}$. Thereafter, we do the change of variable $v' = Nv$, and place $n^{(1)}(t,x)$ inside the integral by using that $\int_0^{\delta}g(v) \dd v = 1$. We obtain that for all~$t\geq0$,~$x\geq0$,
\begin{equation}\label{eq:rewriting_firstmodel_firstline}
	\begin{aligned}
		\partial_t n^{(1)}(t,x) &= bN^2\int_0^{\frac{\delta}{N}}n^{(1)}\left(t,x + v\right)g\left(Nv\right) \mathrm{d}v - bN.n^{(1)}(t,x) \\
		& = bN\int_0^{\delta}\left[n^{(1)}\left(t,x + \frac{v'}{N}\right)-n^{(1)}(t,x)\right]g(v') \mathrm{d}v'.
	\end{aligned}
\end{equation}
Now, we rewrite the second line of~\eqref{eq:PDE_model_telomeres_one_telo}. First observe that by the change of variable $w' = \frac{w}{N}$, we have for all $v\in\left[0,\frac{\delta}{N}\right]$
\begin{equation}\label{eq:relation_between_Gtilde_and_G}
	G(Nv) = \int_0^{Nv} g(w)\,\mathrm{d}w= N\int_0^{v} g(Nw')\,\mathrm{d}w' = \int_0^{v} \tilde{g}(w')\,\mathrm{d}w' = \tilde{G}(v). 
\end{equation}
In view of the above equality, in the second line of~\eqref{eq:PDE_model_telomeres_one_telo}, we successively replace $\tilde{b}$ with $bN$, $\tilde{G}(v)$ with $G(Nv)$  for all $v\geq0$, and finally $\tilde{\delta}$ with $\frac{\delta}{N}$. Then, we do the change of variable $v' = Nv$. It comes the following rewriting, for all~$t\geq0$,
\begin{equation}\label{eq:rewriting_firstmodel_secondline}
	\begin{aligned}
		n_{\partial}^{(1)}(t) &=  bN\int_{0}^{\frac{\delta}{N}}n^{(1)}(t,v)(1-G(Nv))\,\mathrm{d}v = b\int_{0}^{\delta} n^{(1)}\left(t,\frac{v'}{N}\right)(1-G(v'))\,\mathrm{d}v'. 
	\end{aligned}
\end{equation}
By combining~\eqref{eq:rewriting_firstmodel_firstline} and~\eqref{eq:rewriting_firstmodel_secondline}, we now have below a new expression for~\eqref{eq:PDE_model_telomeres_one_telo} in which $N$ appears
\begin{equation}\label{eq:rescaled_PDE_model_telomeres_one_telo}
	\begin{cases}
		\partial_t n^{(1)}(t,x) = bN\int_0^{\delta}\left[n^{(1)}\left(t,x + \frac{v}{N}\right)- n^{(1)}(t,x)\right]g(v)\,\mathrm{d}v, & \forall t\geq0,\,x\geq0,\\
		n^{(1)}_{\partial}(t) = b\int_0^{\delta} n^{(1)}\left(t,\frac{v}{N}\right)(1-G(v))\,\mathrm{d}v ,& \forall t\geq0, \\ 
		n^{(1)}(0,x) = n_0(x), & \forall x\geq 0.
	\end{cases}
\end{equation}
We aim to derive a system corresponding to the limit version of~\eqref{eq:rescaled_PDE_model_telomeres_one_telo} as $N\rightarrow +\infty$. In fact, this system can be intuitively obtained. To do so, one has to observe that by the definition of the derivative and the equality $\int_0^{\delta}(1-G(v))\,\mathrm{d}v = \int_0^{\delta}vg(v) \,\mathrm{d}v = m_1$ (integration by part), we have the following two results, for all $t\geq0$,
$$
\begin{aligned}
	N\int_0^{\delta}\left[n^{(1)}\left(t,x + \frac{v}{N}\right)- n^{(1)}(t,x)\right]g(v) \,\mathrm{d}v &\underset{N\rightarrow+\infty}{\approx} \int_0^{\delta}\left[v\partial_x n^{(1)}(t,x) \right]g(v) \,\mathrm{d}v = m_1\partial_x n^{(1)}(t,x), \\
	\int_0^{\delta} n^{(1)}\left(t,\frac{v}{N}\right)(1-G(v))\,\mathrm{d}v &\underset{N\rightarrow+\infty}{\approx} m_1n^{(1)}\left(t,0\right).
\end{aligned}
$$
Then, by plugging the above in~\eqref{eq:rescaled_PDE_model_telomeres_one_telo}, we can conjecture that the following approximates \eqref{eq:rescaled_PDE_model_telomeres_one_telo}
\begin{equation}\label{eq:approximation_transport_model_1_telomere}
	\begin{cases}
		\partial_t u^{(1)}(t,x) = bm_1\partial_{x} u^{(1)}(t,x), & \forall t\geq0,\,x\geq0,\\
		u_{\partial}^{(1)}(t) = bm_1u^{(1)}(t,0), & \forall t\geq 0,\\ 
		u^{(1)}(0,x) = n_0(x), & \forall x\geq0.
	\end{cases}
\end{equation}
\begin{rem}
	By using the method of characteristics to solve the first line of~\eqref{eq:approximation_transport_model_1_telomere}, and then plugging the solution in the second line, we have for all $t\geq 0$,~$x\geq 0$,
	\begin{equation}\label{eq:alternative_representation_transport_model_1_telomere}
		\begin{aligned}
			u^{(1)}(t,x) = n_0(bm_1t + x), \quad \text{and} \quad u_{\partial}^{(1)}(t) = bm_1n_0(bm_1t).
		\end{aligned}
	\end{equation}
\end{rem} 

In fact, our conjecture can be rigorously proven. Specifically, the following result, proved in Section~\ref{subsect:proof_approximation_one_telomere_model}, provides bounds on the pointwise errors between~\eqref{eq:rescaled_PDE_model_telomeres_one_telo} and~\eqref{eq:approximation_transport_model_1_telomere}.
\begin{prop}[Pointwise approximation errors, one telomere]\label{prop:approximation_PDE_one_telomere}
	We recall the constant $\lambda_N$ defined in~\eqref{eq:approximation_eigenvalues}. The following statements hold. 
	\begin{enumerate}[$(a)$]
		\item Assume \hyperlink{assumption:H1}{$(H_1)-(H_2)$}. Then, there exists $c'_0 > 0$ such that for all $t\geq 0$, $x\geq 0$, we have
		$$
		\begin{aligned}
			\left|n^{(1)}(t,x) - u^{(1)}(t,x)\right| \leq \frac{c'_0bt}{N}\exp\left(-bm_1\lambda_N t\right)\exp\left(-\lambda x\right),
		\end{aligned}
		$$
		where $c'_0$ depends only on $g$ and $C_{\lambda}$.
		\item Assume \hyperlink{assumption:H1}{$(H_1)-(H_2)$}. Then, there exists $c'_1 > 0$ such that for all $t\geq 0$, we have
		$$
		\left|n_{\partial}^{(1)}(t) - u_{\partial}^{(1)}(t)\right| \leq \frac{c'_1b\left(bm_1t+1\right)}{N}\exp\left(-bm_1\lambda_N t\right),
		$$
		where $c'_1$ depends only on $g$, $C_{\lambda}$ and $C'_{\lambda}$. 
	\end{enumerate}
\end{prop}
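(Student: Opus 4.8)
\textbf{Proof plan for Proposition~\ref{prop:approximation_PDE_one_telomere}.} For part $(a)$ I would work with the rescaled system \eqref{eq:rescaled_PDE_model_telomeres_one_telo} and set $e(t,x):=n^{(1)}(t,x)-u^{(1)}(t,x)$, recalling from \eqref{eq:alternative_representation_transport_model_1_telomere} that $u^{(1)}(t,x)=n_0(bm_1t+x)$. Writing $\mathcal{A}\varphi(x):=bN\int_0^{\delta}[\varphi(x+v/N)-\varphi(x)]g(v)\,\dd v$ for the jump operator in the first line of \eqref{eq:rescaled_PDE_model_telomeres_one_telo}, the error solves the linear Cauchy problem $\partial_t e=\mathcal{A}e+S$ with $e(0,\cdot)=0$, where the consistency (source) term is
$$S(t,x):=\mathcal{A}u^{(1)}(t,\cdot)(x)-bm_1\partial_x u^{(1)}(t,x)=bN\int_0^{\delta}\Big[u^{(1)}(t,x+\tfrac{v}{N})-u^{(1)}(t,x)-\tfrac{v}{N}\partial_x u^{(1)}(t,x)\Big]g(v)\,\dd v.$$
A first-order Taylor expansion with integral remainder, combined with the left inequality of \hyperlink{assumption:H2}{$(H_2)$} and $u^{(1)}(t,\cdot)=n_0(bm_1t+\cdot)$, yields $|S(t,x)|\leq \tfrac{C m_2 b}{N}\,e^{-\lambda bm_1 t}\,e^{-\lambda x}$ for a constant $C$ proportional to $C_{\lambda}$ and $m_2=\int_0^{\delta}v^2 g(v)\,\dd v$ depending only on $g$; the factor $e^{-\lambda x}$ survives because the remainder involves $n_0''$ only at points $\geq x$. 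This is the ``integral of sub-errors'' representation announced in the introduction.

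The core of the argument is then a Duhamel formula together with a maximum principle. Writing $e(t,\cdot)=\int_0^t e^{(t-s)\mathcal{A}}S(s,\cdot)\,\dd s$, I would use that $\mathcal{A}$ is the generator of a positivity-preserving (Markov) semigroup — it satisfies the positive maximum principle, being a Lévy-type jump operator — and the key identity that $x\mapsto e^{-\lambda x}$ is an eigenfunction:
$$\mathcal{A}[e^{-\lambda\,\cdot}](x)=bN\big(\mathcal{L}(g)(\lambda/N)-1\big)e^{-\lambda x}=-bm_1\lambda_N\,e^{-\lambda x},$$
by the definition \eqref{eq:approximation_eigenvalues} of $\lambda_N$. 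Hence $(\tau,x)\mapsto e^{-bm_1\lambda_N\tau}e^{-\lambda x}$ solves $\partial_\tau w=\mathcal{A}w$ exactly, and comparison gives $\big|e^{\tau\mathcal{A}}h\big|(x)\leq \|h\,e^{\lambda\,\cdot}\|_{\infty}\,e^{-bm_1\lambda_N\tau}e^{-\lambda x}$ for admissible $h$ (equivalently, this bound follows from the probabilistic representation $e^{\tau\mathcal{A}}h(x)=\mathbb{E}[h(x+\tfrac1N\sum_{i\leq N_\tau}V_i)]$ with $N_\tau\sim\mathrm{Poisson}(bN\tau)$, $V_i\sim g$). Plugging in the bound on $S$ and using $\lambda_N\leq\lambda$ — a consequence of $e^{-t}\geq 1-t$, which gives $1-\mathcal{L}(g)(\beta)\leq m_1\beta$ — the time integral collapses:
$$|e(t,x)|\leq \frac{C m_2 b}{N}e^{-\lambda x}\int_0^t e^{-\lambda bm_1 s}e^{-bm_1\lambda_N(t-s)}\,\dd s\leq \frac{C m_2 b}{N}\,t\,e^{-bm_1\lambda_N t}\,e^{-\lambda x},$$
which is the claimed estimate with $c'_0=Cm_2$ depending only on $g$ and $C_\lambda$. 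The main obstacle I anticipate is making this maximum-principle comparison fully rigorous on the unbounded half-line $\mathbb{R}_+$ for the nonlocal operator $\mathcal{A}$ (ruling out loss of mass at $+\infty$, which is exactly where the common $e^{-\lambda x}$ decay of $e$, $S$ and the comparison function is used), together with checking that $u^{(1)}$ — built from $n_0\in W^{2,1}(\mathbb{R}_+)$ — is regular enough to legitimise the Taylor remainder pointwise.

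For part $(b)$, the plan is to exploit $\int_0^{\delta}(1-G(v))\,\dd v=m_1$ to rewrite $u_\partial^{(1)}(t)=bm_1u^{(1)}(t,0)=b\int_0^{\delta}u^{(1)}(t,0)(1-G(v))\,\dd v$, so that
$$n_\partial^{(1)}(t)-u_\partial^{(1)}(t)=b\int_0^{\delta}\Big[\big(n^{(1)}(t,\tfrac{v}{N})-u^{(1)}(t,\tfrac{v}{N})\big)+\big(u^{(1)}(t,\tfrac{v}{N})-u^{(1)}(t,0)\big)\Big](1-G(v))\,\dd v.$$
The first bracket is controlled by part $(a)$, contributing $\lesssim \tfrac{c'_0 m_1 b(bm_1 t)}{N}e^{-bm_1\lambda_N t}$ after integrating against $(1-G)$; the second bracket equals $\int_0^{v/N}n_0'(bm_1t+w)\,\dd w$, which by the right inequality of \hyperlink{assumption:H2}{$(H_2)$} is at most $\tfrac{\delta C'_\lambda}{N}e^{-\lambda bm_1 t}\leq \tfrac{\delta C'_\lambda}{N}e^{-bm_1\lambda_N t}$, contributing $\lesssim \tfrac{m_1\delta C'_\lambda b}{N}e^{-bm_1\lambda_N t}$. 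Summing the two contributions and setting $c'_1=\max(c'_0,\,m_1\delta C'_\lambda)$ — which depends only on $g$, $C_\lambda$, $C'_\lambda$ — gives the bound $\tfrac{c'_1 b(bm_1 t+1)}{N}e^{-bm_1\lambda_N t}$. Once $(a)$ is in hand, this step is routine and introduces no new difficulty.
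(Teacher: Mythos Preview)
Your proposal is correct and follows essentially the same route as the paper: the same error equation $\partial_t e=\mathcal{A}e+S$, the same Taylor bound on $S$ via $(H_2)$, the same Duhamel representation combined with the maximum principle for the jump semigroup and the eigenfunction identity $\mathcal{A}[e^{-\lambda\cdot}]=-bm_1\lambda_N e^{-\lambda\cdot}$, and the same splitting for part~$(b)$. The paper packages the Duhamel--comparison step into a general Lemma~\ref{lemm:key_lemma_lengths} (and Lemma~\ref{lemm:key_lemma_cemetery} for the boundary term), with the maximum principle and the explicit exponential solution isolated as Corollary~\ref{cor:maximum_principle} and Proposition~\ref{prop:explicit_solution_expo}; your probabilistic representation is an equally valid way to justify the comparison, and in part~$(b)$ the paper uses the sharper $\int_0^\delta v(1-G(v))\,\dd v=m_2/2$ where you bound $v\leq\delta$, but these are cosmetic differences.
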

\begin{rem}\label{rem:no_corollary_explanation}
As done in Corollary~\ref{cor:error_lebesgue_spaces}, the errors  in all Lebesgue spaces between $n^{(1)}$ and $u^{(1)}$, and $n_{\partial}^{(1)}$ and~$u_{\partial}^{(1)}$ can be bounded thanks to the above inequalities. We do not state these bounds as they are not necessary to prove our main theorem.
\end{rem}
\noindent The main interest of Proposition~\ref{prop:approximation_PDE_one_telomere} is that it allows us to get a bound on the error done by $\widehat{n}_0^{(1)}$. To obtain it, first observe that by the right-hand side of~\eqref{eq:alternative_representation_transport_model_1_telomere}, it holds~\hbox{$n_0(x) = \frac{1}{bm_1}u_{\partial}^{(1)}\left(\frac{x}{bm_1}\right)$} for all $x\geq0$. Then, in view of the definition of~$\widehat{n}_0^{(1)}$ given in~\eqref{eq:definitions_estimators} and Eq.~\eqref{eq:equality_transport_terms}, we have by applying Proposition~\ref{prop:approximation_PDE_one_telomere}-$(b)$ the following corollary, which directly implies Theorem~\hbox{\ref{te:main_result}-$(a)$}.
\begin{cor}[Pointwise estimation error, one telomere]\label{cor:quality_estimator_model_one_telo}
	We recall the constant $\lambda_N$ defined in~\eqref{eq:approximation_eigenvalues}. Assume that \hyperlink{assumption:H1}{$(H_1)-(H_2)$} hold. Then, for all $x\geq0$, we have 
	$$
	\left|\widehat{n}_0^{(1)}(x) - n_0(x)\right| = \frac{1}{bm_1}\left|n_{\partial}^{(1)}\left(\frac{x}{bm_1}\right)-u_{\partial}^{(1)}\left(\frac{x}{bm_1}\right)\right|  \leq \frac{c'_1\left(x+1\right)}{m_1N}\exp\left(-\lambda_N x\right),
	$$
	where $c'_1$ is the same constant as in Proposition~\ref{prop:approximation_PDE_one_telomere}-$(b)$.
\end{cor}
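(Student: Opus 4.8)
\textbf{Proof plan for Corollary~\ref{cor:quality_estimator_model_one_telo}.}
The plan is to unwind the definitions so that the estimation error becomes exactly an approximation error already controlled by Proposition~\ref{prop:approximation_PDE_one_telomere}-$(b)$, and then to simplify the resulting bound. First I would record the identity $n_0(x) = \frac{1}{bm_1}u_\partial^{(1)}\bigl(\frac{x}{bm_1}\bigr)$ for all $x\ge0$, which is immediate from the right-hand formula in~\eqref{eq:alternative_representation_transport_model_1_telomere}. Next, I would rewrite $\widehat n_0^{(1)}(x)$: by definition~\eqref{eq:definitions_estimators} it equals $\frac{1}{\tilde b\tilde m_1}n_\partial^{(1)}\bigl(\frac{x}{\tilde b\tilde m_1}\bigr)$, and since $\tilde b\tilde m_1 = bm_1$ by~\eqref{eq:equality_transport_terms}, this is $\frac{1}{bm_1}n_\partial^{(1)}\bigl(\frac{x}{bm_1}\bigr)$. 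Subtracting the two expressions gives
$$
\left|\widehat n_0^{(1)}(x) - n_0(x)\right| = \frac{1}{bm_1}\left|n_\partial^{(1)}\!\left(\tfrac{x}{bm_1}\right) - u_\partial^{(1)}\!\left(\tfrac{x}{bm_1}\right)\right|,
$$
which is the first displayed equality in the statement.

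Then I would apply Proposition~\ref{prop:approximation_PDE_one_telomere}-$(b)$ with $t = \frac{x}{bm_1}$. This yields
$$
\left|n_\partial^{(1)}\!\left(\tfrac{x}{bm_1}\right) - u_\partial^{(1)}\!\left(\tfrac{x}{bm_1}\right)\right| \le \frac{c'_1 b\bigl(bm_1\cdot\frac{x}{bm_1}+1\bigr)}{N}\exp\!\left(-bm_1\lambda_N\cdot\tfrac{x}{bm_1}\right) = \frac{c'_1 b\,(x+1)}{N}\exp\!\left(-\lambda_N x\right).
$$
Dividing by $bm_1$ cancels the factor $b$ against the $b$ in $bm_1$, leaving $\frac{c'_1(x+1)}{m_1 N}\exp(-\lambda_N x)$, which is exactly the claimed bound. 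Since $c'_1$ depends only on $g$, $C_\lambda$, $C'_\lambda$, the dependence claim is inherited directly.

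There is essentially no obstacle here: the corollary is a one-line consequence of the substitution $t = x/(bm_1)$ in Proposition~\ref{prop:approximation_PDE_one_telomere}-$(b)$ together with the already-established scaling identity $\tilde b\tilde m_1 = bm_1$. The only points requiring any care are bookkeeping ones — verifying that the time argument $\frac{x}{bm_1}$ is nonnegative (it is, since $x\ge0$ and $b,m_1>0$), and checking that the factor $b$ in the numerator of Proposition~\ref{prop:approximation_PDE_one_telomere}-$(b)$ is precisely what is consumed by the $\frac{1}{bm_1}$ prefactor so that the final constant is $c'_1/m_1$ rather than something involving $b$. The deeper content — the exponential-in-$x$ decay and the linear growth prefactor — is entirely packaged inside Proposition~\ref{prop:approximation_PDE_one_telomere}, whose proof is deferred to Section~\ref{subsect:proof_approximation_one_telomere_model}.
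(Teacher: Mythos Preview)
Your proposal is correct and follows essentially the same approach as the paper: the paper also derives $n_0(x) = \frac{1}{bm_1}u_{\partial}^{(1)}\bigl(\frac{x}{bm_1}\bigr)$ from~\eqref{eq:alternative_representation_transport_model_1_telomere}, uses~\eqref{eq:definitions_estimators} and~\eqref{eq:equality_transport_terms} to rewrite $\widehat{n}_0^{(1)}$, and then applies Proposition~\ref{prop:approximation_PDE_one_telomere}-$(b)$ at $t=\frac{x}{bm_1}$.
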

\noindent From the above, if we manage to justify that Proposition~\ref{prop:approximation_PDE_one_telomere} is true, then \hbox{Theorem~\ref{te:main_result}-$(a)$} will be proved. Explaining why this proposition holds is thus what we do in the next subsections. In particular, we now present the main arguments and the auxiliary statements used to obtain~it.


\subsection{Plan of the proof of Proposition~\ref{prop:approximation_PDE_one_telomere}}\label{subsect:plan_mainproof_onetelo}


The proof of the first statement of Proposition~\ref{prop:approximation_PDE_one_telomere} consists in controlling the absolute value of $\overline{u}^{(1)} := n^{(1)} - u^{(1)}$. To do so, we first obtain an equation verified by~$\overline{u}^{(1)}$. By taking the difference between the first lines of~\eqref{eq:rescaled_PDE_model_telomeres_one_telo} and~\eqref{eq:approximation_transport_model_1_telomere}, then decomposing $n^{(1)}$ with the equality $n^{(1)}= \overline{u}^{(1)}  +  u^{(1)}$, and finally using that $bm_1= bN\int_0^{\delta}\frac{v}{N}g(v) \dd v$, we have for all $t\geq0$, $x\geq0$,
\begin{equation}\label{eq:equation_difference_onetelomodel_approximant_lengths} 
	\begin{aligned}
		\partial_t \overline{u}^{(1)}(t,x) &= bN\int_0^{\delta}\left[n^{(1)}\left(t,x + \frac{v}{N}\right)- n^{(1)}(t,x)\right]g(v)\,\mathrm{d}v - bm_1 \partial_xu^{(1)}(t,x) \\
		&= bN\int_0^{\delta}\left[\overline{u}^{(1)}\left(t,x + \frac{v}{N}\right)- \overline{u}^{(1)}(t,x)\right]g(v)\,\mathrm{d}v  \\
		&+bN\int_0^{\delta}\left[u^{(1)}\left(t,x + \frac{v}{N}\right)- u^{(1)}(t,x) - \frac{v}{N}\partial_xu^{(1)}(t,x)\right]g(v)\,\mathrm{d}v.
	\end{aligned}
\end{equation}
Therefore, if we are able to bound a solution of the above equation, then Proposition~\ref{prop:approximation_PDE_one_telomere}-$(a)$ will be proved. The following lemma, proved in Section~\ref{subsect:proof_key_lemma_lengths}, allows us to do this. This lemma is stated for equations in a multidimensional trait space instead of only~$\mathbb{R}_+$, as we use it later in Section~\ref{subsect:proof_approximation_lengths_several} for the approximation of the model with several telomeres.
\begin{lemm}[Key lemma for approximating lengths densities]\label{lemm:key_lemma_lengths}
	Let $d\in\mathbb{N}^*$, $\xi$ a probability measure on $\mathbb{R}^d$ with finite first and second moments, and $F : \mathbb{R}_+\times\mathbb{R}_+^d \rightarrow \mathbb{R}$. We also consider~$u_{\xi}$ the solution of the following integro-differential equation, for all $(t,x)\in\mathbb{R}_+\times\mathbb{R}_+^d$,
	
	\begin{equation}\label{eq:PDE_to_develop_general}
		\begin{aligned}
			\partial_tu_{\xi}(t,x) &= bN\int_{v\in\mathbb{R}_+^d}\left[u_{\xi}\left(t,x + \frac{v}{N}\right)- u_{\xi}(t,x)\right] \xi\left(\mathrm{d}v\right)  \\ 
			&+ bN\int_{v\in \mathbb{R}_+^d}\left[F\left(t,x + \frac{v}{N}\right)- F\left(t,x\right) - \sum_{i = 1}^{d}\frac{v_i}{N}\partial_{x_i} F\left(t,x\right)\right]\xi\left(\mathrm{d}v\right),
		\end{aligned}
	\end{equation}
	with initial condition $u_{\xi}(0,.) \equiv 0$. Assume that there exist  $\alpha,\,\beta > 0$ and $C > 0$ such that
	\begin{align}				\forall (t,x)\in\mathbb{R}_+\times\mathbb{R}_+^d:\hspace{1mm}\sup_{(\ell,\ell')\in\llbracket1,d\rrbracket^2}\left(\left|\partial_{x_\ell x_{\ell'}}F(t,x)\right|\right) &\leq C\exp\left(-\alpha t - \beta\sum_{i = 1}^d x_i\right), \label{eq:condition_second_derivative_general_lemma} \\ 
		bN\left(1 - \mathcal{L}(\xi)\left(\frac{\beta}{N}\right)\right)&\leq \alpha. \label{eq:inequalities_alpha_and_beta}
	\end{align}
	Then, denoting the constant $\sigma_{\xi} = \sum_{1\leq \ell,\ell'\leq d}\int_{v\in\mathbb{R}_+^d} v_\ell v_{\ell'} \xi(\dd v)$, we have for all $(t,x)\in\mathbb{R}_+\times\mathbb{R}_+^d$
	\begin{equation}\label{eq:bound_key_lemma}
		\big|u_{\xi}(t,x)\big| \leq \frac{btC\sigma_{\xi}}{2N}\exp\left[-bN\left(1-\mathcal{L}(\xi)\left(\frac{\beta}{N}\right)\right)t\right]\exp\left(-\beta \sum_{i =1}^dx_i\right).
	\end{equation}
\end{lemm}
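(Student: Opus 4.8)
The plan is to treat \eqref{eq:PDE_to_develop_general} as a linear evolution equation driven by the source term (the second integral involving second-order Taylor remainders of $F$), and to integrate it against the semigroup generated by the jump operator in the first integral. Write $S_t$ for the Markov semigroup associated with the pure-jump generator $\mathcal{A}\varphi(x) = bN\int_{\mathbb{R}_+^d}[\varphi(x+v/N)-\varphi(x)]\,\xi(\dd v)$, and set
\begin{equation*}
R(t,x) := bN\int_{v\in\mathbb{R}_+^d}\left[F\left(t,x+\tfrac{v}{N}\right)-F(t,x)-\sum_{i=1}^d\tfrac{v_i}{N}\partial_{x_i}F(t,x)\right]\xi(\dd v).
\end{equation*}
Then Duhamel's formula gives $u_\xi(t,x) = \int_0^t S_{t-s}R(s,\cdot)(x)\,\dd s$, which is precisely the integral representation the introduction refers to as \eqref{eq:step2_keylemma}: the error at time $t$ is the accumulation over $s\in[0,t]$ of the source created at time $s$, transported by $S_{t-s}$. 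So the first step is to bound $|R(s,x)|$ pointwise, and the second is to show $S_{t-s}$ contracts this bound exponentially.

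For the first step, I would apply the second-order Taylor formula with integral remainder to $F(s,\cdot)$ along the segment from $x$ to $x+v/N$: the bracket equals $\sum_{\ell,\ell'}\frac{v_\ell v_{\ell'}}{N^2}\int_0^1(1-\theta)\,\partial_{x_\ell x_{\ell'}}F(s,x+\theta v/N)\,\dd\theta$. Using \eqref{eq:condition_second_derivative_general_lemma} and the fact that $v\in\mathbb{R}_+^d$ so the exponential factor $\exp(-\beta\sum_i(x_i+\theta v_i/N))$ is bounded by $\exp(-\beta\sum_i x_i)$, the $\theta$-integral contributes $\tfrac12$, and integrating against $\xi$ produces $\sigma_\xi = \sum_{\ell,\ell'}\int v_\ell v_{\ell'}\,\xi(\dd v)$. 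This yields the clean pointwise bound
\begin{equation*}
|R(s,x)| \leq \frac{bC\sigma_\xi}{2N}\exp\left(-\alpha s\right)\exp\left(-\beta\sum_{i=1}^d x_i\right).
\end{equation*}

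For the second step — which I expect to be the main obstacle, or at least the part requiring care — I need to control $S_{t-s}$ acting on the function $x\mapsto \exp(-\beta\sum_i x_i)$. The key observation is that the exponential $e_\beta(x) := \exp(-\beta\sum_i x_i)$ is almost an eigenfunction of $\mathcal{A}$: since $v\in\mathbb{R}_+^d$, we have $\mathcal{A}e_\beta(x) = bN[\mathcal{L}(\xi)(\beta/N)-1]\,e_\beta(x)$, an honest eigenfunction relation with eigenvalue $-bN(1-\mathcal{L}(\xi)(\beta/N)) \leq 0$. Hence $S_{t-s}e_\beta = \exp[-bN(1-\mathcal{L}(\xi)(\beta/N))(t-s)]\,e_\beta$, and because $S_{t-s}$ is a positive (sub-Markovian on $\mathbb{R}_+^d$, after absorption at the boundary) operator, $S_{t-s}$ applied to any function dominated in absolute value by $\kappa\,e_\beta$ is dominated by $\kappa\exp[-bN(1-\mathcal{L}(\xi)(\beta/N))(t-s)]\,e_\beta$. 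The subtlety here is the boundary: on $\mathbb{R}_+^d$ with absorption, the semigroup is only sub-Markovian, so $S_{t-s}e_\beta \le \exp[-bN(1-\mathcal{L}(\xi)(\beta/N))(t-s)]\,e_\beta$ with possible strict inequality, which is fine since we only need an upper bound. Alternatively one can avoid semigroup language entirely and run a direct maximum-principle / Grönwall argument on $w(t,x):= u_\xi(t,x)\,e_\beta(x)^{-1}\exp[bN(1-\mathcal{L}(\xi)(\beta/N))t]$, showing that the quantity $\sup_x |w(t,x)|$ grows at most linearly in $t$ with slope $\tfrac{bC\sigma_\xi}{2N}$ after using \eqref{eq:inequalities_alpha_and_beta} to absorb the $e^{-\alpha s}$ from the source against the $e^{+bN(1-\mathcal{L}(\xi)(\beta/N))s}$ from the conjugation — this is exactly where hypothesis \eqref{eq:inequalities_alpha_and_beta} is used, ensuring $\alpha - bN(1-\mathcal{L}(\xi)(\beta/N)) \geq 0$ so that $\int_0^t \exp\big[(bN(1-\mathcal{L}(\xi)(\beta/N))-\alpha)s\big]\,\dd s \leq t$.

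Putting the two steps together: $|u_\xi(t,x)| \leq \int_0^t \frac{bC\sigma_\xi}{2N}\exp[-bN(1-\mathcal{L}(\xi)(\beta/N))(t-s)]\exp(-\alpha s)\,e_\beta(x)\,\dd s$, and bounding $\exp(-\alpha s)\leq \exp[-bN(1-\mathcal{L}(\xi)(\beta/N))s]$ (again by \eqref{eq:inequalities_alpha_and_beta}) collapses the integrand to $\frac{bC\sigma_\xi}{2N}\exp[-bN(1-\mathcal{L}(\xi)(\beta/N))t]\,e_\beta(x)$, whose integral over $[0,t]$ is $\frac{btC\sigma_\xi}{2N}\exp[-bN(1-\mathcal{L}(\xi)(\beta/N))t]\,e_\beta(x)$ — exactly \eqref{eq:bound_key_lemma}. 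The one genuinely delicate point to get right is the justification that $u_\xi$ is given by Duhamel's formula and that the positivity/sub-Markovian property of $S_t$ on $\mathbb{R}_+^d$ with boundary absorption is legitimately available; in a fully rigorous write-up I would either invoke a well-posedness statement for \eqref{eq:PDE_to_develop_general} from the appendix or phrase everything as an a priori estimate via the maximum principle mentioned in the introduction, which sidesteps the need for an explicit semigroup.
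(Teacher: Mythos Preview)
Your proposal is correct and follows essentially the same route as the paper: Duhamel representation $u_\xi(t,x)=\int_0^t S_{t-s}R(s,\cdot)(x)\,\dd s$ (the paper's $\Phi(H_N(s,\cdot))(t-s,x)$), a Taylor-remainder bound on the source yielding the $\frac{bC\sigma_\xi}{2N}e^{-\alpha s}e_\beta(x)$ estimate, the exact eigenfunction identity $S_t e_\beta = e^{-bN(1-\mathcal{L}(\xi)(\beta/N))t}e_\beta$ combined with positivity (the paper's maximum principle, Corollary~\ref{cor:maximum_principle}, and Proposition~\ref{prop:explicit_solution_expo}), and finally~\eqref{eq:inequalities_alpha_and_beta} to collapse the time integral. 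One small remark: since all jumps move to the right ($x\mapsto x+v/N$ with $v\in\mathbb{R}_+^d$), there is no boundary absorption at all and the eigenfunction relation for $e_\beta$ is an equality rather than the sub-Markovian inequality you worry about---but this over-caution is harmless.
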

\noindent One can easily see that~\eqref{eq:equation_difference_onetelomodel_approximant_lengths} is an equation of the form given in~\eqref{eq:PDE_to_develop_general} with $d = 1$, $u_{\xi} = \overline{u}^{(1)}$, \hbox{$\xi = g$} and~$F = u^{(1)}$. The proof of the first statement of Proposition~\ref{prop:approximation_PDE_one_telomere} is thus to check the assumptions of Lemma~\ref{lemm:key_lemma_lengths}, and then to apply it. We do this in Section~\ref{subsect:proof_approximation_one_telomere_model}. 

To prove Proposition~\ref{prop:approximation_PDE_one_telomere}-$(b)$, we need this time to control \hbox{$\big|\overline{u}_{\partial}^{(1)}\big| := \big|n_{\partial}^{(1)} - u_{\partial}^{(1)}\big|$}. Again, we do this by obtaining the equation verified by $\overline{u}_{\partial}^{(1)}$, and then applying a general lemma. By taking the difference between the second lines of~\eqref{eq:rescaled_PDE_model_telomeres_one_telo} and~\eqref{eq:approximation_transport_model_1_telomere},  then decomposing $n^{(1)}$ with the equality $n^{(1)} = \overline{u}^{(1)} + u^{(1)}$, and finally using that $\int_0^{\delta}(1-G(v)) \dd v = \int_0^{\delta}v g(v) \dd v = m_1$ (integration by part), we have for all~$t\geq0$ 
\begin{equation}\label{eq:equation_difference_model_approximant_cemeteries}
\begin{aligned}
\overline{u}_{\partial}^{(1)}(t) &= b\int_0^{\delta} n^{(1)}\left(t,\frac{v}{N}\right)(1-G(v)) \mathrm{d}v - bm_1u^{(1)}(t,0) \\ 
&= b\int_0^{\delta} \overline{u}^{(1)}\left(t,\frac{v}{N}\right)(1-G(v)) \mathrm{d}v + b\int_0^{\delta} \left[u^{(1)}\left(t,\frac{v}{N}\right) - u^{(1)}\left(t,0\right)\right](1-G(v)) \mathrm{d}v.
\end{aligned}
\end{equation}
We thus need to control an equation with the same form as~\eqref{eq:equation_difference_model_approximant_cemeteries}, and the second statement of Proposition~\ref{prop:approximation_PDE_one_telomere} will be proved. This control is done by using the following lemma, that is proved in Section~\ref{subsect:proof_key_lemma_cemetery}.



\begin{lemm}[Key lemma for approximating cemeteries]\label{lemm:key_lemma_cemetery}
	Assume that the assumptions of Lemma~\ref{lemm:key_lemma_lengths} hold. We consider a set of functions~$\left(h_i\right)_{i\in\llbracket1,d\rrbracket}$ from $\mathbb{R}_+\times\mathbb{R}^d$ to $\mathbb{R}$ and $C' >0$ such that for all $t\geq0$, $x\in\mathbb{R}_+^d$ and~$i\in\llbracket1,d\rrbracket$, it holds
	\begin{equation}\label{eq:condition_approx_cemetery_first}
		\left|h_i(t,x)\right| \leq \frac{C'x_i}{N}\exp\left[-bN\left(1-\mathcal{L}(\xi)\left(\frac{\beta}{N}\right)\right)t -\beta \sum_{\substack{j = 1,\,j\neq i}}^d x_j\right].
	\end{equation}
	We denote the function $v_{\xi} : \mathbb{R}_+ \rightarrow \mathbb{R}$, defined for all $t\geq 0$ as
	\begin{equation}\label{eq:condition_approx_cemetery_second}
		v_{\xi}(t) := bN\int_{y\in\mathbb{R}_+^{d}} u_{\xi}(t,y) \xi\big(\big\{v\in\mathbb{R}_+^{d}\,|\,Ny - v\notin\mathbb{R}_+^d\big\}\big)\mathrm{d}y + b\sum_{i = 1}^d\int_{y\in\mathbb{R}_+^{d}} h_i(t,y)(1-G(y_i))1_{\{y_i \leq \delta\}}\mathrm{d}y.
	\end{equation}
	Then, recalling the constant $\sigma_{\xi}$ introduced in Lemma~\ref{lemm:key_lemma_lengths}, we have for all $t\geq 0$
	\begin{equation}\label{eq:key_lemma_cemetery}
		\big|v_{\xi}(t)\big| \leq \left(\frac{btC\sigma_{\xi}\alpha}{2\beta^dN} + \frac{bC'm_2d}{2\beta^{d-1}N}\right)\exp\left[-bN\left(1-\mathcal{L}\left(\xi\right)\left(\frac{\beta}{N}\right)\right)t\right].
	\end{equation}
\end{lemm}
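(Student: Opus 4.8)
The plan is to estimate the two terms appearing in the definition~\eqref{eq:condition_approx_cemetery_second} of $v_{\xi}$ separately, and then add the two bounds. Both contributions will carry the same exponential factor $\exp\left[-bN\left(1-\mathcal{L}(\xi)(\beta/N)\right)t\right]$, so the task reduces to controlling the spatial integrals and bookkeeping the constants.

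For the first term, $bN\int_{\mathbb{R}_+^d}u_{\xi}(t,y)\,\xi(\{v\in\mathbb{R}_+^d\,|\,Ny-v\notin\mathbb{R}_+^d\})\,\dd y$, I would first invoke the bound~\eqref{eq:bound_key_lemma} on $|u_{\xi}|$, which is legitimate since the hypotheses of Lemma~\ref{lemm:key_lemma_lengths} are in force. This reduces the matter to evaluating $\int_{\mathbb{R}_+^d}e^{-\beta\sum_i y_i}\,\xi(\{v\,|\,Ny-v\notin\mathbb{R}_+^d\})\,\dd y$. The key observation is the pointwise identity $1_{\{Ny-v\notin\mathbb{R}_+^d\}} = 1 - \prod_{i=1}^d 1_{\{y_i\geq v_i/N\}}$, valid because $Ny-v\notin\mathbb{R}_+^d$ holds exactly when some coordinate satisfies $y_i < v_i/N$. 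Substituting it, splitting the integral, and applying Fubini to the product part, using $\int_{v_i/N}^{+\infty}e^{-\beta y_i}\,\dd y_i = e^{-\beta v_i/N}/\beta$, the integral collapses to the \emph{exact} value $\beta^{-d}\bigl(1-\mathcal{L}(\xi)(\beta/N)\bigr)$. Multiplying back the prefactors $bN\cdot\frac{btC\sigma_{\xi}}{2N}$ and rewriting $b\bigl(1-\mathcal{L}(\xi)(\beta/N)\bigr) = \frac{1}{N}\,bN\bigl(1-\mathcal{L}(\xi)(\beta/N)\bigr) \leq \frac{\alpha}{N}$ through~\eqref{eq:inequalities_alpha_and_beta} then produces the first term $\frac{btC\sigma_{\xi}\alpha}{2\beta^{d}N}$ of the claimed bound.

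For the second term, $b\sum_{i=1}^d\int_{\mathbb{R}_+^d}h_i(t,y)(1-G(y_i))1_{\{y_i\leq\delta\}}\,\dd y$, I would plug in the hypothesis~\eqref{eq:condition_approx_cemetery_first} on $|h_i|$. For each fixed $i$ the integrand factorizes over coordinates: the $d-1$ variables $y_j$ with $j\neq i$ only carry a factor $e^{-\beta y_j}$, each contributing $\beta^{-1}$ after integration over $\mathbb{R}_+$, while the $i$-th variable contributes $\int_0^{\delta}y_i(1-G(y_i))\,\dd y_i$. An integration by parts, using $G(\delta)=1$ (so the boundary term vanishes), evaluates this last integral to $m_2/2$. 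Summing the $d$ identical contributions yields the second term $\frac{bC'm_2d}{2\beta^{d-1}N}$.

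Adding the two estimates gives~\eqref{eq:key_lemma_cemetery}. I do not expect a serious obstacle here; the one delicate point is to compute $\int_{\mathbb{R}_+^d}e^{-\beta\sum_i y_i}\,\xi(\{v\,|\,Ny-v\notin\mathbb{R}_+^d\})\,\dd y$ \emph{exactly}, via the product structure, rather than through a union bound over the coordinate events $\{v_i>Ny_i\}$: the latter would cost a spurious factor $d$ and destroy the cancellation against $\alpha$ supplied by~\eqref{eq:inequalities_alpha_and_beta}. A secondary, routine care is to ensure that the relevant integrations genuinely take place on $\mathbb{R}_+^d$ so that $\mathcal{L}(\xi)$ appears, and that the finiteness of the first and second moments of $\xi$ justifies the use of Fubini.
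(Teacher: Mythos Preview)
Your proposal is correct and follows essentially the same approach as the paper's proof: split $v_\xi$ into its two summands, bound the $u_\xi$-term via~\eqref{eq:bound_key_lemma} together with the exact evaluation $\int_{\mathbb{R}_+^d}e^{-\beta\sum_i y_i}\,\xi(\{v:Ny-v\notin\mathbb{R}_+^d\})\,\dd y=\beta^{-d}\bigl(1-\mathcal{L}(\xi)(\beta/N)\bigr)$ and then~\eqref{eq:inequalities_alpha_and_beta}, and bound the $h_i$-term via~\eqref{eq:condition_approx_cemetery_first} and the integration by parts $\int_0^\delta y_i(1-G(y_i))\,\dd y_i=m_2/2$. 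Your remark that a union bound over the events $\{v_i>Ny_i\}$ would introduce a spurious factor $d$ is a useful observation, though the paper simply performs the exact computation without commenting on this pitfall.
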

\noindent This is less evident to see it, but Eq.~\eqref{eq:equation_difference_model_approximant_cemeteries} and a change of variable imply that $\overline{u}_{\partial}^{(1)}$ is of the form presented in~\eqref{eq:condition_approx_cemetery_second} with $\xi = g$. More details about this are given in Section~\ref{subsect:proof_approximation_one_telomere_model}, as well as the proof of \hbox{Proposition~\ref{prop:approximation_PDE_one_telomere}-$(b)$} from Lemma~\ref{lemm:key_lemma_cemetery}. 

We now prove the auxiliary statements given in this section, and then obtain Proposition~\ref{prop:approximation_PDE_one_telomere} from them.
\subsection{Proof of Lemma~\ref{lemm:key_lemma_lengths}}\label{subsect:proof_key_lemma_lengths}

We begin with preliminaries. To simplify notations, we denote \hbox{$H_N: \mathbb{R}_+\times\mathbb{R}_+^d \rightarrow \mathbb{R}$,}  the function defined for all $(t,x)\in\mathbb{R}_+\times\mathbb{R}_+^d$ as
\begin{equation}\label{eq:definition_function_HN}
	\begin{aligned}
		H_N(t,x) &:= bN\int_{v\in \mathbb{R}_+^d}\left[F\left(t,x + \frac{v}{N}\right)- F\left(t,x\right) - \sum_{i = 1}^{d}\frac{v_i}{N}\partial_{x_i} F\left(t,x\right)\right]\xi\left(\mathrm{d}v\right) \\ 
		&=  \frac{b}{N}\sum_{1 \leq \ell,\ell' \leq d}\int_{v\in\mathbb{R}_+^{d}}\left[\int_0^1 (1-w)\left( \partial_{x_\ell x_{\ell'}}F\left(t,x + w\frac{v}{N}\right)v_\ell v_{\ell'} \right)\dd w\right]\xi(\mathrm{d}v) .
	\end{aligned} 
\end{equation}
The above last equality comes from expanding $F\left(t,x + \frac{v}{N}\right)- F\left(t,x\right) - \sum_{i = 1}^{d}\frac{v_i}{N}\partial_{x_i} F\left(t,x\right)$ using a Taylor expansion with remainder in integral form. We also introduce the linear operator~\hbox{$\Phi : L^1(\mathbb{R}^d) \rightarrow C\left(\mathbb{R}_+,L^1(\mathbb{R}^d)\right)$}, defined such that for all $f_0\in L^1(\mathbb{R}^d)$,  $m = \Phi(f_0)$ is the solution in $C\left(\mathbb{R}_+,L^1(\mathbb{R}^d)\right)$ of the following integro-differential equation 
\begin{equation}\label{eq:PDE_jump_process_operator_Phi}
	\begin{cases}
		\partial_t m(t,x) = bN\int_{v\in\mathbb{R}_+^d} m\left(t,x+\frac{v}{N}\right) \xi(\dd v) - bNm(t,x), & \forall t\geq0,\,x\in\mathbb{R}_+^d, \\
		m(0,x) = f_0(x),& \forall x\in\mathbb{R}_+^d.
	\end{cases}
\end{equation}
This operator is well-posed by Proposition~\ref{prop:well_definition_general_model}.

The proof of Lemma~\ref{lemm:key_lemma_lengths} is done in two steps. First, in Step~\hyperlink{paragraph:step1_proof_keylemma}{$1$}, we prove that for all $(t,s,x)\in \mathbb{R}_+\times\mathbb{R}_+\times\mathbb{R}_+^d$ such that $t\geq s$, it holds
\begin{equation}\label{eq:step1_keylemma}
	\left|\Phi\left(H_N(s,.)\right)(t-s,x)\right| \leq \frac{bC\sigma_{\xi}}{2N}\exp\left[-bN\left(1 - \mathcal{L}(\xi)\left(\frac{\beta}{N}\right)\right)t\right]  \exp\left(- \beta\sum_{i = 1}^d x_i\right).
\end{equation}
Then, in Step~\hyperlink{paragraph:step2_proof_keylemma}{$2$}, we prove that for all $(t,x)\in\mathbb{R}_+\times\mathbb{R}_+^{d}$, we have 
\begin{equation}\label{eq:step2_keylemma}
	u_{\xi}(t,x) =\int_0^t \Phi\left(H_N(s,.)\right)(t-s,x) \dd s.
\end{equation}
In view of the fact that for all $s\geq0$, $H_N(s,.)$ is the source term in~\eqref{eq:PDE_to_develop_general} at time $s$, Eq.~\eqref{eq:step2_keylemma} means that we have rewritten $u_{\xi}$ as the sum of the evolutions of the source terms over time. This equation and Eq.~\eqref{eq:step1_keylemma} imply that the lemma is proved, as we only have to plug~\eqref{eq:step1_keylemma} in~\eqref{eq:step2_keylemma} and then integrate to obtain~\eqref{eq:bound_key_lemma}.  
\paragraph{Step $1$:}\hypertarget{paragraph:step1_proof_keylemma}{} Assume first that for all $(s,x)\in \mathbb{R}_+\times\mathbb{R}_+^d$ it holds 
\begin{equation}\label{eq:inequality_on_H_N}
	\left|H_N(s,x)\right| \leq \frac{bC\sigma_{\xi}}{2N}\exp\left(-\alpha s\right)f_{\beta}(x),
\end{equation}
where $f_{\beta}(x) := \exp\left(-\beta\sum_{i = 1}^d x_i\right)$. By Corollary~\ref{cor:maximum_principle} and the definition of $\Phi$, we have for all \hbox{$(h,\varphi)\in \left(L^1(\mathbb{R}_+^d)\right)^2$} verifying $\left|h\right| \leq \varphi$ that $\left|\Phi(h)\right| \leq \Phi(\varphi)$.
To bound from above the left-hand side of~\eqref{eq:step1_keylemma}, we first apply this inequality for $h=H_N(s,.)$ and \hbox{$\varphi = \frac{bC\sigma_{\xi}}{2N}\exp\left(-\alpha s\right)f_{\beta}$}. Then, we use that it holds $\Phi\left(\varphi\right) = \frac{bC\sigma_{\xi}}{2N}\exp\left(-\alpha s\right)\Phi\left(f_{\beta}\right)$, as $\Phi$ is a linear operator. Finally, we apply Eq.~\eqref{eq:explicit_solution_cos_expo} with $\omega = 0$ to compute $\Phi\left(f_{\beta}\right)$. We obtain that for all $(t,s,x)\in\mathbb{R}_+\times\mathbb{R}_+^d$ such that~$s\leq t$
$$
\left|\Phi\left(H_N(s,.)\right)(t-s,x)\right| \leq \frac{bC\sigma_{\xi}}{2N}\exp\left(-\alpha s\right)\exp\left[-bN\left(1 - \mathcal{L}(\xi)\left(\frac{\beta}{N}\right)\right)(t-s)\right]  \exp\left[- \beta\sum_{i = 1}^d x_i\right].
$$
Then, by using~\eqref{eq:inequalities_alpha_and_beta} to bound from above the first exponential, we get that~\eqref{eq:step1_keylemma} is true assuming that~\eqref{eq:inequality_on_H_N} holds. 

It thus remains to prove~\eqref{eq:inequality_on_H_N}, and Step~\hyperlink{paragraph:step1_proof_keylemma}{$1$} will be done. To do so, we first plug~\eqref{eq:condition_second_derivative_general_lemma} in the second line of~\eqref{eq:definition_function_HN}. Then, we bound from above $\exp\left(-\alpha s-\sum_{i = 1}^d \beta \left(x_i + w\frac{v_i}{N}\right)\right)$ by~$\exp\left(-\alpha s\right)f_{\beta}(x)$, and use the equality~$\int_0^1 (1 - w)\dd w = \frac{1}{2}$. It comes the following, which proves~\eqref{eq:inequality_on_H_N} and concludes the proof of the first step, for all $(s,x)\in\mathbb{R}_+\times\mathbb{R}_+^d$,
$$
\begin{aligned}
	\left|H_N(s,x)\right| &\leq \frac{bC}{N}\sum_{1 \leq \ell,\ell' \leq d}\int_{v\in\mathbb{R}_+^{d}} \left[\int_0^{1} (1-w) \exp\left(-\alpha s-\sum_{i = 1}^d \beta \left(x_i + w\frac{v_i}{N}\right)\right)v_\ell v_{\ell'}\dd w\right]  \xi(\mathrm{d}v) \\
	&\leq \frac{bC}{2N}\left[\sum_{1 \leq \ell,\ell' \leq d}\int_{v\in\mathbb{R}_+^{d}}v_\ell v_{\ell'} \xi(\mathrm{d}v)\right]\exp\left(-\alpha s\right)f_{\beta}(x) = \frac{bC\sigma_{\xi}}{2N}\exp\left(-\alpha s\right)f_{\beta}(x).
\end{aligned}
$$

\paragraph{Step $2$:}\hypertarget{paragraph:step2_proof_keylemma}{} To obtain~\eqref{eq:step2_keylemma}, we prove that the function $\tilde{u}_{\xi}$, defined for all $(t,x)\in\mathbb{R}_+\times\mathbb{R}_+^d$ as 
\begin{equation}\label{eq:PDE_alternative_taylor}
	\tilde{u}_{\xi}(t,x) =\int_0^t \Phi\left(H_N(s,.)\right)(t-s,x) \dd s
\end{equation}
is a solution of~\eqref{eq:PDE_to_develop_general}, the equation verified by $u_{\xi}$. Then, by the uniqueness result stated in Proposition~\ref{prop:well_definition_general_model} and the fact that $u_{\xi}$ and $\tilde{u}_{\xi}$ have the same initial conditions, we will have that~$u_{\xi} = \tilde{u}_{\xi}$, so that~\eqref{eq:step2_keylemma} is~true. 



To do the above, we first compute $\partial_t\tilde{u}_{\xi}(t,x)$, by using that for all $f\in W^{1,1}\left(\mathbb{R}_+,L^1\left(\mathbb{R}_+\right)\right)$ such that $s\mapsto f(s,s)\in L^1\left(\mathbb{R}_+\right)$, it holds $\frac{\dd }{\dd t}\left(\int_0^tf(t,s) \dd s\right) =  \int_0^t \partial_tf(t,s) \dd s + f(t,t)$. Thereafter, we simplify the terms \hbox{$\partial_t \Phi\left(H_N(s,.)\right)(t-s,x)$} and $\Phi\left(H_N(t,.)\right)(0,x)$ that appear after the previous computation, in view of the fact that $\Phi\left(H_N(s,.)\right)$ is a solution of~\eqref{eq:PDE_jump_process_operator_Phi} with initial condition $f_0 = H_N(s,.)$. Finally, we switch the integrals, and use~\eqref{eq:PDE_alternative_taylor} to replace the integrals in $\dd s$ with the function~$\tilde{u}_{\xi}$. We obtain that for all $(t,x)\in\mathbb{R}_+\times\mathbb{R}_+^d$ 
$$
\begin{aligned}
	\partial_t \tilde{u}_{\xi}(t,x) =  &\int_0^t \partial_t\Phi\left(H_N(s,.)\right)(t-s,x) \dd s + \Phi\left(H_N(t,.)\right)(0,x)\\ 
	=  bN&\int_0^t\bigg[\int_{v\in\mathbb{R}_+^d} \Phi\left(H_N(s,.)\right)\Big(t-s,x+\frac{v}{N}\Big) \xi(\dd v) - \Phi\left(H_N(s,.)\right)(t-s,x)\bigg] \dd s + H_N(t,x)\\
	= bN&\left[\int_{v\in\mathbb{R}_+^d} \tilde{u}_{\xi}\left(t,x+\frac{v}{N}\right) \xi(\dd v) - \tilde{u}_{\xi}(t,x)\right] + H_N(t,x).
\end{aligned}
$$
The above equation is in fact the same as~\eqref{eq:PDE_to_develop_general} in view of the definition of $H_N$, see~\eqref{eq:definition_function_HN}. Then, we have that $u_{\xi} = \tilde{u}_{\xi}$, which concludes the proof of the second step, and thus of the lemma. \qed

\subsection{Proof of Lemma~\ref{lemm:key_lemma_cemetery}}\label{subsect:proof_key_lemma_cemetery}

To simplify notations, we denote  for all $y\in\mathbb{R}_+^d$  the set $A_{Ny} := \big\{v\in\mathbb{R}_+^{d}\,|\,Ny - v\notin\mathbb{R}_+^d\big\}$. We consider $v_{\xi,1}$ and $v_{\xi,2}$ the functions defined for all $t\geq0$ as
$$
\begin{aligned}
	v_{\xi,1}(t) = bN\int_{y\in\mathbb{R}_+^{d}} u_{\xi}(t,y) \xi\left(A_{Ny}\right)\mathrm{d}y,\hspace{1.5mm} \text{ and }\hspace{1.5mm}v_{\xi,2}(t)=b\sum_{i = 1}^d\int_{y\in\mathbb{R}_+^{d}} h_i(t,y)(1-G(y_i))1_{\{y_i \leq \delta\}}\mathrm{d}y.
\end{aligned}
$$
By~\eqref{eq:condition_approx_cemetery_second}, we have that $v_{\xi}= v_{\xi,1} + v_{\xi,2}$. Then, to prove this lemma, we bound $v_{\xi,1}$ and~$v_{\xi,2}$, and conclude by summing their bounds, in view of the triangle inequality.

Let $t\geq0$. We begin by bounding $v_{\xi,2}(t)$. First, for all $i\in\llbracket1,d\rrbracket$, we apply~\eqref{eq:condition_approx_cemetery_first} to bound  the function $h_i$. Then, we compute the integrals with respect to $(dy_j)_{j\in\llbracket1,d\rrbracket,\,j\neq i}$, by using that 
$$
\prod_{j\in\llbracket1,d\rrbracket,\,j\neq i}\left(\int_{y_j\in\mathbb{R}_+}\exp(-\beta y_j)dy_j\right) = \left(\int_{s\in\mathbb{R}_+}\exp(-\beta s)\dd s\right)^{d-1} = \frac{1}{\beta^{d-1}}.
$$
Finally, we use the equality~\hbox{$\int_{y_i\in[0,\delta]} y_i (1-G(y_i))dy_i = \int_{y_i\in[0,\delta]} \frac{(y_i)^2}{2} g(y_i) dy_i = \frac{m_2}{2}$} (integration by part) to compute the integral that remains. It comes

\begin{equation}\label{eq:first_approx_cemetery_model_general_intermediate_first}
	\begin{aligned}
		\big|v_{\xi,2}(t)\big| &\leq  b\sum_{i = 1}^d\frac{C'}{N}\frac{1}{\beta^{d-1}}\frac{m_2}{2}\exp\left[-bN\left(1-\mathcal{L}(\xi)\left(\frac{\beta}{N}\right)\right)t\right] \\
		&= \frac{bC'm_2d}{2\beta^{d-1}N}\exp\left[-bN\left(1-\mathcal{L}(\xi)\left(\frac{\beta}{N}\right)\right)t\right].
	\end{aligned}
\end{equation}

Now, we focus on bounding $v_{\xi,1}(t)$. To do so, we first apply~\eqref{eq:bound_key_lemma} to bound from above the term $u_{\xi}(t,y)$ in $\big|v_{\xi,1}(t)\big|$. Thereafter, we use the fact that for all $y\in\mathbb{R}^d$ it holds \hbox{$\xi\left(A_{Ny}\right) = 1 - \xi\left(\left(A_{Ny}\right)^c\right)$} to develop the bound in two different integrals. Finally, we compute the value of the first integral, which is  $\int_{y\in\mathbb{R}_+^d}\exp\left(-\beta \sum_{i =1}^dy_i\right) \dd y = \frac{1}{\beta^d}$. We get the following 
\begin{equation}\label{eq:first_approx_cemetery_model_general_intermediate_second}
	\begin{aligned}
		\big|v_{\xi,1}(t)\big| &\leq \frac{b^2tC\sigma_{\xi}}{2}\exp\left[-bN\left(1-\mathcal{L}(\xi)\left(\frac{\beta}{N}\right)\right)t\right]\int_{y\in\mathbb{R}_+^d}\exp\left(-\beta \sum_{i =1}^dy_i\right) \xi\left(A_{Ny}\right)\mathrm{d}y\\ 
		&= \frac{b^2tC\sigma_{\xi}}{2}\exp\left[-bN\left(1-\mathcal{L}(\xi)\left(\frac{\beta}{N}\right)\right)t\right] \left[\frac{1}{\beta^d}-\int_{y\in\mathbb{R}_+^d}\exp\left(-\beta \sum_{i =1}^dy_i\right) \xi\left(\left(A_{Ny}\right)^c\right)\mathrm{d}y\right].
	\end{aligned}
\end{equation}
To continue our computations, we need to find a better expression for the last term of~\eqref{eq:first_approx_cemetery_model_general_intermediate_second}. We notice that by the definition of $A_{Ny}$, it holds $\xi\left(\left(A_{Ny}\right)^c\right) = \int_{v\in\mathbb{R}_+^d} 1_{\left\{\forall i\in \llbracket1,d\rrbracket:\, y_i \geq \frac{v_i}{N}\right\}}\xi(\mathrm{d}v)$, for all $y\geq0$. Using this equality, then Fubini's theorem to switch the integrals, and finally the fact that $\int_{y_i\in\left[\frac{v_i}{N},+\infty\right)} \exp\left(-\beta y_i\right)\dd y_i = \frac{\exp\left(-\frac{\beta v_i}{N}\right)}{\beta}$ for all $i\in\llbracket1,d\rrbracket$, we~have 
$$
\begin{aligned}
	\int_{y\in\mathbb{R}_+^d}\hspace{-0.01mm}\exp\left[-\beta \sum_{i =1}^dy_i\right] \xi\left(\left(A_{Ny}\right)^c\right)\mathrm{d}y &= \int_{v\in\mathbb{R}_+^d} \left[\int_{y\in\mathbb{R}_+^d} 1_{\left\{\forall i\in \llbracket1,d\rrbracket:\, y_i \geq \frac{v_i}{N}\right\}}\exp\left(-\beta \sum_{i =1}^dy_i\right)  \mathrm{d}y\right]\xi(\mathrm{d}v) \\
	&=  \frac{1}{\beta^d}\mathcal{L}\left(\xi\right)\left(\frac{\beta}{N}\right).
\end{aligned}
$$
The above expression is what we need to continue the computations interrupted at~\eqref{eq:first_approx_cemetery_model_general_intermediate_second}. First, we plug the above equation in~\eqref{eq:first_approx_cemetery_model_general_intermediate_second}, and put the terms $\frac{1}{\beta^d}$ in factors. Then, we use~\eqref{eq:inequalities_alpha_and_beta} to bound the term $1-\mathcal{L}\left(\xi\right)\left(\frac{\beta}{N}\right)$ that appears from the previous step by $\frac{\alpha}{bN}$. We obtain
\begin{equation}\label{eq:first_approx_cemetery_model_general_intermediate_third}
\begin{aligned}
\big|v_{\xi,1}(t)\big| &\leq  \frac{b^2tC\sigma_{\xi}}{2\beta^d}\exp\left[-bN\left(1-\mathcal{L}(\xi)\left(\frac{\beta}{N}\right)\right)t\right]\left(1-\mathcal{L}\left(\xi\right)\left(\frac{\beta}{N}\right)\right)\\
&\leq \frac{btC\sigma_{\xi}\alpha}{2\beta^dN}\exp\left[-bN\left(1-\mathcal{L}(\xi)\left(\frac{\beta}{N}\right)\right)t\right].
\end{aligned}
\end{equation}
The lemma is thus proved by summing~\eqref{eq:first_approx_cemetery_model_general_intermediate_first} and~\eqref{eq:first_approx_cemetery_model_general_intermediate_third}. \qed

\subsection{Proof of Proposition~\ref{prop:approximation_PDE_one_telomere}}\label{subsect:proof_approximation_one_telomere_model}
We prove this proposition statement by statement. We first deal with Proposition~\hbox{\ref{prop:approximation_PDE_one_telomere}-$(a)$}. We need to verify the assumptions of Lemma~\ref{lemm:key_lemma_lengths} for $u_{\xi} = \overline{u}^{(1)}$ to obtain it. First, recall that Eq.~\eqref{eq:equation_difference_onetelomodel_approximant_lengths}, the equation verified by~$\overline{u}^{(1)}$, corresponds to~\eqref{eq:PDE_to_develop_general} with $F = u^{(1)}$ and $\xi = g$. Then, notice that by the left-hand side of~\eqref{eq:alternative_representation_transport_model_1_telomere} and~\hyperlink{assumption:H2}{$(H_2)$}, we have for all $(t,x)\in\mathbb{R}_+\times\mathbb{R}_+$

$$
\left|\partial_{xx}u^{(1)}(t,x)\right| = \left|n''_0(bm_1t+x)\right| \leq C_{\lambda}\exp\left(- \lambda bm_1 t-\lambda x \right),
$$
so that Eq.~\eqref{eq:condition_second_derivative_general_lemma} holds with $\alpha = \lambda bm_1$, $\beta = \lambda$ and $C = C_{\lambda}$. Finally, by the inequality $1 - e^{-x} \leq x$ for all $x\in \mathbb{R}$, one can easily obtain that
\begin{equation}\label{eq:inequality_approximation_eigenvalue}
	bN\left(1 - \mathcal{L}(g)\left(\frac{\lambda}{N}\right)\right) = bN\int_0^{+\infty} \left(1 - e^{-\frac{\lambda}{N}u}\right)g(u) \dd u \leq \lambda b\int_0^{+\infty} ug(u) \dd u = \lambda bm_1.
\end{equation}
The above is exactly Eq.~\eqref{eq:inequalities_alpha_and_beta} with the same $\alpha$ and $\beta$ as before. Then, we have that all the assumptions of Lemma~\ref{lemm:key_lemma_lengths} are verified. We therefore apply this lemma to bound $u_{\xi} = \overline{u}^{(1)}$, in view of the fact that $\sigma_{\xi} = \int_0^{\delta}u^2g(u)\dd u = m_2$. We obtain that for all $(t,x)\in\mathbb{R}_+\times\mathbb{R}_+$ 
$$
\left|\overline{u}^{(1)}(t,x)\right| \leq \frac{btC_{\lambda}m_2}{2N}\exp\left[-bN\left(1-\mathcal{L}(g)\left(\frac{\lambda}{N}\right)\right)t\right]\exp\left(-\lambda x\right).
$$
As it holds $bN \left( 1 - \mathcal{L}(g)\left(\frac{\lambda}{N}\right) \right) = bm_1\lambda_N$ by Eq.~\eqref{eq:approximation_eigenvalues}, the above inequality yields that Proposition~\ref{prop:approximation_PDE_one_telomere}-$(a)$ is true with $c'_0= C_{\lambda}\frac{ m_2}{2}$.

Now, we deal with Proposition~\ref{prop:approximation_PDE_one_telomere}-$(b)$. This time, we check the assumptions of Lemma~\ref{lemm:key_lemma_cemetery} for the function $v_{\xi} = \overline{u}_{\partial}^{(1)}$. We denote $h_1$ the function defined for all $(t,x)\in\mathbb{R}_+\times\mathbb{R}_+$ as 
\begin{equation}\label{eq:function_to_apply_lemma_cemetery}
	h_1(t,x) = u^{(1)}\left(t,\frac{x}{N}\right) - u^{(1)}(t,0).
\end{equation}
In the first term of the last line of~\eqref{eq:equation_difference_model_approximant_cemeteries}, we do the change of variable $y = \frac{v}{N}$, and replace~$\overline{u}^{(1)}$ with $u_{\xi}$ (same $u_{\xi}$ as before). In the second term, we plug Eq.~\eqref{eq:function_to_apply_lemma_cemetery}. We obtain that for all~$t\geq0$
$$
\overline{u}_{\partial}^{(1)}(t) = bN\int_0^{\frac{\delta}{N}} u_{\xi}\left(t,y\right)\left(1-G(Ny)\right) \mathrm{d}y + b\int_0^{\delta} h_1(t,v)(1-G(v)) \dd v,
$$
so that~\eqref{eq:condition_approx_cemetery_second} holds. It remains to prove~\eqref{eq:condition_approx_cemetery_first}. To do so, in view of~\eqref{eq:function_to_apply_lemma_cemetery}, we first write $h_1$ as an integral of $\partial_x u ^{(1)}$. Then, we use~\eqref{eq:alternative_representation_transport_model_1_telomere} to write $u^{(1)}$ in terms of~$n_0$. Finally, we apply~\hyperlink{assumption:H2}{$(H_2)$} and~\eqref{eq:inequality_approximation_eigenvalue} to bound $n_0'$ and integrate. We obtain that for all $(t,x)\in\mathbb{R}_+\times\mathbb{R}_+$
$$
\left|h_1(t,x)\right| = \left|\int_{0}^{\frac{x}{N}} \partial_x u^{(1)}(t,v) \dd v\right| = \left|\int_{0}^{\frac{x}{N}}  n'_0(bm_1t+v) \dd v\right| \leq \frac{C'_{\lambda}x}{N}\exp\left[-bN\left(1 - \mathcal{L}(g)\left(\frac{\lambda}{N}\right)\right) t\right].
$$
This is exactly~\eqref{eq:condition_approx_cemetery_first} with $C' = C'_{\lambda}$ .
Since all the assumptions of Lemma~\ref{lemm:key_lemma_cemetery} are verified, we apply this lemma to bound $v_{\xi} = \overline{u}_{\partial}^{(1)}$ (the constants $C$, $\sigma_{\xi}$, $\alpha$ and $\beta$ in~\eqref{eq:key_lemma_cemetery} are the same as for the first statement). We then use that $bN \left( 1 - \mathcal{L}(g)\left(\frac{\lambda}{N}\right) \right) = bm_1\lambda_N$ to simplify the term~$bN \left( 1 - \mathcal{L}(g)\left(\frac{\lambda}{N}\right) \right)$ in the bound, see~\eqref{eq:approximation_eigenvalues}. We obtain Proposition~\ref{prop:approximation_PDE_one_telomere}-$(b)$.  \qed

\section{The model with several telomeres}\label{sect:model_several_telomeres}

Having solved the single-telomere case, let us now turn to the case with $2k$ telomeres. The difference with the previous section is that here, we have to control the term in the denominator of~$\widehat{n}_0^{(2k)}$, see~\eqref{eq:definitions_estimators}. In addition, we need to prove an additional statement to Lemmas~\ref{lemm:key_lemma_lengths} and~\ref{lemm:key_lemma_cemetery} to obtain the model approximation. First, in Section~\ref{subsect:severaltelos_rewritingmodel}, we rewrite~\eqref{eq:PDE_model_telomeres_several_telos}, and explain how a model approximation can be conjectured from this rewriting. Then, in Section~\ref{subsect:plan_proof_prop_theo_severaltelos}, we present the main arguments and the auxiliary statements required to approximate our model and obtain Theorem~\ref{te:main_result}-$(b)$. Thereafter, in Section~\ref{subsect:proof_auxiliary_statements_several}, we prove these auxiliary statements. Finally, we prove the approximation of $n^{(2k)}$ in Section~\ref{subsect:proof_approximation_lengths_several}, the approximation of $n_{\partial}^{(2k)}$ in Section~\ref{subsect:proof_approximation_cemetery_several}, and Theorem~\ref{te:main_result}-$(b)$ in Section~\ref{subsect:proof_quality_estimator_several_telomeres}. In all this section, we assume that~\hyperlink{assumption:H1}{$(H_1)$} holds.

\subsection{Rewriting of the integro-differential equation and model approximation}\label{subsect:severaltelos_rewritingmodel}

As for the single-telomere model, we obtain the model approximation by letting the scaling parameter $N$ introduced in~\hyperlink{assumption:H1}{$(H_1)$} tend to infinity. We split this subsection into two parts: first, we rewrite our model in order to make $N$ appears. Then, we intuit and state a model approximation based on this rewriting.

\paragraph{Rewriting of the model.}\label{subsubsect:rewriting_model_several_telos}

Our first goal is to substitute the term $\int_{v\in\mathbb{R}_+^{2k}} n^{(2k)}(t,x+v) \tilde{\mu}\left(\dd v\right)$ in the first line of~\eqref{eq:PDE_model_telomeres_several_telos} for the term~$\int_{v\in\mathbb{R}_+^{2k}} n^{(2k)}\left(t,x+\frac{v}{N}\right)\mu(\dd v)$. To do so, we first denote for all $i\in\llbracket1,2k\rrbracket$ the $i$-th vector of the canonical basis $e_i\in\mathbb{R}_+^{2k}$. Then, we use both equalities in~\eqref{eq:scaled_measures_shortening_and_by_set} and integrate. We obtain that for all $t\geq0$, $x\in\mathbb{R}_+^{2k}$, 
\begin{equation}\label{eq:first_argument_rewriting_several}
\int_{v\in\mathbb{R}_+^{2k}} n^{(2k)}\left(t,x+\frac{v}{N}\right)\mu(\dd v) = \frac{1}{2^k}\sum_{I \in \mathcal{I}_k} \int_{(v_i,i\in I) \in \mathbb{R}_+^k} n^{(2k)}\left(t,x + \sum_{i\in I} \frac{v_i}{N}e_i\right)\left[\prod_{i\in I}g\left(v_{i}\right)\mathrm{d}v_{i}\right].
\end{equation}
We therefore need to prove that~$\int_{v\in\mathbb{R}_+^{2k}} n^{(2k)}(t,x+v) \tilde{\mu}\left(\dd v\right)$ is equal to the right-hand side of~\eqref{eq:first_argument_rewriting_several}, and the substitution will be possible. To do this, we first develop~$\tilde{\mu}$ by using~\eqref{eq:measure_shortening} and~\eqref{eq:measure_shortening_by_set}. Then, for all $I\in\mathcal{I}_k$, $i\in I$, we replace $\tilde{g}(v_i)$ with $Ng(Nv_i)$ in view of~\hyperlink{assumption:H1}{$(H_1)$}, and we integrate. Finally, we do the change of variable $v_i' = Nv_i$ in each coordinate. It comes for all $t\geq 0$,~$x\in\mathbb{R}_+^{2k}$,
$$
\begin{aligned}
	\int_{v\in\mathbb{R}_+^{2k}} n^{(2k)}(t,x+v) \tilde{\mu}\left(\dd v\right) &= \frac{1}{2^{k}}\sum_{I \in \mathcal{I}_k}\int_{(v_i,i\in I) \in \mathbb{R}_+^k} n^{(2k)}\left(t,x + \sum_{i\in I}v_i e_i\right) \left[\prod_{i\in I}Ng\left(Nv_{i}\right)\mathrm{d}v_{i}\right] \\
	&= \frac{1}{2^{k}}\sum_{I \in \mathcal{I}_k}\int_{\left(v'_i,i\in I\right) \in \mathbb{R}_+^k} n^{(2k)}\left(t,x + \sum_{i\in I}\frac{v'_i}{N} e_i\right) \left[\prod_{i\in I}g\left(v'_{i}\right)\mathrm{d}v'_{i}\right].
\end{aligned}
$$
Therefore, it holds $\int_{v\in\mathbb{R}_+^{2k}} n^{(2k)}(t,x+v) \tilde{\mu}\left(\dd v\right) = \int_{v\in\mathbb{R}_+^{2k}} n^{(2k)}\left(t,x+\frac{v}{N}\right) \mu\left(\dd v\right)$. We now rewrite the first line of~\eqref{eq:PDE_model_telomeres_several_telos} by using this last equality. First, we plug it in~\eqref{eq:PDE_model_telomeres_several_telos}. Then, we apply~\hyperlink{assumption:H1}{$(H_1)$} to replace $\tilde{b}$ with~$bN$. We obtain the following rewriting, for all $t\geq0$, $x\in\mathbb{R}_+^{2k}$,
\begin{equation}\label{eq:rewriting_several_firstline}
\partial_t n^{(2k)}(t,x) = bN\int_{v\in\mathbb{R}_+^{2k}}\left[n^{(2k)}\left(t,x + \frac{v}{N}\right)  - n^{(2k)}(t,x)\right]\mathrm{d}\mu(v).
\end{equation}

Now, we aim at rewriting the second line of~\eqref{eq:PDE_model_telomeres_several_telos}. In Section~\ref{subsect:onetelo_rewritingmodel}, the rewriting of the second line of~\eqref{eq:PDE_model_telomeres_one_telo} was obtained by proving an equality involving $\tilde{G}$ and $G$, given in~\eqref{eq:relation_between_Gtilde_and_G}. Here, we need to obtain the equivalent equality, involving $\tilde{\mu}$ and $\mu$, which is the following, for all $y\in\mathbb{R}_+^{2k}$,
\begin{equation}\label{eq:relation_between_mutilde_and_mu}
	\tilde{\mu}\left(\left\{v\in\mathbb{R}_+^{2k}\,|\, y - v \in\mathbb{R}_+^{2k}\right\}\right) = \mu\left(\left\{v\in\mathbb{R}_+^{2k}\,|\, Ny - v \in\mathbb{R}_+^{2k}\right\}\right).
\end{equation}
By first using~\eqref{eq:scaled_measures_shortening_and_by_set} to develop $\mu$, then integrating, and finally applying~\eqref{eq:relation_between_Gtilde_and_G}, we have
\begin{equation}\label{eq:secondstep_relation_between_mutilde_and_mu}
\begin{aligned}
\mu\left(\left\{v\in\mathbb{R}_+^{2k}\,|\, Ny - v \in\mathbb{R}_+^{2k}\right\}\right) &= \frac{1}{2^{k}}\sum_{I \in \mathcal{I}_k} \int_{v\in\mathbb{R}_+^{2k}} 1_{\left\{\forall i\in\llbracket1,2k\rrbracket:\, Ny_i \geq v_i\right\}}\left[\prod_{i\in I}g\left(v_{i}\right)\mathrm{d}v_{i}\right]\left[\prod_{i\notin I}\delta_0(\mathrm{d}v_{i})\right]\\ 
&= \frac{1}{2^{k}}\sum_{I \in \mathcal{I}_k}  \left(\prod_{i\in I} G\left(Ny_{i}\right)\right) = \frac{1}{2^{k}}\sum_{I \in \mathcal{I}_k} \left(\prod_{i\in I}\tilde{G}\left(y_{i}\right)\right).
\end{aligned}
\end{equation}
In addition, by doing the first two steps presented above, replacing $g$ with $\tilde{g}$, we have that the left-hand side of~\eqref{eq:relation_between_mutilde_and_mu} is equal to the last term in~\eqref{eq:secondstep_relation_between_mutilde_and_mu}. Hence, from this last result, we have that Eq.~\eqref{eq:relation_between_mutilde_and_mu} is true. We now use this last equation to rewrite the second line of~\eqref{eq:PDE_model_telomeres_several_telos}. First, we take the complement of both sets in~\eqref{eq:relation_between_mutilde_and_mu}. Then, we plug the equality obtained in the second line of~\eqref{eq:PDE_model_telomeres_several_telos}. Finally, we replace $\tilde{b}$ with $bN$, in view of~\hyperlink{assumption:H1}{$(H_1)$}. We obtain that for all $t\geq0$ 

\begin{equation}\label{eq:rewriting_several_secondline}
	n_{\partial}^{(2k)}(t) =  bN\int_{y\in\mathbb{R}_+^{2k}} n^{(2k)}\left(t,y\right)\mu\left(\left\{v\in\mathbb{R}_+^{2k}\,|\,Ny-v \notin \mathbb{R}_+^{2k}\right\}\right)\mathrm{d}y.
\end{equation}

We conclude by presenting the full rewriting of the model. By combining~\eqref{eq:rewriting_several_firstline} and~\eqref{eq:rewriting_several_secondline}, we have the following new version of~\eqref{eq:PDE_model_telomeres_several_telos}
\begin{equation}\label{eq:rescaled_PDE_model_telomeres_several_telos}
	\begin{cases}
		\partial_t n^{(2k)}(t,x) = bN\int_{v\in\mathbb{R}_+^{2k}}\left[n^{(2k)}\left(t,x + \frac{v}{N}\right)  - n^{(2k)}(t,x)\right]\mathrm{d}\mu(v), & \forall t\geq0,\,x\in\mathbb{R}_+^{2k},\\
		n^{(2k)}_{\partial}(t) = bN\int_{y\in\mathbb{R}_+^{2k}} n^{(2k)}\left(t,y\right)\mu\left(\left\{v\in\mathbb{R}_+^{2k}\,|\,Ny-v \notin \mathbb{R}_+^{2k}\right\}\right)\mathrm{d}y ,& \forall t\geq0, \\ 
		n^{(2k)}(0,x) = \prod_{i = 1}^{2k}n_0(x_i), & \forall x\in\mathbb{R}_+^{2k}.
	\end{cases}
\end{equation}
\noindent The above is the system of integro-differential equations that we study in the rest of the section.
\paragraph{Model approximation.}\label{subsubsect:approximation_model_several_telos} As for the single-telomere model, we start by giving the intuition allowing us to obtain the approximant of~\eqref{eq:rescaled_PDE_model_telomeres_several_telos}. We begin by conjecturing the approximation of the first line of~\eqref{eq:rescaled_PDE_model_telomeres_several_telos}. In view of the Taylor expansion, we have that 
$$
bN\int_{v\in\mathbb{R}_+^{2k}}\left[n^{(2k)}\left(t,x + \frac{v}{N}\right)  - n^{(2k)}(t,x)\right]\mathrm{d}\mu(v) \underset{N\rightarrow+\infty}{\approx} b\sum_{i = 1}^{2k} \int_{v\in\mathbb{R}_+^{2k}}v_i\mathrm{d}\mu(v)\partial_{x_i}n^{(2k)}\left(t,x\right).
$$
In addition, the explicit value of $b \int_{v\in\mathbb{R}_+^{2k}}v_i\mathrm{d}\mu(v)$ can be computed for all $i\in\llbracket1,2k\rrbracket$ thanks to the following statement.
\begin{lemm}[First moment of $\mu$]\label{lemm:moments_measure_mu}
	Assume that \hyperlink{assumption:H1}{$(H_1)$} holds. Then, for all $i\in\llbracket1,2k\rrbracket$, we have
	$$
	\int_{v\in\mathbb{R}_+^{2k}}v_i\mathrm{d}\mu(v) = \frac{m_1}{2}.
	$$ 
\end{lemm}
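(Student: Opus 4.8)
The plan is to compute the integral directly from the definitions of $\mu^{(I)}$ and $\mu$ given in~\eqref{eq:scaled_measures_shortening_and_by_set}. By linearity of the integral in the measure,
I would first write $\int_{v\in\mathbb{R}_+^{2k}} v_i \dd\mu(v) = \frac{1}{2^k}\sum_{I\in\mathcal{I}_k}\int_{v\in\mathbb{R}_+^{2k}} v_i \dd\mu^{(I)}(v)$, so that the task reduces to evaluating $\int_{v\in\mathbb{R}_+^{2k}} v_i\dd\mu^{(I)}(v)$ for each fixed $I\in\mathcal{I}_k$ and then summing over $I$.

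For a fixed $I$, the measure $\mu^{(I)}$ is a product measure: a copy of $g(v_j)\dd v_j$ on each coordinate $j\in I$ and a Dirac mass $\delta_0(\dd v_j)$ on each coordinate $j\in\llbracket1,2k\rrbracket\setminus I$. Hence the integral factorises over coordinates. If $i\notin I$, the factor coming from the $i$-th coordinate is $\int_{\mathbb{R}_+} v_i\,\delta_0(\dd v_i) = 0$, so the whole integral vanishes. If $i\in I$, the $i$-th factor is $\int_0^{\delta} v_i\, g(v_i)\dd v_i = m_1$, while every remaining factor equals $1$ — either $\int_0^{\delta} g(v_j)\dd v_j = 1$ for $j\in I\setminus\{i\}$ (as $g$ is a probability density) or $\int_{\mathbb{R}_+}\delta_0(\dd v_j) = 1$ for $j\notin I$. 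Therefore $\int_{v\in\mathbb{R}_+^{2k}} v_i\dd\mu^{(I)}(v) = m_1\,\mathbf{1}_{\{i\in I\}}$.

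It then remains only to count $\#\{I\in\mathcal{I}_k : i\in I\}$. For this I would use the chromosome-pair structure built into the definition~\eqref{eq:set_shortening} of $\mathcal{I}_k$: a set $I\in\mathcal{I}_k$ amounts to choosing, for each of the $k$ pairs $\{j,j+k\}$ with $j\in\llbracket1,k\rrbracket$, exactly one of its two members (this is precisely the bijection underlying the identity $\#\mathcal{I}_k = 2^k$ of Lemma~\ref{lemm:zero_singleton_cardinal}). Fixing that $I$ must contain the particular index $i$ pins down the choice for the pair containing $i$ and leaves the other $k-1$ pairs free, giving $\#\{I\in\mathcal{I}_k : i\in I\} = 2^{k-1}$. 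Combining the three observations yields $\int_{v\in\mathbb{R}_+^{2k}} v_i\dd\mu(v) = \frac{1}{2^k}\cdot 2^{k-1}\cdot m_1 = \frac{m_1}{2}$, as claimed.

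There is essentially no obstacle here: the computation is a factorisation of a product measure together with the normalisations $\int_0^{\delta} g = 1$ and $\int_{\mathbb{R}_+}\delta_0 = 1$. The only point that warrants a (very short) argument is the combinatorial count, and even that reduces to the same pairing bijection already used to prove $\#\mathcal{I}_k = 2^k$.
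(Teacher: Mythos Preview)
Your proof is correct and follows essentially the same approach as the paper: both decompose $\mu$ as $\frac{1}{2^k}\sum_{I\in\mathcal{I}_k}\mu^{(I)}$, compute $\int v_i\,\dd\mu^{(I)} = m_1\mathbf{1}_{\{i\in I\}}$ via the product structure, and then use the count $\#\{I\in\mathcal{I}_k:i\in I\}=2^{k-1}$. The only cosmetic difference is that the paper invokes Lemma~\ref{lemm:zero_singleton_cardinal} directly for the ratio $\#\{I:i\in I\}/2^k=1/2$, whereas you spell out the pairing bijection that underlies that lemma.
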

\begin{proof}
	Let $i\in\llbracket1,2k\rrbracket$. To obtain the above expression, we first develop $\mu$ by using the right-hand side of~\eqref{eq:scaled_measures_shortening_and_by_set}. Then, we integrate both measures, in view of the fact that for all $I\in\mathcal{I}_k$, it holds by the left-hand side of~\eqref{eq:scaled_measures_shortening_and_by_set}: $\int_{v\in\mathbb{R}_+^{2k}} v_i \mu^{(I)}(\dd v) = m_11_{\{i\in I\}}$. We obtain 
	$$
	\int_{v\in\mathbb{R}_+^{2k}}v_i\mathrm{d}\mu(v)  = \frac{1}{2^{k}}\sum_{I \in \mathcal{I}_k} m_11_{\{i\in I\}} = \frac{m_1\#\left(\left\{I\in\mathcal{I}_k\,|\,i\in I\right\}\right)}{2^k}.
	$$
	The lemma is then proved by using Lemma~\ref{lemm:zero_singleton_cardinal} to compute the ratio.
\end{proof}
\noindent Therefore, by denoting the vector $\mathbb{1}_{2k} := \left(1,\hdots,1\right) \in \mathbb{R}^{2k}$, our conjecture is that~\eqref{eq:rescaled_PDE_model_telomeres_several_telos} can be approximated by the  function~\hbox{$u^{(2k)}\in C\left(\mathbb{R}_+,W^{2,1}\left(\mathbb{R}_+^{2k}\right)\right)$}, solution of the following equation 
\begin{equation}\label{eq:approx_first_line_several}
	\partial_t u^{(2k)}(t,x) = \frac{bm_1}{2}\nabla.\left(u^{(2k)}\mathbb{1}_{2k}\right)(t,x).
\end{equation}

We now intuit how the second line of~\eqref{eq:rescaled_PDE_model_telomeres_several_telos} can be approximated. In comparison with what is done in Section~\ref{subsect:onetelo_rewritingmodel}, it is necessary to decompose $n_{\partial}^{(2k)}$ into two terms. Notice that by the second equality in~\eqref{eq:secondstep_relation_between_mutilde_and_mu}, the fact that $G(x) = 1$ for all $x>\delta$, and~Lemma~\ref{lemm:zero_singleton_cardinal},~we have
\begin{equation}\label{eq:decomposition_cemetery_first_reason}
	\forall y\in\left(\frac{\delta}{N},+\infty\right)^{2k}:\hspace{2.5mm}\mu\left(\left\{v\in\mathbb{R}_+^{2k}\,|\,Ny-v \notin \mathbb{R}_+^{2k}\right\}\right) = 1 - \frac{1}{2^{k}}\sum_{I \in \mathcal{I}_k} \left(\prod_{i\in I}G\left(Ny_{i}\right)\right) = 0.
\end{equation}
Then, combining~the above with the second line of~\eqref{eq:rescaled_PDE_model_telomeres_several_telos} yields that our decomposition of~$n_{\partial}^{(2k)}$ is the following, for all $t\geq0$,  
\begin{equation}\label{eq:decomposition_cemetery_several}
	\begin{aligned}
		n^{(2k)}_{\partial}(t) &= bN\int_{y\in\mathbb{R}_+^{2k}} n^{(2k)}\left(t,y\right)\mu\left(\left\{v\in\mathbb{R}_+^{2k}\,|\,Ny-v \notin \mathbb{R}_+^{2k}\right\}\right)1_{\left\{\#\left(i\in\llbracket1,2k\rrbracket\,|\,y_i \leq \frac{\delta}{N}\right) = 1\right\}}\mathrm{d}y \\ 
		&+bN\int_{y\in\mathbb{R}_+^{2k}} n^{(2k)}\left(t,y\right)\mu\left(\left\{v\in\mathbb{R}_+^{2k}\,|\,Ny-v \notin \mathbb{R}_+^{2k}\right\}\right)1_{\left\{\#\left(\left\{i\in\llbracket1,2k\rrbracket\,|\,y_i \leq \frac{\delta}{N}\right\}\right) \geq 2\right\}}\mathrm{d}y \\
		&=: n^{(2k)}_{\partial,1}(t) +  n^{(2k)}_{\partial,2}(t).
	\end{aligned}
\end{equation}
In fact, $n^{(2k)}_{\partial,1}$ and $n^{(2k)}_{\partial,2}$ have a different behaviour as $N \rightarrow +\infty$. The behaviour of $n_{\partial,2}^{(2k)}$ is easy to conjecture. As the domain of integration of the integral in its definition correspond to points with at least two coordinates smaller than $\frac{\delta}{N}$, we have that this integral vanishes  at least at a rate $\frac{1}{N^2}$ as~$N \to +\infty$. Then, by multiplying it by~$bN$, we obtain that for all $t\geq0$
\begin{equation}\label{eq:behaviour_second_term_cemetery}
	n_{\partial,2}(t) = bN\times O\left(\frac{1}{N^2}\right) \underset{N\rightarrow+\infty}{\approx} 0.
\end{equation}
To conjecture the behaviour of $n_{\partial,1}^{(2k)}$ as $N\rightarrow+\infty$, we need to obtain a better expression for~it. To do so, in its definition given in the first line of~\eqref{eq:decomposition_cemetery_several}, we first decompose the indicator, by using that for all $y\in\mathbb{R}_+^{2k}$, we have
$$
1_{\left\{\#\left(i\in\llbracket1,2k\rrbracket\,|\,y_i \leq \frac{\delta}{N}\right) = 1\right\}} = \sum_{i = 1}^{2k} 1_{\left\{y_i \leq \frac{\delta}{N},\,\forall j\in\llbracket1,2k\rrbracket\backslash\{i\}:\, y_j > \frac{\delta}{N}\right\}}.
$$
Then, we use the following equality to replace the term $\mu\left(\left\{v\in\mathbb{R}_+^{2k}\,|\,Ny-v \notin \mathbb{R}_+^{2k}\right\}\right)$ in the first line of~\eqref{eq:decomposition_cemetery_several}. This equality is obtained by applying the second equality in~\eqref{eq:secondstep_relation_between_mutilde_and_mu}, the fact that~$G(x) = 1$ when $x>\delta$, and~Lemma~\ref{lemm:zero_singleton_cardinal}. For all $i\in\llbracket1,2k\rrbracket$ and $y\in\mathbb{R}_+^{2k}$ verifying $y_i \leq \frac{\delta}{N}$ and $y_j > \frac{\delta}{N}$ when $j\neq i$, it holds
$$
\begin{aligned}
	\mu\left(\left\{v\in\mathbb{R}_+^{2k}\,|\,Ny-v \notin \mathbb{R}_+^{2k}\right\}\right) &= 1 - \frac{1}{2^{k}}\sum_{I \in \mathcal{I}_k} \left(\prod_{j\in I}G\left(Ny_{j}\right)\right) \\ 
	&= 1 - \frac{1}{2^{k}}\sum_{I \in \mathcal{I}_k} \left(G\left(Ny_{i}\right)1_{\{i \in I\}} + 1_{\{i\notin I\}}\right) = \frac{1}{2}\left(1-G(Ny_i)\right).
\end{aligned}
$$
Finally, for each index $i\in\llbracket1,2k\rrbracket$ of the sum, we do the change of variables $y_i' = N y_i$, and $y'_j = y_j$ for all $j\in\llbracket1,2k\rrbracket\backslash\{i\}$. We obtain that for all $t\geq0$
\begin{equation}\label{eq:final_formula_first_term_cemetery}
	\begin{aligned}
		n_{\partial,1}^{(2k)}(t) &= \frac{bN}{2}\sum_{i = 1}^{2k}\int_{y\in\mathbb{R}_+^{2k}} n^{(2k)}\left(t,y\right)\left(1-G(Ny_i)\right)1_{\left\{y_i \leq \frac{\delta}{N},\,\forall j\in\llbracket1,2k\rrbracket\backslash\{i\}:\, y_j > \frac{\delta}{N}\right\}}\mathrm{d}y \\
		&= \frac{b}{2}\sum_{i = 1}^{2k}\int_{y'\in\mathbb{R}_+^{2k}} n^{(2k)}\left(t,\sum_{j = 1,\,j\neq i}^{2k} y'_j e_j + \frac{y'_i}{N}e_i\right)\left(1-G(y'_i)\right)1_{\left\{y'_i \leq \delta,\,\forall j\in\llbracket1,2k\rrbracket\backslash\{i\}:\, y'_j > \frac{\delta}{N}\right\}}\mathrm{d}y'.
	\end{aligned}
\end{equation}
Then, we are now able to conjecture the behaviour of~$n^{(2k)}_{\partial,1}$ as $N\rightarrow +\infty$. Indeed, by letting~$N$ tend to infinity, and by using that $\int_{s\in[0,\delta]} (1-G(s)) \dd s = m_1$, we have for all $t\geq 0$
$$
n^{(2k)}_{\partial,1}(t) \underset{N\rightarrow+\infty}{\approx} \frac{bm_1}{2}\sum_{i = 1}^{2k}\int_{y'\in\mathbb{R}_+^{2k}} n^{(2k)}\left(t,y \right)\delta_0(dy_i)\left(\overset{2k}{\underset{j = 1,\,j\neq i}{\prod}} dy_j\right).
$$
Combining the above with~\eqref{eq:behaviour_second_term_cemetery} and~\eqref{eq:approx_first_line_several} finally yields that the following system seems a good approximant of~\eqref{eq:rescaled_PDE_model_telomeres_several_telos}
\begin{equation}\label{eq:approximation_transport_model_2k_telomeres}
	\begin{cases}
		\partial_t u^{(2k)}(t,x) = \frac{bm_1}{2}\nabla.\left(u^{(2k)}\mathbb{1}_{2k}\right)(t,x), & \forall t\geq0,\,x\in\mathbb{R}_+^{2k},\\
		u_{\partial}^{(2k)}(t) =  \frac{bm_1}{2} \overset{2k}{\underset{i = 1}{\sum}}\int_{y\in\mathbb{R}_+^{2k}} u^{(2k)}(t, y) \delta_0(dy_i)\left(\overset{2k}{\underset{j = 1,\,j\neq i}{\prod}} dy_j\right), &\forall t\geq0, \\ 
		u^{(2k)}(0,x) = \overset{2k}{\underset{i = 1}{\prod}}n_0(x_i), & \forall x\in\mathbb{R}_+^{2k}.
	\end{cases}
\end{equation}
\begin{rem}\label{rem:other_expression_approx_cemtery}
	By using the method of characteristics to solve the first line of~\eqref{eq:approximation_transport_model_2k_telomeres}, we have for all $(t,x)\in\mathbb{R}_+\times\mathbb{R}_+^{2k}$
	\begin{equation}\label{eq:characteristics_density_lengths}
		u^{(2k)}(t,x) = \prod_{i = 1}^{2k} n_0\left(\frac{bm_1}{2}t + x_i\right).
	\end{equation}
	In addition, by plugging~\eqref{eq:characteristics_density_lengths} in the second line of~\eqref{eq:approximation_transport_model_2k_telomeres}, and then doing the change of variables $y' = y + \frac{bm_1t}{2}\mathbb{1}_{2k}$, we have for all $t\geq0$
	\begin{equation}\label{eq:characteristics_density_cemetery}
		u_{\partial}^{(2k)}(t)  = \frac{bm_1}{2} \overset{2k}{\underset{i = 1}{\sum}}n_0\left(\frac{bm_1t}{2}\right)\prod_{\substack{j = 1 \\j\neq i}}^{2k} \left[\int_{\frac{bm_1}{2}t}^{+\infty}n_0\left(y'_j\right) \dd y'_j\right]  = kbm_1n_0\left(\frac{bm_1t}{2}\right)\left[\int_{\frac{bm_1t}{2}}^{+\infty} n_0\left(s\right)\dd s\right]^{2k-1}.
	\end{equation}
\end{rem}
\begin{rem}\label{rem:mass_conservation_transport}
	The conservation of the number of individuals, presented in Remark~\ref{rem:conservation_individual_several_telos}, still holds for~\eqref{eq:approximation_transport_model_2k_telomeres} when \hyperlink{assumption:H2}{$(H_2)-(H_3)$} are verified. To obtain it, first integrate~\eqref{eq:characteristics_density_cemetery}, in view of the fact that for all \hbox{$f\in W^{1,1}\left(\mathbb{R}_+\right)\cap W^{1,2k}\left(\mathbb{R}_+\right)\cap L^{2k-1}\left(\mathbb{R}_+\right)$} it holds~$\left(f^{2k}\right)' = 2kf'f^{2k-1}$. Then, do the change of variable $s' = s - \frac{bm_1}{2}t$. Finally, apply~\eqref{eq:characteristics_density_lengths}. It comes for all $t\geq0$ 
	\begin{equation}\label{eq:link_tail_cemetery_initial}
		\int_t^{+\infty} u_{\partial}^{(2k)}(s) \dd s = \left[\int_{\frac{bm_1t}{2}}^{+\infty} n_0\left(s\right)\dd s\right]^{2k} = \left[\int_{0}^{+\infty} n_0\left(\frac{bm_1}{2}t+s'\right)\dd s'\right]^{2k} = \int_{x\in\mathbb{R}_+^{2k}}u(t,x)\dd x. 
	\end{equation}
\end{rem}
\begin{rem}\label{rem:initial_telomere_length_correlations}
Even in the case where the assumption presented in~\eqref{rem:initial_lengths_assumptions} does not hold, the computations done to obtain the first equality in~\eqref{eq:characteristics_density_cemetery} can be done, and yield that for all $t\geq0$
$$
u_{\partial}^{(2k)}(t) = \frac{bm_1}{2}\sum_{i = 1}^{2k}\int_{\left(y'_j,\,j\in\llbracket1,2k\rrbracket\backslash\{i\}\right)\in\mathbb{R}_+^{2k-1}}n\left(0,\frac{bm_1t}{2} e_i+\sum_{j = 1, \,j\neq i}^{2k}y'_je_j\right)\left[\overset{2k}{\underset{j = 1,\,j\neq i}{\prod}} 1_{\left\{y'_j \geq \frac{bm_1t}{2}\right\}}dy'_j\right].
$$
We thus have by integrating, and then doing the change of variable $y'' = \frac{bm_1s}{2} e_i+\sum_{j = 1, \,j\neq i}^{2k}y'_je_j$, that for all $t\geq0$
$$
\int_t^{+\infty}u_{\partial}^{(2k)}(s) \dd s = \sum_{i = 1}^{2k}\int_{y''\in\mathbb{R}_+^{2k}} n\left(0,y''\right)1_{\left\{y''_i\geq \frac{bm_1t}{2}\right\}}1_{\left\{\forall j\in\llbracket1,2k\rrbracket:\, y''_j\geq y''_i\right\}} \dd y.
$$
The latter yields, in view of the fact that $y''_j \geq y''_i$ for all $j\neq i$, that for all $t\geq0$
$$
\int_t^{+\infty}u_{\partial}^{(2k)}(s) \dd s = \int_{y''\in\mathbb{R}_+^{2k}} n\left(0,y''\right)1_{\left\{\min_{1\leq i \leq 2k}\left(y''_i\right) \geq \frac{bm_1t}{2}\right\}} \dd y''.
$$
Hence, $u_{\partial}^{(2k)}$  is the distribution of the minimum initial telomere length, rescaled by $\frac{bm_1}{2}$.

\end{rem}
All we have intuited above can in fact be justified rigorously. Specifically, the proposition below provides that the error between $\left(n^{(2k)},n_{\partial}^{(2k)}\right)$  and $\left(u^{(2k)},u_{\partial}^{(2k)}\right)$ tends to $0$ when $N\rightarrow+\infty$. Its proof is separated into two parts, one part for each statement, done in Sections~\ref{subsect:proof_approximation_lengths_several} and~\ref{subsect:proof_approximation_cemetery_several}.

\begin{prop}[Pointwise approximation errors, several telomeres]\label{prop:approximation_PDE_several_telomeres}
	We recall the constant~$\lambda'_N$ defined in~\eqref{eq:approximation_eigenvalues}. The following statements hold.
	\begin{enumerate}[$(a)$]
		\item Assume \hyperlink{assumption:H1}{$(H_1)-(H_3)$}. Then, there exists $d'_0 > 0$ such that for all $t\geq 0$, $x\in\mathbb{R}_+^{2k}$, we have
		$$
			\left|n^{(2k)}(t,x) - u^{(2k)}(t,x)\right| \leq d'_0\left(\lambda D_{\lambda}\right)^{2k}\frac{k^2bt}{2N}\exp\left(-kbm_1\lambda'_Nt\right)\exp\left(-\lambda \sum_{i =1}^{2k}x_i\right),
		$$
		where $d'_0$ depends only on $g$, $\lambda$, $C_{\lambda}$, $C'_{\lambda}$ and $D_{\lambda}$.
		\item Assume \hyperlink{assumption:H1}{$(H_1)-(H_3)$}. Then, there exists $d'_1 > 0$ such that for all $t\geq 0$, we have
		\begin{equation}\label{eq:approximation_cemetery_several}
			\left|n_{\partial}^{(2k)}(t) - u_{\partial}^{(2k)}(t)\right| \leq \frac{bd'_1}{N}\left(D_{\lambda}\right)^{2k}\left(k^3 \frac{b m_1}{2}t + k^2+ k\right)\exp\left(-kbm_1 \lambda'_Nt\right), 
		\end{equation}
		where $d'_1$ depends only on $g$, $\delta$, $\lambda$, $C_{\lambda}$, $C'_{\lambda}$ and $D_{\lambda}$. 
	\end{enumerate}
\end{prop}
\begin{rem}
As for Proposition~\ref{prop:approximation_PDE_one_telomere}, a result providing bounds on the approximation errors in all Lebesgue spaces can be obtained from the above, that we do not state. 
\end{rem}
\noindent We now present the main arguments and the auxiliary statements required to prove Proposition~\ref{prop:approximation_PDE_several_telomeres} and Theorem~\ref{te:main_result}-$(b)$.
\subsection{Plan of the proofs of Proposition~\ref{prop:approximation_PDE_several_telomeres} and Theorem~\ref{te:main_result}-\texorpdfstring{$(b)$}{b)}}\label{subsect:plan_proof_prop_theo_severaltelos}

We begin by presenting the plan to prove Proposition~\ref{prop:approximation_PDE_several_telomeres}-$(a)$. To get this statement, we need to control the absolute value of~$\overline{u}^{(2k)} := n^{(2k)} - u^{(2k)}$. By proceeding as when we obtained~\eqref{eq:equation_difference_onetelomodel_approximant_lengths}, using this time Lemma~\ref{lemm:moments_measure_mu} to write the term $\frac{bm_1}{2}\nabla.\left(u^{(2k)}\mathbb{1}_{2k}\right)(t,x)$~coming from~\eqref{eq:approximation_transport_model_2k_telomeres} as an integral, we have that for all $t\geq0$, $x\in\mathbb{R}_+^{2k}$ 
\begin{equation}\label{eq:equation_difference_severaltelosmodel_approximant_lengths}
\begin{aligned}
\partial_t\overline{u}^{(2k)}(t,x)  &= bN\int_0^{\delta}\left[\overline{u}^{(2k)}\left(t,x + \frac{v}{N}\right)- \overline{u}^{(2k)}(t,x)\right]\mu\left(\mathrm{d}v\right)  \\ 
&+ bN\int_0^{\delta}\left[u^{(2k)}\left(t,x + \frac{v}{N}\right)- u^{(2k)}\left(t,x\right) - \sum_{i = 1}^{2k} \frac{v_i}{N}\partial_{x_i}u^{(2k)}(t,x)\right]\mu\left(\mathrm{d}v\right).
\end{aligned}
\end{equation}
Then, $\overline{u}^{(2k)}$ is a solution of an equation of the form presented in~\eqref{eq:PDE_to_develop_general}, with $d = 2k$, $u_{\xi} = \overline{u}^{(2k)}$, \hbox{$\xi  = \mu$} and~\hbox{$F = u^{(2k)}$}. From this result, a natural plan to prove Proposition~\ref{prop:approximation_PDE_several_telomeres} is to proceed as in the proof of Proposition~\ref{prop:approximation_PDE_one_telomere}-$(a)$, and check the assumptions of Lemma~\ref{lemm:key_lemma_lengths} to obtain \hbox{Proposition~\ref{prop:approximation_PDE_several_telomeres}-$(a)$}. Here, we do not do this directly because we must first obtain the value of the constant~$\sigma_{\xi}$, defined in Lemma~\ref{lemm:key_lemma_lengths}, for $\xi = \mu$. This value is given in the following statement, proved in Section~\ref{subsubsect:proof_value_sigma}.  
\begin{lemm}[Sum of second moments of $\mu$]\label{lemm:value_sigma}
	Assume that \hyperlink{assumption:H1}{$(H_1)$} holds. Then, we have
	$$
	\sigma_{\mu} := \sum_{1\leq \ell,\ell'\leq 2k}\int_{v\in\mathbb{R}_+^{2k}} v_\ell v_{\ell'} \mu(\dd v) = \left(m_1\right)^2k^2 + \left(m_2 - \left(m_1\right)^2\right)k.
	$$
\end{lemm}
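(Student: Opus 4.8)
The plan is to split the sum defining $\sigma_{\mu}$ according to whether the two indices $\ell,\ell'$ coincide, and then to use the structure of $\mu$ as a uniform average over $\mathcal{I}_k$ of the product measures $\mu^{(I)}$. First, write
\begin{equation*}
\sigma_{\mu} = \sum_{\ell = 1}^{2k}\int_{v\in\mathbb{R}_+^{2k}} v_\ell^2\, \mu(\dd v) + \sum_{\substack{1\leq \ell,\ell'\leq 2k \\ \ell\neq \ell'}}\int_{v\in\mathbb{R}_+^{2k}} v_\ell v_{\ell'}\, \mu(\dd v).
\end{equation*}
Plugging in $\mu = \frac{1}{2^k}\sum_{I\in\mathcal{I}_k}\mu^{(I)}$ from~\eqref{eq:scaled_measures_shortening_and_by_set}, I would evaluate, for each fixed $I\in\mathcal{I}_k$, the integrals $\int v_\ell^2\,\mu^{(I)}(\dd v)$ and $\int v_\ell v_{\ell'}\,\mu^{(I)}(\dd v)$. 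Since $\mu^{(I)}$ is a product of the densities $g(v_i)\dd v_i$ for $i\in I$ and Dirac masses $\delta_0(\dd v_i)$ for $i\notin I$, the first integral equals $m_2\,\mathbb{1}_{\{\ell\in I\}}$ and the second equals $m_1^2\,\mathbb{1}_{\{\ell\in I\}}\mathbb{1}_{\{\ell'\in I\}}$ whenever $\ell\neq\ell'$ (the coordinates being independent under $\mu^{(I)}$).

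Next I would carry out the counting over $\mathcal{I}_k$. For the diagonal part, $\sum_{\ell=1}^{2k}\sum_{I\in\mathcal{I}_k}\mathbb{1}_{\{\ell\in I\}} = \sum_{I\in\mathcal{I}_k}\#I = 2^k\cdot k$ since $\#I = k$ and $\#\mathcal{I}_k = 2^k$ by Lemma~\ref{lemm:zero_singleton_cardinal}; hence the diagonal contribution to $\sigma_\mu$ is $\frac{1}{2^k}\cdot m_2\cdot 2^k k = m_2 k$. For the off-diagonal part I need $\sum_{\ell\neq\ell'}\#\{I\in\mathcal{I}_k\mid \ell,\ell'\in I\}$. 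This is where the combinatorial structure of $\mathcal{I}_k$ in~\eqref{eq:set_shortening} must be used carefully: an index set $I$ with $\ell,\ell'\in I$ exists only if $\ell\not\equiv\ell'\pmod k$, i.e. $\ell'\neq\ell$ and $\ell'\neq\ell\pm k$. For an ordered pair $(\ell,\ell')$ with $\ell\neq\ell'$ and $\ell\not\equiv\ell'\pmod k$, the number of $I\in\mathcal{I}_k$ containing both is $2^{k-2}$ (the two chromosomes of $\ell$ and $\ell'$ are determined, and each of the remaining $k-2$ chromosomes independently contributes one of its two ends). I would then count the admissible ordered pairs: out of the $2k(2k-1)$ ordered pairs with $\ell\neq\ell'$, exactly $2k$ of them have $\ell'\equiv\ell\pmod k$ (the pairs $(\ell,\ell+k)$ and $(\ell+k,\ell)$), leaving $2k(2k-1)-2k = 2k(2k-2) = 4k(k-1)$ admissible pairs. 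Therefore the off-diagonal contribution is $\frac{1}{2^k}\cdot m_1^2\cdot 4k(k-1)\cdot 2^{k-2} = m_1^2 k(k-1)$.

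Summing, $\sigma_{\mu} = m_2 k + m_1^2 k(k-1) = m_1^2 k^2 + (m_2 - m_1^2)k$, which is the claimed formula. The main obstacle is the off-diagonal count: one must be attentive to the congruence condition in the definition of $\mathcal{I}_k$ (so that pairs of telomeres on the same chromosome are excluded) and to the fact that once two compatible indices are fixed in $I$, the remaining $k-2$ chromosomes each still have a free binary choice, giving the factor $2^{k-2}$ rather than $2^{k-1}$ or $2^k$. A clean way to make this rigorous, avoiding ad hoc case analysis, is to model a uniform draw from $\mathcal{I}_k$ as $k$ independent fair coin flips $(\varepsilon_i)_{i=1}^k$, where $I = \{i : \varepsilon_i = 0\}\cup\{i+k : \varepsilon_i = 1\}$; then $\int v_\ell v_{\ell'}\,\mu(\dd v)$ becomes an expectation over the $\varepsilon_i$'s of a product of $g$-moments, and the formula follows by elementary independence computations, recovering $\int v_i\,\mu(\dd v) = m_1/2$ (Lemma~\ref{lemm:moments_measure_mu}) as a consistency check.
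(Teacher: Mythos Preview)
Your proof is correct and follows essentially the same approach as the paper: both split the sum into diagonal and off-diagonal parts, evaluate $\int v_\ell v_{\ell'}\,\mu^{(I)}(\dd v)$ using the product structure of $\mu^{(I)}$, and then invoke the cardinality lemmas for $\mathcal{I}_k$ (Lemmas~\ref{lemm:zero_singleton_cardinal} and~\ref{lemm:pair_cardinal}) to count the admissible pairs. The only cosmetic difference is the order of summation---the paper computes each $I_{\ell,\ell'}$ first (obtaining $m_2/2$, $0$, or $m_1^2/4$) and then sums over $(\ell,\ell')$, whereas you swap sums and count $\sum_{\ell,\ell'}\#\{I:\ell,\ell'\in I\}$ directly; the arithmetic is identical ($4k(k-1)\cdot 2^{k-2}/2^k = k(k-1)$). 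Your coin-flip remark at the end is a clean probabilistic repackaging of the same combinatorics.
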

\noindent Now that we know the value of $\sigma_{\mu}$, we prove Proposition~\ref{prop:approximation_PDE_one_telomere}-$(a)$ in Section~\ref{subsect:proof_approximation_lengths_several} by following the steps presented above. 

To prove Proposition~\ref{prop:approximation_PDE_several_telomeres}-$(b)$, we control \hbox{$\big|\overline{u}_{\partial}^{(2k)}\big| := \big|n_{\partial}^{(2k)} - u_{\partial}^{(2k)}\big|$}. To do this, a decomposition of $\overline{u}_{\partial}^{(2k)}$ is required, inspired by the one given in~\eqref{eq:decomposition_cemetery_several}. Let us introduce for all~$t\geq0$
\begin{equation}\label{eq:decomposition_function_u}
	\begin{aligned}
		u^{(2k)}_{\partial,0}(t) &:= bN\int_{y\in\mathbb{R}_+^{2k}} u^{(2k)}\left(t,y\right)\mu\left(\left\{v\in\mathbb{R}_+^{2k}\,|\,Ny-v \notin \mathbb{R}_+^{2k}\right\}\right)\mathrm{d}y, \\
		u^{(2k)}_{\partial,1}(t) &:= \frac{b}{2}\sum_{i = 1}^{2k} \int_{y'\in\mathbb{R}_+^{2k}} u^{(2k)}\left(t,\sum_{j = 1,\,j\neq i}^{2k}y'_j e_j + \frac{y'_i}{N}\right) \left(1 - G(y'_i)\right)1_{\{y'_i \leq \delta\}}\mathrm{d}y',\\
		u^{(2k)}_{\partial,2}(t) &:= u^{(2k)}_{\partial,0}(t) - u^{(2k)}_{\partial,1}(t) .
	\end{aligned}
\end{equation}
Then, as it holds $u_{\partial,0}^{(2k)} = u_{\partial,1}^{(2k)} + u_{\partial,2}^{(2k)}$, we have the following decomposition, for all~$t\geq0$,
\begin{equation}\label{eq:decomposition_difference_cemetery_several}
	\overline{u}_{\partial}^{(2k)}(t) = \left(n_{\partial}^{(2k)}(t) - u_{\partial,0}^{(2k)}(t) + u_{\partial,1}^{(2k)}(t)- u_{\partial}^{(2k)}(t)\right) + u_{\partial,2}^{(2k)}(t).
\end{equation}
This decomposition allows us to control $\big|\overline{u}_{\partial}^{(2k)}\big|$, by bounding each of the terms that compose it. The term $n_{\partial}^{(2k)} - u_{\partial,0}^{(2k)} + u_{\partial,1}^{(2k)}- u_{\partial}^{(2k)}$, on the one hand, can be bounded by using Lemma~\ref{lemm:key_lemma_cemetery}, already proved in Section~\ref{subsect:proof_key_lemma_cemetery}.  The term~$u_{\partial,2}^{(2k)}$, on the other hand, can be bounded with the following lemma, proved in Section~\ref{subsubsect:proof_integral_cemetery}. 
\begin{lemm}[Control of the probability of having several short telomeres]\label{lemm:integral_cemetery_model_several_telos}
	Assume that \hyperlink{assumption:H1}{$(H_1)$} and \hyperlink{assumption:H3}{$(H_3)$} hold. Then, there exists $\tilde{d} > 0$ such that for all $t\geq 0$ we have
	\begin{equation}\label{eq:integral_cemetery_model_several_telos}
		\begin{aligned}
			\left|u_{\partial,2}^{(2k)}(t)\right|\leq \frac{b\tilde{d}}{N}\left(D_{\lambda}\right)^{2k}k^2\exp\left(-kbm_1\lambda t\right),
		\end{aligned}
	\end{equation}
	where $\tilde{d}$ depends only on $\delta$, $\lambda$ and $g$.
\end{lemm}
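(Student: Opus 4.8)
The plan is to adapt the decomposition that led to~\eqref{eq:decomposition_cemetery_several}--\eqref{eq:final_formula_first_term_cemetery}, now applied to the transport approximant $u^{(2k)}$ instead of $n^{(2k)}$, and to exploit that $u^{(2k)}_{\partial,2}$ only records configurations in which at least two telomeres are simultaneously short, each of which is integrated over a set of $\mathbb{R}_+^{2k}$-measure $O(N^{-2})$. Throughout, I use the explicit formula $u^{(2k)}(t,x) = \prod_{i=1}^{2k} n_0\!\left(\tfrac{bm_1}{2}t + x_i\right)$ from~\eqref{eq:characteristics_density_lengths}.

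\textbf{Step 1: splitting $u^{(2k)}_{\partial,0}$.} By~\eqref{eq:decomposition_cemetery_first_reason}, the integrand defining $u^{(2k)}_{\partial,0}$ in~\eqref{eq:decomposition_function_u} vanishes unless at least one coordinate of $y$ is $\le \delta/N$. I therefore split its domain of integration into the set where exactly one coordinate is $\le\delta/N$ and the set where at least two are, writing $u^{(2k)}_{\partial,0} = \mathrm{(I)} + \mathrm{(II)}$. On the first set, if $i$ is the index with $y_i\le\delta/N$, the identity established just before~\eqref{eq:final_formula_first_term_cemetery} gives $\mu\big(\{v\in\mathbb{R}_+^{2k}: Ny-v\notin\mathbb{R}_+^{2k}\}\big) = \tfrac12\big(1-G(Ny_i)\big)$; performing the change of variables $y_i\mapsto Ny_i$ exactly as there yields $\mathrm{(I)} = \tfrac{b}{2}\sum_{i=1}^{2k}\int_{\mathbb{R}_+^{2k}} u^{(2k)}\big(t,\textstyle\sum_{j\ne i}y'_j e_j + \tfrac{y'_i}{N}e_i\big)\big(1-G(y'_i)\big)\,1_{\{y'_i\le\delta,\ \forall j\ne i:\ y'_j>\delta/N\}}\,\dd y'$.

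\textbf{Step 2: comparison with $u^{(2k)}_{\partial,1}$.} The integrand of $\mathrm{(I)}$ coincides with the one in $u^{(2k)}_{\partial,1}$, the only difference being that the indicator $1_{\{y'_i\le\delta\}}$ is replaced by $1_{\{y'_i\le\delta,\ \forall j\ne i:\ y'_j>\delta/N\}}$. Hence $u^{(2k)}_{\partial,1} - \mathrm{(I)}$ equals the same sum of integrals with the indicator $1_{\{y'_i\le\delta,\ \exists j\ne i:\ y'_j\le\delta/N\}}$, which again forces at least two short telomeres. Since $u^{(2k)}_{\partial,2} = u^{(2k)}_{\partial,0} - u^{(2k)}_{\partial,1} = \mathrm{(II)} - \big(u^{(2k)}_{\partial,1} - \mathrm{(I)}\big)$, the triangle inequality reduces the lemma to bounding $|\mathrm{(II)}|$ and $|u^{(2k)}_{\partial,1} - \mathrm{(I)}|$ each by $\tfrac{b\tilde d}{2N}(D_\lambda)^{2k}k^2\exp(-kbm_1\lambda t)$.

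\textbf{Step 3: estimating the two double-short terms.} In both terms I bound $\mu(\cdots)\le 1$ and $1-G\le 1$, dominate the constraint that at least two coordinates are $\le\delta/N$ by $\sum_{i<j}1_{\{y_i\le\delta/N,\ y_j\le\delta/N\}}$ (respectively by $\sum_{j\ne i}1_{\{y'_j\le\delta/N\}}$), and factorize the resulting integrals over the $2k$ coordinates of the product $u^{(2k)}(t,\cdot)$. By~\hyperlink{assumption:H3}{$(H_3)$}, a \emph{free} coordinate contributes $\int_0^{+\infty} n_0\!\big(\tfrac{bm_1}{2}t+s\big)\dd s\le D_\lambda\exp(-\lambda\tfrac{bm_1}{2}t)$, a coordinate restricted to $[0,\delta/N]$ contributes at most $D_\lambda\lambda\tfrac{\delta}{N}\exp(-\lambda\tfrac{bm_1}{2}t)$, and the rescaled coordinate (argument $y'_i/N$, range $[0,\delta]$) contributes at most $D_\lambda\lambda\delta\exp(-\lambda\tfrac{bm_1}{2}t)$, after using $e^{-\lambda s/N}\le 1$ and cancelling the Jacobian $N$. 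Multiplying the $2k$ factors produces $(D_\lambda)^{2k}$ and $\exp\!\big(-2k\lambda\tfrac{bm_1}{2}t\big)=\exp(-kbm_1\lambda t)$; the factors $N^{-1}$ coming from the short coordinates combine with the prefactors $bN$ (respectively $b$) to leave a single $b/N$; and summing over the $O(k^2)$ index pairs gives the claimed estimate with $\tilde d$ a fixed multiple of $(\lambda\delta)^2$, hence depending only on $\lambda$, $\delta$ (and on $g$ through the rate $m_1$).

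\textbf{Main obstacle.} The genuinely delicate point is Steps~1--2: re-running the one-short-telomere computation of~\eqref{eq:final_formula_first_term_cemetery} for $u^{(2k)}$ and recognising that the mismatch between $\mathrm{(I)}$ and $u^{(2k)}_{\partial,1}$ is itself a two-short-telomere remainder, with nothing double-counted. The estimates of Step~3 are then routine, the one subtlety being to keep each of the $2k$ factors $n_0(\tfrac{bm_1}{2}t+\cdot)$ bounded by $D_\lambda\lambda\exp(-\lambda\tfrac{bm_1}{2}t)$, so that the full exponential rate $kbm_1\lambda$ (and not a weaker one) is retained.
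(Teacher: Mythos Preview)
Your proof is correct and follows essentially the same route as the paper's: the same splitting $u^{(2k)}_{\partial,0}=\mathrm{(I)}+\mathrm{(II)}$, the same identification of $u^{(2k)}_{\partial,1}-\mathrm{(I)}$ as a two-short-telomeres remainder, and the same coordinate-wise integration via~\hyperlink{assumption:H3}{$(H_3)$}. The only (harmless) difference is that the paper exploits the opposite signs of $\mathrm{(II)}$ and $u^{(2k)}_{\partial,1}-\mathrm{(I)}$ to bound $|u^{(2k)}_{\partial,2}|$ by their maximum rather than their sum, and keeps $\int_0^\delta(1-G)=m_1$ instead of bounding $1-G\le 1$; also, your aside about ``cancelling the Jacobian $N$'' in the rescaled coordinate is misphrased (no Jacobian remains after the change of variables in Step~1), though your actual estimate there is correct.
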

\noindent As a result, we  have a control over $\big|\overline{u}_{\partial}^{(2k)}\big|$ by summing the bounds we have on the two terms presented above, in view of~\eqref{eq:decomposition_difference_cemetery_several} and the triangle inequality. We detail how we apply these lemmas and sum the bounds to prove Proposition~\ref{prop:approximation_PDE_several_telomeres}-$(b)$ in Section~\ref{subsect:proof_approximation_cemetery_several}. 

The proof of Theorem~\ref{te:main_result}-$(b)$, finally, is based on the following statement. Its proof is relatively short, and consists in first taking $t =\frac{2x}{bm_1}$ in the first equality in~\eqref{eq:link_tail_cemetery_initial}, then raising both sides of the equality to the power $\frac{1}{2k}$, and finally taking the derivative.
\begin{lemm}[Link between $n_0$ and $u_{\partial}$]\label{lemm:proof_link_initial_cemetery_several}
	Assume that \hyperlink{assumption:H1}{$(H_1)-(H_3)$} hold. Then, for all $x\geq0$, we have
	\begin{equation}\label{eq:relation_n0_cemetery}
		n_0(x) = \frac{1}{kbm_1}\frac{u_{\partial}^{(2k)}\left(\frac{2x}{bm_1}\right)}{\left(\int_{\frac{2x}{bm_1}}^{\infty} u_{\partial}^{(2k)}\left(s\right) \dd s\right)^{1 - \frac{1}{2k}}}.
	\end{equation}
\end{lemm}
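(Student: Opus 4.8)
The plan is to turn the mass--conservation identity~\eqref{eq:link_tail_cemetery_initial} into the pointwise identity~\eqref{eq:relation_n0_cemetery} by a change of variable and a differentiation, exactly as announced before the statement. Throughout, I would set
\[
\Phi(t) := \int_t^{+\infty} u_{\partial}^{(2k)}(s)\,\dd s, \qquad \Psi(t) := \int_{\frac{bm_1 t}{2}}^{+\infty} n_0(s)\,\dd s,
\]
so that the first equality in~\eqref{eq:link_tail_cemetery_initial} reads $\Phi(t) = \Psi(t)^{2k}$ for all $t\geq 0$. Since $n_0\in W^{2,1}(\mathbb{R}_+)$ is continuous, $\Psi$ is $C^1$ with $\Psi'(t) = -\frac{bm_1}{2}\,n_0\!\left(\frac{bm_1 t}{2}\right)$; and by~\eqref{eq:characteristics_density_cemetery} together with the continuity of $n_0$, the function $u_{\partial}^{(2k)}$ is continuous, so $\Phi$ is $C^1$ with $\Phi'(t) = -u_{\partial}^{(2k)}(t)$.

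First I would raise the identity $\Phi = \Psi^{2k}$ to the power $\frac{1}{2k}$ — licit since both sides are non-negative — to get $\Psi(t) = \Phi(t)^{1/(2k)}$. Differentiating this in $t$, using the chain rule on the right-hand side, gives
\[
-\frac{bm_1}{2}\, n_0\!\left(\frac{bm_1 t}{2}\right) \;=\; \frac{1}{2k}\,\Phi(t)^{\frac{1}{2k}-1}\,\Phi'(t) \;=\; -\,\frac{1}{2k}\,\frac{u_{\partial}^{(2k)}(t)}{\Phi(t)^{1-\frac{1}{2k}}},
\]
and after rearranging and substituting $t = \frac{2x}{bm_1}$, so that $\frac{bm_1 t}{2} = x$, one obtains precisely~\eqref{eq:relation_n0_cemetery}. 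Equivalently, and avoiding differentiation altogether, one may simply observe that $\Psi(t)^{2k-1} = \Phi(t)^{1-1/(2k)}$ and plug this into~\eqref{eq:characteristics_density_cemetery}, which already expresses $u_{\partial}^{(2k)}(t)$ as $kbm_1\,n_0\!\left(\frac{bm_1 t}{2}\right)\Psi(t)^{2k-1}$; I would probably present this shorter route.

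The only point requiring care — and the main, rather mild, obstacle — is the well-definedness of the negative power $\Phi(t)^{\frac{1}{2k}-1}$, i.e. the non-vanishing of $\Phi(t) = \Psi(t)^{2k}$. Because $n_0$ is a probability density, $\Psi(t)>0$ for every $t\geq 0$ as soon as $n_0$ does not vanish on a half-line $\left[\frac{bm_1 t}{2},+\infty\right)$, which holds for all the densities relevant here (in particular the Erlang densities of Section~\ref{sect:estimation_on_simulations}, and more generally whenever the lower bound~\hyperlink{assumption:H4}{$(H_4)$} is available). In the degenerate case where $n_0$ vanishes beyond some point, both sides of~\eqref{eq:relation_n0_cemetery} are zero with the convention $0/0=0$, so the identity still holds; alternatively the differentiation argument can be carried out on the open set $\{t:\Phi(t)>0\}$ and the complement treated directly.
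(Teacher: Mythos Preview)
Your proof is correct and follows the same three-step route the paper sketches just before the lemma: start from the identity $\Phi(t)=\Psi(t)^{2k}$ in~\eqref{eq:link_tail_cemetery_initial}, raise to the power $\tfrac{1}{2k}$, differentiate, and substitute $t=\tfrac{2x}{bm_1}$. Your alternative ``shorter route'' via~\eqref{eq:characteristics_density_cemetery} is equivalent and indeed cleaner; your discussion of the degenerate case $\Phi(t)=0$ is more careful than the paper's sketch (which is silent on it), but note that the lemma is only invoked in Section~\ref{subsect:proof_quality_estimator_several_telomeres} under $(H_4)$, where Lemma~\ref{lemma:inequalities_lower_bounds_tails} rules this out.
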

\noindent From this lemma, if we replace $\widehat{n}_0^{(2k)}$ with its definition (see~\eqref{eq:definitions_estimators}), thereafter use~\eqref{eq:equality_transport_terms} to replace $\tilde{b}\tilde{m}_1$ with $bm_1$, and finally write $n_0$ with the right-hand side of~\eqref{eq:relation_n0_cemetery}, then we obtain that for all~$x\geq0$
\begin{equation}\label{eq:intermediate_inequality_error_severaltelos}
	\left|\widehat{n}_0^{(2k)}(x) - n_0(x)\right| = \frac{1}{kbm_1}\left|\frac{n_{\partial}^{(2k)}\left(\frac{2x}{bm_1}\right)}{\left((\int_{\frac{2x}{bm_1}}^{\infty} n_{\partial}^{(2k)}\left(s\right) \dd s)\right)^{1 - \frac{1}{2k}}}- \frac{u_{\partial}^{(2k)}\left(\frac{2x}{bm_1}\right)}{\left(\int_{\frac{2x}{bm_1}}^{\infty} u_{\partial}^{(2k)}\left(s\right) \dd s\right)^{1 - \frac{1}{2k}}}\right|.
\end{equation}
We thus need to control this difference, and Theorem~\ref{te:main_result}-$(b)$ will be proved. Proposition~\ref{prop:approximation_PDE_several_telomeres} allows us to control the error between $n_{\partial}^{(2k)}$ and $u_{\partial}^{(2k)}$, or their integrals. However, there is still a difficulty related to the fact that the denominator of the two terms tends to~$0$ when~$x\rightarrow+\infty$. We handle this by proving the following two lemmas, in Sections~\ref{subsubsect:proof_power_series_expansion} and~\ref{subsubsect:inequalities_tails} respectively.
\begin{lemm}[Power series expansion of the cemetery tail]\label{lemma:power_series_expansion}
	Assume that \hyperlink{assumption:H1}{$(H_1)$} and \hyperlink{assumption:H3}{$(H_3)$} hold. Let us consider the function $\tilde{n}_0\in L^{1}\left(\mathbb{R}_+^{2k}\right)$, defined for all $x\in\mathbb{R}_+^{2k}$ as
	\begin{equation}\label{eq:full_initial_distribution_n0}
		\tilde{n}_0(x) := \prod_{i = 1}^{2k} n_0(x_i).
	\end{equation}
	Then, for all $t\geq 0$, we have
	\begin{equation}\label{eq:power_series_expansion_tail_cemetery}
		\int_t^{+\infty} n_{\partial}^{(2k)}(s) \dd s = e^{-bNt} \Bigg[1 + \sum_{\ell\geq 1}  \frac{\left(bNt\right)^\ell}{\ell!}\int_{(x,v) \in \mathbb{R}_+^{2k}\times\left(\mathbb{R}_+^{2k}\right)^\ell}\tilde{n}_0\Bigg(x + \sum_{j = 1}^{\ell} \frac{v_j}{N}\Bigg) \dd x\mu(\dd v_1)\hdots\mu(\dd v_{\ell})\Bigg].
	\end{equation}
\end{lemm}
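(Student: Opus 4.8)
The plan is to compute $\int_{x\in\mathbb{R}_+^{2k}} n^{(2k)}(t,x)\dd x$ explicitly via a Dyson-type series and then to invoke conservation of mass. Indeed, by the second equality in~\eqref{eq:link_density_lengths_cemetery}, for every $t\geq0$ one has $\int_t^{+\infty} n_{\partial}^{(2k)}(s)\dd s = \int_{x\in\mathbb{R}_+^{2k}} n^{(2k)}(t,x)\dd x$, so~\eqref{eq:power_series_expansion_tail_cemetery} will follow once the same power series is shown to represent $\int_{x\in\mathbb{R}_+^{2k}} n^{(2k)}(t,x)\dd x$.

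To obtain the latter, I would first establish an explicit series representation of $n^{(2k)}$ itself. Let $\mathcal{T}$ be the operator on $L^1(\mathbb{R}_+^{2k})$ defined by $(\mathcal{T}f)(x) := \int_{v\in\mathbb{R}_+^{2k}} f\left(x+\frac{v}{N}\right)\mu(\dd v)$; since $\mu$ is a probability measure and $\frac{v}{N}+\mathbb{R}_+^{2k}\subseteq\mathbb{R}_+^{2k}$, a change of variable together with Tonelli's theorem give $\|\mathcal{T}f\|_{L^1}\leq\|f\|_{L^1}$, so $\mathcal{T}$ is a positive contraction. Hence the series
$$
m(t,\cdot) := e^{-bNt}\sum_{\ell\geq 0}\frac{(bNt)^\ell}{\ell!}\,\mathcal{T}^\ell\tilde{n}_0
$$
converges in $C(\mathbb{R}_+,L^1(\mathbb{R}_+^{2k}))$ (its partial sums are Cauchy, uniformly on compact time intervals, because $\|\mathcal{T}^\ell\tilde{n}_0\|_{L^1}\leq\|\tilde{n}_0\|_{L^1}=1$), satisfies $m(0,\cdot)=\tilde{n}_0$, and — differentiating term by term and reindexing the sum — solves $\partial_t m = bN\mathcal{T}m - bNm$, i.e.\ the first line of~\eqref{eq:rescaled_PDE_model_telomeres_several_telos} with initial datum $\tilde{n}_0$. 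By the well-posedness of that equation (Proposition~\ref{prop:well_definition_general_model} with $\xi=\mu$ and $f_0=\tilde{n}_0$, an admissible initial datum under~\hyperlink{assumption:H1}{$(H_1)$} and~\hyperlink{assumption:H3}{$(H_3)$}), we get $n^{(2k)}=m$.

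Next, an induction on $\ell$ (applying $\mathcal{T}$ and Tonelli's theorem, all integrands being nonnegative) identifies the iterates: for $\ell\geq 1$ and $x\in\mathbb{R}_+^{2k}$,
$$
(\mathcal{T}^\ell\tilde{n}_0)(x) = \int_{(\mathbb{R}_+^{2k})^\ell}\tilde{n}_0\left(x+\sum_{j=1}^{\ell}\frac{v_j}{N}\right)\mu(\dd v_1)\cdots\mu(\dd v_\ell),
$$
and $\mathcal{T}^0\tilde{n}_0=\tilde{n}_0$. Integrating $n^{(2k)}(t,\cdot)=m(t,\cdot)$ over $\mathbb{R}_+^{2k}$ and using Tonelli's theorem once more to move the $x$-integral inside the sum and the $\mu$-integrals yields
$$
\int_{x\in\mathbb{R}_+^{2k}} n^{(2k)}(t,x)\dd x = e^{-bNt}\sum_{\ell\geq 0}\frac{(bNt)^\ell}{\ell!}\int_{(x,v)\in\mathbb{R}_+^{2k}\times(\mathbb{R}_+^{2k})^\ell}\tilde{n}_0\left(x+\sum_{j=1}^\ell\frac{v_j}{N}\right)\dd x\,\mu(\dd v_1)\cdots\mu(\dd v_\ell),
$$
whose $\ell=0$ term is $\int_{\mathbb{R}_+^{2k}}\tilde{n}_0(x)\dd x = \left(\|n_0\|_{L^1(\mathbb{R}_+)}\right)^{2k}=1$ by~\eqref{eq:full_initial_distribution_n0} and the normalisation of $n_0$. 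Pulling this term out of the sum gives exactly the bracket in~\eqref{eq:power_series_expansion_tail_cemetery}, and combining with the reduction above proves the lemma.

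The main obstacle is the rigorous justification of the Dyson series in the second step: that the series defining $m$ converges in $C(\mathbb{R}_+,L^1)$, may be differentiated term by term, and that the resulting solution is \emph{the} $n^{(2k)}$ of~\eqref{eq:rescaled_PDE_model_telomeres_several_telos}; this is where one must check that $\tilde{n}_0$ lies in the functional setting covered by Proposition~\ref{prop:well_definition_general_model}, and thus where~\hyperlink{assumption:H3}{$(H_3)$} (together with $n_0\in W^{2,1}(\mathbb{R}_+)$) enters. Everything after that — the induction on $\ell$ and the interchanges of integrals — is routine bookkeeping with Tonelli's theorem on nonnegative integrands.
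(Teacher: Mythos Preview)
Your proof is correct and follows essentially the same route as the paper: both construct the Dyson series $e^{-bNt}\sum_{\ell\geq0}\frac{(bNt)^\ell}{\ell!}\mathcal{T}^\ell\tilde{n}_0$, check it solves the first line of~\eqref{eq:rescaled_PDE_model_telomeres_several_telos} by term-by-term differentiation, invoke the uniqueness of Proposition~\ref{prop:well_definition_general_model} to identify it with $n^{(2k)}$, and then use the mass conservation~\eqref{eq:link_density_lengths_cemetery} together with $\int\tilde{n}_0=1$ to reach~\eqref{eq:power_series_expansion_tail_cemetery}. The only cosmetic difference is that the paper factors out $e^{-bNt}$ after the fact (writing $\overline{F}=e^{-bNt}F$), whereas you build it into $m$ from the start; your version is also slightly more explicit about the contraction bound on $\mathcal{T}$ justifying convergence.
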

\begin{lemm}[Lower bound for cemetery tails]\label{lemma:inequalities_lower_bounds_tails}
	Assume that \hyperlink{assumption:H1}{$(H_1)-(H_4)$} hold, and recall the constant~$\omega_N'$ defined in~\eqref{eq:approximation_eigenvalues}. Then, for all~$t\geq 0$, we have
	\begin{equation}\label{eq:lowerbound_tails}
	\int_t^{+\infty} n_{\partial}^{(2k)}(s) \dd s \geq \left(D_{\omega}\right)^{2k}\exp\left(-kbm_1\omega_N' t\right) \hspace{1mm} \text{ and } \hspace{1mm} \int_t^{+\infty} u_{\partial}^{(2k)}(s) \dd s \geq \left(D_{\omega}\right)^{2k}\exp\left(-kbm_1\omega t\right).
	\end{equation}
\end{lemm}

\noindent In fact, the statement that allows us to do this control is Lemma~\ref{lemma:inequalities_lower_bounds_tails}. Lemma~\ref{lemma:power_series_expansion} corresponds to an intermediate step to obtain Lemma~\ref{lemma:inequalities_lower_bounds_tails}. Thus, the proof of Theorem~\ref{te:main_result}-$(b)$ consists in controlling the right-hand side term of~\eqref{eq:intermediate_inequality_error_severaltelos} by using Proposition~\ref{prop:approximation_PDE_several_telomeres} and Lemma~\ref{lemma:inequalities_lower_bounds_tails}. We do this in Section~\ref{subsect:proof_quality_estimator_several_telomeres}. 

We now prove all the statements given in this section, and then obtain Proposition~\ref{prop:approximation_PDE_several_telomeres} and Theorem~\ref{te:main_result}-$(b)$ from them.

\subsection{Proof of the auxiliary statements}\label{subsect:proof_auxiliary_statements_several}
This section is devoted to the proof of the auxiliary statements presented in Section~\ref{subsect:plan_proof_prop_theo_severaltelos}. These statements are proved one by one, in the same order in which they were stated.
\subsubsection{Proof of Lemma~\ref{lemm:value_sigma}}\label{subsubsect:proof_value_sigma}

We consider for all $(\ell,\ell')\in\llbracket1,2k\rrbracket^2$ the integral $I_{\ell,\ell'} := \int_{y \in\mathbb{R}_+^{2k}} y_\ell y_{\ell'} \mu(\dd v)$. One can easily see that $\sigma_{\mu} = \sum_{1\leq \ell,\ell'\leq 2k} I_{\ell,\ell'}$. Thus, our aim is to compute the values of the integrals $\left(I_{\ell,\ell'}\right)_{(\ell,\ell')\in\llbracket1,2k\rrbracket^2}$, and then conclude by summing their values. To do this, we fix $(\ell,\ell')\in\llbracket1,2k\rrbracket^2$ and do a distinction between cases.

Assume first that $\ell = \ell'$. In view of the left-hand side of~\eqref{eq:scaled_measures_shortening_and_by_set}, we have that for all $I\in\mathcal{I}_k$ 
$$
\int_{y \in\mathbb{R}_+^{2k}} y_{\ell}y_{\ell'}\mu^{(I)}\left(\dd y\right)= \begin{cases} 
	\int_{y'\in [0,\delta]} (y')^2g(y') \dd y' = m_2, &\text{if }\ell = \ell' \in I, \\
	0, & \text{otherwise.}
\end{cases}
$$
Then, by using the right-hand side of~\eqref{eq:scaled_measures_shortening_and_by_set}~and Lemma~\ref{lemm:zero_singleton_cardinal}, we obtain that 
\begin{equation}\label{eq:proof_value_sigma_case_1}
	I_{\ell,\ell} = \frac{1}{2^k}\underset{I \in \mathcal{I}_k,\,\ell = \ell'\in I}{\sum} m_2 = \frac{m_2}{2}.
\end{equation}

Assume now that $\ell \neq \ell'$. In this case, in view of the left-hand side of~\eqref{eq:scaled_measures_shortening_and_by_set}, we have that for all~$I\in\mathcal{I}_k$
\begin{equation}\label{eq:step3_approximation_lengths_intermediate_first}
	\int_{y \in\mathbb{R}_+^{2k}} y_{\ell}y_{\ell'}\mu^{(I)}\left(\dd y\right)= \begin{cases} 
		\left(\int_{y'\in [0,\delta]} y'g(y') \dd y'\right)^2 = \left(m_1\right)^2, &\text{if }\ell \in I \text{ and }\ell' \in I, \\
		0, & \text{otherwise.}
	\end{cases}
\end{equation}
In addition, by the definition of $\mathcal{I}_k$ (see~\eqref{eq:set_shortening}), we have that when $\ell = \ell' \text{ mod }k$
$$
\left\{I\in\mathcal{I}_k\,|\,\{\ell,\ell'\}\subset I\right\} = \emptyset.
$$
Then, by combining these results, we obtain that when $\ell \neq \ell'$ and $\ell = \ell' \text{ mod }k$
\begin{equation}\label{eq:proof_value_sigma_case_2}
	I_{\ell,\ell'} = \frac{1}{2^k}\sum_{I \in \mathcal{I}_k,\,\ell\in I,\,\ell'\in I} \left(m_1\right)^2 = 0.
\end{equation}
We also obtain by combining~\eqref{eq:step3_approximation_lengths_intermediate_first} with Lemma~\ref{lemm:pair_cardinal} that when $\ell \neq \ell' \text{ mod }k$
\begin{equation}\label{eq:proof_value_sigma_case_3}
	I_{\ell,\ell'} = \frac{1}{2^k}\sum_{I \in \mathcal{I}_k,\,\ell\in I,\,\ell'\in I}\left(m_1\right)^2 = \begin{cases}
		0, & \text{ if } k= 1, \\
		\frac{\left(m_1\right)^2}{4}, & \text{ if }k \geq 2.
	\end{cases}
\end{equation}

We now conclude. First, we combine~\eqref{eq:proof_value_sigma_case_1},~\eqref{eq:proof_value_sigma_case_2} and~\eqref{eq:proof_value_sigma_case_3}. Then, we use the fact that~as the set
$$
\left\{(\ell,\ell')\in\llbracket1,2k\rrbracket^2\,|\,\ell = \ell' \text{ mod }k\right\} = \left(\bigcup_{i = 1}^{2k} \left\{(i,i)\right\}\right)\bigcup\left(\bigcup_{i = 1}^{k}\left\{(i,i+k)\right\}\right)\bigcup\left(\bigcup_{i = k+1}^{2k} \left\{(i,i-k)\right\}\right)
$$
has a cardinality of $4k$, it holds
$\#\left(\left\{(\ell,\ell')\in\llbracket1,2k\rrbracket^2\,|\,\ell \neq \ell' \text{ mod }k\right\}\right) = (2k)^2 - 4k$. Finally, we use that as $k\in\mathbb{N}^*$, we have $\left((2k)^2 - 4k\right)1_{\{k\geq2\}} = (2k)^2 - 4k$. We obtain the following, which ends the proof

$$
\begin{aligned}
	\sigma_{\mu} &=   \sum_{1 \leq \ell =\ell' \leq 2k} I_{\ell,\ell}+ \sum_{\substack{1 \leq \ell \neq \ell' \leq 2k \\\text{s.t. } \ell = \ell' \text{ mod }k}} I_{\ell,\ell'}  + \sum_{\substack{1 \leq \ell \neq \ell' \leq 2k \\ \text{s.t. } \ell \neq \ell' \text{ mod }k}} I_{\ell,\ell'} \\
	&= 2k\frac{m_2}{2} + 0 + \left((2k)^2 - 4k\right)\frac{\left(m_1\right)^2}{4}1_{\{k\geq2\}} = \left(m_1\right)^2k^2 + \left(m_2 - \left(m_1\right)^2\right)k. \mathrlap{\phantom{xxxx}\hspace{1.3825mm}\qed}
\end{aligned}
$$ 
\subsubsection{Proof of Lemma~\ref{lemm:integral_cemetery_model_several_telos}}\label{subsubsect:proof_integral_cemetery}

We first need to obtain a better expression for $u_{\partial,2} = u_{\partial,0} - u_{\partial,1}$, where $u_{\partial,0}$ and $u_{\partial,1}$ are defined in~\eqref{eq:decomposition_function_u}. To do so, we develop the function $u_{\partial,0}$ in its definition.  One can notice that by the definition of~$u_{\partial,0}$ and~\eqref{eq:decomposition_cemetery_first_reason}, it holds for all $t\geq0$
$$
\begin{aligned}
	u_{\partial,0}(t) &= bN\int_{y\in\mathbb{R}_+^{2k}} u^{(2k)}\left(t,y\right)\mu\left(\left\{v\in\mathbb{R}_+^{2k}\,|\,Ny-v \notin \mathbb{R}_+^{2k}\right\}\right)1_{\left\{\#\left(i\in\llbracket1,2k\rrbracket\,|\,y_i \leq \frac{\delta}{N}\right) = 1\right\}}\mathrm{d}y \\ 
	&+bN\int_{y\in\mathbb{R}_+^{2k}} u^{(2k)}\left(t,y\right)\mu\left(\left\{v\in\mathbb{R}_+^{2k}\,|\,Ny-v \notin \mathbb{R}_+^{2k}\right\}\right)1_{\{\#\left(i\in\llbracket1,2k\rrbracket\,|\,y_i \leq \frac{\delta}{N}\right) \geq 2\}}\mathrm{d}y.
\end{aligned}
$$
The function on the first line of the above has the same definition as $n_{\partial,1}^{(2k)}$, see~\eqref{eq:decomposition_cemetery_several}, with $u^{(2k)}$ instead of $n^{(2k)}$. Then, following exactly the same steps as those to obtain~\eqref{eq:final_formula_first_term_cemetery}, replacing $n^{(2k)}$ with $u^{(2k)}$, yields that for all $t\geq0$
\begin{equation}\label{eq:proof_integral_cemetery_intermediate_first}
	\begin{aligned}
		u_{\partial,0}(t) &=\frac{b}{2}\sum_{i = 1}^{2k}\int_{y'\in\mathbb{R}_+^{2k}} u^{(2k)}\left(t,\sum_{j = 1,\,j\neq i}^{2k} y'_j e_j + \frac{y'_i}{N}e_i\right)\left(1-G(y'_i)\right)1_{\left\{y'_i \leq \delta,\,\forall j\in\llbracket1,2k\rrbracket\backslash\{i\}:\, y'_j > \frac{\delta}{N}\right\}}\mathrm{d}y'\\
		&+bN\int_{y\in\mathbb{R}_+^{2k}} u^{(2k)}\left(t,y\right)\mu\left(\left\{v\in\mathbb{R}_+^{2k}\,|\,Ny-v \notin \mathbb{R}_+^{2k}\right\}\right)1_{\left\{\#\left(\left\{i\in\llbracket1,2k\rrbracket\,|\,y_i \leq \frac{\delta}{N}\right\}\right) \geq 2\right\}}\mathrm{d}y.
	\end{aligned}
\end{equation}
Now, we use the above equation to develop $u_{\partial,2}$. Specifically, we subtract $u_{\partial,1}$ from both sides of~\eqref{eq:proof_integral_cemetery_intermediate_first}, in view of~\eqref{eq:decomposition_function_u} and the following equality
$$
1_{\left\{y'_i \leq \delta,\,\forall j\in\llbracket1,2k\rrbracket\backslash\{i\}:\, y'_j > \frac{\delta}{N}\right\}} - 1_{\left\{y'_i \leq \delta\right\}} = -1_{\left\{y'_i \leq \delta,\,\exists j\in\llbracket1,2k\rrbracket\backslash\{i\}:\, y'_j \leq \frac{\delta}{N}\right\}}.
$$
We obtain that for all $t\geq0$
$$
\begin{aligned}
	u_{\partial,2}(t) &= - \frac{b}{2}\sum_{i = 1}^{2k}\int_{y'\in\mathbb{R}_+^{2k}} u^{(2k)}\left(t,\sum_{\ell = 1,\,\ell\neq i}^{2k} y'_\ell e_\ell + \frac{y'_i}{N}e_i\right)\left(1-G(y'_i)\right)1_{\left\{y'_i \leq \delta,\,\exists j\in\llbracket1,2k\rrbracket\backslash\{i\}:\, y'_j \leq \frac{\delta}{N}\right\}}\mathrm{d}y'\\
	&+bN\int_{y\in\mathbb{R}_+^{2k}} u^{(2k)}\left(t,y\right)\mu\left(\left\{v\in\mathbb{R}_+^{2k}\,|\,Ny-v \notin \mathbb{R}_+^{2k}\right\}\right)1_{\left\{\#\left(\left\{i\in\llbracket1,2k\rrbracket\,|\,y_i \leq \frac{\delta}{N}\right\}\right) \geq 2\right\}}\mathrm{d}y \\
	&=: -u_{\partial,3}(t) + u_{\partial,4}(t).
\end{aligned}
$$
As $-u_{\partial,3}$ and $u_{\partial,4}$ have an opposite sign, the above implies that $\left|u_{\partial,2}(t)\right| \leq \max\left(u_{\partial,3}(t),u_{\partial,4}(t)\right)$ for all $t\geq0$. We thus now obtain an upper bound for both $u_{\partial,3}$ and $u_{\partial,4}$ in order to prove~\eqref{eq:integral_cemetery_model_several_telos}. 

To bound $u_{\partial,3}$, we first bound $1_{\left\{y'_i \leq \delta,\,\exists j\in\llbracket1,2k\rrbracket\backslash\{i\}:\, y'_j \leq \frac{\delta}{N}\right\}}$ by $\sum_{j \in \llbracket1,2k\rrbracket\backslash\{i\}}1_{\left\{y'_i \leq \delta,\, y'_j \leq \frac{\delta}{N}\right\}}$. Then, we use~\eqref{eq:characteristics_density_lengths} to write~$u^{(2k)}$ in terms of $n_0$, and apply~\hyperlink{assumption:H3}{$(H_3)$} to bound $n_0$. Finally, we integrate in $\mathrm{d}y'$, and use the equality $\int_0^{\delta}(1-G(y'_i)) \dd y'_i = m_1$, the inequality $\int_{0}^{\frac{\delta}{N}} \exp\left(-\lambda y'_j \right)\mathrm{d}y'_j \leq \frac{\delta}{N}$, and the equality $\int_{0}^{+\infty} \exp\left(-\lambda y'_\ell \right)\mathrm{d}y'_\ell = \frac{1}{\lambda}$ for all~$\ell\in\llbracket1,2k\rrbracket\backslash\{i,j\}$. We obtain that for all $t\geq0$ 
\begin{equation}\label{eq:auxiliary_integral_intermediate_fourth_cemetery_model_several_telos}
	\begin{aligned}
		\left|u_{\partial,3}(t)\right| \hspace{-0.0035mm}&\leq \hspace{-0.0035mm}\frac{b}{2}\sum_{i = 1}^{2k}\sum_{\substack{j = 1\\j\neq i}}^{2k}\int_{y'\in\mathbb{R}_+^{2k}} n_0\left(\frac{bm_1}{2}t+\frac{y'_i}{N}\right)\left[\prod_{\substack{\ell = 1\\\ell\neq i}}^{2k} n_0\left( \frac{bm_1}{2}t+y_\ell\right)\right]\left(1-G(y'_i)\right)1_{\left\{y'_i \leq \delta,\, y_j \leq \frac{\delta}{N}\right\}}\mathrm{d}y' \\ 
		&\leq \hspace{-0.0035mm}\frac{bm_1\delta}{2N}\sum_{i = 1}^{2k}\sum_{\substack{j = 1\\j\neq i}}^{2k} \frac{\lambda^{2k}\left(D_{\lambda}\right)^{2k}}{\lambda^{2k-2}}\exp\left(-kbm_1\lambda t\right) = 2k(2k-1)\frac{bm_1\delta}{2N} \lambda^{2}\left(D_{\lambda}\right)^{2k}\exp\left(-kbm_1\lambda t\right) .
	\end{aligned}
\end{equation}

To bound $u_{\partial,4}$, we first bound the term $\mu\left(\left\{v\in\mathbb{R}_+^{2k}\,|\,Ny-v \notin \mathbb{R}_+^{2k}\right\}\right)$ in $u_{\partial,4}$ by~$1$, as $\mu$ is a probability measure. Then, we bound the term $1_{\left\{\#\left(\left\{i\in\llbracket1,2k\rrbracket\,|\,y_i \leq \frac{\delta}{N}\right\}\right) \geq 2\right\}}$ by the sum $\sum_{(i,j) \in \llbracket1,2k\rrbracket^2,\,i\neq j}1_{\left\{y_i \leq \frac{\delta}{N},\, y_j \leq \frac{\delta}{N}\right\}}$. Finally, as done before, we successively use Eq.~\eqref{eq:characteristics_density_lengths} to write $u^{(2k)}$ in terms of $n_0$, apply~\hyperlink{assumption:H3}{$(H_3)$} to bound from above $n_0$, and integrate by using the same inequalities/equalities. We obtain that for all $t\geq0$ 
\begin{equation}\label{eq:auxiliary_integral_intermediate_fifth_cemetery_model_several_telos}
	\begin{aligned}
		\left|u_{\partial,4}(t)\right| &\leq bN\sum_{i = 1}^{2k}\sum_{\substack{j = 1\\j\neq i}}^{2k}\int_{y\in\mathbb{R}_+^{2k}} u^{(2k)}\left(t,y\right)1_{\left\{y_i \leq \frac{\delta}{N},\, y_j \leq \frac{\delta}{N}\right\}}\mathrm{d}y  \\ 
		&\leq 2k(2k-1)\frac{b\delta^2}{N} \lambda^{2}\left(D_{\lambda}\right)^{2k}\exp\left(-kbm_1\lambda t\right).
	\end{aligned}
\end{equation}
From~\eqref{eq:auxiliary_integral_intermediate_fourth_cemetery_model_several_telos},~\eqref{eq:auxiliary_integral_intermediate_fifth_cemetery_model_several_telos} and the fact that $\left|u_{\partial,2}\right| \leq \max\left(u_{\partial,3},u_{\partial,4}\right)$, the lemma is proved. \qed

\subsubsection{Proof of Lemma~\ref{lemma:power_series_expansion}}\label{subsubsect:proof_power_series_expansion}

We consider two functions $F:\mathbb{R}_+\times\mathbb{R}_+^{2k} \rightarrow\mathbb{R}$ and $\overline{F}:\mathbb{R}_+\times\mathbb{R}_+^{2k} \rightarrow\mathbb{R}$, defined for all~\hbox{$(t,x)\in\mathbb{R}_+\times\mathbb{R}_+^{2k}$} as
\begin{equation}\label{eq:inequalities_lower_bounds_tails_intermediate_first}
	\begin{aligned}
		F(t,x) &:= \tilde{n}_0(x) + \sum_{\ell\geq 1}  \frac{\left(bN\right)^\ell t^\ell}{\ell!}\int_{v_1 \in \mathbb{R}_+^{2k}}\hdots \int_{v_{\ell} \in \mathbb{R}_+^{2k}} \tilde{n}_0\left(x + \sum_{j = 1}^{\ell} \frac{v_j}{N}\right) \mu(\dd v_\ell)\hdots \mu(\dd v_{1}), \\
		\overline{F}(t,x) &:= e^{-bNt}F(t,x).
	\end{aligned}
\end{equation}
We begin by proving that $n^{(2k)} = \overline{F}$. By deriving $F$, and then taking $m = \ell-1$, we have that for all $(t,x)\in\mathbb{R}_+\times\mathbb{R}_+^{2k}$ 
$$
\begin{aligned}
	\partial_t F(t,x) &= bN\sum_{m\geq 0}  \frac{\left(bNt\right)^{m}}{m!} \int_{v_1 \in \mathbb{R}_+^{2k}}\hdots \int_{v_{m+1} \in \mathbb{R}_+^{2k}} \tilde{n}_0\left(x + \sum_{j = 1}^{m+1} \frac{v_j}{N}\right) \mu(\dd v_{m+1})\hdots \mu(\dd v_1) \\
	&= bN \int_{v_1 \in \mathbb{R}_+^{2k}} F\left(t,x+\frac{v_1}{N}\right)\mu(\dd v_1).
\end{aligned}
$$
Then, by combining the above with the fact that $\partial_t\overline{F}(t,x) = e^{-bNt}\partial_t F(t,x) - bN e^{-bNt} F(t,x)$, we obtain that $\overline{F}$ verifies the same integro-differential equation as~$n^{(2k)}$ (see Eq.~\eqref{eq:rescaled_PDE_model_telomeres_several_telos}), so that~\hbox{$n^{(2k)} = \overline{F}$ by Proposition~\ref{prop:well_definition_general_model}.} 

Now, we prove Eq.~\eqref{eq:power_series_expansion_tail_cemetery}. By applying the equality on the right-hand side of~\eqref{eq:link_density_lengths_cemetery}, and then combining the fact that \hbox{$n^{(2k)} = \overline{F}$} with the second line of~\eqref{eq:inequalities_lower_bounds_tails_intermediate_first}, we have that for all $t\geq0$
$$
\int_{t}^{+\infty}  n_{\partial}^{(2k)}(s) \dd s =  e^{-bNt}\int_{x\in\mathbb{R}_+^{2k}} F(t,x) \dd x.
$$  
Therefore, by plugging the first line of~\eqref{eq:inequalities_lower_bounds_tails_intermediate_first} in the above equation, and then using that \hbox{$\int_{x\in\mathbb{R}_+^{2k}} \tilde{n}_0(x) \dd x = 1$} to compute the term that is not in the sum, we obtain that Eq.~\eqref{eq:power_series_expansion_tail_cemetery} is~true, which ends the proof. \qed 
\subsubsection{Proof of Lemma~\ref{lemma:inequalities_lower_bounds_tails}}\label{subsubsect:inequalities_tails}

To prove this lemma, we proceed in two steps. In Step~\hyperlink{paragraph:step1_proof_lowerbounds_tails}{$1$}, we prove the left-hand side of~\eqref{eq:lowerbound_tails}, and in Step~\hyperlink{paragraph:step2_proof_lowerbounds_tails}{$2$}, we prove the right-hand side of~\eqref{eq:lowerbound_tails}.

\paragraph{Step $1$:}\hypertarget{paragraph:step1_proof_lowerbounds_tails}{} To simplify notations, we use in this step the function $\tilde{n}_0$, defined in \eqref{eq:full_initial_distribution_n0}. Our aim here is to bound from below the right-hand side term of~\eqref{eq:power_series_expansion_tail_cemetery}. To do so, we begin by bounding the coefficients in the sum. By applying~\hyperlink{assumption:H4}{$(H_4)$}, and then using that $f_{\omega}$ is non-decreasing, we have that for all $\ell\in\mathbb{N}^*$, $x\in\mathbb{R}_+^{2k}$ and $(v_1,\hdots,v_\ell)\in\left(\mathbb{R}_+^{2k}\right)^\ell$ 
$$
\begin{aligned}
	\tilde{n}_0\left(x + \sum_{j = 1}^{\ell} \frac{v_j}{N}\right) &\geq \left(D_{\omega}\right)^{2k}\frac{\prod_{i = 1}^{2k}\left(f_{\omega}\left(x_i + \sum_{j = 1}^{\ell} \frac{(v_j)_i}{N}\right)\exp\left[-\omega\left(x_i + \sum_{j = 1}^{\ell} \frac{(v_j)_i}{N}\right)\right]\right)}{\left(\int_{0}^{+\infty} f_{\omega}(y)\exp\left(-\omega y\right) \dd y\right)^{2k}} \\
	&\geq \left(D_{\omega}\right)^{2k}\frac{\prod_{i = 1}^{2k}\left(f_{\omega}\left(x_i \right)\exp\left[-\omega x_i\right]\right)}{\left(\int_{0}^{+\infty} f_{\omega}(y)\exp\left(-\omega y\right) \dd y\right)^{2k}} \exp\left(-\omega\sum_{i = 1}^{2k}\sum_{j = 1}^{\ell} \frac{(v_j)_i}{N}\right) .
\end{aligned}
$$
Then, by integrating both sides in $\dd x$ and $\left(\mu(\dd v_i)\right)_{i\in\llbracket1,\ell\rrbracket}$, and simplifying the last term with a Laplace transform, we obtain that for all~$\ell\in\mathbb{N}^*$ 
\begin{equation}\label{eq:step_1.1_intermediate_first}
	\begin{aligned}
		&\int_{x\in\mathbb{R}_+^{2k}} \int_{v_1\in\mathbb{R}_+^{2k}}\hdots \int_{v_\ell\in\mathbb{R}_+^{2k}}\tilde{n}_0\left(x + \sum_{j = 1}^{\ell} \frac{v_j}{N}\right)  \mu(\dd v_\ell) \hdots \mu(\dd v_1)\dd x \\ 
		&\geq \left(D_{\omega}\right)^{2k}\left[\prod_{j = 1}^\ell\int_{v_j\in\mathbb{R}_+^{2k}}\exp\left(-\omega\sum_{i = 1}^{2k} \frac{(v_j)_i}{N}\right)\mu(\dd v_j)\right] = \left(D_{\omega}\right)^{2k}\left(\mathcal{L}(\mu)\left(\frac{\omega}{N}\right)\right)^\ell.
	\end{aligned}
\end{equation}
We now plug~\eqref{eq:step_1.1_intermediate_first} in the right-hand side of~\eqref{eq:power_series_expansion_tail_cemetery}. Then, we use that $1 \geq \left(D_{\omega}\right)^{2k}$ (as $D_{\omega} \leq 1$) to bound from below the term that is not in the sum. We obtain that for all $t\geq0$
\begin{equation}\label{eq:step_1.1_intermediate_second}
\begin{aligned}
	\int_t^{+\infty} n_{\partial}^{(2k)}(s) \dd s &\geq e^{-bNt}\left(D_{\omega}\right)^{2k}\left[1 +\sum_{\ell\geq 1}  \frac{\left(bN\right)^\ell t^\ell}{\ell!}\left(\mathcal{L}(\mu)\left(\frac{\omega}{N}\right)\right)^\ell\right]\\ 
	& = \left(D_{\omega}\right)^{2k}\exp\left[bN\left(\mathcal{L}(\mu)\left(\frac{\omega}{N}\right)-1\right)t\right].
\end{aligned}
\end{equation}
In addition, by first using the right-hand side of~\eqref{eq:scaled_measures_shortening_and_by_set} to develop $\mathcal{L}(\mu)$, then the left-hand side of~\eqref{eq:scaled_measures_shortening_and_by_set} to obtain that $\mathcal{L}(\mu^{(I)}) = \left(\mathcal{L}(g)\right)^{k}$ for all $I\in\mathcal{I}_k$, and finally the first equality in Lemma~\ref{lemm:zero_singleton_cardinal} to simplify the sum and the fraction, we have that for all~$p\in\mathbb{C}$ such that $\text{Re}(p) >0$
\begin{equation}\label{eq:laplace_transform_mu}
	\mathcal{L}\left(\mu\right)(p) =  \frac{1}{2^k}\sum_{I \in \mathcal{I}_k} \mathcal{L}\left(\mu^{(I)}\right)(p)= \frac{1}{2^k}\sum_{I \in \mathcal{I}_k} \left(\mathcal{L}(g)\left(p\right)\right)^{k}= \left(\mathcal{L}(g)(p)\right)^{k}.
\end{equation}
By plugging the above with $p = \frac{\omega}{N}$ in~\eqref{eq:step_1.1_intermediate_second}, in view of~\eqref{eq:approximation_eigenvalues}, we obtain that~the left-hand side of~\eqref{eq:lowerbound_tails} is~proved.

\paragraph{Step $2$:}\hypertarget{paragraph:step2_proof_lowerbounds_tails}{} First, notice that by applying~\hyperlink{assumption:H4}{$(H_4)$}, then doing the change of variable $s' = s - \frac{bm_1t}{2}$, and finally using that $f_{\omega}$ is non-decreasing, we have for all $t\geq0$
$$
\begin{aligned}
	\int_{\frac{bm_1}{2}t}^{+\infty} n_0(s) \dd s &\geq  \frac{D_{\omega}\int_{\frac{bm_1}{2}t}^{+\infty} f_{\omega}(s)\exp\left(-\omega s\right) \dd s}{\int_{0}^{+\infty} f_{\omega}(z)\exp\left(-\omega z\right) \dd z}    \\
	&=  \frac{D_{\omega}\int_{0}^{+\infty} f_{\omega}\left(s'+\frac{bm_1}{2}t\right)\exp\left(-\omega s'-\frac{ bm_1\omega}{2}t\right) \dd s'}{\int_{0}^{+\infty} f_{\omega}(z)\exp\left(-\omega z\right) \dd z}\geq D_{\omega}\exp\left(-\frac{ bm_1\omega}{2}t\right).
\end{aligned}
$$
Then, by combining the above equation with the first equality in~\eqref{eq:link_tail_cemetery_initial}, we obtain that~the right-hand side of~\eqref{eq:lowerbound_tails} is true, which ends the proof. \qed
\subsection{Proof of Proposition~\ref{prop:approximation_PDE_several_telomeres}-\texorpdfstring{$(a)$}{(a)}}\label{subsect:proof_approximation_lengths_several}

To recall, in view of~\eqref{eq:equation_difference_severaltelosmodel_approximant_lengths}, our aim is to check the assumptions of Lemma~\ref{lemm:key_lemma_lengths} for $u_{\xi} = \overline{u}^{(2k)}$, $\xi = \mu$ and $F = u^{(2k)}$, and then to apply it to obtain the statement. First, notice that by~\eqref{eq:characteristics_density_lengths}, we have for all $(\ell,\ell')\in\llbracket1,2k\rrbracket^2$, $(t,x)\in \mathbb{R}_+\times\mathbb{R}_+^{2k}$,
$$
\partial_{x_{\ell}x_{\ell'}}u^{(2k)}(t,x) = \begin{cases}
	n'_0\left(\frac{bm_1}{2}t + x_\ell\right)n'_0\left(\frac{bm_1}{2}t+x_{\ell'}\right)\prod_{i = 1, i\notin\{\ell,\ell'\}}^{2k}n_0\left(\frac{bm_1}{2}t+x_i\right), & \text{if }\ell\neq \ell', \\
	n''_0\left(\frac{bm_1}{2}t + x_\ell\right)\prod_{i = 1, i\neq \ell}^{2k}n_0\left(\frac{bm_1}{2}t + x_i\right), & \text{if }\ell= \ell'.
\end{cases}
$$
Then, by applying~\hyperlink{assumption:H2}{$(H_2)$} and~\hyperlink{assumption:H3}{$(H_3)$} to the above, we obtain that for all $(\ell,\ell')\in\llbracket1,2k\rrbracket^2$ and $(t,x)\in \mathbb{R}_+\times\mathbb{R}_+^{2k}$
$$
\left|\partial_{x_\ell x_{\ell'}}u^{(2k)}(t,x)\right| \leq \begin{cases}
	\left(C'_{\lambda}\right)^2\left(\lambda D_{\lambda}\right)^{2k-2}\exp\left(- k bm_1\lambda t-\lambda\sum_{i = 1}^{2k}x_i \right), & \text{if }\ell\neq \ell', \\
	C_{\lambda}\left(\lambda D_{\lambda}\right)^{2k-1}\exp\left(- k bm_1\lambda t-\lambda\sum_{i = 1}^{2k}x_i \right), & \text{if }\ell= \ell'.
\end{cases}
$$
This implies that~\eqref{eq:condition_second_derivative_general_lemma} holds with $C = \max\left(\left(C'_{\lambda}\right)^2,C_{\lambda}\lambda D_{\lambda}\right)\left(\lambda D_{\lambda}\right)^{2k-2}$, $\alpha = kbm_1\lambda$, and $\beta = \lambda$. In addition, in view of the inequality $1 - e^{-x} \leq x$ for all $x\in\mathbb{R}$ and Lemma~\ref{lemm:moments_measure_mu}, we have that 
\begin{equation}\label{eq:step2_approximation_lengths_intermediate_second}
bN\left[1 - \mathcal{L}(\mu)\left(\frac{\lambda}{N}\right)\right] = bN\int_{u\in\mathbb{R}_+^{2k}} \left(1 - e^{-\frac{\lambda}{N}\sum_{i = 1}^{2k}u_i}\right) \mu\left(\dd u\right)  
	\leq  b\lambda\sum_{i = 1}^{2k}\int_{u\in\mathbb{R}_+^{2k}} u_i\mu\left(\dd u\right) = kbm_1\lambda,
\end{equation}
so that~\eqref{eq:inequalities_alpha_and_beta} holds with the same $\alpha$ and $\beta$ as before. From these two results, all the assumptions of Lemma~\ref{lemm:key_lemma_lengths} are verified. We thus apply this lemma, and it comes for all~\hbox{$(t,x)\in \mathbb{R}_+\times\mathbb{R}_+^{2k}$}
\begin{equation}\label{eq:step2_approximation_lengths_intermediate_third}
	\left|\overline{u}^{(2k)}(t,x)\right| \leq \max\left(\left(C'_{\lambda}\right)^2,C_{\lambda}\lambda D_{\lambda}\right)\left(\lambda D_{\lambda}\right)^{2k-2}\frac{bt\sigma_{\mu}}{2N}\exp\left[- bN\left(1-\mathcal{L}(\mu)\left(\frac{\lambda}{N}\right)\right)t-\lambda\sum_{i = 1}^{2k}x_i\right].
\end{equation}
Now, by Lemma~\ref{lemm:value_sigma}, we have that there exists $d'_0 >0$, independent of $b$ and~$k$, such~that 
\begin{equation}\label{eq:step2_approximation_lengths_intermediate_fourth}
	\max\left(\left(C'_{\lambda}\right)^2,C_{\lambda}\lambda D_{\lambda}\right)\sigma_{\mu} \leq d'_0\left(\lambda D_{\lambda}\right)^2k^2.
\end{equation}
Moreover, in view of~\eqref{eq:laplace_transform_mu} and~\eqref{eq:approximation_eigenvalues}, we have that
\begin{equation}\label{eq:link_laplace_mu_and_approximated_eigenvalue}
	bN\left[1-\mathcal{L}(\mu)\left(\frac{\lambda}{N}\right)\right] = bN\left[1-\left(\mathcal{L}(g)\left(\frac{\lambda}{N}\right)\right)^k\right]=kbm_1\lambda'_N.
\end{equation} 
Then, by plugging~\eqref{eq:step2_approximation_lengths_intermediate_fourth} and~\eqref{eq:link_laplace_mu_and_approximated_eigenvalue} in~\eqref{eq:step2_approximation_lengths_intermediate_third}, we obtain that Proposition~\ref{prop:approximation_PDE_several_telomeres}-$(a)$ is true. \qed

\subsection{Proof of Proposition~\ref{prop:approximation_PDE_several_telomeres}-\texorpdfstring{$(b)$}{(b)}}\label{subsect:proof_approximation_cemetery_several}



In view of~\eqref{eq:decomposition_difference_cemetery_several}, we consider the function $v_{\mu} := n_{\partial}^{(2k)} - u_{\partial,0}^{(2k)} + u_{\partial,1}^{(2k)}- u_{\partial}^{(2k)}$. Assume that there exists a set of functions $(h_i)_{i\in\llbracket1,2k\rrbracket}$ from $\mathbb{R}_+\times\mathbb{R}_+^{2k}$ to $\mathbb{R}$ verifying~\eqref{eq:condition_approx_cemetery_first} with $C' = \frac{1}{2}C'_{\lambda}\left(\lambda D_{\lambda}\right)^{2k-1}$, $\xi = \mu$ and~$\beta = \lambda$, such that for all $t\geq0$
\begin{equation}\label{eq:step2_prop_approx_cemetery_intermediate_first}
	u_{\partial,1}^{(2k)}(t)- u_{\partial}^{(2k)}(t) = b\sum_{i = 1}^{2k}\int_{y\in\mathbb{R}_+^{2k}} h_i(t,y)(1-G(y_i))1_{\{y_i \leq \delta\}}\mathrm{d}y.
\end{equation}
By developing $n_{\partial}^{(2k)}$ and $u_{\partial,0}^{(2k)}$ with respectively~\eqref{eq:rescaled_PDE_model_telomeres_several_telos} and~\eqref{eq:decomposition_function_u}, we have that for all $t\geq0$
\begin{equation}\label{eq:step2_prop_approx_cemetery_intermediate_second}
	n_{\partial}^{(2k)}(t) - u_{\partial,0}^{(2k)}(t) =bN\int_{y\in\mathbb{R}_+^{2k}} \left[n^{(2k)}\left(t,y\right) - u^{(2k)}\left(t,y\right)\right]\mu\left(\left\{w\in\mathbb{R}_+^{2k}\,|\,Ny-w \notin \mathbb{R}_+^{2k}\right\}\right)\mathrm{d}y .
\end{equation}
Then, by plugging \eqref{eq:step2_prop_approx_cemetery_intermediate_first} and~\eqref{eq:step2_prop_approx_cemetery_intermediate_second} in the definition of $v_{\mu}$, we have that $v_{\mu}$ verifies~\eqref{eq:condition_approx_cemetery_second} with~$u_{\xi} = n^{(2k)} - u^{(2k)}$ and $\xi = \mu$. As $u_{\xi}$ verifies the assumptions of Lemma~\ref{lemm:key_lemma_lengths} by the proof of Proposition~\ref{prop:approximation_PDE_several_telomeres}-$(a)$, see Section~\ref{subsect:proof_approximation_lengths_several}, this means that all the assumptions of Lemma~\ref{lemm:key_lemma_cemetery} are verified for $v_{\xi} = v_{\mu}$. Then, by applying this lemma, and using~\eqref{eq:step2_approximation_lengths_intermediate_fourth} and \eqref{eq:link_laplace_mu_and_approximated_eigenvalue} to simplify the bound (as done in Section~\ref{subsect:proof_approximation_lengths_several}), we obtain that for all $t\geq0$  
$$
\left|v_{\mu}(t)\right| \leq \frac{b}{N}\left(D_{\lambda}\right)^{2k-1}\bigg(d'_0\lambda D_{\lambda}\frac{k^3b m_1t}{2} + \frac{C'_{\lambda} m_2k}{2}\bigg)\exp\left(-kbm_1 \lambda'_Nt\right).
$$
Recalling Eq.~\eqref{eq:decomposition_difference_cemetery_several}, combining the above with~\eqref{eq:integral_cemetery_model_several_telos} through a triangle inequality, and then using that $-kbm_1 \lambda \leq -kbm_1 \lambda'_N$ (as a consequence of~\eqref{eq:step2_approximation_lengths_intermediate_second} and~\eqref{eq:link_laplace_mu_and_approximated_eigenvalue}), yield that~\eqref{eq:approximation_cemetery_several} is true. Then,  Proposition~\ref{prop:approximation_PDE_several_telomeres}-$(b)$ is proved, assuming that~the set of functions $(h_i)_{i\in\llbracket1,2k\rrbracket}$ presented at the beginning of the proof exists. 

It thus remains to prove that such a sequence exists. To do so, we consider for all $i\in\llbracket1,2k\rrbracket$, $(t,y)\in\mathbb{R}_+\times\mathbb{R}_+^ {2k}$,
\begin{equation}\label{eq:step2_prop_approx_cemetery_intermediate_third}
	h_i(t,y) :=  \frac{1}{2}\left[u^{(2k)}\left(t,\sum_{j = 1,\,j\neq i}^{2k}y_je_j + \frac{y_i}{N}\right) - u^{(2k)}\left(t,\sum_{j = 1,\,j\neq i}^{2k}y_je_j \right)\right].
\end{equation}
By taking the difference between the second lines of Eq.~\eqref{eq:decomposition_function_u} and Eq.~\eqref{eq:approximation_transport_model_2k_telomeres}, and using that $m_1 = \int_{y_i\in[0,\delta]} (1 - G(y_i)) \dd y_i$ for each index~$i\in\llbracket1,2k\rrbracket$ of the sum in the second line of~\eqref{eq:approximation_transport_model_2k_telomeres}, we have that \eqref{eq:step2_prop_approx_cemetery_intermediate_first} holds with the set of functions defined in~\eqref{eq:step2_prop_approx_cemetery_intermediate_third}. In addition, by using~\eqref{eq:characteristics_density_lengths}, then writing $n_0$ as an integral of $n'_0$, and finally applying~\hyperlink{assumption:H2}{$(H_2)$} and \hyperlink{assumption:H3}{$(H_3)$}, we have that for all~$i\in\llbracket1,2k\rrbracket$, $(t,y)\in\mathbb{R}_+\times\mathbb{R}_+^{2k}$, 
$$
\begin{aligned}
	h_i(t,y) &= \frac{1}{2}\left[n_0\left(\frac{bm_1}{2}t + \frac{y_i}{N}\right) - n_0\left(\frac{bm_1}{2}t\right)\right]\left[\prod_{j = 1,\,j\neq i}^{2k} n_0\left(\frac{bm_1}{2}t + y_j\right)\right] \\
	&= \frac{1}{2}\left[\int_{0}^{\frac{y_i}{N}} n'_0\left(\frac{bm_1}{2}t + u\right) \dd u\right]\left[\prod_{j = 1,\,j\neq i}^{2k} n_0\left(\frac{bm_1}{2}t + y_j\right)\right] \\
	&\leq \frac{1}{2}C'_{\lambda}\left(\lambda D_{\lambda}\right)^{2k-1}\frac{y_i}{N}\exp\left[-kbm_1\lambda t- \lambda \sum_{\substack{j = 1,\,j\neq i}}^{2k} y_j\right].
\end{aligned}
$$
This implies, by using~\eqref{eq:step2_approximation_lengths_intermediate_second} to bound the coefficient $-kbm_1\lambda$ in the exponential, that~$\left(h_{i}\right)_{i\in\llbracket1,2k\rrbracket}$ verifies~\eqref{eq:condition_approx_cemetery_first} with \hbox{$C' = \frac{1}{2}C'_{\lambda}\left(\lambda D_{\lambda}\right)^{2k-1}$}, $\xi = \mu$ and $\beta = \lambda$. Then, from these points, we have that the set of functions we need exists, which concludes the proof of Proposition~\ref{prop:approximation_PDE_several_telomeres}-$(b)$. \qed 
\subsection{Proof of Theorem~\ref{te:main_result}-\texorpdfstring{$(b)$}{(b)}}\label{subsect:proof_quality_estimator_several_telomeres}

Let $x\geq0$. To simplify notations, we denote 
$$
\widehat{N}_0(x) := \int_{\frac{2x}{bm_1}}^{+\infty} n_{\partial}^{(2k)}\left(s\right) \dd s, \hspace{3mm}\text{ and } \hspace{3mm}\widehat{U}_0(x) := \int_{\frac{2x}{bm_1}}^{+\infty} u_{\partial}^{(2k)}\left(s\right) \dd s.
$$ 
In view of~\eqref{eq:intermediate_inequality_error_severaltelos} and the triangle inequality, the following holds
\begin{equation}\label{eq:proof_main_result_intermediate_first}
	\begin{aligned}
		\left|\widehat{n}_0^{(2k)}(x) - n_0(x)\right| &\leq \frac{\left|n_{\partial}^{(2k)}\left(\frac{2x}{bm_1}\right) - u_{\partial}^{(2k)}\left(\frac{2x}{bm_1}\right)\right|}{kbm_1\widehat{N}_0(x)^{1 - \frac{1}{2k}}} + \frac{u_{\partial}^{(2k)}\left(\frac{2x}{bm_1}\right)}{kbm_1} \left|\frac{1}{\widehat{N}_0(x)^{1 - \frac{1}{2k}}} - \frac{1}{\widehat{U}_0(x)^{1 - \frac{1}{2k}}}\right| \\
		&=: \Delta_1(x) + \Delta_2(x).
	\end{aligned}
\end{equation}
Thus, our aim is to obtain an upper bound for both $\Delta_1(x)$ and $\Delta_2(x)$. Theorem~\ref{te:main_result}-$(b)$ then comes by summing these bounds.

By applying Proposition~\ref{prop:approximation_PDE_several_telomeres}-$(b)$ to bound the numerator, and Lemma~\ref{lemma:inequalities_lower_bounds_tails} to bound the denominator, we have the following
\begin{equation}\label{eq:proof_main_result_intermediate_second}
	\begin{aligned}
		\Delta_1(x) &\leq \frac{\frac{bd'_1}{N}\left(D_{\lambda}\right)^{2k}\left(k^3x + k^2+ k\right)\exp\left(-2k \lambda'_Nx\right)}{kbm_1\left[\left(D_{\omega}\right)^{2k}\exp\left(-2k\omega_N' x\right)\right]^{1 - \frac{1}{2k}}}\\ 
		&= \frac{d'_1D_{\omega}}{Nm_1}\left(\frac{D_{\lambda}}{D_{\omega}}\right)^{2k}\left(k^2x + k+ 1\right)\exp\left(-2k\lambda'_Nx+(2k-1)\omega_N'x\right).
	\end{aligned}
\end{equation}
We thus now focus on finding an upper bound for $\Delta_2(x)$, which requires more computations. For this purpose, we begin by obtaining an intermediate inequality. In the equation below, first apply the equality $\left|\frac{1}{c} -\frac{1}{d}\right| = \frac{|c-d|}{cd}$ for $c = \widehat{N}_0(x)^{1 - \frac{1}{2k}}$ and $d = \widehat{U}_0(x)^{1 - \frac{1}{2k}}$ to develop $\Delta_2(x)$. Then, use that it holds \hbox{$\frac{1}{kbm_1}u_{\partial}^{(2k)}\left(\frac{2x}{bm_1}\right) = n_0(x)\widehat{U}_0(x)^{1 - \frac{1}{2k}}$}  by Lemma~\ref{lemm:proof_link_initial_cemetery_several} to simplify the term $\widehat{U}_0(x)^{1 - \frac{1}{2k}}$ in the denominator. Finally, apply~\hyperlink{assumption:H3}{$(H_3)$} to bound from above the term~$n_0(x)$ coming from the previous computation, and use~Lemma~\ref{lemma:inequalities_lower_bounds_tails} to bound the term $\widehat{N}_0(x)^{1-\frac{1}{2k}}$ in the denominator. It comes the following inequality
\begin{equation}\label{eq:proof_main_result_intermediate_third}
	\begin{aligned}
		\Delta_2(x)\hspace{-0.15mm} &= 	 n_0(x)\frac{\left|\widehat{N}_0(x)^{1 - \frac{1}{2k}}-\widehat{U}_0(x)^{1 - \frac{1}{2k}}\right|}{\widehat{N}_0(x)^{1 - \frac{1}{2k}}} \leq  \lambda D_{\lambda}\exp\left(-\lambda x\right)\frac{\left|\widehat{N}_0(x)^{1 - \frac{1}{2k}}-\widehat{U}_0(x)^{1 - \frac{1}{2k}}\right|}{\left(D_{\omega}\right)^{2k-1}\exp\left[-(2k-1)\omega_N' x\right]}.
	\end{aligned}
\end{equation}
To continue our computations, we need to bound the numerator of~\eqref{eq:proof_main_result_intermediate_third}. To do so, we first develop it by using the inequality $|c^{\ell}-d^{\ell}|\leq \ell\frac{|c-d|}{\min(c,d)^{1-\ell}}$ for $c = \widehat{N}_0(x)$, $d = \widehat{U}_0(x)$ and $\ell = 1 - \frac{1}{2k}$. This inequality comes from the Taylor's inequality applied to the function~$y\mapsto y^\ell$, and is true when $\ell\in(0,1)$. Thereafter, we apply Lemma~\ref{lemma:inequalities_lower_bounds_tails} to bound the term~$\frac{1}{\min(c,d)^{1-\ell}}$ coming from the previous step, in view of the fact that~$\omega'_N \leq \omega$ by~\eqref{eq:step2_approximation_lengths_intermediate_second} and~\eqref{eq:link_laplace_mu_and_approximated_eigenvalue}, that also hold when~$\lambda$ and $\lambda'_N$  are replaced with $\omega$ and $\omega'_N$ respectively. Finally, to bound the term $|c-d|$ coming from the first step, we use the following equality, which comes from~\eqref{eq:link_density_lengths_cemetery}-\eqref{eq:link_tail_cemetery_initial}, Proposition~\ref{prop:approximation_PDE_several_telomeres}-$(a)$, and the fact that~$\int_{y\in\mathbb{R}_+^{2k}}\exp\left(-\lambda \sum_{i =1}^{2k}y_i\right)\dd y = \frac{1}{\lambda^{2k}}$,
$$
\left|\widehat{N}_0(x)-\widehat{U}_0(x)\right|   =  \bigg|\int_{y\in\mathbb{R}_+^{2k}} \left(n^{(2k)}-u^{(2k)}\right)\left(\frac{2x}{bm_1},y\right) \dd y\bigg| \leq  d'_0\left( D_{\lambda}\right)^{2k}\frac{k^2x}{Nm_1}\exp\left(-2k\lambda'_Nx\right).
$$
We obtain
\begin{equation}\label{eq:proof_main_result_intermediate_fourth}
	\left|\widehat{N}_0(x)^{1 - \frac{1}{2k}}-\widehat{U}_0(x)^{1 - \frac{1}{2k}}\right| \leq \left(1 - \frac{1}{2k}\right) \frac{d'_0\left( D_{\lambda}\right)^{2k}\frac{k^2x}{Nm_1}\exp\left[-2k\lambda'_Nx\right]}{D_{\omega}\exp\left(-\omega x \right)}.
\end{equation}
Then, by plugging~\eqref{eq:proof_main_result_intermediate_fourth} in~\eqref{eq:proof_main_result_intermediate_third} and bounding the term $1-\frac{1}{2k}$ by $1$, we get the following upper bound for $\Delta_2(x)$ 
\begin{equation}\label{eq:proof_main_result_intermediate_fifth}
	\begin{aligned}
		\Delta_2(x) &\leq \lambda D_{\lambda}\exp\left(-\lambda x\right)\frac{d'_0\left( D_{\lambda}\right)^{2k}\frac{k^2x}{Nm_1}\exp\left(-2k\lambda'_Nx\right)}{\left(D_{\omega}\right)^{2k}\exp\left[-(2k-1)\omega'_Nx -\omega x\right]} \\ 
		&= \frac{d'_0\lambda D_{\lambda}}{Nm_1}\left(\frac{D_{\lambda}}{D_{\omega}}\right)^{2k}k^2x\exp\left[-\left(\lambda + 2k\lambda'_N\right)x + \left(\omega + (2k-1)\omega'_N\right)x\right].
	\end{aligned}
\end{equation}


We now conclude. As it holds $\omega \geq \lambda$, we have that
\begin{equation}\label{eq:proof_main_result_intermediate_sixth}
-2k\lambda'_N+(2k-1)\omega_N' \leq -\left(\lambda + 2k\lambda'_N\right) + \left(\omega + (2k-1)\omega'_N\right).
\end{equation}
Therefore, by plugging \eqref{eq:proof_main_result_intermediate_second}~and~\eqref{eq:proof_main_result_intermediate_fifth} in~\eqref{eq:proof_main_result_intermediate_first}, and then using~\eqref{eq:proof_main_result_intermediate_sixth} to bound the term coming from~\eqref{eq:proof_main_result_intermediate_second}, we obtain Theorem~\ref{te:main_result}-$(b)$. \qed 


\section{Estimation on simulations and experimental data}\label{sect:estimation_on_simulations}
Now that we have studied the quality of our estimators from a theoretical point of view, we verify if they work in practice. First, in Section~\ref{subsect:estimation_results_onetelo}, we present estimation results in the case where we observe $n_{\partial}^{(1)}$ and~$n_{\partial}^{(2k)}$ without noise. Then, in Section~\ref{subsect:estimation_results_probabilistics_model}, we study how our inference method can be adapted in a more realistic framework where we observe noisy values of $n_{\partial}^{(1)}$ and~$n_{\partial}^{(2k)}$, focusing in particular on the noise related to sampling. Finally, in Section~\ref{subsect:estimation_real_datas}, we test our inference method on experimental data.

\subsection{Estimation results on noise-free data}\label{subsect:estimation_results_onetelo}

We study here the case where we observe the exact values of $\widehat{n}_0^{(1)}$ and~$\widehat{n}_0^{(2k)}$. We first present in Section~\ref{subsubsect:good_estimation_results} two examples in which the estimation is satisfactory in the single-telomere model. Then, in Section~\ref{subsubsect:problems_variability}, we present the issues that arise in this model when $n_0$ has a small variability. Finally, in Section~\ref{subsect:estimation_results_severaltelos},  we present estimation results for the model in~$\mathbb{R}_+^{2k}$. 

\subsubsection{Estimation results in the single-telomere model: large variability}\label{subsubsect:good_estimation_results}
Let us start by presenting the framework we use here. We introduce for all $(\ell,\beta)\in\mathbb{N}^*\times\mathbb{R}_+^*$ the following functions, for all $x\geq0$,
\begin{equation}\label{eq:density_erlang_distribution}
	h_{\ell,\beta}(x) := \frac{\beta^{\ell}}{(\ell-1)!}x^{\ell-1}\exp\left(-\beta x\right), \hspace{2.5mm} \text{ and } \hspace{2.5mm} H_{\ell,\beta}(x) := \int_0^x \frac{\beta^{\ell}}{(\ell-1)!}y^{\ell-1}\exp\left(-\beta y\right) \dd y.
\end{equation}%
These functions correspond respectively to the probability density and the cumulative density function of an Erlang distribution with parameter $(\ell,\beta)$. In the estimations presented here, we choose $n_0$ belonging to the set~$ \left\{h_{\ell,\beta}\,|\,(\ell,\beta)\in\mathbb{N}^*\times\mathbb{R}_+^*\right\}$. The first reason is that in this case, either $\widehat{n}_0^{(1)}$ or $n^{(1)}$ can be computed thanks to an explicit formula, see Propositions~\ref{prop:explicit_solutions_h1beta} and~\ref{prop:explicit_solutions_gamma}. The second reason is that our main result assumptions are verified for these functions. Finally, the last reason is that when $\ell \geq 2$, these distributions are biologically relevant since they correspond to unimodal distributions, see~\cite{Xu2013}.
 
%
%


We assume that~\hyperlink{assumption:H1}{$(H_1)$} is verified for $b = 1$, $g = 1_{[0,1]}$, and $N$ not yet fixed. Our aim is to check numerically that the curve of $\widehat{n}_0^{(1)}$ is close to the one of~$n_0$. To do so, we plot in Figures~\ref{fig:estimation_exponential_onetelo} and~\ref{fig:estimation_gamma_onetelo} the curve of~$\widehat{n}_0^{(1)}$ as a function of telomere lengths when $n_0 \in \left\{h_{1,4},h_{2,1}\right\}$, and for $N\in\{1,5,40\}$. In each of these figures, the curve of $n_0$ is also plotted in black for comparison. We observe that when~$N$ is large, the estimation performs very well.  Indeed, in Figures~\ref{fig:estimation_exponential_onetelo} and~\ref{fig:estimation_gamma_onetelo}, the blue curves, which correspond to the estimations with $N=40$ are almost superposed with the dotted curves. The estimation results are thus very satisfactory.


\begin{figure}[!ht]
	\centering
	\begin{subfigure}[t]{0.425\textwidth}
		\centering
		\includegraphics[width = \textwidth]{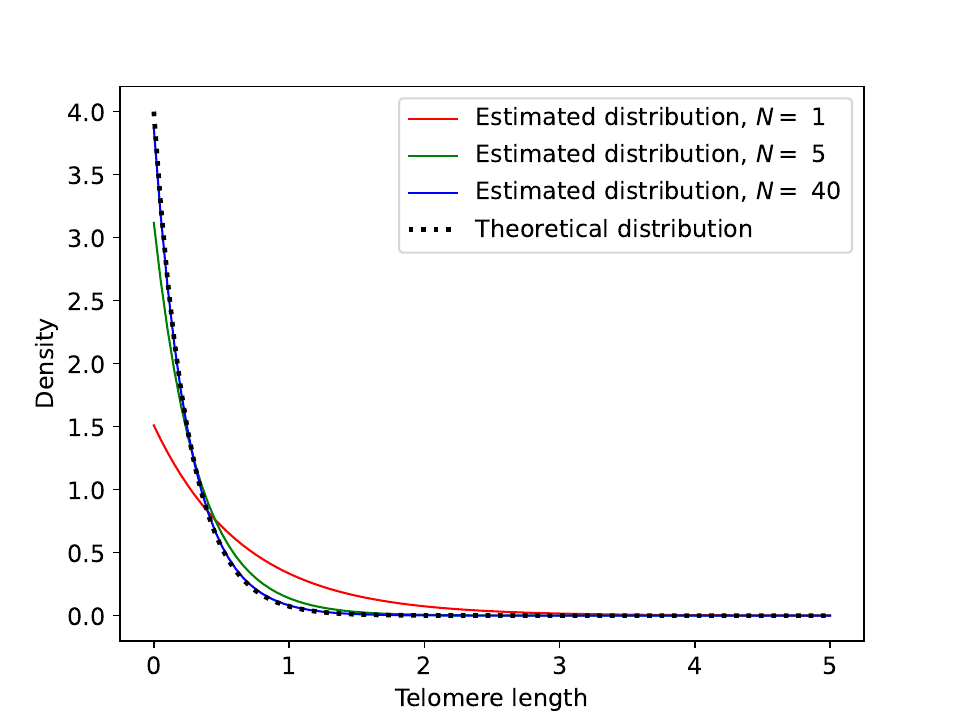}
		\caption{Estimation results when $n_0 = h_{1,4}$ and $N\in\{1,5,40\}$.}\label{fig:estimation_exponential_onetelo}
	\end{subfigure}
	\hfill
	\begin{subfigure}[t]{0.425\textwidth}
		\centering
		\includegraphics[width = \textwidth]{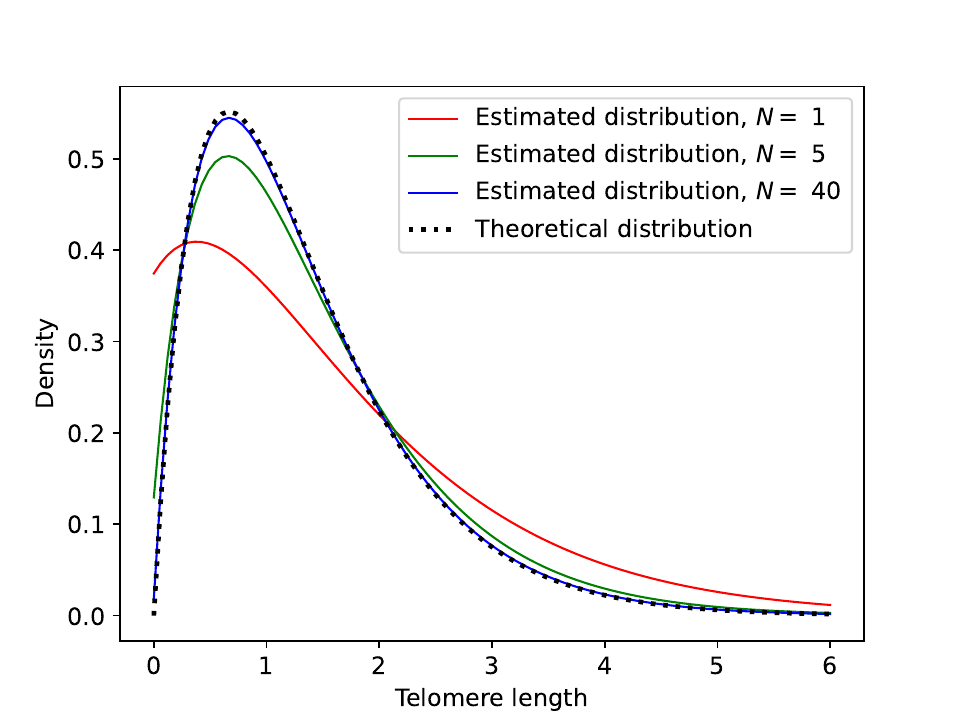} 
		\caption{Estimation results when $n_0 = h_{2,1.5}$ and $N\in\{1,5,40\}$.}\label{fig:estimation_gamma_onetelo}
	\end{subfigure}
	\caption{Estimation results in the single-telomere model for different values of $N$, when $b = 1$, $g = 1_{[0,1]}$ and $n_0\in\left\{h_{1,4},h_{2,1.5}\right\}$.}\label{fig:estimation_model_one_telomere}
\end{figure}
\begin{rem}\label{rem:choice_scaling_parameter}
We have chosen $N = 40$ for the maximum scaling parameter because this is the most realistic value for budding yeast, see the discussion about~\hyperlink{assumption:H1}{$(H_1)$} in Section~\ref{subsect:discussion_models_assumptions}.
\end{rem}

\subsubsection{Estimation results in the single-telomere model: small variability}\label{subsubsect:problems_variability}
As mentioned in the previous section, it is required to have an initial distribution with a sufficiently large coefficient of variation to ensure a good estimation. We present here the problems that occur when this is not the case and the reasons behind it. We assume that~\hyperlink{assumption:H1}{$(H_1)$} is verified with $b = 1$, $g = 1_{[0,1]}$, and~$N = 40$. We proceed to estimations of initial distributions with the same mean, but different coefficients of variation~($cv$). To do so, in view of  Eq.~\eqref{eq:density_erlang_distribution} and Proposition~\ref{prop:moments_erlang_distribution}, we first introduce for all $cv > 0$ the function $\mathcal{H}_{cv} = h_{1/cv^2,1/cv^2}$, which is the density of an Erlang distribution with mean $m = 1$ and coefficient of variation~$cv$. Then, we plot in Figure~\ref{fig:bad_estimation_small_variability} the curve of $\widehat{n}_0^{(1)}$ (blue curves), that we compare with the curve of $n_0$ (dotted curves), for~\hbox{$n_0\in\{\mathcal{H}_{1/2}, \mathcal{H}_{1/3}, \mathcal{H}_{1/5}, \mathcal{H}_{1/7}\}$}. We observe that the smaller the coefficient of variation is, the worse the estimation becomes. In particular, the spread of the initial laws is not well-captured, although the position of the mode is correctly estimated.

\begin{figure}[!ht]
	\centering
	\begin{subfigure}[t]{0.45\textwidth}
		\centering
		\includegraphics[scale = 0.425]{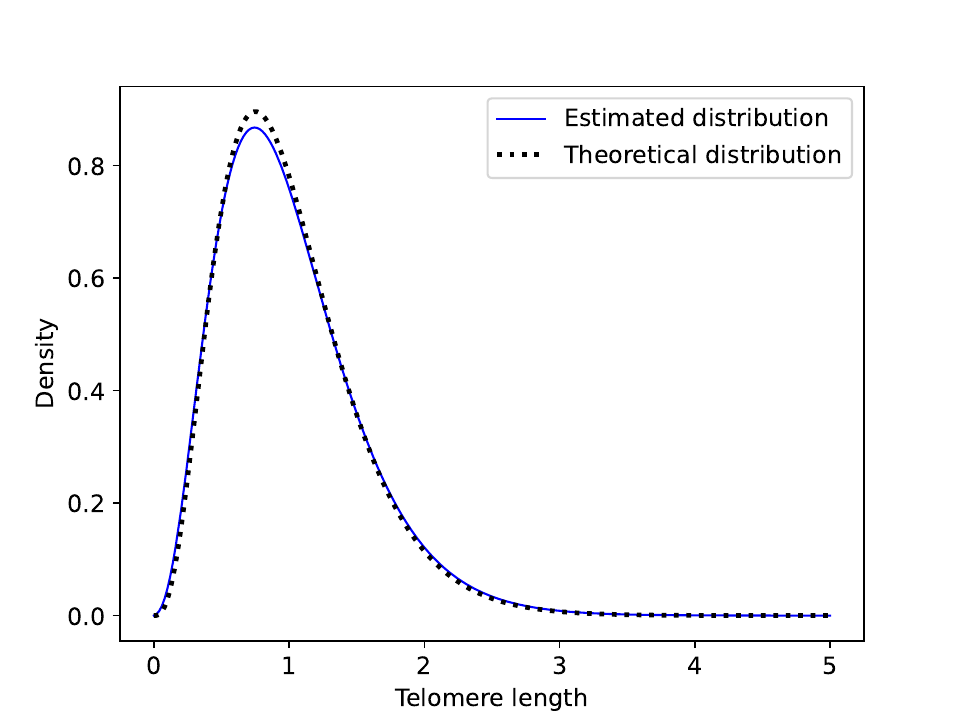}
		\caption{Estimation result when $n_0 = \mathcal{H}_{1/2}$.}
	\end{subfigure}
	\hfill
	\begin{subfigure}[t]{0.45\textwidth}
		\centering
		\includegraphics[scale = 0.425]{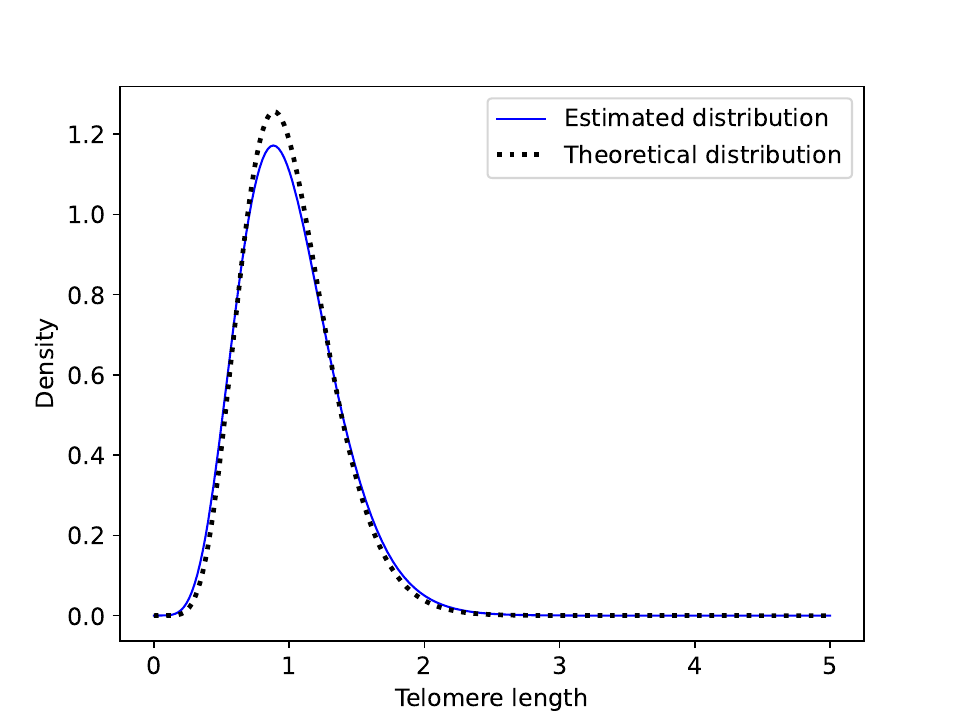}
		\caption{Estimation result when $n_0 = \mathcal{H}_{1/3}$.}
	\end{subfigure}
	\begin{subfigure}[t]{0.45\textwidth}
		\centering
		\includegraphics[scale = 0.425]{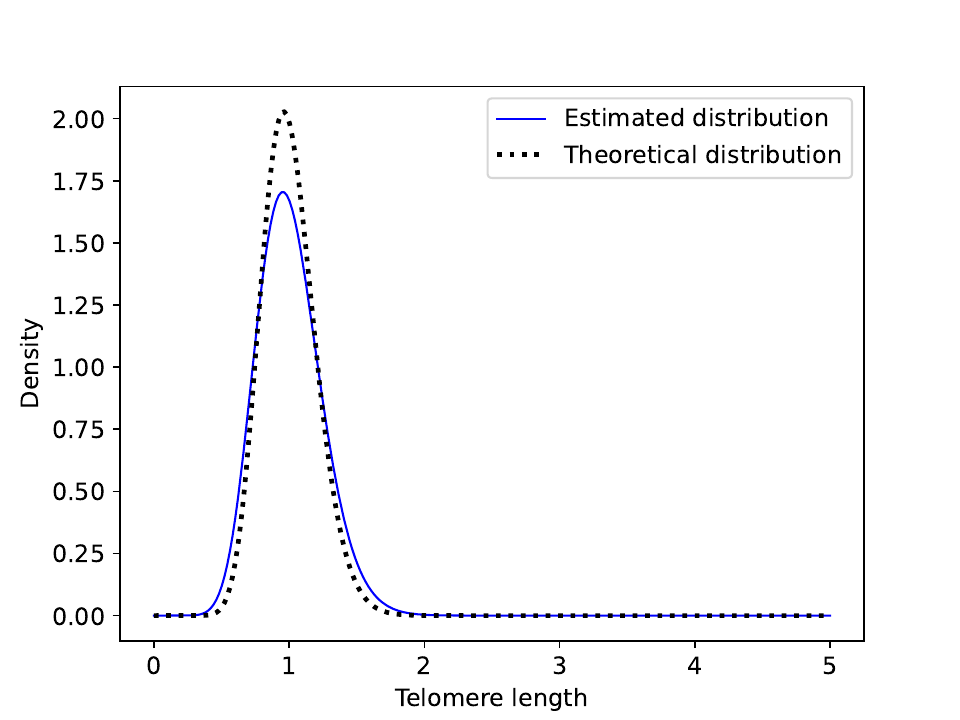}
		\caption{Estimation result when $n_0 = \mathcal{H}_{1/5}$.}
	\end{subfigure}
	\hfill
	\begin{subfigure}[t]{0.45\textwidth}
		\centering
		\includegraphics[scale = 0.425]{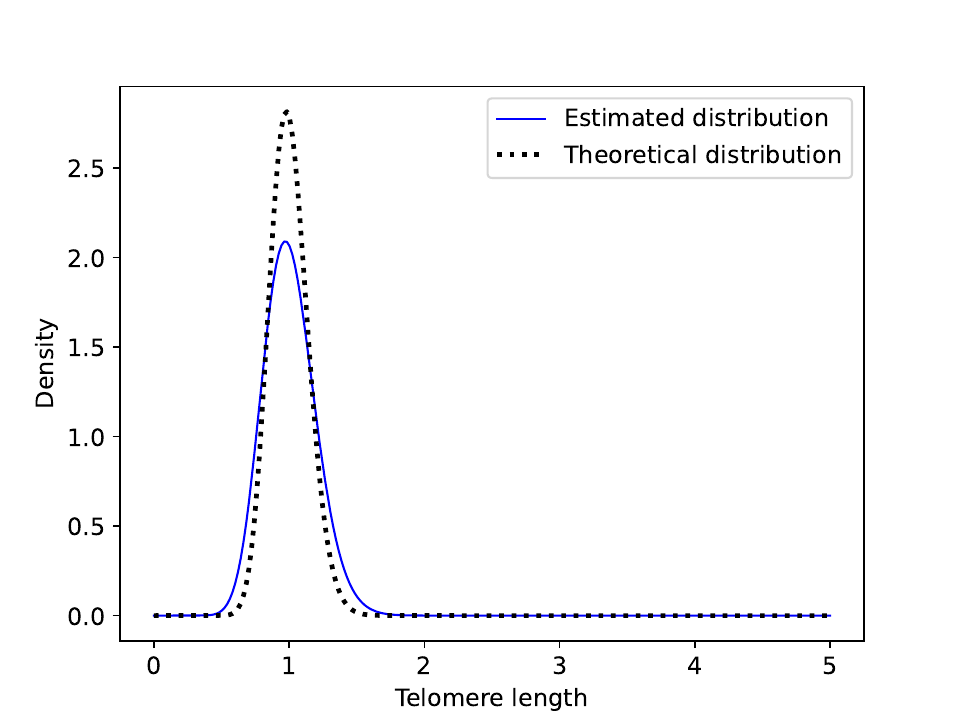}
		\caption{Estimation result when $n_0 = \mathcal{H}_{1/7}$.}
	\end{subfigure}
	\caption{Estimation results in the single-telomere model when $b = 1$, $g = 1_{[0,1]}$, $N = 40$, and $n_0\in\{\mathcal{H}_{1/2}, \mathcal{H}_{1/3}, \mathcal{H}_{1/5}, \mathcal{H}_{1/7}\}$.}\label{fig:bad_estimation_small_variability}
\end{figure}
A qualitative reason for this observation is that when the initial distribution has a small coefficient of variation, the randomness of shortening values and cell division times plays a greater role than the variability of the initial distribution. This leads to a poor estimation, as the information on $n_0$ obtained from senescence times is blurred by the information on $g$ and~$b$. A quantitative reason is that the second derivative of the initial distribution is large when its coefficient of variation is small. Thus, as the error between~$\widehat{n}_0^{(1)}$ and~$n_0$ is mainly given by the second derivative of~$n_0$ (see Sections~\ref{subsect:proof_key_lemma_lengths} and~\ref{subsect:proof_approximation_one_telomere_model}), the estimation is not reliable. To illustrate this, by using the formula in the left-hand side of~\eqref{eq:density_erlang_distribution}, we compute the $L^2$-norm of the second derivatives of~$\mathcal{H}_{1/2},\,\mathcal{H}_{1/3},\,\mathcal{H}_{1/5}, \text{ and }\mathcal{H}_{1/7}$. We obtain that \hbox{$\big|\big|\mathcal{H}_{1/2}''\big|\big|_{L^2\left(\mathbb{R}_+\right)} \approx 5.657$}, $\big|\big|\mathcal{H}_{1/3}''\big|\big|_{L^2\left(\mathbb{R}_+\right)} \approx 9.445$, $\big|\big|\mathcal{H}_{1/5}''\big|\big|_{L^2\left(\mathbb{R}_+\right)} \approx 28.17$ and \hbox{$\big|\big|\mathcal{H}_{1/7}''\big|\big|_{L^2\left(\mathbb{R}_+\right)} \approx 62.41$}, so that the values of the second derivative of $n_0$ \hbox{increase when the coefficient of variation~decreases.} 

\subsubsection{Estimation results in the model with several telomeres}\label{subsect:estimation_results_severaltelos} 

We now study how the inference method works on the model with several telomeres. To do so, we proceed in the same way as in Section~\ref{subsect:estimation_results_onetelo}. First, we assume that~\hyperlink{assumption:H1}{$(H_1)$} is verified with~\hbox{$b = 1$}, $g = 1_{[0,1]}$, and $N = 40$. Then, we plot in Figures~\ref{fig:estimation_exponential_severaltelos_small} and~\ref{fig:estimation_exponential_severaltelos_large} the curve of the estimator~$n_{0}^{(2k)}$ for different values of $k$, by using the formula given in Proposition~\ref{prop:explicit_solutions_h1beta}. In each case, this curve is compared with the one of $n_0$, plotted in dotted lines. The estimations presented in Figure~\ref{fig:estimation_exponential_severaltelos_small} correspond to estimation when $k\in\{1,3,5\}$, so to estimations in which~$k$ is small. Conversely, the estimations presented in Figure~\ref{fig:estimation_exponential_severaltelos_large} correspond to estimations when $k\in\{15,30,50\}$, so to estimations in which $k$ is large. This case is important to consider because biologists mostly study species with a large number of telomeres: budding yeast cells have $32$ telomeres ($k = 16$), and human cells have $92$ telomeres ($k =46$). We observe in Figure~\ref{fig:estimation_exponential_severaltelos_small} that  the estimated initial distributions almost overlap with the theoretical initial distribution. We also observe in Figure~\ref{fig:estimation_exponential_severaltelos_large} that the estimated curves are far from the curve of $n_0$ (dotted curve). We thus have that the estimation is satisfactory when $k$ is small, but poor when $k$ is~large.  

\begin{figure}[!ht]
	\centering
	\begin{subfigure}[t]{0.425\textwidth}
		\centering
		\includegraphics[width = \textwidth]{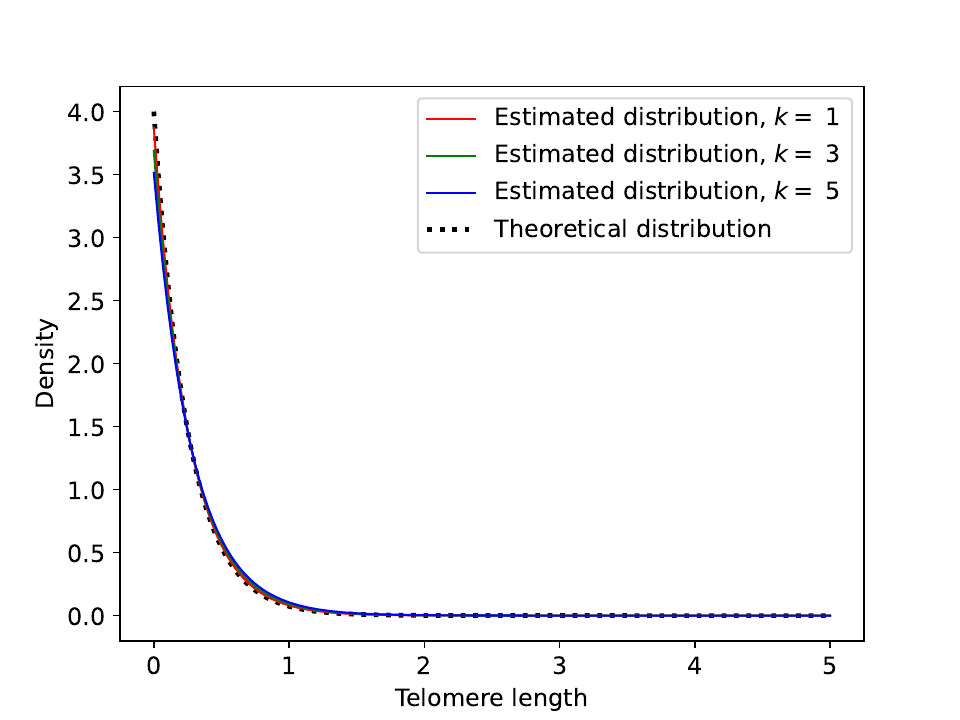}
		\caption{Estimation results when $k\in\{1,3,5\}$.}\label{fig:estimation_exponential_severaltelos_small} 
	\end{subfigure}
	\hfill
	\begin{subfigure}[t]{0.425\textwidth}
		\centering
		\includegraphics[width = \textwidth]{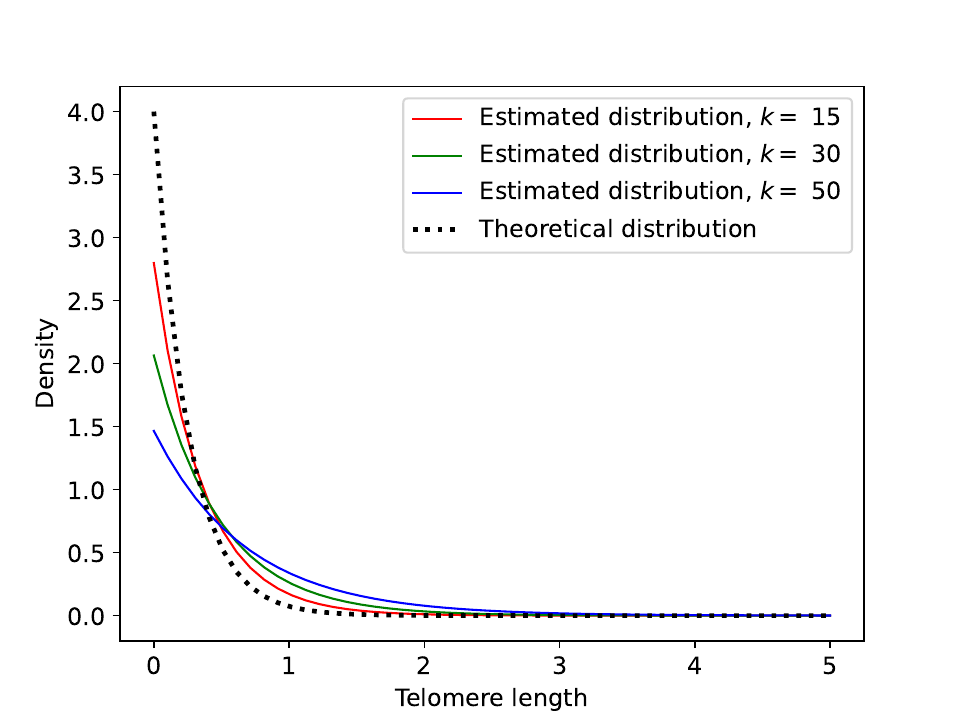}
		\caption{Estimation results when $k\in\{15,30,50\}$.}\label{fig:estimation_exponential_severaltelos_large}
	\end{subfigure}
	\caption{Estimation results in the model with several telomeres when $b = 1$, $g = 1_{[0,1]}$, $N = 40$, $n_0=h_{1,4}$, and $k\in\{1,3,5,15,30,50\}$.}\label{fig:estimation_model_several_telomeres}
\end{figure}
 

A quantitative reason for these observations is that the model approximation used to construct our estimator is no longer valid when $k$ is large. Indeed, the bounds obtained in Proposition~\ref{prop:approximation_PDE_several_telomeres} and Theorem~\ref{te:main_result} increase with the value of $k$. It follows that using $\widehat{n}_0^{(2k)}$ to estimate $n_0$ does not really make sense, because we now have no control over its estimation error. A qualitative reason is that the senescence times distribution becomes more and more determined by the distribution of cell division times when $k \rightarrow +\infty$. The estimation is thus less accurate as the information on $n_0$ is blurred by the influence of the  division times distribution. The increasing influence of the division times distribution is related to the fact that the probability of having at least one short telomere initially grows with $k$. As a result, the initial minimum length tends to concentrate near zero, and fewer shortenings are required before having a telomere with length below $0$. This yields that the randomness of the initial distribution and the shortening values lose their impact on the senescence times distribution, and that only the randomness of the division times remains to influence the latter. 
\subsection{Estimation with random variables}\label{subsect:estimation_results_probabilistics_model}
In practice, we never have the full density $n_{\partial}^{(1)}$ or $n_{\partial}^{(2k)}$. Instead, we observe senescence times $\left(T_i\right)_{1\leq i \leq m}$, where $m \in \mathbb{N}^*$, that are noisy measurements of random variables distributed according to~$n_{\partial}^{(1)}$~or~$n_{\partial}^{(2k)}$. In this subsection, we investigate whether our inference method can be adapted to this setting, by considering only the noise related to sampling, and not to measurement error. We first focus in Section~\ref{subsubsubsect:estimation_onetelo_probabilistic} on the case where we observe senescence times distributed according to~$n_{\partial}^{(1)}$. We then do the same in the case where senescence times are distributed according to~$n_{\partial}^{(2k)}$ in two different parts, as it requires more work. More precisely, we present the estimator we use in Section~\ref{subsubsect:estimator_severaltelos_probabilistic}, and check its quality on simulations in Section~\ref{subsubsect:results_severaltelos_probabilistic}. We show in this last case that issues related to the curse of the dimensionality arise. From now on, we assume that~\hyperlink{assumption:H1}{$\left(H_1\right)$} is verified with $b = 1$,~$g = 1_{[0,1]}$, and~$N = 40$. 

\subsubsection{Estimator and estimation with random variables distributed according to~\texorpdfstring{$n_{\partial}^{(1)}$}{n\_partial\^\{(1)\}}}\label{subsubsubsect:estimation_onetelo_probabilistic}

Let us consider $n_s \in\mathbb{N}^*$, and $\left(T_{1,i}\right)_{1\leq i \leq n_s}$ a sequence of random variables independent and identically distributed according to $n_{\partial}^{(1)}$. Our first objective in this section is to construct an estimator of~$n_0$ based on these random variables. In fact, this can be easily done. What we only have to do is to first construct an estimator of $n_{\partial}^{(1)}$, and then to adapt the expression of~$\widehat{n}_0^{(1)}$ given in~\eqref{eq:definitions_estimators} to this estimator. Let us detail these two steps. 

To obtain an estimator of $n_{\partial}^{(1)}$ with the times $\left(T_{1,i}\right)_{1\leq i \leq n_s}$, we do a log-transform kernel density estimation~\cite{charpentier_2015,nguyen_positive_2019}. Denoting for all $x\geq0$ the Gaussian kernel $\rho(x) = \frac{1}{\sqrt{2\pi}}e^{-\frac{x^2}{2}}$, and \hbox{$\alpha > 0$} a smoothing parameter, the log-transform kernel density estimator is defined for all~\hbox{$t\geq 0$}~as
\begin{equation}\label{eq:log_KDE_estimation}
\overline{n}_{\partial}^{(1,\alpha)}\left(t\right) := \frac{1}{n_s}\sum_{i = 1}^{n_s} \frac{1}{t\alpha}\rho\left(\frac{1}{\alpha}\log\left(\frac{t}{T_{1,i}}\right)\right).
\end{equation}
This is a classical method to estimate a density on $\mathbb{R}_+$. The main idea behind this estimator is to smooth the Dirac measures in the empirical estimator of the density, defined as~$\frac{1}{n_s}\sum_{i = 1}^{n_s} \delta_{T_{1,i}}$. This smoothing is due to  that for all $f\in \mathcal{C}_c^{\infty}\left(\mathbb{R}_+^*\right)$ and $X\in\mathbb{R}_+^*$, by the change of variable $t' = \log\left(\frac{t}{X}\right)$ and the fact that $\frac{1}{\alpha}\rho\left(\frac{.}{\alpha}\right) \underset{\alpha \rightarrow0}{\rightarrow} \delta_0$ (as it is a classical mollifier), we have 
\begin{equation}\label{eq:smoothing_exponential}
	\lim_{\alpha \rightarrow 0} \int_{t\in\mathbb{R}_+^*} \frac{1}{t\alpha}\rho\left(\frac{1}{\alpha}\log\left(\frac{t}{X}\right)\right)f(t)\dd t = \lim_{\alpha \rightarrow 0} \int_{t'\in\mathbb{R}} \frac{1}{\alpha}\rho\left(\frac{t'}{\alpha}\right)f\left(X\exp(t')\right)\dd t' =  \delta_X(f).
\end{equation}
Thus, by the above and the fact that $\rho\in\mathcal{C}^{\infty}\left(\mathbb{R}\right)$, the terms that are summed in~\eqref{eq:log_KDE_estimation} are smoothed approximations of the measures $\left(\delta_{T_{1,i}}\right)_{i\in\llbracket1,n_s\rrbracket}$. In particular, when $\alpha$ is small, $\overline{n}_{\partial}^{(1,\alpha)}$ is close to the empirical estimator $n_{\partial}$ but the smoothing is weak. When $\alpha$ is large, it is the opposite. We do not use a classical kernel density estimation~\cite{weglarczyk_kernel_2018} because this method works for densities with support on $\mathbb{R}$, and is much less satisfactory on~$\mathbb{R}_+$, see~\cite[Section $1$]{nguyen_positive_2019}. 

Adapting now the definition of $\widehat{n}_0$, see~\eqref{eq:definitions_estimators}, by replacing $n_{\partial}^{(1)}$ with $\overline{n}_{\partial}^{(1,\alpha)}$, we obtain the following estimator for $n_0$ that depends on the smoothing parameter $\alpha >0$, for all $x\geq0$,
\begin{equation}\label{eq:estimators_simulations_onetelo}
	\overline{n}_0^{(1,\alpha)}(x) := \frac{1}{\tilde{b}\tilde{m}_1}\overline{n}_{\partial}^{(1,\alpha)}\left(\frac{x}{\tilde{b}\tilde{m}_1}\right) = \frac{1}{n_s}\sum_{i = 1}^{n_s} \frac{1}{x\alpha}\rho\left(\frac{1}{\alpha}\log\left(\frac{x}{\tilde{b}\tilde{m}_1T_{1,i}}\right)\right).
\end{equation}
This estimator is what we use to estimate $n_0$ from the variables $\left(T_{1,i}\right)_{1\leq i \leq n_s}$. A theoretical result concerning its quality, whose proof is sketched in Section~\ref{sect:impact_noise}, is provided below. It states that the error of this estimator tends to $0$ with probability~$1$ when both $n_s \rightarrow +\infty$ and~$N\rightarrow+\infty$.
\begin{prop}[Confidence intervals, single-telomere model]\label{prop:quality_estimator_random_variables_onetelo}
Assume that \hyperlink{assumption:H1}{$(H_1)-(H_2)$} hold. We denote the constant
\begin{equation}\label{eq:constant_randomvar_onetelo}
C_{\widehat{n},1} := \left|\left|\text{Id}^2.\left(\widehat{n}_0^{(1)}\right)' +\text{Id}.\widehat{n}_0^{(1)} \right|\right|_{L^\infty\left(\mathbb{R}_+\right)}.
\end{equation}
We also fix $p\in(0,1]$, and the smoothing parameter $\alpha_p := \frac{1}{\left(C_{\widehat{n},1}\right)^{\frac{1}{2}}}\left(\frac{\log\left(\frac{2}{p}\right)}{2n_s}\right)^{\frac{1}{4}}$. 
Then, there exists a sequence $\left(L_{1,n}\right)_{n\in\mathbb{N}}$ of positive real numbers independent of $p$ such that \hbox{$\underset{n\rightarrow+\infty}{\limsup}\,L_{1,n} < +\infty$}, and such that with probability at least $1-p$ it holds 
$$
\left|\left|\text{Id}\left[\overline{n}_0^{(1,\alpha_p)} - n_0\right]\right|\right|_{L^\infty\left(\mathbb{R}_+\right)} \leq 
2\sqrt{\frac{2}{\pi}}\left(C_{\widehat{n},1}\right)^{\frac{1}{2}}\left(\frac{\log\left(\frac{2}{p}\right)}{2n_s}\right)^{\frac{1}{4}}  + \frac{L_{1,N}}{N}.
$$
\end{prop}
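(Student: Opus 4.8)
The plan is to split the error into a stochastic part, a smoothing bias and a model‑approximation part via the triangle inequality,
\begin{equation*}
\left\|\text{Id}\left[\overline{n}_0^{(1,\alpha_p)} - n_0\right]\right\|_{L^\infty(\mathbb{R}_+)} \leq \left\|\text{Id}\left[\overline{n}_0^{(1,\alpha_p)} - \mathbb{E}\!\left[\overline{n}_0^{(1,\alpha_p)}\right]\right]\right\|_{L^\infty} + \left\|\text{Id}\left[\mathbb{E}\!\left[\overline{n}_0^{(1,\alpha_p)}\right] - \widehat{n}_0^{(1)}\right]\right\|_{L^\infty} + \left\|\text{Id}\left[\widehat{n}_0^{(1)} - n_0\right]\right\|_{L^\infty},
\end{equation*}
and to bound each piece in turn. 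The third, purely deterministic, term is handled directly by Corollary~\ref{cor:quality_estimator_model_one_telo}: it gives $x\big|\widehat{n}_0^{(1)}(x)-n_0(x)\big| \leq \frac{c_1}{N}\,x(x+1)e^{-\lambda_N x}$, so this term is at most $\frac{L_{1,N}}{N}$ with $L_{1,N} := c_1\sup_{x\geq0}x(x+1)e^{-\lambda_N x}$; since $\lambda_N\to\lambda>0$ this sequence is bounded in $N$, and it does not depend on $p$, as required.

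\emph{Smoothing bias.} First I would note that $S_i := \tilde b\tilde m_1 T_{1,i}$ are i.i.d.\ with density $\widehat{n}_0^{(1)}$ by~\eqref{eq:definitions_estimators}, so that $x\,\overline{n}_0^{(1,\alpha)}(x) = \frac{1}{n_s}\sum_i \frac{1}{\alpha}\rho\!\left(\frac{1}{\alpha}\log\frac{x}{S_i}\right)$. Taking expectations and changing variables to $w = \frac1\alpha\log\frac{x}{s}$ yields $x\,\mathbb{E}\!\left[\overline{n}_0^{(1,\alpha)}(x)\right] = \int_{\mathbb{R}}\rho(w)\,\psi\!\left(xe^{-\alpha w}\right)\dd w$ with $\psi := \text{Id}\cdot\widehat{n}_0^{(1)}$, while $x\,\widehat{n}_0^{(1)}(x) = \psi(x) = \int_{\mathbb{R}}\rho(w)\psi(x)\dd w$. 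A first‑order Taylor bound in the variable $\log x$, using that $\frac{\dd}{\dd r}\psi(e^r) = e^r\psi'(e^r)$, that $s\psi'(s) = s\widehat{n}_0^{(1)}(s)+s^2(\widehat{n}_0^{(1)})'(s)$, and that $\int_{\mathbb{R}} w\rho(w)\dd w = 0$, then gives
\begin{equation*}
\big|x\,\mathbb{E}\!\left[\overline{n}_0^{(1,\alpha)}(x)\right] - x\,\widehat{n}_0^{(1)}(x)\big| \leq \alpha\, C_{\widehat n,1}\int_{\mathbb{R}}|w|\rho(w)\dd w = \sqrt{\tfrac{2}{\pi}}\;\alpha\, C_{\widehat n,1},
\end{equation*}
with $C_{\widehat n,1}$ as in~\eqref{eq:constant_randomvar_onetelo}, which is finite because $\widehat{n}_0^{(1)}$ and $(\widehat{n}_0^{(1)})'$ decay exponentially (combine~\eqref{eq:definitions_estimators}, Proposition~\ref{prop:approximation_PDE_one_telomere} and~\hyperlink{assumption:H2}{$(H_2)$}).

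\emph{Stochastic term.} Here I would pass to the variable $r := \log x$, under which $r\mapsto e^r\,\overline{n}_0^{(1,\alpha)}(e^r)$ becomes, up to a deterministic shift by $\log(\tilde b\tilde m_1)$, the classical kernel density estimator $\frac{1}{n_s}\sum_i K_\alpha(r-R_i)$ of the law of $R_i := \log S_i$, with $K_\alpha(u) := \frac1\alpha\rho(u/\alpha)$. Integrating by parts against the empirical c.d.f.\ $F_{n_s}$ and the true c.d.f.\ $F$ of the $R_i$ rewrites the centred estimator as $\int_{\mathbb{R}}(F_{n_s}-F)(s)\,K_\alpha'(r-s)\dd s$, hence
\begin{equation*}
\left\|\text{Id}\left[\overline{n}_0^{(1,\alpha)} - \mathbb{E}\!\left[\overline{n}_0^{(1,\alpha)}\right]\right]\right\|_{L^\infty} \leq \|F_{n_s}-F\|_{L^\infty}\int_{\mathbb{R}}|K_\alpha'(u)|\dd u = \frac{1}{\alpha}\Big(\int_{\mathbb{R}}|w|\rho(w)\dd w\Big)\|F_{n_s}-F\|_{L^\infty} = \frac{\sqrt{2/\pi}}{\alpha}\,\|F_{n_s}-F\|_{L^\infty},
\end{equation*}
using $\rho'(w) = -w\rho(w)$. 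The Dvoretzky--Kiefer--Wolfowitz inequality then bounds $\|F_{n_s}-F\|_{L^\infty}$ by $\big(\log(2/p)/(2n_s)\big)^{1/2}$ with probability at least $1-p$. On that event, inserting $\alpha = \alpha_p = C_{\widehat n,1}^{-1/2}\big(\log(2/p)/(2n_s)\big)^{1/4}$ into the two displays above makes both the stochastic term and the smoothing bias equal to $\sqrt{2/\pi}\,C_{\widehat n,1}^{1/2}\big(\log(2/p)/(2n_s)\big)^{1/4}$, and summing the three contributions gives the stated inequality. The main obstacle is exactly this uniform, $L^\infty$ control of the stochastic term: rather than a covering or chaining argument, I would rely on the integration‑by‑parts identity above to transfer the fluctuation to $\|F_{n_s}-F\|_{L^\infty}$, which is what both produces the explicit constant $\sqrt{2/\pi}$ and fixes the scaling $\alpha^{-1}\big(\log(2/p)/n_s\big)^{1/2}$ that the choice of $\alpha_p$ is designed to balance against the $\alpha$‑linear bias.
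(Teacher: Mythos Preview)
Your proof is correct and follows essentially the same route as the paper: after passing to the logarithmic variable, both arguments split $\|\rho_\alpha*\overline{n}_{\log}^{(1)}-n_{\log}^{(1)}\|_{L^\infty}$ into a stochastic piece controlled by moving the derivative onto the kernel (your integration-by-parts identity is exactly the paper's $\rho_\alpha*\varphi'=(\rho_\alpha)'*\varphi$, with $\varphi=F_{n_s}-F$) and a smoothing bias bounded by $\alpha\sqrt{2/\pi}\,C_{\widehat n,1}$, then apply DKW and the deterministic estimate of Corollary~\ref{cor:quality_estimator_model_one_telo}. One cosmetic remark: in your bias step the fact that $\int w\rho(w)\,\dd w=0$ is not actually used for the first-order bound you state---the inequality $\lvert\int\rho(w)[g(r-\alpha w)-g(r)]\,\dd w\rvert\le\alpha\|g'\|_\infty\int|w|\rho(w)\,\dd w$ holds without any centering.
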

\begin{rem}
We observe that we need to weight the $L^{\infty}$-norm by the identity function to obtain a qualitative bound on the error. This weighting implies that the guarantees we have on the estimation of $n_0(x)$, where $x\geq0$, decrease when $x$ is small. This is due to the fact the logarithm explodes near $0$, which generates instability. 
\end{rem}

We now proceed to estimations on simulations. We first choose $n_s \in\{30, 300, 3000\}$, and simulate a sequence of random variables $\left(T_{1,i}\right)_{1\leq i \leq n_s}$ distributed according to $n_{\partial}^{(1)}$, when $n_0\in\left\{h_{1,4},h_{2,1.5}\right\}$. These random variables are obtained by simulating a probabilistic telomere shortening model, see~\cite{olaye_long-time_2026,benetos_stochastic_2025}. Then, in Figure~\ref{fig:estimation_results_monte_carlo_one_telo}, we compare the curve of $\overline{n}_0^{(1,\alpha)}$ with the curve of~$n_0$, for $\alpha = \alpha_{0.1}$ defined in~Proposition~\ref{prop:quality_estimator_random_variables_onetelo}. We observe that when $n_s = 3000$, the curve of $\overline{n}_0^{(1,\alpha_{0.1})}$ follows perfectly the one of~$n_0$, so the estimation is very satisfactory. We also observe that when $n_s = 300$, the quality of the estimation decreases, but remains correct. When $n_s = 30$, the curve of $\overline{n}_0^{(1,\alpha_{0.1})}$ does not follow the one of $n_0$, so the estimation is not completely satisfactory. However, this is expected because we have a very small number of data points in this case. We therefore have from these observations that the estimation stays reliable when we use~$\overline{n}_0^{(1,\alpha_{0.1})}$ to estimate instead of~$\widehat{n}_{0}^{(1)}$, provided that $n_s$ is not too small.

\begin{figure}[!ht]
	\centering
	\begin{subfigure}[t]{0.45\textwidth}
		\centering
		\includegraphics[scale = 0.375]{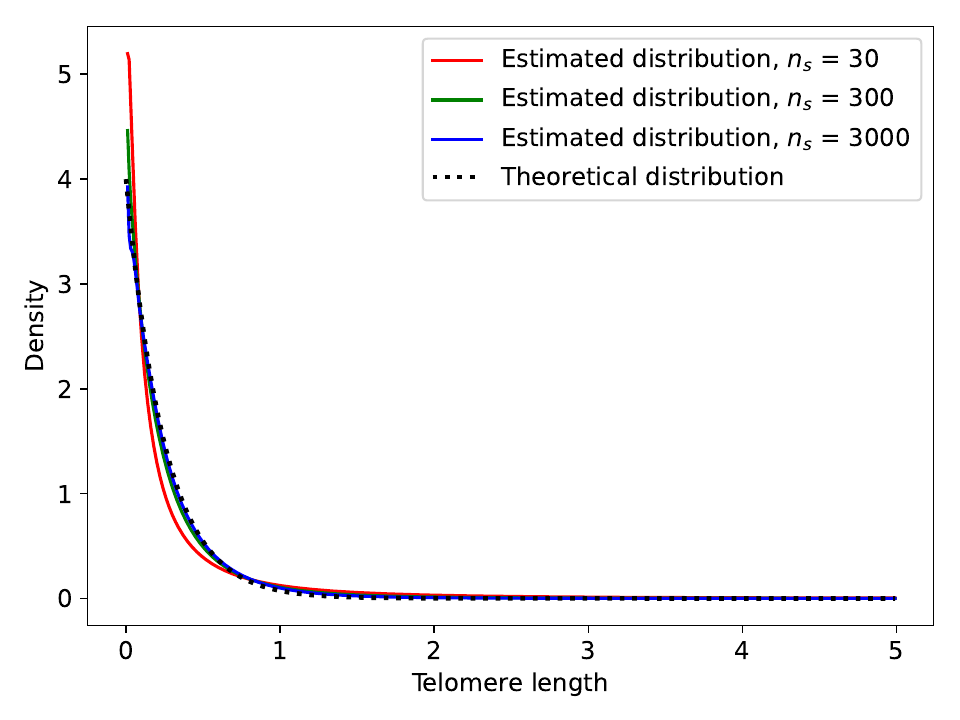}
		\caption{Comparison between the curves of $\widehat{n}_0^{(1,\alpha_{0.1})}$ and \hbox{$n_0 = h_{1,4}$}, for \hbox{$n_s \in\{30, 300, 3000\}$}.}\label{fig:estimation_montecarlo_one_telo_exponential}
	\end{subfigure}
	\hfill
	\begin{subfigure}[t]{0.45\textwidth}
		\centering
		\includegraphics[scale = 0.375]{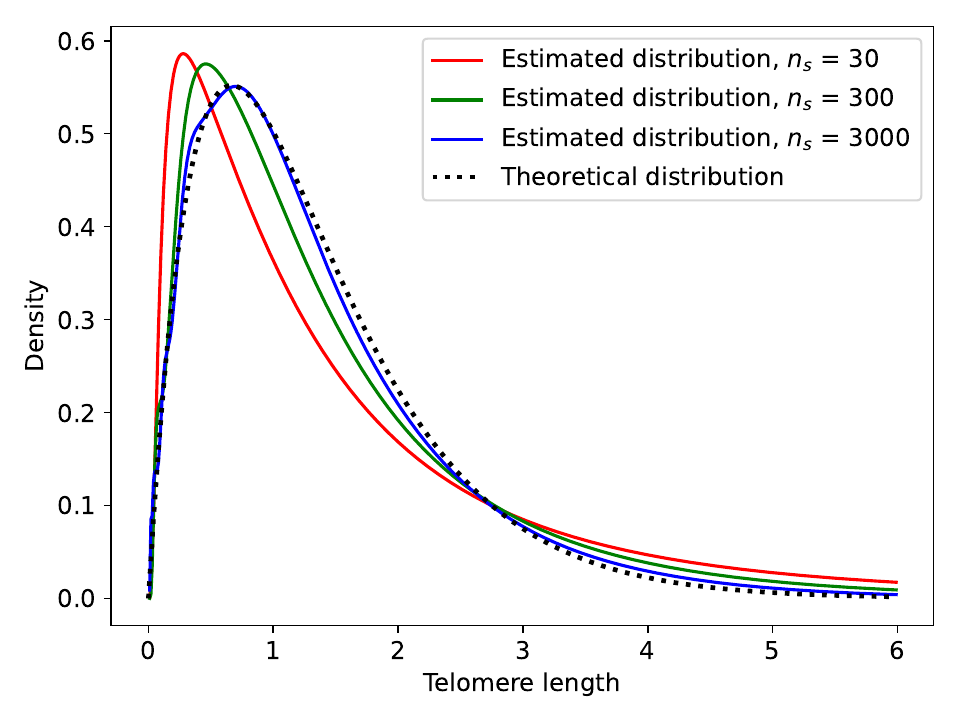}
		\caption{Comparison between the curves of $\widehat{n}_0^{(1,\alpha_{0.1})}$ and \hbox{$n_0 = h_{2,1.5}$}, for \hbox{$n_s \in\{30, 300, 3000\}$}.}\label{fig:estimation_montecarlo_one_telo_gamma}
	\end{subfigure}
	\caption{Estimation results for the estimator $\overline{n}_0^{(1,\alpha_{0.1})}$ defined in~\eqref{eq:estimators_simulations_onetelo}, when $b = 1$, $g = 1_{[0,1]}$, $N = 40$, $n_0 \in \left\{h_{1,4},h_{2,1.5}\right\}$, and for different values of $n_s$.}\label{fig:estimation_results_monte_carlo_one_telo}
\end{figure}
\begin{rem}
Choosing for smoothing parameter $\alpha_{0.1}$ allows us to have an estimation with a confidence level of $0.9$ by Proposition~\ref{prop:quality_estimator_random_variables_onetelo}. However, in practice, choosing for smoothing parameter one of the constants in $\left(\alpha_p\right)_{p\in(0,1]}$ is  impossible in most cases, as it requires to have \textit{a priori} information on the initial distribution (in particular, on $C_{\widehat{n},1}$). Developing a method for choosing a good smoothing parameter without \textit{a~priori} information on $n_0$ is quite challenging. We thus postpone this study to another work.
\end{rem}

\subsubsection{Estimator with random variables distributed according to \texorpdfstring{$n_{\partial}^{(2k)}$}{n\_partial\^\{(2k)\}}}\label{subsubsect:estimator_severaltelos_probabilistic}
We now focus on the case in which we observe a sequence of random variables $\left(T_{2k,i}\right)_{1\leq i \leq n_s}$ identically distributed according to $n_{\partial}^{(2k)}$,  where $n_s\in\mathbb{N}^*$. In particular, in this section, we construct the estimator of $n_0$ we use in this context. We assume that $T_{2k,i} \leq T_{2k,i+1}$ for all \hbox{$i\in\llbracket1,n_s-1\rrbracket$}, and that the random variables $\left(T_{2k,i}\right)_{1\leq i \leq n_s}$ are independent up to a permutation.  We consider the survival function associated to $n_{\partial}$, defined for all $t \geq 0$ as $N_{\partial}(t) := \int_t^{+\infty} n_{\partial}^{(2k)}(s)\dd s$, and its empirical estimator
\begin{equation}\label{eq:survival_function_estimator}
\forall t\geq0:\hspace{2mm} \widehat{N}_{\partial}(t) := \frac{1}{n_s}\sum_{i = 1}^{n_s} 1_{\left\{T_{2k,i} > t\right\}} = 1 - \frac{1}{n_s}\sum_{i = 1}^{n_s} 1_{\left\{T_{2k,i} \leq t\right\}}. 
\end{equation}
The strategy we follow here for constructing our estimator of $n_0$ is slightly different from the one followed in the previous section. It is based on the fact that $\widehat{n}_0^{(2k)}$ is the weak derivative of the function \hbox{$x\mapsto -\left[N_{\partial}\left(\frac{2x}{\tilde{b}\tilde{m}_1}\right)\right]^{\frac{1}{2k}}$}, see~\eqref{eq:definitions_estimators}. From this last property and the fact that $N_{\partial}$ can be estimated by~$\widehat{N}_{\partial}$, a natural estimator for $n_0$ would be to use the weak derivative of $M(x) := -\left[\widehat{N}_{\partial}\left(\frac{2x}{\tilde{b}\tilde{m}_1}\right)\right]^{\frac{1}{2k}}$. The problem is that the latter cannot be represented by a function, as we can see in the following statement. 
\begin{prop}[Derivative of $M$]\label{prop:derivative_power_empirical}
	We denote $T_{2k,0} = 0$ and $T_{2k,n_s+1} = +\infty$. Then, it holds
	$$
	M' = \sum_{j = 0}^{n_s}\left(1 - \frac{j}{n_s}\right)^{\frac{1}{2k}}\left[\delta_{\frac{\tilde{b}\tilde{m}_1}{2}T_{2k,j+1}} -\delta_{\frac{\tilde{b}\tilde{m}_1}{2}T_{2k,j}}\right].
	$$
\end{prop}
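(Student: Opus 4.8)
The plan is to compute the weak derivative of $M$ directly from the piecewise-constant structure of $\widehat{N}_{\partial}$. First I would record the explicit form of $\widehat{N}_{\partial}$ on the partition induced by the ordered sample: by \eqref{eq:survival_function_estimator} and the ordering $T_{2k,i}\le T_{2k,i+1}$, for $t\in\big[T_{2k,j},T_{2k,j+1}\big)$ one has $\widehat{N}_{\partial}(t) = 1-\frac{j}{n_s}$ (with the conventions $T_{2k,0}=0$, $T_{2k,n_s+1}=+\infty$). Consequently, after the affine change of variable $x\mapsto \frac{2x}{\tilde b\tilde m_1}$, the function $M(x) = -\big[\widehat{N}_{\partial}\big(\tfrac{2x}{\tilde b\tilde m_1}\big)\big]^{1/2k}$ is the step function equal to $-\big(1-\tfrac{j}{n_s}\big)^{1/2k}$ on the interval $\big[\tfrac{\tilde b\tilde m_1}{2}T_{2k,j},\tfrac{\tilde b\tilde m_1}{2}T_{2k,j+1}\big)$ for $j\in\{0,\dots,n_s\}$.

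Next I would differentiate this step function in the sense of distributions. A step function $\sum_{j} c_j\,\mathbf 1_{[a_j,a_{j+1})}$ has distributional derivative $\sum_j (c_j-c_{j-1})\,\delta_{a_j}$; applying this with $a_j = \tfrac{\tilde b\tilde m_1}{2}T_{2k,j}$ and $c_j = -\big(1-\tfrac{j}{n_s}\big)^{1/2k}$, and reindexing the telescoping sum so that the jump at the point $\tfrac{\tilde b\tilde m_1}{2}T_{2k,j}$ is collected as the coefficient $\big(1-\tfrac{j}{n_s}\big)^{1/2k}$ contributed with opposite signs by two consecutive plateaus, yields
$$
M' \;=\; \sum_{j=0}^{n_s}\Big(1-\tfrac{j}{n_s}\Big)^{\frac{1}{2k}}\Big[\delta_{\frac{\tilde b\tilde m_1}{2}T_{2k,j+1}}-\delta_{\frac{\tilde b\tilde m_1}{2}T_{2k,j}}\Big],
$$
which is the claimed formula. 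One should double-check the boundary terms: the $j=n_s$ summand contributes a Dirac at $\tfrac{\tilde b\tilde m_1}{2}T_{2k,n_s+1}=+\infty$ with coefficient $(1-1)^{1/2k}=0$, so it is harmless, and the $j=0$ term gives the jump of size $1$ at $\tfrac{\tilde b\tilde m_1}{2}T_{2k,1}$ together with the $-\delta_0$ at the origin; these match the values of $M$ at $0^+$ and beyond the last sample point.

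This proposition is essentially a bookkeeping computation, so there is no genuine obstacle; the only point requiring a little care is the reindexing of the telescoping sum and the treatment of the two artificial endpoints $T_{2k,0}=0$ and $T_{2k,n_s+1}=+\infty$, to make sure the stated single-index form is exactly equivalent to the raw sum of jumps. The qualitative message — that $M'$ is a purely atomic measure and hence cannot serve directly as a density estimator of $n_0$, motivating the alternative construction in the following section — follows immediately once the formula is established.
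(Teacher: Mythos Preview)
Your proposal is correct and follows essentially the same approach as the paper: both arguments rewrite $M$ as the step function taking the value $-\bigl(1-\tfrac{j}{n_s}\bigr)^{1/2k}$ on $\bigl[\tfrac{\tilde b\tilde m_1}{2}T_{2k,j},\tfrac{\tilde b\tilde m_1}{2}T_{2k,j+1}\bigr)$ and then compute its distributional derivative. The only cosmetic difference is that the paper carries out the pairing $-\int M(x)f'(x)\,\dd x$ explicitly against a test function $f\in\mathcal C_c^\infty(\mathbb R_+^*)$, whereas you invoke the general jump formula for step functions and then reindex; both yield the same telescoping expression, and your boundary checks at $j=0$ and $j=n_s$ are exactly the points the paper handles implicitly via the choice of test-function space.
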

\begin{proofNoQED}
	As the times $\left(T_{2k,i}\right)_{0\leq i \leq n_s+1}$ are ordered in increasing order, we have for all~$t\geq0$ 
	\begin{equation}\label{eq:survival_function_estimator_otherform}
		\sum_{i = 1}^{n_s} 1_{\left\{T_{2k,i} \leq t\right\}} = \sum_{i = 1}^{n_s} \sum_{j = i}^{n_s} 1_{\left\{T_{2k,j} \leq t < T_{2k,j+1}\right\}} = \sum_{j = 1}^{n_s} \sum_{i = 1}^{j} 1_{\left\{T_{2k,j} \leq t < T_{2k,j+1}\right\}}  = \sum_{j = 0}^{n_s}  j.1_{\left\{T_{2k,j} \leq t < T_{2k,j+1}\right\}}.
	\end{equation}
	Therefore, by first replacing $M$ with its definition, and then combining~\eqref{eq:survival_function_estimator_otherform} with~\eqref{eq:survival_function_estimator}, in view of the fact that the intervals in the indicators in~\eqref{eq:survival_function_estimator_otherform} are disjoint, we obtain the following equality, which ends the proof, for all $f \in \mathcal{C}_c^{\infty}\left(\mathbb{R}_+^*\right)$,
	$$
	\begin{aligned}
		-\int_{x\in\mathbb{R}_+^*} M(x) f'(x) \dd x &= \sum_{j = 0}^{n_s}\left(1 - \frac{j}{n_s}\right)^{\frac{1}{2k}}\int_{x\in\mathbb{R}_+^*}  1_{\left\{T_{2k,j} \leq \frac{2x}{\tilde{b}\tilde{m}_1} < T_{2k,j+1}\right\}} f'(x)\dd x \\
		&= \sum_{j = 0}^{n_s} \left(1 - \frac{j}{n_s}\right)^{\frac{1}{2k}}\left[f\left(\frac{\tilde{b}\tilde{m}_1}{2}T_{2k,j+1}\right) -f\left(\frac{\tilde{b}\tilde{m}_1}{2}T_{2k,j}\right)\right]. \mathrlap{\phantom{xxxx}\hspace{0.05mm}\qed}
	\end{aligned}
	$$
\end{proofNoQED}
\noindent To solve the issue presented above, we use a smoothed version of $M'$ to estimate $n_0$. This version is obtained by using that in view of~\eqref{eq:smoothing_exponential}, we can approximate $\delta_{\frac{\tilde{b}\tilde{m}_1}{2}T_{2k,j}}$ for all $j\in\llbracket0,n_s+1\rrbracket$ by the measure $\frac{1}{\alpha x}\rho\left(\frac{1}{\alpha}\log\left(\frac{2x}{\tilde{b}\tilde{m}_1T_{2k,j}}\right)\right) \dd x$, where $\alpha  > 0$ is a smoothing parameter. This gives us the following estimator, depending on the parameter $\alpha$, for all $x\geq0$,
\begin{equation}\label{eq:estimators_simulations_several_telos}
	\overline{n}_0^{(2k,\alpha)}(x) := \sum_{j = 0}^{n_s} \left(1 - \frac{j}{n_s}\right)^{\frac{1}{2k}}\frac{1}{\alpha x}\left[\rho\left(\frac{1}{\alpha}\log\left(\frac{2x}{\tilde{b}\tilde{m}_1T_{2k,j+1}}\right)\right) -\rho\left(\frac{1}{\alpha}\log\left(\frac{2x}{\tilde{b}\tilde{m}_1T_{2k,j}}\right)\right)\right].
\end{equation}
The following proposition provides us a bound on the error of this estimator. It is proved in Section~\ref{sect:impact_noise}, and further discussed in Section~\ref{subsubsect:results_severaltelos_probabilistic}.
\begin{prop}[Confidence intervals, model with several telomeres]\label{prop:quality_estimator_random_variables_severaltelos}
	Assume that \hyperlink{assumption:H1}{$(H_1)-(H_4)$} hold. We denote the constant
	\begin{equation}\label{eq:constant_randomvar_severaltelos}
	C_{\widehat{n},2k} := \left|\left|\text{Id}^2.\left(\widehat{n}_0^{(2k)}\right)' +\text{Id}.\widehat{n}_0^{(2k)} \right|\right|_{L^\infty\left(\mathbb{R}_+\right)}.
	\end{equation}	
	\noindent We also fix $p\in(0,1]$, and the smoothing parameter $\tilde{\alpha}_p := \frac{1}{\left(C_{\widehat{n},2k}\right)^{\frac{1}{2}}}\left(\frac{\log\left(\frac{2}{p}\right)}{2n_s}\right)^{\frac{1}{8k}}$. 
	Then, there exists a sequence $\left(L_{2k,n}\right)_{n\in\mathbb{N}}$ of positive real numbers independent of $p$, such that \hbox{$\underset{n\rightarrow+\infty}{\limsup}\, L_{2k,n} < +\infty$}, and such that with probability at least $1-p$ it holds 
	\vspace{-1.5mm}
	\begin{equation}\label{eq:bounds_confidence_interval}
		\left|\left|\text{Id}\left[\overline{n}_0^{(2k,\tilde{\alpha}_p)} - n_0\right]\right|\right|_{L^\infty\left(\mathbb{R}_+\right)} \leq 
		2\sqrt{\frac{2}{\pi}}\left(C_{\widehat{n},2k}\right)^{\frac{1}{2}}\left(\frac{\log\left(\frac{2}{p}\right)}{2n_s}\right)^{\frac{1}{8k}}  + \frac{L_{2k,N}}{N}.
	\end{equation}
\end{prop}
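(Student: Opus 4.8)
The idea is to combine a deterministic model‑approximation bound coming from Theorem~\ref{te:main_result}-$(b)$ with a stochastic bound in the spirit of Proposition~\ref{prop:quality_estimator_random_variables_onetelo}, its single‑telomere analogue. Write $c:=\tilde b\tilde m_1=bm_1$ (see~\eqref{eq:equality_transport_terms}), and for $\alpha>0$ introduce the log‑mollification kernel $\phi_\alpha(x,y):=\frac{1}{\alpha x}\rho\!\big(\tfrac1\alpha\log\tfrac xy\big)$, so that, in view of Proposition~\ref{prop:derivative_power_empirical} and~\eqref{eq:estimators_simulations_several_telos}, $\overline n_0^{(2k,\alpha)}(x)=\int_{\mathbb R_+}\phi_\alpha(x,y)\,M'(\dd y)$ with $M(x)=-\big[\widehat N_\partial(\tfrac{2x}{c})\big]^{1/(2k)}$. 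Let $\Theta(x):=-\big[N_\partial(\tfrac{2x}{c})\big]^{1/(2k)}$, the antiderivative of $\widehat n_0^{(2k)}$ (see~\eqref{eq:definitions_estimators}), and let $\widetilde n_0^{(2k,\alpha)}$ be its deterministic surrogate obtained by replacing $M$ by $\Theta$ in the construction of $\overline n_0^{(2k,\alpha)}$, i.e.\ $\widetilde n_0^{(2k,\alpha)}(x)=\int_{\mathbb R_+}\phi_\alpha(x,y)\,\widehat n_0^{(2k)}(y)\,\dd y$. A triple triangle inequality then gives
\begin{equation*}
\begin{aligned}
\left|\left|\text{Id}\big[\overline n_0^{(2k,\tilde\alpha_p)}-n_0\big]\right|\right|_{L^\infty(\mathbb R_+)} &\le \left|\left|\text{Id}\big[\overline n_0^{(2k,\tilde\alpha_p)}-\widetilde n_0^{(2k,\tilde\alpha_p)}\big]\right|\right|_{L^\infty(\mathbb R_+)} + \left|\left|\text{Id}\big[\widetilde n_0^{(2k,\tilde\alpha_p)}-\widehat n_0^{(2k)}\big]\right|\right|_{L^\infty(\mathbb R_+)} \\
&\quad + \left|\left|\text{Id}\big[\widehat n_0^{(2k)}-n_0\big]\right|\right|_{L^\infty(\mathbb R_+)},
\end{aligned}
\end{equation*}
with a \emph{statistical}, a \emph{bias} and a \emph{model} term respectively.

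For the model term, multiplying the pointwise estimate of Theorem~\ref{te:main_result}-$(b)$ by $x$ and taking the supremum yields $\frac{L_{2k,N}}{N}$ with $L_{2k,N}:=d_1\big(\tfrac{D_\lambda}{D_\omega}\big)^{2k}\sup_{x\ge0}\big(x(k^2x+k+1)e^{-\beta'_N x}\big)$, which is bounded as $N\to+\infty$ under the standing assumptions, exactly as in Proposition~\ref{prop:quality_estimator_random_variables_onetelo}. For the bias term, the change of variable $y=xe^{-\alpha s}$ turns $x\,\widetilde n_0^{(2k,\alpha)}(x)-x\,\widehat n_0^{(2k)}(x)$ into $\int_{\mathbb R}\rho(s)\big[\tilde\psi(\log x-\alpha s)-\tilde\psi(\log x)\big]\dd s$, where $\tilde\psi(t):=e^t\widehat n_0^{(2k)}(e^t)$. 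A direct computation gives $\tilde\psi'(t)=\big(\text{Id}^2(\widehat n_0^{(2k)})'+\text{Id}\,\widehat n_0^{(2k)}\big)(e^t)$, hence $\|\tilde\psi'\|_{L^\infty(\mathbb R_+)}=C_{\widehat n,2k}$; the mean‑value inequality together with $\int_{\mathbb R}|s|\rho(s)\dd s=\sqrt{2/\pi}$ gives the bias bound $\sqrt{2/\pi}\,C_{\widehat n,2k}\,\alpha$, which for $\alpha=\tilde\alpha_p$ equals $\sqrt{2/\pi}\,(C_{\widehat n,2k})^{1/2}\big(\tfrac{\log(2/p)}{2n_s}\big)^{1/(8k)}$.

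For the statistical term, integrate by parts: $\overline n_0^{(2k,\alpha)}(x)-\widetilde n_0^{(2k,\alpha)}(x)=-\int_{\mathbb R_+}\partial_y\phi_\alpha(x,y)\,[M(y)-\Theta(y)]\,\dd y$, the boundary terms vanishing because $\phi_\alpha(x,\cdot)$ decays (super‑exponentially in $\log$) at $0$ and $+\infty$, while $M(0)=\Theta(0)=-1$ by the mass conservation $\int_0^{+\infty}n_\partial^{(2k)}=1$ of Remark~\ref{rem:conservation_individual_several_telos}. Since $z\mapsto z^{1/(2k)}$ is $\tfrac1{2k}$-Hölder on $\mathbb R_+$, one has $\sup_y|M(y)-\Theta(y)|\le\big(\sup_t|\widehat N_\partial(t)-N_\partial(t)|\big)^{1/(2k)}$, and a short computation gives $\int_{\mathbb R_+}x\,|\partial_y\phi_\alpha(x,y)|\,\dd y=\tfrac1\alpha\int_{\mathbb R}|\rho'|=\tfrac1\alpha\sqrt{2/\pi}$; hence this term is at most $\tfrac{\sqrt{2/\pi}}{\alpha}\big(\sup_t|\widehat N_\partial(t)-N_\partial(t)|\big)^{1/(2k)}$. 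The senescence times being i.i.d.\ (they are i.i.d.\ up to the permutation used to sort them), the Dvoretzky–Kiefer–Wolfowitz inequality gives $\sup_t|\widehat N_\partial(t)-N_\partial(t)|\le\big(\tfrac{\log(2/p)}{2n_s}\big)^{1/2}$ with probability at least $1-p$; inserting this and $\alpha=\tilde\alpha_p$ yields the statistical bound $\sqrt{2/\pi}\,(C_{\widehat n,2k})^{1/2}\big(\tfrac{\log(2/p)}{2n_s}\big)^{1/(8k)}$. Summing the three terms gives~\eqref{eq:bounds_confidence_interval}, and the value of $\tilde\alpha_p$ is precisely the one balancing the bias with the statistical error.

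I expect the main obstacle to be the composition with $z\mapsto z^{1/(2k)}$: it is only $\tfrac1{2k}$-Hölder, so the optimal $n_s^{-1/2}$ rate from concentration enters as $n_s^{-1/(4k)}$, and after balancing against the $O(\alpha)$ mollification bias one is left with the degraded rate $n_s^{-1/(8k)}$ — the quantitative manifestation of the curse of dimensionality. Secondary technical points requiring care are: the rigorous justification of the integration by parts and the vanishing of the boundary terms for $M-\Theta$; checking that the empirical survival function $\widehat N_\partial$ genuinely satisfies the hypotheses of the DKW inequality; and the bookkeeping that identifies $\|\tilde\psi'\|_{L^\infty}$ with $C_{\widehat n,2k}$, which is what forces the weighting of the $L^\infty$-norm by $\text{Id}$.
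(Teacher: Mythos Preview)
Your proposal is correct and follows essentially the same approach as the paper: a three-term decomposition (statistical, bias, model), the bias handled by the mean-value inequality on $t\mapsto e^t\widehat n_0^{(2k)}(e^t)$ identifying its derivative norm with $C_{\widehat n,2k}$, the statistical term handled by passing to the antiderivative, the $\tfrac1{2k}$-H\"older continuity of $z\mapsto z^{1/(2k)}$, and the DKW inequality, and the model term by Theorem~\ref{te:main_result}-$(b)$. The only cosmetic difference is that the paper first changes variables $y=\log x$ and phrases everything as a standard convolution on $\mathbb R$ (packaging your integration-by-parts step into the functional $\mathcal K(f)=\inf\{\|\varphi\|_{L^\infty}:\varphi'=f\}$), whereas you work directly on $\mathbb R_+$ with the log-mollification kernel $\phi_\alpha$; the computations are in one-to-one correspondence.
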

\noindent At first sight, other estimators than the one given in~\eqref{eq:estimators_simulations_several_telos} seem more natural to use. In view of the definition of~$\widehat{n}_0^{(2k)}$, see~\eqref{eq:definitions_estimators}, we can for example mention estimators corresponding to the ratio between an estimator of the numerator of~$\widehat{n}_0^{(2k)}$ and an estimator of the denominator of~$\widehat{n}_0^{(2k)}$. We prefer here to use the estimator presented in~\eqref{eq:estimators_simulations_several_telos} for two reasons. The first one is that it is easier to obtain qualitative results about the quality of the estimators $\left(\overline{n}_0^{(2k,\alpha)}\right)_{\alpha >0}$, see Proposition~\ref{prop:quality_estimator_random_variables_severaltelos}. The second one is that this prevents us from having an estimator with a denominator tending to~$0$ when~$x\rightarrow+\infty$, which can generate instability. 

\subsubsection{Estimation with random variables distributed according to \texorpdfstring{$n_{\partial}^{(2k)}$}{n\_partial\^\{(2k)\}}}\label{subsubsect:results_severaltelos_probabilistic}

We consider two examples: first, the case where the model with several telomeres, defined in \eqref{eq:PDE_model_telomeres_several_telos}, has for parameters $\left(n_0,k\right) = \left(h_{1,4},5\right)$. Second, the case where this model has for parameters $\left(n_0,k\right) = \left(h_{2,1.5},16\right)$. For each of these examples, we fix $n_s = 3000$ and simulate a sequence of random variables $\left(T_{2k,i}\right)_{1\leq i \leq n_s}$ distributed according to~$n_{\partial}^{(2k)}$ by using a probabilistic telomere shortening model \cite{olaye_long-time_2026,benetos_stochastic_2025}. Then, we compute an estimation of $n_0$ from these times by using $\overline{n}_0^{(2k,\alpha)}$ for $\alpha = 0.275$. The smoothing parameter has been chosen manually, and does not need to be optimal. We do not choose one of the smoothing parameters $\left(\tilde{\alpha}_p\right)_{p\in(0,1]}$ introduced in Proposition~\ref{prop:quality_estimator_random_variables_severaltelos} for the reason given in Remark~\ref{rem:smoothing_parameter_several}. We plot in Figure~\ref{fig:estimation_results_lack_data} the estimated initial distributions (green curves), and compare them with their theoretical values (dotted curves). What we observe is surprising. The left tails of the initial distributions are first correctly estimated. However, at a certain telomere length, a peak appears on each estimated curve and the estimated density is~$0$ immediately afterwards. We thus have a very poor estimation.

\begin{figure}[!ht] 
	\centering
	\begin{subfigure}[t]{0.46\textwidth}
		\centering
		\includegraphics[scale = 0.375]{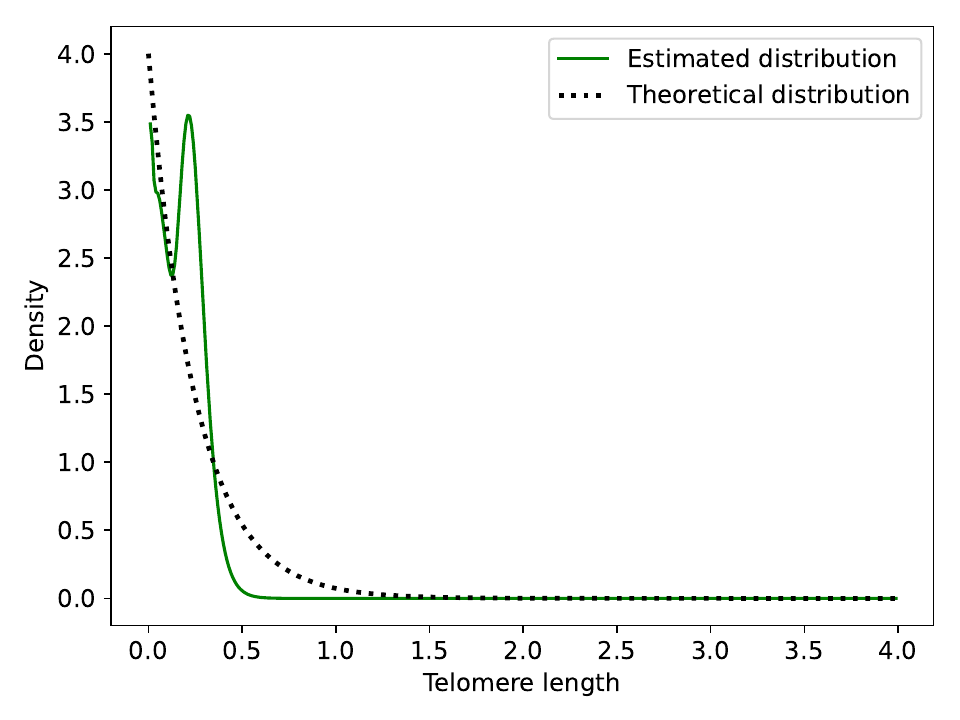}
		\caption{Comparison between the curves of $\widehat{n}_0^{(2k,\alpha)}$ and $n_0 = h_{1,4}$, for $k = 5$.}\label{fig:estimation_results_lack_data_exponential}
	\end{subfigure}
	\hfill
	\begin{subfigure}[t]{0.46\textwidth}
		\centering
		\includegraphics[scale = 0.375]{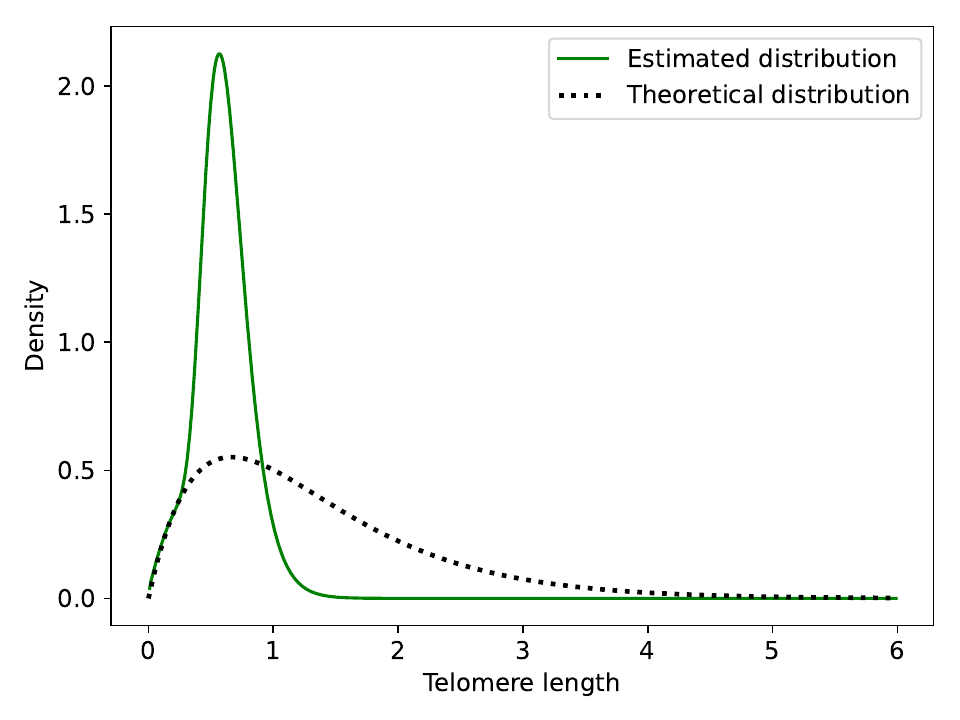}
		\caption{Comparison between the curves of $\widehat{n}_0^{(2k,\alpha)}$ and $n_0 = h_{1,4}$, for $k = 16$.}\label{fig:estimation_results_lack_data_gamma}
	\end{subfigure}
	\caption{Estimation results for the estimator $\overline{n}_0^{(2k,\alpha)}$ defined in~\eqref{eq:estimators_simulations_several_telos}, when $b = 1$, $g = 1_{[0,1]}$, $N = 40$,~\hbox{$n_s = 3000$}, $\alpha = 0.275$ and $\left(n_0,k\right) \in\left\{\left(h_{1,4},5\right),\left(h_{2,1.5},16\right)\right\}$.}\label{fig:estimation_results_lack_data}
\end{figure}

To understand this, we investigate the $n_s$ simulations of the probabilistic model used to generate our sequence of senescence times~$\left(T_{2k,i}\right)_{1\leq i \leq n_s}$. For each of these simulations, we successively collect the initial length of the telomere signalling senescence, plot in Figure~\ref{fig:histogrammes_stoppinglengths} the histogram of these lengths (green bars), and superpose the curve of $n_0$ on this histogram (black curve). In Figure~\ref{fig:histogrammes_stoppinglengths_exponential}, which corresponds to the case where $(n_0,k) = (h_{1,4},5)$, we observe that even if $n_0$ has a large density after the length $0.35$, no telomere with an initial length greater than $0.35$ has signalled senescence. In Figure~\ref{fig:histogrammes_stoppinglengths_gamma}, which corresponds to the case where $(n_0,k) = (h_{1,4},5)$, we have a similar observation. No telomere with an initial length greater than $0.65$ has signalled senescence, whereas the density of $n_0$ is still large at this length. The random variables $\left(T_{2k,i}\right)_{1\leq i \leq n_s}$ therefore do not contain a significant part of the information about $n_0$ in these two cases. This lack of information explains the poor estimation in~Figure~\ref{fig:estimation_results_lack_data}. It also explains why we observe peaks: a smoothed Dirac measure appears in each estimation due to the abrupt loss of information. 

\begin{figure}[!ht]
	\centering
	\begin{subfigure}[t]{0.45\textwidth}
		\centering
		\includegraphics[scale = 0.4]{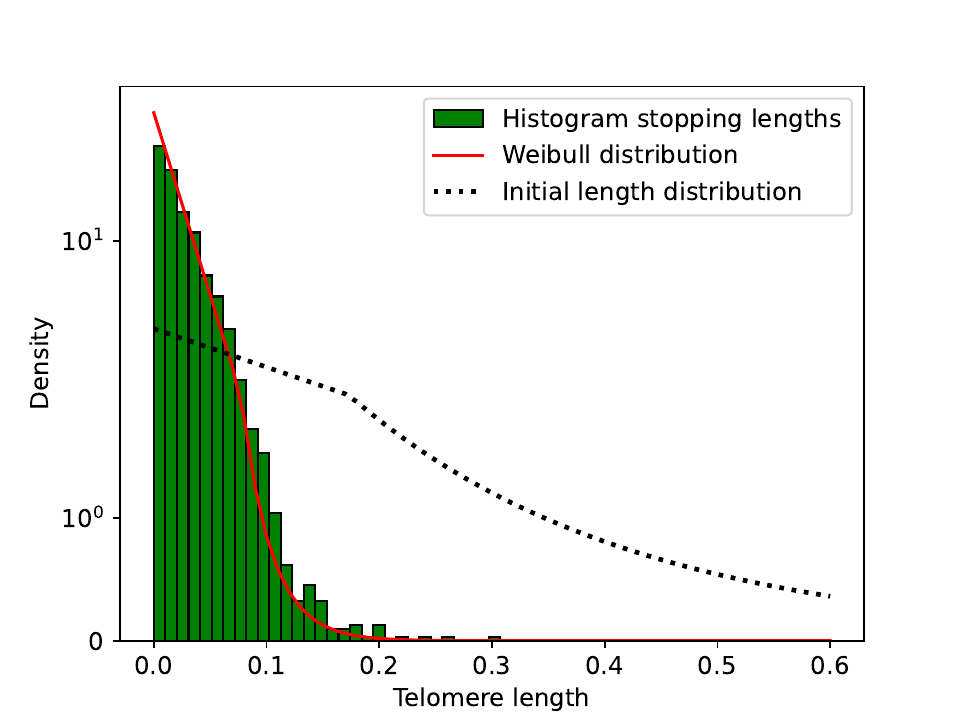}
		\caption{Histogram when $(n_0,k) = (h_{1,4},5)$, and comparison with a Weibull distribution with parameters $\left(1,\left(H_{1,4}\right)^{-1}\left(\frac{1}{10}\right)\right)$.}\label{fig:histogrammes_stoppinglengths_exponential}
	\end{subfigure}
	\hfill
	\begin{subfigure}[t]{0.45\textwidth}
		\centering
		\includegraphics[scale = 0.4]{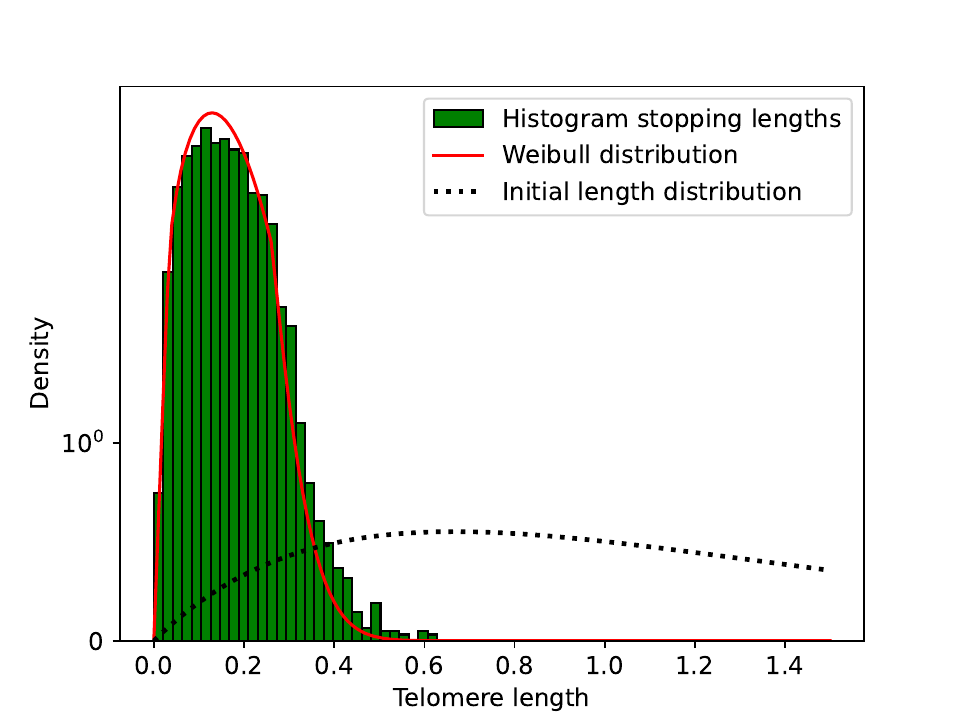}
		\caption{Histogram when $(n_0,k) = (h_{2,1.5},16)$, and comparison with a Weibull distribution with parameters $\left(2,\left(H_{2,1.5}\right)^{-1}\left(\frac{1}{32}\right)\right)$.}\label{fig:histogrammes_stoppinglengths_gamma}
	\end{subfigure}
	\caption{Histograms of the initial lengths of telomeres signalling senescence in the simulations done to generate $\left(T_{2k,i}\right)_{1\leq i \leq n_s}$, where $n_s = 3000$, at the symlog scale~\cite{webber_bi-symmetric_2012}. Comparison with the curve of $n_0$ (black curves) and with the density of a Weibull distribution (red curves). \textit{The symlog scale uses a logarithmic scale for large values and a linear scale for small ones. It allows us to make appear very small bars, while preserving the shape of the histogram close to that of the linear scale.}}\label{fig:histogrammes_stoppinglengths}
\end{figure}
The only way to manage this issue is to do the estimation with a larger number of data~points. The reason is that it allows us to have better coverage of the telomere length that signals senescence at time $t = 0$. The problem is that the number of data points required to obtain a sufficiently large coverage of $n_0$ is far too high. The latter is illustrated by the bound on the errors of the estimators $\left(\overline{n}_0^{(2k,\tilde{\alpha}_p)}\right)_{p\in(0,1]}$ we have obtained in Proposition~\ref{prop:quality_estimator_random_variables_severaltelos}. Indeed, this bound is of order~$\frac{1}{\left(n_s\right)^{8k}}$ as $n_s \rightarrow +\infty$. Hence, the number of data points required to have a qualitative bound increase exponentially fast with the dimension.
\begin{rem}\label{rem:smoothing_parameter_several}
	The fact that we have a bound of order~$\frac{1}{\left(n_s\right)^{8k}}$ as $n_s \rightarrow +\infty$ explains why we do not use one of the smoothing parameter~$\left(\tilde{\alpha}_p\right)_{p\in(0,1]}$ in the estimations presented in Figure~\ref{fig:estimation_results_lack_data}. As the bounds provided in Proposition~\ref{prop:quality_estimator_random_variables_severaltelos} are not qualitative when~$n_s = 3000$ and $k\in\{5,16\}$, there is not benefit in choosing one of these smoothing parameters.
\end{rem}
This problem of lack of data is in fact very common when working in high dimension, and is often referred to as \textit{curse of the dimensionality}. In this context, we can go further, and link this problem to extreme value theory~\cite{leadbetter_extremes_1983}, which studies the distribution of the maximum and minimum of i.i.d. random variables. This link is shown in the following statement. 
\begin{prop}[Minimum initial length when $k\rightarrow+\infty$]\label{prop:extreme_values_n0}
	Let $(\ell,\beta)\in\mathbb{N}^*\times\mathbb{R}_+^*$ and $\left(X_i\right)_{1\leq i \leq 2k}$ a sequence of i.i.d. random variables with distribution given by $h_{\ell,\beta}$. Then, we have for all~$x\geq 0$
	\begin{equation}\label{eq:weibull_convergence}
		\underset{k\rightarrow+\infty}{\lim} \mathbb{P}\left[\frac{\min\left(X_1,\hdots,X_{2k}\right)}{\left(H_{\ell,\beta}\right)^{-1}\left(\frac{1}{2k}\right)} \leq x\right] = 1 - \exp\left(-x^{\ell}\right),
	\end{equation}
	where $\left(H_{\ell,\beta}\right)^{-1}$ is the reciprocal of the function $H_{\ell,\beta}$ defined in~\eqref{eq:density_erlang_distribution}.
\end{prop}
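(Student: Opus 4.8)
The plan is to recognise \eqref{eq:weibull_convergence} as the standard extreme-value statement for the minimum of i.i.d.\ variables whose common law has a polynomial behaviour at its left endpoint $0$, and to verify the von Mises--type asymptotics by hand. First I would record two elementary facts about $H_{\ell,\beta}$. Since $h_{\ell,\beta}>0$ on $\mathbb{R}_+^*$, the map $H_{\ell,\beta}$ is continuous and strictly increasing from $H_{\ell,\beta}(0)=0$ to $1$, so its reciprocal $\left(H_{\ell,\beta}\right)^{-1}$ is well defined on $(0,1)$; consequently, setting $a_k:=\left(H_{\ell,\beta}\right)^{-1}\!\left(\tfrac{1}{2k}\right)$, we have $H_{\ell,\beta}(a_k)=\tfrac{1}{2k}$ and, since $\tfrac{1}{2k}\to 0$ and $H_{\ell,\beta}$ is a continuous bijection, $a_k\to 0$ as $k\to+\infty$. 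Second, from the expression of $h_{\ell,\beta}$ near $0$, a first-order Taylor expansion (or L'Hôpital's rule) gives the local equivalent
$$
H_{\ell,\beta}(t)=\int_0^t \frac{\beta^{\ell}}{(\ell-1)!}\,y^{\ell-1}e^{-\beta y}\,\dd y \underset{t\to 0^+}{\sim}\frac{\beta^{\ell}}{\ell!}\,t^{\ell}.
$$

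Next I would use these two facts to compute the rescaled tail. For a fixed $x>0$, since $a_k\to 0$ and $a_k x\to 0$, the local equivalent above yields
$$
\frac{H_{\ell,\beta}(a_k x)}{H_{\ell,\beta}(a_k)}\underset{k\to+\infty}{\longrightarrow} x^{\ell},
$$
hence, using $H_{\ell,\beta}(a_k)=\tfrac{1}{2k}$, we get $2k\,H_{\ell,\beta}(a_k x)\to x^{\ell}$ as $k\to+\infty$. By independence of $\left(X_i\right)_{1\leq i\leq 2k}$,
$$
\mathbb{P}\!\left[\frac{\min\left(X_1,\hdots,X_{2k}\right)}{a_k}\leq x\right]=1-\mathbb{P}\!\left[\forall i\in\llbracket1,2k\rrbracket:\, X_i> a_k x\right]=1-\bigl(1-H_{\ell,\beta}(a_k x)\bigr)^{2k},
$$
and from $2k\,H_{\ell,\beta}(a_k x)\to x^{\ell}$ the elementary limit $\left(1-\tfrac{c_k}{2k}\right)^{2k}\to e^{-c}$ whenever $c_k\to c$ gives $\bigl(1-H_{\ell,\beta}(a_k x)\bigr)^{2k}\to e^{-x^{\ell}}$. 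This proves \eqref{eq:weibull_convergence} for $x>0$; the case $x=0$ is immediate since $X_i>0$ almost surely forces the left-hand side to be $0$, which equals $1-e^{-0^{\ell}}=0$.

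There is no genuinely hard step here: the argument is the domain-of-attraction criterion for the Weibull law of minima, specialised to the Erlang distribution whose density vanishes like $y^{\ell-1}$ at $0$. The only points requiring a little care are the rigorous justification that $a_k\to 0$ (which I would obtain from continuity of the bijection $H_{\ell,\beta}$) and the passage to the limit in $\bigl(1-H_{\ell,\beta}(a_k x)\bigr)^{2k}$, for which I would write $2k\log\bigl(1-H_{\ell,\beta}(a_k x)\bigr)=-2k\,H_{\ell,\beta}(a_k x)+2k\,O\bigl(H_{\ell,\beta}(a_k x)^2\bigr)$ and use that $2k\,H_{\ell,\beta}(a_k x)\to x^{\ell}$ while $2k\,H_{\ell,\beta}(a_k x)^2\to 0$.
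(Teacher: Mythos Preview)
Your proof is correct and follows essentially the same route as the paper: both identify the key ratio limit $H_{\ell,\beta}(a_k x)/H_{\ell,\beta}(a_k)\to x^{\ell}$ (the paper via L'H\^opital, you via the equivalent $H_{\ell,\beta}(t)\sim \tfrac{\beta^{\ell}}{\ell!}t^{\ell}$). The only difference is that the paper then invokes the Fisher--Tippett--Gnedenko theorem (Leadbetter et al., Corollary~1.6.3, Type~III) applied to $\max(-X_1,\ldots,-X_{2k})$, whereas you carry out the $(1-c_k/(2k))^{2k}\to e^{-c}$ computation directly, making your argument self-contained at the cost of a few extra lines.
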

\begin{rem}\label{rem:weibull}
	The right-hand side of~\eqref{eq:weibull_convergence} corresponds to the cumulative distribution function of a Weibull distribution with parameters $\left(\ell,1\right)$. Hence, Eq.~\eqref{eq:weibull_convergence} means that $\min\left(X_1,\hdots,X_{2k}\right)$ can be approximated by a Weibull distribution with parameters $\left(\ell,\left(H_{\ell,\beta}\right)^{-1}\left(\frac{1}{2k}\right)\right)$ when $k\rightarrow+\infty$. 
\end{rem}
\begin{proof}
	First, notice that as $X_1$ has for cumulative distribution function $H_{\ell,\beta}$, by the L'Hospital's rule and~\eqref{eq:density_erlang_distribution}, we have for all $s\geq0$
	\begin{equation}\label{eq:weibull_to_prove}
		\underset{h\rightarrow 0+}{\lim} \frac{\mathbb{P}[-X_1 > -hs]}{\mathbb{P}[-X_1 > -h]} =  \underset{h\rightarrow 0+}{\lim} \frac{H_{\ell,\beta}(hs)}{H_{\ell,\beta}(h)} = \underset{h\rightarrow 0+}{\lim} \frac{s \times h_{\ell,\beta}(hs)}{h_{\ell,\beta}(h)} =s^{\ell}.
	\end{equation}
	Therefore, by using~\eqref{eq:weibull_to_prove} to verify the assumptions of the Fisher-Tippett-Gnedenko's theorem~\cite[Corollary $1.6.3$, Type III]{leadbetter_extremes_1983},  we obtain that for all $x\geq 0$ 
	$$
	\lim_{k\rightarrow + \infty}\mathbb{P}\left[\frac{\max\left(-X_1,\hdots,-X_{2k}\right)}{\left(H_{\ell,\beta}\right)^{-1}\left(\frac{1}{2k}\right)} \leq -x\right] = \exp\left(-x^{\ell}\right).
	$$
	We thus have that the proposition is proved, by using the above and the fact that for all $y\geq0$ it holds $\mathbb{P}\left[\min\left(X_1,\hdots,X_{2k}\right) \leq y\right] = 1 - \mathbb{P}\left[\max\left(-X_1,\hdots,-X_{2k}\right) \leq -y\right]$.
\end{proof}
\noindent From this result, we have that if $n_0$ is the density of an Erlang distribution and $k$ is large, then the distribution of the shortest telomere at $t = 0$ depends only on the left tail of $n_0$. In addition, we recall that in the approximated model, the distribution of senescence times is a scaled version of the distribution of the shortest telomere at $t = 0$ (see Remark~\eqref{rem:initial_telomere_length_correlations}). Then, we only have a good estimation of the left tail in Figure~\ref{fig:estimation_results_lack_data} because the cemetery only provide information about it in the original model, when $k\rightarrow+\infty$. This explanation is all the more valid since each of the histograms presented in Figure~\ref{fig:histogrammes_stoppinglengths} has a shape similar to that of the density of a Weibull distribution with the parameters given in Remark~\ref{rem:weibull}.

From these observations, we conclude that even if our inference method is theoretically adaptable when we observe random variables instead of $n_{\partial}^{(2k)}$, it is in fact not usable in practice. In most cases, due to the lack of data, only information about the left tail can be inferred, and not about the whole curve. This suggests that the senescence times distribution is in fact mainly influenced by the left tail of $n_0$, and much less by the overall distribution. 

\subsection{Estimation on experimental data}\label{subsect:estimation_real_datas}

To conclude this study, we test our inference method on experimental data. The dataset we use here comes from~\cite{Martin2021}. It contains the senescence times of $187$ different lineages, and the cell cycle duration of $1430$ cells, both measured in hours. The histograms corresponding to these data are presented in Figure~\ref{fig:senescence_times_experimental} and~\ref{fig:division_times_experimental}. Our aim is to use the senescence times to obtain an estimation of $n_0$. Then, we will compare this estimation  with a previous estimation of the initial length distribution, obtained in~\cite{Xu2013} on the basis of an elongation-shortening model, see also~\cite{olaye_long-time_2026,benetos_stochastic_2025}. 

The budding yeast is a species with $16$ chromosomes. We can estimate its division rate as $1/1.386 = 0.7216 \text{ hours}^{-1}$ (hereafter denoted as $\text{h}^{-1}$) by computing the inverse of the average division times presented in~Figure~\ref{fig:division_times_experimental}. In addition, the shortening distribution of this species can be modelled by a uniform distribution from~$5$ to $10$ base pairs (bp), see~\cite{soudet_elucidation_2014}, and the threshold for senescence has been recently be estimated as $L_{\min} = 27$ bp in~\cite{rat_mathematical_2025}. We thus proceed to the estimation by using the estimator~$\overline{n}_0^{(2k,\alpha)}$ with the parameters \hbox{$k = 16$}, \hbox{$b = 0.7216 \text{ h}^{-1}$}, \hbox{$m_1 = 7.5$~bp}, $N = 40$ (see Remark~\ref{rem:choice_scaling_parameter} and Section~\ref{subsect:discussion_models_assumptions}), and the smoothing parameter~\hbox{$\alpha = 0.1$}. We apply this estimator to the senescence times presented in the previous paragraph, and plot in Figure~\ref{fig:estimation_experimental} the density we have inferred shifted by $L_{\min}$ (green curve). We also plot the estimation of the initial distribution coming from~\cite{Xu2013} on the same figure (black curve). As for the estimations presented in Section~\ref{subsubsect:results_severaltelos_probabilistic}, we observe in Figure~\ref{fig:estimation_experimental} that we first have an estimation of the left tail, then a peak, and finally an estimate of $0$ everywhere. Let us comment on the estimated left tail, that is the only reliable estimation, see~Section~\ref{subsubsect:results_severaltelos_probabilistic}.

\begin{figure}[!ht]
	\centering
	\begin{subfigure}[t]{0.45\textwidth}
		\centering
		\includegraphics[scale =0.385]{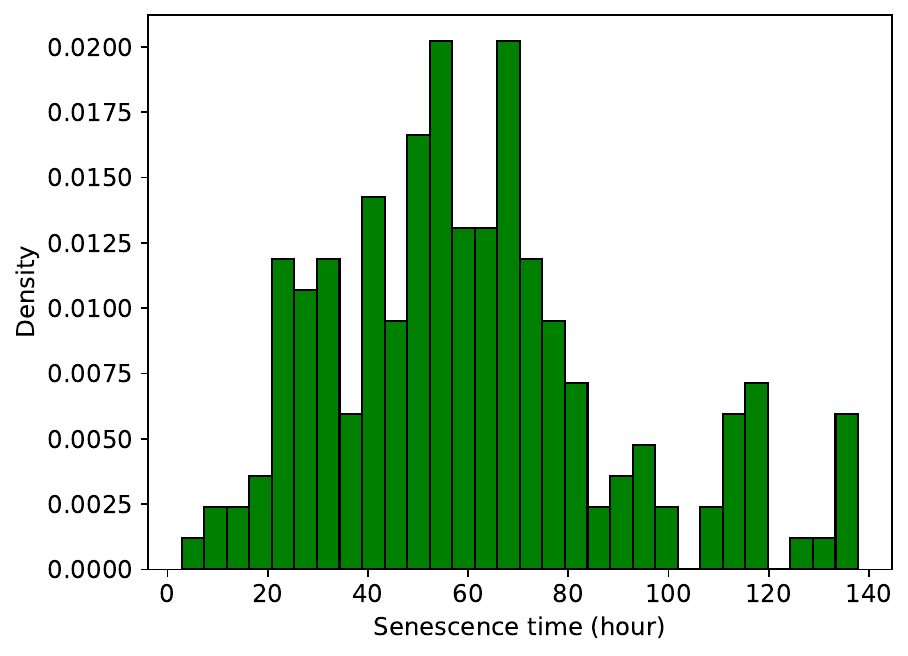}
		\caption{Histogram of senescence times coming from experiments.}\label{fig:senescence_times_experimental}
	\end{subfigure}
	\hfill 
	\begin{subfigure}[t]{0.45\textwidth}
		\centering
		\includegraphics[scale =0.385]{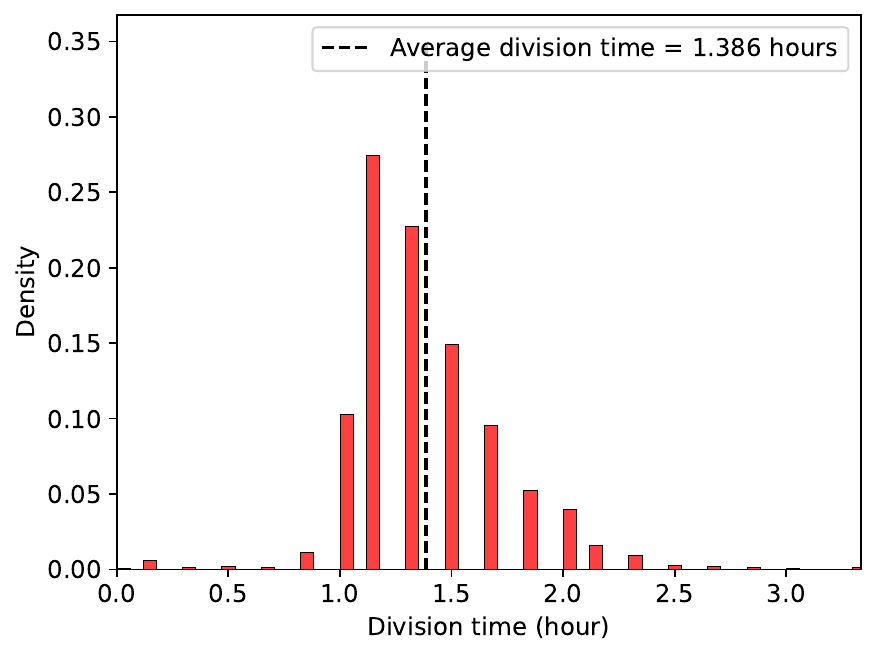}
		\caption{Histogram of division times coming from experiments.}\label{fig:division_times_experimental}
	\end{subfigure}	
	\begin{subfigure}[t]{0.45\textwidth}
		\centering
		\includegraphics[scale =0.385]{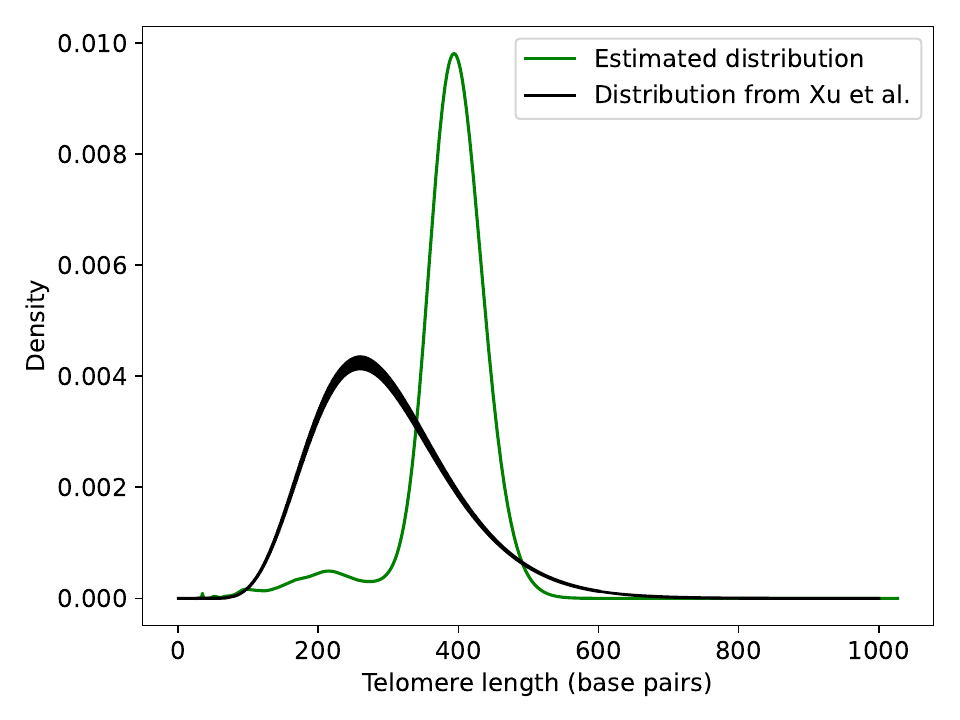}
		\caption{Estimation results on experimental data using $\overline{n}_0^{(32,\alpha)}$ with $\alpha = 0.1$. Comparison with the distribution obtained in \citeauthor{Xu2013}~\cite{Xu2013}.}\label{fig:estimation_experimental}
	\end{subfigure}
	\caption{Inference on experimental data: dataset and estimation results.}\label{fig:experimentalfigures}
\end{figure}

We observe in Figure~\ref{fig:estimation_experimental} that our estimated tail does not grow as quickly as the tail of the distribution obtained in~\cite{Xu2013}. What is the most surprising is that even at lengths close to the mode of the black curve (around $200-240$ base pairs), the tail of the estimated distribution has not grown yet. We thus have an estimation that differs from the one of~\cite{Xu2013}. The reason for this difference is surely linked to the fact that the model described by~\eqref{eq:PDE_model_telomeres_several_telos} is too simple.  For example, using an exponential distribution for division times is not realistic, as illustrated by the shape of the histogram in Figure~\ref{fig:division_times_experimental}. In addition, we do not know yet if the assumption discussed in Remark~\ref{rem:initial_lengths_assumptions} is problematic or not. Solving this inverse problem in more complex models will allow us to identify which model assumptions are too restrictive. This work therefore opens up new modelling perspectives. 

\section{Discussion}\label{sect:discussion_estimation} 
Our work began with a theoretical study, during which we study the connection between the senescence times distribution and the initial length density. We showed that there is a strong link between the two, provided that the ratio between the average telomere length and the average shortening value, denoted $N$, is large. We then have done a numerical study to understand how our method works in practice. Even if this study is encouraging in many cases, it has revealed several limitations. The first limitation is the poor estimation of initial distributions with a small coefficient of variation, see Section~\ref{subsubsect:problems_variability}. In this case, the information on the initial distribution is blurred by the other sources of randomness (shortening values, cell cycle duration). The second limitation is the poor estimation of initial distributions in high dimension. In this case, the number of data points required for a good estimation is considerable, see Section~\ref{subsubsect:results_severaltelos_probabilistic}. In addition, the error between~\eqref{eq:PDE_model_telomeres_several_telos} and~\eqref{eq:approximation_transport_model_2k_telomeres} is large, see~Section~\ref{subsect:estimation_results_severaltelos}.

This work opens up new perspectives for telomere shortening models, including applications beyond those presented in this study. We have provided a rigorous justification that these models can be approximated by transport equations, and have used these approximations to solve our inverse problem. However, this justification has limitations, since for the species that motivate our study, $N$ is not very large (for example, $N = 40$ for the yeast). The first perspective is thus to improve our approximations by adding a diffusion term to our transport equations, see~\cite{doumic_asymptotic_2025,portillo_influence_2023,wattis_mathematical_2020}. Indeed, if we manage to show that telomere shortening models can be approximated by a transport-diffusion equations with an error of order~$\frac{1}{N^2}$ instead of $\frac{1}{N}$, then it will be possible to construct estimators with estimation errors of order $\frac{1}{N^2}$ and improve the results presented in Section~\ref{subsubsect:problems_variability}. In addition, on this more complex approximation, we strongly believe that thanks to the diffusion, the distribution of senescence times distribution is not only influenced by the distribution of the initial shortest telomere, see Remark~\ref{rem:initial_telomere_length_correlations}, but also by the second shortest, third shortest etc. Hence, by obtaining information on the distribution of the shortest and second shortest initial telomeres thanks to this approximation, and then studying if these two distributions are close or not, it might be possible to obtain information on the correlations of the initial telomere lengths when the assumption presented in Remark~\ref{rem:initial_lengths_assumptions} does not hold. Obtaining such a model approximation corresponds to a work in progress.

The second perspective is to find a model approximation when $k \rightarrow +\infty$. As mentioned above, problems arise when $k$ is large, that may be related to extreme value theory or to the fact that the approximation error depends on $k$ (see Theorem~\ref{te:main_result}). It is therefore necessary to understand how the dynamics evolve when $k$ is large. The main approach to achieve this is to use extreme value theory to our advantage, by constructing a typical particle representing the telomere with the smallest length among the $2k$ present in a cell. In view of the Fisher–Tippett–Gnedenko's theorem~\cite[Theorem~$1.6.2$]{leadbetter_extremes_1983}, it will thus be possible to obtain a parametric law for the evolution of the shortest telomere over time, and thus to simplify the study of the phenomenon.

The third perspective, more directly linked to the inverse problem, is to improve our bounds on the approximation and estimation errors for the model with several telomeres. As explained in Section~\ref{subsect:main_results}, the condition to have an exponential decay of the error in Theorem~\ref{te:main_result}-$(b)$ is that $\lambda + 2k\lambda'_N > \omega + \left(2k-1\right)\omega_N'$. However, this condition is not optimal. The reason we think this is that from~\eqref{eq:cemetery_exponential}, a maximum principle in view of~\hyperlink{assumption:H3}{$\left(H_3\right)$}, and~\eqref{eq:laplace_transform_mu}, one has that for all~$t\geq0$
$$
n_{\partial}^{(2k)}(t) \leq \left(D_{\lambda}\right)^{2k}kbm_1\lambda'_N\exp\left[-kbm_1\lambda'_Nx\right].
$$
Using the latter and~Lemma~\ref{lemma:inequalities_lower_bounds_tails} to bound $n_0^{(2k)}$, in view of the right-hand side of~\eqref{eq:definitions_estimators} and~\eqref{eq:equality_transport_terms}, yields that for all $x\geq0$
$$
\widehat{n}_0^{(2k)}(x) \leq \left(D_{\lambda}\right)^{2k}\frac{\lambda'_N\exp\left[-2k\lambda'_Nx\right]}{\left(D_{\omega}\exp\left[-2k\omega'_Nx\right]\right)^{1-\frac{1}{2k}}} =  \frac{\left(D_{\lambda}\right)^{2k}}{\left(D_{\omega}\right)^{2k-1}}\lambda'_N\exp\left[-2k\lambda'_Nx + \left(2k-1\right)\omega'_N x\right],
$$
for which the right-hand side vanishes exponentially when $2k\lambda'_N > \left(2k-1\right)\omega'_N$. In addition, the function $n_0$ vanishes exponentially when $x\rightarrow+\infty$ in view of \hyperlink{assumption:H3}{$\left(H_3\right)$}. We thus have that the condition under which~$\left|n_0^{(2k)}(x) - n_0(x)\right|$ vanishes exponentially is when $2k\lambda'_N > \left(2k-1\right)\omega'_N$,  which is less restrictive than the condition $\lambda + 2k\lambda'_N > \omega + \left(2k-1\right)\omega_N'$ since $\omega \geq \lambda$. We thus would like to know if this is possible to obtain a better coefficient in the exponential in the bound of Theorem~\hbox{\ref{te:main_result}-$(b)$}, with a coefficient before the exponential at least of the same order. This can maybe be done by better controlling the term $\Delta_2(x)$ introduced in Section~\ref{subsect:proof_quality_estimator_several_telomeres}, where $x\in\mathbb{R}_+$. In addition, the constants~$\left(D_{\lambda}\right)^{2k}$ and~$\left(\frac{D_{\lambda}}{D_{\omega}}\right)^{2k}$ in respectively~Proposition~\ref{prop:approximation_PDE_several_telomeres} and~Theorem~\hbox{\ref{te:main_result}-$(b)$} are not optimal. There are related to the loss of information on~$n_{0}$ that occurs when we apply~\hyperlink{assumption:H3}{$(H_3)$} or~\hyperlink{assumption:H4}{$(H_4)$} to bound~$n_0$, and this loss increases exponentially with the dimension. We would like to obtain bounds on the estimation and approximation errors in which these two constants do not appear. This will allow us to prove that the approximation and estimation errors grow at most polynomially when the number of chromosomes increases.


The other perspectives are not related to model approximations, and correspond to questions that remain open. First, we would like to prove that the operator $\Psi_{\partial}$ defined in Remark~\ref{rem:ill_posed_inverse_problem_one_dimension}, and its equivalent on~\eqref{eq:PDE_model_telomeres_several_telos}, are invertible. Second, we would like to adapt our method for age-dependent models, see~\cite{olaye_long-time_2026}. These two perspectives are the subject of future work. 


\appendix

\section{Auxiliary statements}
%
We present here auxiliary statements which are quite standard to obtain, and used frequently during all the paper. First, we provide in Section~\ref{subsect:existence_PDE} results related to the integro-differential equations we use in this work. Then, in Section~\ref{subsect:auxiliary_statements_Ik}, we present statements dealing with the cardinalities of subsets of $\mathcal{I}_k$. Finally, in Section~\ref{subsect:auxiliary_statements_erlang_distributions}, we give results about Erlang distributions that we use in our numerical study, see~Section~\ref{sect:estimation_on_simulations}. 

\subsection{Auxiliary statements related to our integro-differential equations}\label{subsect:existence_PDE}
We begin by presenting the statement that justifies the well-posedness of the integro-differential equations used in this work, as well as their uniqueness. The proof of this statement is based on a fixed-point argument, and is inspired by the ideas presented in~\cite[Section~$3.3$]{perthame07}. We do not detail this proof, as it is relatively classical. We however mention, for the interested readers, that the detailed proof of this result is presented in~\cite[p.~$194$]{olaye_thesis_2025}.


\begin{prop}[Well-posedness of the equations]\label{prop:well_definition_general_model}
	Let $d\in\mathbb{N}^*$, $\tilde{\xi}$ a finite measure on $\mathbb{R}_+^d$, $F\in C\left(\mathbb{R}_+,L^1\left(\mathbb{R}_+^d\right)\right)$, and $w_0\in L^1\left(\mathbb{R}_+^d\right)$. Then, there exists a unique solution in~$C\left(\mathbb{R}_+;\,L^1\left(\mathbb{R}_+^d\right)\right)$ to the following integro-differential equation 
	\begin{equation}\label{eq:general_model}
		\begin{cases}
			\partial_t w(t,x) = \int_{v\in\mathbb{R}_+^d} \left[w(t,x+v) -  w(t,x) \right]\tilde{\xi}(\dd v) + F(t,x), & \forall (t,x)\in\mathbb{R}_+\times\mathbb{R}_+^{d},\\
			w(0,x) = w_0(x), & \forall x\in\mathbb{R}_+^{d}.
		\end{cases}
	\end{equation}
	In addition, if $F$ and $w_0$ are non-negative, then $w$ is non-negative.
\end{prop}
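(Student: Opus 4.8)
The plan is to recast~\eqref{eq:general_model} as a fixed-point problem and to apply Banach's fixed-point theorem. Write $a := \tilde{\xi}(\mathbb{R}_+^d) < +\infty$, so that the first line of~\eqref{eq:general_model} becomes $\partial_t w(t,x) + a\,w(t,x) = \int_{v\in\mathbb{R}_+^d} w(t,x+v)\,\tilde{\xi}(\dd v) + F(t,x)$. By the variation of constants formula, I would show that a function $w\in C\left(\mathbb{R}_+;L^1(\mathbb{R}_+^d)\right)$ solves~\eqref{eq:general_model} if and only if it is a fixed point of the operator $\mathcal{T}$ defined by
$$\mathcal{T}(w)(t,x) := e^{-at}w_0(x) + \int_0^t e^{-a(t-s)}\left[\int_{v\in\mathbb{R}_+^d} w(s,x+v)\,\tilde{\xi}(\dd v) + F(s,x)\right]\dd s.$$

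The key elementary estimate is that translation by a vector of $\mathbb{R}_+^d$ does not increase the $L^1(\mathbb{R}_+^d)$-norm: for $v\in\mathbb{R}_+^d$ and $f\in L^1(\mathbb{R}_+^d)$, the map $x\mapsto x+v$ sends $\mathbb{R}_+^d$ into $\mathbb{R}_+^d$ and is measure-preserving onto its image, so $\|f(\cdot+v)\|_{L^1(\mathbb{R}_+^d)}\leq \|f\|_{L^1(\mathbb{R}_+^d)}$. Combining this with Fubini's theorem to interchange $\int_{\mathbb{R}_+^d}\dd x$ and $\int\tilde{\xi}(\dd v)$ gives $\big\|\int_{v\in\mathbb{R}_+^d} w(s,\cdot+v)\,\tilde{\xi}(\dd v)\big\|_{L^1}\leq a\,\|w(s,\cdot)\|_{L^1}$, from which I first deduce that $\mathcal{T}$ maps $C\left([0,T];L^1(\mathbb{R}_+^d)\right)$ into itself (continuity in $t$ follows since the Duhamel integrand is continuous with values in $L^1$ — here I use $w_0\in L^1$, $F\in C(\mathbb{R}_+;L^1)$ — and the integral of a continuous $L^1$-valued function against a bounded kernel is continuous), and second that, for every $T>0$ and $w_1,w_2\in C\left([0,T];L^1(\mathbb{R}_+^d)\right)$,
$$\big\|\mathcal{T}(w_1)-\mathcal{T}(w_2)\big\|_{C([0,T];L^1)} \leq \Big(\sup_{t\in[0,T]}\int_0^t a\,e^{-a(t-s)}\dd s\Big)\big\|w_1-w_2\big\|_{C([0,T];L^1)} = \left(1-e^{-aT}\right)\big\|w_1-w_2\big\|_{C([0,T];L^1)}.$$
Since $1-e^{-aT}<1$, the operator $\mathcal{T}$ is a contraction on $C\left([0,T];L^1(\mathbb{R}_+^d)\right)$ for \emph{every} finite $T$, which is what makes the argument painless.

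Banach's theorem then yields, for each $T>0$, a unique fixed point $w_T\in C\left([0,T];L^1(\mathbb{R}_+^d)\right)$; by uniqueness these coincide on overlapping intervals and glue into a unique $w\in C\left(\mathbb{R}_+;L^1(\mathbb{R}_+^d)\right)$ with $\mathcal{T}(w)=w$. To finish, once $w$ is known to be continuous the Duhamel integrand is continuous in $s$, hence $t\mapsto \mathcal{T}(w)(t,\cdot)=w(t,\cdot)$ is differentiable in $L^1$ with $\partial_t w = \int_v[w(t,\cdot+v)-w(t,\cdot)]\tilde{\xi}(\dd v)+F(t,\cdot)$ in $C(\mathbb{R}_+;L^1)$; conversely any $C(\mathbb{R}_+;L^1)$-solution of~\eqref{eq:general_model} satisfies the Duhamel identity, so solutions and fixed points coincide and uniqueness is established. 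The only mildly delicate points are the measure-theoretic bookkeeping specific to the half-space (that $x+v\in\mathbb{R}_+^d$ for $x,v\in\mathbb{R}_+^d$, and the associated change of variables giving the norm bound) and the equivalence between the mild and the strong formulations; neither presents a real obstacle, which is why the paper states the result without detailing the proof, referring to the analogous fixed-point argument in~\cite[Section~3.3]{perthame07}.
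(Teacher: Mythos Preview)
Your proposal is correct and follows essentially the same approach the paper indicates: a Banach fixed-point argument via the Duhamel/variation-of-constants reformulation, which is precisely the method of~\cite[Section~$3.3$]{perthame07} that the paper cites. The paper does not detail the proof, and your sketch fills in exactly the steps one would expect (the $L^1$-norm bound for the translated integral operator, the contraction estimate with constant $1-e^{-aT}<1$, and the equivalence between mild and strong solutions).
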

\noindent We now present the second statement of this section, which corresponds to a maximum principle for solutions of~\eqref{eq:general_model} when the source term $F$ is identically equal to $0$. Again, we do not detail its proof, and refer to~\cite[p.~$195$]{olaye_thesis_2025} for the full proof of this result.

\begin{cor}[Maximum principle]\label{cor:maximum_principle}
	We work under the setting of Theorem~\ref{te:main_result}, with $F = 0$ and $w_0$ non-negative. We consider $v_0 \in L^1\left(\mathbb{R}_+^d\right)$ verifying $\left|v_{0}\right| \leq w_{0}$, and $(v,w)\in \left(C\left(\mathbb{R}_+;\,L^1\left(\mathbb{R}_+^d\right)\right)\right)^2$ two solutions of~\eqref{eq:general_model} with respective initial conditions $v_{0}$ and $w_{0}$. Then, for all $(t,x)\in\mathbb{R}_+\times\mathbb{R}_+^d$, we have
	$$
	\left|v(t,x)\right| \leq w(t,x).
	$$
\end{cor}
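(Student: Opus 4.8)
The plan is to prove this maximum principle via the standard argument for integro-differential equations of transport-jump type, following the structure of \cite[Theorem~$3.1$-$(ii)$]{perthame07}. Since $v$ and $w$ both solve the linear equation~\eqref{eq:general_model} with source term $F=0$, linearity tells us that $w-v$ and $w+v$ are both solutions with initial data $w_0-v_0 \geq 0$ and $w_0+v_0 \geq 0$ respectively (both non-negative because $|v_0|\leq w_0$). Hence it suffices to prove the following comparison statement: if $z\in C(\mathbb{R}_+;L^1(\mathbb{R}_+^d))$ solves~\eqref{eq:general_model} with $F=0$ and non-negative initial datum $z_0 \geq 0$, then $z(t,\cdot)\geq 0$ for all $t\geq 0$. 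Applying this to $z = w-v$ and $z = w+v$ gives $-w \leq v \leq w$, i.e. $|v|\leq w$, which is the claim.

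To prove the positivity statement, the key observation is that the equation can be read as a perturbation of a decay equation. Writing $\tilde{\xi}(\mathbb{R}_+^d) =: \kappa < +\infty$ (finite by hypothesis), we have
\begin{equation}
\partial_t z(t,x) = \int_{v\in\mathbb{R}_+^d} z(t,x+v)\,\tilde{\xi}(\dd v) - \kappa\, z(t,x),
\end{equation}
so that $\partial_t\big(e^{\kappa t} z(t,x)\big) = e^{\kappa t}\int_{v\in\mathbb{R}_+^d} z(t,x+v)\,\tilde{\xi}(\dd v)$, and integrating in time,
\begin{equation}
e^{\kappa t} z(t,x) = z_0(x) + \int_0^t e^{\kappa s}\int_{v\in\mathbb{R}_+^d} z(s,x+v)\,\tilde{\xi}(\dd v)\,\dd s.
\end{equation}
This is a Volterra-type fixed-point relation: setting $Z(t,x) := e^{\kappa t} z(t,x)$, the map $\mathcal{T}(Z)(t,x) := z_0(x) + \int_0^t \int_{v\in\mathbb{R}_+^d} Z(s,x+v)\,\tilde{\xi}(\dd v)\,\dd s$ is a contraction on $C([0,T];L^1(\mathbb{R}_+^d))$ for $T$ small enough (or, more robustly, one uses the weighted norm $\sup_t e^{-2\kappa t}\|Z(t,\cdot)\|_{L^1}$ to get a global contraction), and it preserves the cone of non-negative functions: if $Z \geq 0$ then $\mathcal{T}(Z) \geq 0$ since $z_0 \geq 0$ and the integral of a non-negative function is non-negative. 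By Proposition~\ref{prop:well_definition_general_model} the solution is unique, hence it is the fixed point obtained as the limit of the Picard iterates $\mathcal{T}^n(0)$, all of which are non-negative; therefore $Z \geq 0$, and so $z = e^{-\kappa t} Z \geq 0$.

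I would then conclude by unwinding: with $z = w - v$ we get $v \leq w$, and with $z = w + v$ we get $-v \leq w$, hence $|v(t,x)| \leq w(t,x)$ for all $(t,x)\in\mathbb{R}_+\times\mathbb{R}_+^d$, which is exactly the assertion of Corollary~\ref{cor:maximum_principle}. The main subtlety — though it is routine rather than genuinely hard — is handling the fact that $\tilde\xi$ need not be a probability measure and that the spatial integrals range over $\mathbb{R}_+^d$ with the argument $x+v$ potentially leaving no issue (it stays in $\mathbb{R}_+^d$ since $v\in\mathbb{R}_+^d$), so no boundary terms arise; one just needs the finiteness of $\tilde\xi$ to make the Volterra/Picard contraction argument work in $L^1$. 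If one prefers to avoid re-deriving the fixed point, one can instead invoke directly the representation $z(t,\cdot) = \Phi_{\tilde\xi}(z_0)(t,\cdot)$ through the semigroup generated by the jump operator, which is positivity-preserving because it is (up to the scalar factor $e^{-\kappa t}$) an exponential of a positive kernel operator; this is the same computation packaged differently, and either route completes the proof. \qed
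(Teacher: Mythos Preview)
Your proof is correct and follows exactly the approach the paper points to: the paper does not give a detailed proof but simply says the result ``consists in adapting the proof of \cite[Theorem~$3.1$-$(ii)$]{perthame07} to this setting,'' and your argument---reducing by linearity to positivity preservation, then establishing positivity via the Duhamel/Picard fixed-point formulation $e^{\kappa t}z(t,x)=z_0(x)+\int_0^t e^{\kappa s}\int z(s,x+v)\,\tilde\xi(\dd v)\,\dd s$ whose iterates preserve the non-negative cone---is precisely that adaptation. The closing remark about the semigroup viewpoint is an equivalent rephrasing of the same mechanism.
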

\noindent The third statement we present is an intermediate statement to compute an explicit solution to~\eqref{eq:general_model} when $F = 0$. It can be proved by simply computing the derivative in time of the function given in~\eqref{eq:explicit_solution_complex_expo}.
\begin{lemm}[Complex explicit solutions to~\eqref{eq:general_model}]\label{lemm:explicit_solution_complex_expo}
Let $d\in\mathbb{N}^*$, $\tilde{\xi}$ a finite measure on $\mathbb{R}_+^d$, and two constants $C>0$ and $p\in\mathbb{C}$. Then, the function \hbox{$\zeta_{d,C,p} : \mathbb{R}_+\times\mathbb{R}_+^d \rightarrow \mathbb{C}$} defined for all $\left(t,x\right)\in\mathbb{R}_+\times\mathbb{R}_+^d$ as 
\begin{equation}\label{eq:explicit_solution_complex_expo}
\zeta_{d,C,p}(t,x) = C^d\exp\left[-\Big(\tilde{\xi}(1) - \mathcal{L}\big(\tilde{\xi}\big)\left(p\right)\!\Big)t - p\sum_{j = 1}^{d} x_j\right]
\end{equation} 
verifies the first line of~\eqref{eq:general_model} with $F = 0$.
\end{lemm}
\noindent Now, the fourth statement we give is a consequence of Lemma~\ref{lemm:explicit_solution_complex_expo}. It provides the explicit solution to~\eqref{eq:general_model} when $w_0$ is a product of an exponential and a cosine or sine function. It~can be proved by simply computing~$\frac{1}{2}\left(\zeta_{d,C,\beta + i\omega} + \zeta_{d,C,\beta - i\omega}\right)$ and $\frac{1}{2i}\left(\zeta_{d,C,\beta + i\omega} - \zeta_{d,C,\beta - i\omega}\right)$ for all~$d\in\mathbb{N}^*$, $C>0$ and $\left(\beta,\omega\right)\in\mathbb{R}_+^*\times\mathbb{R}$ by using~\eqref{eq:explicit_solution_complex_expo}, and then using the uniqueness result stated in Proposition~\ref{prop:well_definition_general_model}.
\begin{prop}[Real explicit solutions to~\eqref{eq:general_model}]\label{prop:explicit_solution_cos_sin_expo}
We work under the setting of Theorem~\ref{te:main_result} with $F = 0$. We consider $C > 0$ and $\left(\beta,\omega\right)\in\mathbb{R}_+^*\times\mathbb{R}$, and denote $\tilde{\xi}(1) := \int_{v\in \mathbb{R}_+^{d}}1\tilde{\xi}\left(\dd v\right)$. The following statements hold.
	\begin{enumerate}
	\item Assume that $w_0(x) = C^d\cos\left(\omega \sum_{j = 1}^d x_j\right)\exp\left(-\beta \sum_{j = 1}^dx_j\right)$ for all $x\in\mathbb{R}_+^d$. Then, for all $(t,x)\in\mathbb{R}_+\times\mathbb{R}_+^{d}$, we have
	\begin{equation}\label{eq:explicit_solution_cos_expo}
	\begin{aligned}
	w(t,x) &= C^d\cos\left[\text{Im}\left(\mathcal{L}\big(\tilde{\xi}\big)\left(\beta+i\omega\right)\right)t -\omega\sum_{j = 1}^d x_j\right]\\
	&\times\exp\left[-\Big(\tilde{\xi}(1) - \text{Re}\left(\mathcal{L}\big(\tilde{\xi}\big)\left(\beta+i\omega\right)\right)\!\Big)t - \beta\sum_{j = 1}^{d} x_j\right].
	\end{aligned}
	\end{equation}
	\item Assume that $w_0(x) = C^d\sin\left(\omega \sum_{j = 1}^d x_j\right)\exp\left(-\beta \sum_{j = 1}^d x_j\right)$ for all $x\in\mathbb{R}_+^d$.  Then, for all $(t,x)\in\mathbb{R}_+\times\mathbb{R}_+^{d}$, we have
	\begin{equation}\label{eq:explicit_solution_sin_expo}
	\begin{aligned}
	w(t,x) &= C^d\sin\left[\text{Im}\left(\mathcal{L}\big(\tilde{\xi}\big)\left(\beta+i\omega\right)\right)t -\omega\sum_{j = 1}^d x_j\right]\\
	&\times\exp\left[-\Big(\tilde{\xi}(1) - \text{Re}\left(\mathcal{L}\big(\tilde{\xi}\big)\left(\beta+i\omega\right)\right)\!\Big)t - \beta\sum_{j = 1}^{d} x_j\right].
	\end{aligned}
	\end{equation}
	\end{enumerate}
\end{prop}
\noindent Finally, the last statement that we present is useful to show that the cemeteries in~\eqref{eq:PDE_model_telomeres_one_telo} and~\eqref{eq:PDE_model_telomeres_several_telos} can be very close, even for two different initial conditions. This statement is essential to justify rigorously that the inverse problem we study in this work is ill-posed. 
\begin{prop}[Close cemeteries for different initial conditions]\label{prop:non_continuity_inverse_cemetery}
We work under the setting of Theorem~\ref{te:main_result} with $\tilde{\xi}\left(\mathbb{R}_+^d\backslash\{0\}\right) >0$, $F = 0$, and $w_0(x) = \exp\left(-\sum_{j=1}^d x_j\right)$ for all $x\in\mathbb{R}_+^d$. We denote~$\tilde{\xi}(1) := \int_{v\in \mathbb{R}_+^{d}}1\tilde{\xi}\left(\dd v\right)$, the following function:
\begin{equation}\label{eq:cemetery_general_model}
\forall t\geq0: \hspace{2mm}w_{\partial}(t) = \int_{y\in\mathbb{R}_+^{d}} w(t,y)\tilde{\xi}\left(\left\{v\in\mathbb{R}_+^{d}\,|\,y-v \notin \mathbb{R}_+^{d}\right\}\right)\dd y,
\end{equation}
and the following sequences of functions, for all $n\in\mathbb{N}$:
\begin{itemize}[label = \fontsize{6}{12}\selectfont\textbullet,leftmargin=0.8cm]
	\item $w_{0}^{(n)}(x) = \frac{\left(1+\sin\left(n\sum_{j=1}^d x_j\right)\right)\exp\left(-\sum_{j=1}^d x_j\right)}{\int_{y\in\mathbb{R}_+^d}\left(1+\sin\left(n\sum_{j=1}^d y_j\right)\right)\exp\left(-\sum_{j=1}^d y_j\right) \dd y}$ for all $x\in\mathbb{R}_+^d$,
	\item $w^{(n)}\in C\left(\mathbb{R}_+,L^1\left(\mathbb{R}_+^d\right)\right)$ corresponding to the solution of~\eqref{eq:general_model} with initial condition $w_{0}^{(n)}$, 
	\item $w_{\partial}^{(n)}(t) = \int_{y\in\mathbb{R}_+^{d}} w^{(n)}(t,y)\tilde{\xi}\left(\left\{v\in\mathbb{R}_+^{d}\,|\,y-v \notin \mathbb{R}_+^{d}\right\}\right)\dd y$ for all $t\geq0$.
\end{itemize}
Then, it holds:
\begin{equation}\label{eq:non_continuity_closeness_cemetery}
\lim_{n\rightarrow+\infty}\left[\left|\left|w_{\partial}^{(n)} - w_{\partial}\right|\right|_{L^1\left(\mathbb{R}_+\right)} + \left|\left|w_{\partial}^{(n)} - w_{\partial}\right|\right|_{L^\infty\left(\mathbb{R}_+\right)}\right] = 0.
\end{equation}
and
\begin{equation}\label{eq:non-continuity_different_initial_condition}
\lim_{n\rightarrow+\infty} \left|\left|w_{0}^{(n)} - w_0\right|\right|_{L^1\left(\mathbb{R}_+^d\right)} = \frac{2}{\pi}.
\end{equation}
\end{prop}
\begin{proof}
We begin by proving~\eqref{eq:non_continuity_closeness_cemetery}. For all $a>0$, $f : \mathbb{R}_+ \rightarrow \mathbb{R}$ verifying $\sup_{t\geq0}\left(e^{at}\left|f(t)\right|\right) < +\infty$, one has~that
$$
\begin{aligned}
\int_0^{\infty}\left|f(t)\right| \dd t  + \sup_{t\geq0}\left(\left|f(t)\right|\right)  &\leq \left[\int_0^{\infty}e^{-at} \dd t\right]\left[\sup_{t\geq0} \left(e^{at}\left|f(t)\right|\right)\right] + \sup_{t\geq0}\left(e^{at}\left|f(t)\right|\right) \\
&= \left(\frac{1}{a}+1\right)\sup_{t\geq0} \left(e^{at}\left|f(t)\right|\right).
\end{aligned}
$$
Therefore, in view of the above and the fact that $\mathcal{L}\left(\tilde{\xi}\right)(1) < \mathcal{L}\left(\tilde{\xi}\right)(0) = \tilde{\xi}(1)$ (because we have~$\tilde{\xi}\left(\mathbb{R}_+^d\backslash\{0\}\right) > 0$, see~\eqref{eq:definition_laplace_transform}), we are going to prove the following in order to get~\eqref{eq:non_continuity_closeness_cemetery}:
\begin{equation}\label{eq:alternative_non_continuity_closeness_cemetery}
\lim_{n\rightarrow+\infty}\left[\sup_{t\geq0}\left[\exp\left[\Big(\tilde{\xi}(1) - \mathcal{L}\left(\tilde{\xi}\right)(1)\!\Big)t\right]\left|w_{\partial}^{(n)}(t) - w_{\partial}(t)\right|\right]\right] = 0. 
\end{equation}
To do this, we first fix $n\in\mathbb{N}$, and denote the following to simplify notations:
\begin{equation}\label{eq:notations_proof_non_continuity}
	\begin{aligned}
		C_{n} &:= \int_{y\in\mathbb{R}_+^d}\left(1+\sin\left(n\sum_{j=1}^d y_j\right)\right)\exp\left(-\sum_{j=1}^d y_j\right) \dd y, \\
		\forall y\in\mathbb{R}_+^d:\hspace{2mm}A_y &:=  \left\{v\in\mathbb{R}_+^{d}\,|\,y-v \notin \mathbb{R}_+^{d}\right\}.
	\end{aligned}
\end{equation}
By using the linearity of~\eqref{eq:general_model}, and then~\eqref{eq:explicit_solution_sin_expo}, one has that for all $(t,y)\in\mathbb{R}_+\times\mathbb{R}_+^{d}$
\begin{equation}\label{eq:definition_wn}
\begin{aligned}
w^{(n)}(t,y) &= \frac{1}{C_{n}}\left(w(t,y) + \sin\left[\text{Im}\left(\mathcal{L}\big(\tilde{\xi}\big)\left(1+in\right)\right)t - n\sum_{j = 1}^d y_j\right]\right.\\
&\left.\times\exp\left[-\Big(\tilde{\xi}(1) - \text{Re}\left(\mathcal{L}\big(\tilde{\xi}\big)\left(1+in\right)\right)\!\Big)t - \sum_{j = 1}^{d} y_j\right]\right).
\end{aligned}
\end{equation}
In addition, from~\eqref{eq:definition_laplace_transform} and the inequality $\cos(x) \leq 1$ for all $x\in\mathbb{R}$, one has that
$$
\text{Re}\left(\mathcal{L}\left(\tilde{\xi}\right)(1+in)\right) = \int_{y\in\mathbb{R}_+^d} \cos\left[-n\sum_{j = 1}^d y_j\right]\exp\left[-\sum_{j = 1}^d y_j\right]\tilde{\xi}(\dd y) \leq  \mathcal{L}\left(\tilde{\xi}\right)(1),
$$
which implies that
\begin{equation}\label{eq:finiteness_integral_non_continuity}
 \exp\left[\Big(\tilde{\xi}(1) - \mathcal{L}\left(\tilde{\xi}\right)(1)\!\Big)t\right] \leq 	\exp\left[\Big(\tilde{\xi}(1) - \text{Re}\left(\mathcal{L}\big(\tilde{\xi}\big)\left(1+in\right)\right)\!\Big)t\right].
\end{equation}
Therefore, by first subtracting both sides of~\eqref{eq:definition_wn} by $w(t,y)$, then integrating with respect to the measure $\exp\left[\Big(\tilde{\xi}(1) - \mathcal{L}\left(\tilde{\xi}\right)(1)\!\Big)t\right]\tilde{\xi}\left(A_y\right) \dd y$ in view of~\eqref{eq:cemetery_general_model}, and finally applying the triangle inequality and~\eqref{eq:finiteness_integral_non_continuity} to simplify the second exponential in time, we have for all $t\geq0$
\begin{equation}\label{eq:difference_wn_w}
	\begin{aligned}
		\exp\left[\Big(\tilde{\xi}(1) - \mathcal{L}\left(\tilde{\xi}\right)(1)\!\Big)t\right]\left|w_{\partial}^{(n)}(t) -w_{\partial}(t)\right| &\leq \left|\frac{1}{C_{n}} - 1\right|\exp\left[\Big(\tilde{\xi}(1) - \mathcal{L}\left(\tilde{\xi}\right)(1)\!\Big)t\right]w_{\partial}(t) \\
		&+ \frac{1}{C_n}\left|\int_{y\in\mathbb{R}_+^d} \sin\left[\text{Im}\left(\mathcal{L}\big(\tilde{\xi}\big)\left(1+in\right)\right)t - n\sum_{j = 1}^d y_j\right]\right.\\
		&\left.\exp\left[-\sum_{j = 1}^{d} y_j\right]\tilde{\xi}\left(A_y\right)\dd y\right|.
	\end{aligned}
\end{equation}
Now, thanks to~\cite[Theorem~$2.6$]{cioranescu_introduction_1999} and the following equalities (the third equality comes from the fact that $x\in\mathbb{R} \mapsto \cos(x)$ is $2\pi$-periodic), for all $(u_1,\hdots,u_{d-1})\in\mathbb{R}_+^{d-1}$,
$$
\begin{aligned}
\int_{u_d\in(0,2\pi)} \cos\left(\sum_{j = 1}^{d}u_j\right) \dd u_d &= \int_{u_d\in(0,2\pi)} \cos\left(\sum_{j = 1}^{d-1}u_j + u_d\right) \dd u_d \\
 &= \int_{u'_d\in\left(\sum_{j = 1}^{d-1}u_j,\sum_{j = 1}^{d-1}u_j+2\pi\right)} \cos\left(u'_d\right) \dd u'_d = \int_{u_d'\in(0,2\pi)} \cos(u'_d) \dd u'_d =0,
\end{aligned}
$$
one can obtain that 
\begin{equation}\label{eq:auxiliary_limit_proof_non_continuity_zero}
\begin{aligned}
\lim_{n\rightarrow+\infty} \int_{y\in\mathbb{R}_+^d}\cos\left[n\sum_{j = 1}^d y_j\right]\exp\left[-\sum_{j = 1}^{d} y_j\right]\tilde{\xi}\left(A_y\right) \dd y &= \left[\frac{1}{(2\pi)^{d}}\int_{u \in (0,2\pi)^{d}} \cos\left[\sum_{j = 1}^{d}u_j\right]\dd u\right]\\
&\times\left[\int_{y\in\mathbb{R}_+^{d}} \exp\left[-\sum_{j = 1}^{d} y_j\right]\tilde{\xi}\left(A_y\right)\dd y\right]  = 0.
\end{aligned}
\end{equation}
By proceeding as above, we also have that
\begin{equation}\label{eq:auxiliary_limit_proof_non_continuity_zero_bis}
\begin{aligned}
\lim_{n\rightarrow+\infty}\int_{y\in\mathbb{R}_+^d}\sin\left[n\sum_{j = 1}^d y_j\right]\exp\left[-\sum_{j = 1}^{d} y_j\right]\tilde{\xi}\left(A_y\right) \dd y = 0.
\end{aligned}
\end{equation}
Therefore, by first applying the equality $\sin(c-d) = \sin(c)\cos(d) + \cos(c)\sin(d)$ for the real numbers~$c = \text{Im}\left(\mathcal{L}\big(\tilde{\xi}\big)\left(1+in\right)\right)t$ and $d = n\sum_{j = 1}^d y_j$, then using the triangle inequality and the facts that $\sup_{s\geq0}\left(|\sin(s)|\right) = 1$ and $\sup_{s\geq0}\left(|\cos(s)|\right) = 1$, and finally applying~\eqref{eq:auxiliary_limit_proof_non_continuity_zero} and~\eqref{eq:auxiliary_limit_proof_non_continuity_zero_bis} (the limit is equal to the limit superior in these equations),
we obtain that
\begin{equation}\label{eq:auxiliary_limit_proof_non_continuity_first}
\begin{aligned}
&\limsup_{n\rightarrow+\infty} \left[ \sup_{t\geq0}\left[\left|\int_{y\in\mathbb{R}_+^d}\sin\left[\text{Im}\left(\mathcal{L}\big(\tilde{\xi}\big)\left(1+in\right)\right)t - n\sum_{j = 1}^d y_j\right]\exp\left[-\sum_{j = 1}^{d} y_j\right] \tilde{\xi}\left(A_y\right) \dd y\right|\right]\right] \\
&\leq \limsup_{n\rightarrow+\infty} \left[\sup_{t\geq0}\left[\left|\sin[\text{Im}\left(\mathcal{L}\big(\tilde{\xi}\big)\left(1+in\right)\right)t]\right|\right]\left|\int_{y\in\mathbb{R}_+^d}\cos\left[n\sum_{j = 1}^d y_j\right]\exp\left[-\sum_{j = 1}^{d} y_j\right]\tilde{\xi}\left(A_y\right) \dd y\right|\right] \\
&+ \limsup_{n\rightarrow+\infty} \left[ \sup_{t\geq0}\left[\left|\cos[\text{Im}\left(\mathcal{L}\big(\tilde{\xi}\big)\left(1+in\right)\right)t]\right|\right] \left|\int_{y\in\mathbb{R}_+^d}\sin\left[n\sum_{j = 1}^d y_j\right]\exp\left[-\sum_{j = 1}^{d} y_j\right]\tilde{\xi}\left(A_y\right) \dd y\right|\right] = 0.
\end{aligned}
\end{equation}
Moreover, in view of the first line of~\eqref{eq:notations_proof_non_continuity} and~\cite[Theorem~$2.6$]{cioranescu_introduction_1999} (we proceed as in~\eqref{eq:auxiliary_limit_proof_non_continuity_zero}), and then of~\eqref{eq:cemetery_general_model} and~\eqref{eq:explicit_solution_cos_expo} for $\beta = 1$ and $\omega = 0$ (recall that $w_0(x) = \exp\left(-\sum_{j = 1}^d x_j\right)$ for all~$x\in\mathbb{R}_+^d$), one has that
\begin{equation}\label{eq:auxiliary_limit_proof_non_continuity_second}
\begin{aligned}
\lim_{n\rightarrow+\infty} C_n = 1+ &\lim_{n\rightarrow+\infty}\int_{y\in\mathbb{R}_+^d}\sin\left(n\sum_{j=1}^d y_j\right)\exp\left(-\sum_{j=1}^d y_j\right) \dd y  = 1, \\
&\sup_{t\geq 0}\left[\exp\left[\Big(\tilde{\xi}(1) - \mathcal{L}\left(\tilde{\xi}\right)(1)\!\Big)t\right] w_{\partial}(t)\right]  < +\infty.
\end{aligned}
\end{equation}
Therefore, by taking the supremum in $t\geq0$ and the limit when $n\rightarrow+\infty$ in~\eqref{eq:difference_wn_w} in view of~\eqref{eq:auxiliary_limit_proof_non_continuity_first} and~\eqref{eq:auxiliary_limit_proof_non_continuity_second}, we obtain that~\eqref{eq:alternative_non_continuity_closeness_cemetery} is true, which yields~\eqref{eq:non_continuity_closeness_cemetery}.

Now, it remains to prove~\eqref{eq:non-continuity_different_initial_condition}. Let $n\in\mathbb{N}$. In view of the definitions of $w_0^{(n)}$, $w_0$, and $C_n$, one has that 
$$
\left|\left|w_0^{(n)} - w_0\right|\right|_{L^1\left(\mathbb{R}_+^d\right)} =  \int_{y\in\mathbb{R}_+^d} \left|\left(\frac{1}{C_n}-1\right) + \frac{\sin\left(n\sum_{j = 1}^d y_j\right)}{C_n}\right| \exp\left(-\sum_{j = 1}^d y_j\right) \dd y.
$$
This yields, in view of the reverse triangle inequality, and the following classical results:
\begin{equation}\label{eq:auxiliary_results_non_continuity_differents}
\forall y\in\mathbb{R}_+^d: \hspace{2mm} \left|1 + \sin\left(n\sum_{j = 1}^d y_j\right)\right| \leq 2 \hspace{4mm} \text{ and } \hspace{4mm} \int_{y\in\mathbb{R}_+^d} \exp\left(-\sum_{j = 1}^d y_j\right) \dd y = 1, 
\end{equation}
that
$$
\begin{aligned}
&\left|\left|\left|w_0^{(n)} - w_0\right|\right|_{L^1\left(\mathbb{R}_+^d\right)} - \int_{y\in\mathbb{R}_+^d} \left|\sin\left(n\sum_{j = 1}^d y_j\right)\right|\exp\left(-\sum_{j = 1}^d y_j\right) \dd y \right| \\
&\hspace{25mm}\leq \int_{y\in\mathbb{R}_+^d} \left|\left(\frac{1}{C_n}-1\right)\left(1 + \sin\left(n\sum_{j = 1}^d y_j\right)\right)\right| \exp\left(-\sum_{j = 1}^d y_j\right) \dd y \leq 2 \left|\frac{1}{C_n}-1\right| .
\end{aligned}
$$
Combining the latter with the first line of~\eqref{eq:auxiliary_limit_proof_non_continuity_second}, and then using~\cite[Theorem~$2.6$]{cioranescu_introduction_1999} in view of the fact that $x\in\mathbb{R}^d\mapsto |\sin\left(\sum_{j = 1}^d x_j\right)|$ is $(0,\pi)^d$-periodic (see~\cite[Definition~$2.1$]{cioranescu_introduction_1999}), yields that
\begin{equation}\label{eq:auxiliary_limit_proof_non_continuity_third}
\begin{aligned}
\lim_{n\rightarrow+\infty} \left|\left|w_0^{(n)} - w_0\right|\right|_{L^1\left(\mathbb{R}_+^d\right)} &= \lim_{n\rightarrow+\infty}  \int_{y\in\mathbb{R}_+^d} \left|\sin\left(n\sum_{j = 1}^d y_j\right)\right| \exp\left(-\sum_{j = 1}^d y_j\right) \dd y  \\
&= \left[\frac{1}{\pi^d}\int_{u\in(0,\pi)^d} \left|\sin\left(\sum_{j = 1}^d u_j\right)\right| \dd u\right]\left[\int_{y\in\mathbb{R}_+^d} \exp\left(-\sum_{j = 1}^d y_j\right)\dd y\right].
\end{aligned}
\end{equation}
In addition, in view of the fact that $\int_{(0,\pi)} |\sin(x+\sigma)| \dd x = \int_{0}^{\pi} |\sin(x)| \dd x$ for all $\sigma \in \mathbb{R}$ because the function $x\in\mathbb{R} \mapsto |\sin(x)|$ is $\pi$-periodic, and then the inequality $\sin(x) \geq 0$ for $x\in(0,\pi)$, we have that
$$
\begin{aligned}
\frac{1}{\pi^d}\int_{u\in(0,\pi)^d} \left|\sin\left(\sum_{j = 1}^d u_j\right)\right| \dd &u = \frac{1}{\pi^d}\int_{\left(u_1,\hdots,u_{d-1}\right)\in(0,\pi)^{d-1}} \left[\int_{u_d\in(0,\pi)}\left|\sin\left(u_d\right)\right| \dd u_d\right] \dd u_1 \hdots \dd u_{d-1} \\ 
&= \frac{1}{\pi^d}\int_{\left(u_1,\hdots,u_{d-1}\right)\in(0,\pi)^{d-1}} \left[\int_{u_d\in(0,\pi)}\sin\left(u_d\right) \dd u_d\right] \dd u_1 \hdots \dd u_{d-1} = \frac{2}{\pi}.
\end{aligned}
$$
Then, by plugging the above and the right-hand side of~\eqref{eq:auxiliary_results_non_continuity_differents} in~\eqref{eq:auxiliary_limit_proof_non_continuity_third}, we obtain~\eqref{eq:non-continuity_different_initial_condition}.

\end{proof}
\subsection{Auxiliary statements related to \texorpdfstring{$\mathcal{I}_k$}{Ik}}\label{subsect:auxiliary_statements_Ik}

We now provide statements related to the cardinality of subsets of $\mathcal{I}_k$. The first statement we present is the following, and corresponds to~\cite[Lemma $4.9$]{olaye_long-time_2026}.  It provides information about the number of sets in $\mathcal{I}_k$, and about the number of sets that contain/not contain a chosen~index. 
\begin{lemm}[Cardinality of subsets of $\mathcal{I}_k$, less than $1$ index fixed]\label{lemm:zero_singleton_cardinal}
	It holds
	$$
	\#\left(\mathcal{I}_k\right) =  2^k,  \hspace{2.5mm}\text{ and }\hspace{2.5mm} \forall i\in\llbracket1,2k\rrbracket: \hspace{1mm}\frac{\#\left(\left\{I\in \mathcal{I}_k\,|\,i\in I\right\}\right)}{2^k} = \frac{\#\left(\left\{I\in \mathcal{I}_k\,|\,i\notin I\right\}\right)}{2^k} = \frac{1}{2}.
	$$ 
\end{lemm}
\noindent The second lemma we present is not given in~\cite{olaye_long-time_2026}. It provides the number of sets in~$\mathcal{I}_k$ that contain a chosen pair. We briefly sketch its proof but do not detail it, as it mainly consists in adapting the proof of~\cite[Lemma $4.9$]{olaye_long-time_2026}.
\begin{lemm}[Cardinality of subsets of $\mathcal{I}_k$, $2$ indexes fixed]\label{lemm:pair_cardinal} 
	For all $(\ell,\ell')\in\llbracket1,2k\rrbracket^2$ such that $\ell \neq \ell' \text{ mod }k$, we have
	\begin{equation}\label{eq:cardinal_pair_sets}
		\#\left(\left\{I\in\mathcal{I}_k\,|\,\ell\in I,\,\ell'\in I\right\}\right) = \begin{cases}
			0, &\text{if }k = 1,\\
			2^{k-2}, & \text{if }k \geq 2. 
		\end{cases}
	\end{equation}
\end{lemm}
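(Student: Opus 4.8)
The statement is a counting identity about $\mathcal{I}_k$, the set of $k$-subsets of $\llbracket 1,2k\rrbracket$ that pick exactly one index from each ``chromosome pair'' $\{i,i+k\}$. The plan is to exploit a bijection between $\mathcal{I}_k$ and the set $\{0,1\}^k$ of choice vectors: for $I\in\mathcal{I}_k$, define $\varepsilon(I)\in\{0,1\}^k$ by $\varepsilon(I)_j = 0$ if $j\in I$ and $\varepsilon(I)_j = 1$ if $j+k\in I$, for $j\in\llbracket 1,k\rrbracket$. The condition in the definition~\eqref{eq:set_shortening} of $\mathcal{I}_k$ says precisely that $I$ contains exactly one of $j,j+k$ for each $j$, so $\varepsilon$ is a well-defined bijection; this is exactly the structure already used to prove Lemma~\ref{lemm:zero_singleton_cardinal}. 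In particular $\#(\mathcal{I}_k) = 2^k$, which also immediately handles the case $k=1$ (see below).

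\textbf{Main step.} Fix $(\ell,\ell')\in\llbracket 1,2k\rrbracket^2$ with $\ell\neq\ell'\,\mathrm{mod}\,k$. Write $\ell\equiv j$ and $\ell'\equiv j'$ modulo $k$ with $j,j'\in\llbracket 1,k\rrbracket$; the hypothesis $\ell\neq\ell'\,\mathrm{mod}\,k$ means $j\neq j'$. First, if $k=1$ there is nothing to fix two distinct residues into, so $\{I\in\mathcal{I}_1\,|\,\ell\in I,\ell'\in I\}=\emptyset$ and the count is $0$; this is the first branch of~\eqref{eq:cardinal_pair_sets}. Now suppose $k\geq 2$. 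Requiring $\ell\in I$ forces the $j$-th coordinate of $\varepsilon(I)$ to equal a fixed value (namely $0$ if $\ell = j$, i.e. $\ell\leq k$, and $1$ if $\ell = j+k$). Likewise $\ell'\in I$ forces the $j'$-th coordinate of $\varepsilon(I)$. Since $j\neq j'$, these two constraints concern distinct coordinates and are compatible, so they cut out exactly the choice vectors in $\{0,1\}^k$ with two prescribed coordinates; there are $2^{k-2}$ of them. Translating back through the bijection $\varepsilon$ gives
$$
\#\left(\left\{I\in\mathcal{I}_k\,|\,\ell\in I,\,\ell'\in I\right\}\right) = 2^{k-2},
$$
which is the second branch of~\eqref{eq:cardinal_pair_sets}.

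\textbf{On the difficulty.} There is essentially no obstacle here: once the bijection $\mathcal{I}_k\cong\{0,1\}^k$ is set up, the whole statement is the trivial observation that fixing the membership of two indices lying on \emph{different} chromosomes amounts to fixing two independent coordinates of the choice vector. The only point requiring a moment's care is the edge case $k=1$ (where $\llbracket 1,2k\rrbracket = \{1,2\}$ and any two distinct indices are congruent mod $1$, so the hypothesis $\ell\neq\ell'\,\mathrm{mod}\,k$ is vacuous yet the constrained set is empty because one cannot place both $1$ and $2$ in a singleton of $\mathcal{I}_1$) and the bookkeeping that ``$\ell\neq\ell'\,\mathrm{mod}\,k$'' is exactly what guarantees $j\neq j'$ and hence that the two coordinate constraints do not collide — if instead $\ell\equiv\ell'$ one would be fixing a single coordinate twice, possibly inconsistently, which is the content of the companion computation in the proof of Lemma~\ref{lemm:value_sigma}. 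Since the paper explicitly allows us to adapt the proof of~\cite[Lemma $4.9$]{olaye_ergodic_2024}, it suffices to record the bijection and the two-coordinate count, and we are done. \qed
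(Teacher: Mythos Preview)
Your proof is correct and takes essentially the same approach as the paper: both arguments rest on the bijection between $\mathcal{I}_k$ and $\{0,1\}^k$ (choice vectors recording which end of each chromosome pair is selected), and both observe that the constraints $\ell\in I$ and $\ell'\in I$ pin down two distinct coordinates when $\ell\not\equiv\ell'\,\mathrm{mod}\,k$, leaving $2^{k-2}$ free choices. The only cosmetic difference is that the paper checks $k=2$ separately by explicit enumeration, while you treat all $k\geq 2$ uniformly.
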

\begin{proof}
When $k = 1$, we have by~\eqref{eq:set_shortening} that $\mathcal{I}_1 = \{\{1\},\{2\}\}$, so that Eq.~\eqref{eq:cardinal_pair_sets} is true for all $(\ell,\ell')\in \llbracket1,2\rrbracket^2$ such that $\ell\neq \ell'$. When $k = 2$, the expression of $\mathcal{I}_2$ is given in Example~\ref{ex:shortening_set}, and we see that whatever the values of \hbox{$(\ell,\ell')\in \llbracket1,4\rrbracket^2$} verifying $\ell \neq \ell' \text{ mod } 2$, Eq.~\eqref{eq:cardinal_pair_sets} holds. We thus now focus in the case where $k\geq 3$. We consider $(\ell,\ell')\in \llbracket1,2k\rrbracket^2$ such that $\ell \neq \ell' \text{ mod } k$. Following~\hbox{\cite[Section~$4.4.3$]{olaye_long-time_2026}}, one can prove that $f : \{0,1\}^{k-2}\rightarrow \left\{I\in\mathcal{I}_k\,|\,\ell\in I,\,\ell'\in I\right\}$, defined for all $x\in\{0,1\}^{k-2}$ as 
	$$
	f(x) = \left\{kx_j+j\,|\,j\in\llbracket1,k\rrbracket,\,j \neq \ell \text{ mod }k,\, j \neq \ell' \text{ mod }k\right\}\cup\{\ell,\ell'\}
	$$
	is bijective. Hence, as $\#\left(\{0,1\}^{k-2}\right) = 2^{k-2}$, we obtain that Eq.~\eqref{eq:cardinal_pair_sets} is true when $k\geq3$, which concludes the proof of the lemma. 
\end{proof}

\subsection{Auxiliary statements related to Erlang distributions}\label{subsect:auxiliary_statements_erlang_distributions}
Throughout this section, we denote for all $\beta >0$ the following constants:
\begin{equation}\label{eq:approximation_beta}
\beta_{N,1} = \frac{N}{m_1}\left[1 - \mathcal{L}(g)\left(\frac{\beta}{N}\right)\right], \hspace{6.5mm}\beta_{N,2k} =\frac{N}{km_1}\left[1 - \mathcal{L}(\mu)\left(\frac{\beta}{N}\right)\right].
\end{equation}
These constants are similar to the ones introduced in~\eqref{eq:approximation_eigenvalues} ($\beta_{N,2k}$ is similar to $\lambda'_N$ and~$\omega'_N$ by~\eqref{eq:laplace_transform_mu}), and correspond to approximations of $\beta$ when $N$ is large.

The first statement we present is useful in the specific case where $n_0$ is an exponential distribution, i.e. an Erlang distribution with parameter $\ell = 1$. It gives the explicit formula of the estimator~$\widehat{n}_0^{(d)}$, for all $d\in\{1,2k\}$.
\begin{prop}[Explicit estimators for exponential initial distributions]\label{prop:explicit_solutions_h1beta}
Assume that there exists $\beta >0$ such that $n_0 = h_{1,\beta}$, defined in~\eqref{eq:density_erlang_distribution}. Then, for all $d\in\{1,2k\}$ and $x\geq 0$, we have 
\begin{equation}\label{eq:explicit_estimator_h1beta}
\widehat{n}_0^{(d)}(x)  = \beta_{N,d}\exp\left[-\beta_{N,d}x\right].
\end{equation}
\end{prop}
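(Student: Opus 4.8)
The plan is to treat the two cases $d = 1$ and $d = 2k$ separately, although both reduce to the same computation once we have the explicit form of $u_\partial^{(1)}$ or $u_\partial^{(2k)}$. The key observation is that for an exponential initial distribution $n_0 = h_{1,\beta}$, the underlying integro-differential equations have an exactly solvable structure, so the estimators can be written in closed form without needing any of the approximation machinery.

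For $d = 1$, I would start from the exact system~\eqref{eq:rescaled_PDE_model_telomeres_one_telo}. Since $n_0(x) = \beta e^{-\beta x}$ is of the exponential form, Proposition~\ref{prop:explicit_solution_expo} (applied with the jump measure of the first line of~\eqref{eq:rescaled_PDE_model_telomeres_one_telo}, which after rescaling is $bN$ times the probability measure $g(Nv)N\,\mathrm{d}v$, i.e. has total mass $bN$ and Laplace transform $bN\,\mathcal{L}(g)(\cdot/N)$ at argument scaled appropriately) gives that $n^{(1)}(t,x) = \beta e^{-\beta x}\exp\left[-bN\left(1-\mathcal{L}(g)\left(\tfrac{\beta}{N}\right)\right)t\right]$. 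Actually it is cleaner to read off this solution directly: plugging $n^{(1)}(t,x) = \beta e^{-\beta x} \varphi(t)$ into the first line of~\eqref{eq:rescaled_PDE_model_telomeres_one_telo} and using $\int_0^\delta e^{-\beta v/N}g(v)\,\mathrm{d}v = \mathcal{L}(g)(\beta/N)$ yields $\varphi'(t) = -bN(1-\mathcal{L}(g)(\beta/N))\varphi(t)$, so $\varphi(t) = e^{-bN(1-\mathcal{L}(g)(\beta/N))t}$. Then I substitute into the second line of~\eqref{eq:rescaled_PDE_model_telomeres_one_telo}: using $\int_0^\delta e^{-\beta v/N}(1-G(v))\,\mathrm{d}v$ and an integration by parts, $\int_0^\delta e^{-\beta v/N}(1-G(v))\,\mathrm{d}v = \tfrac{N}{\beta}\left(1-\mathcal{L}(g)(\beta/N)\right) = m_1 \beta_{N,1}/\beta$. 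Hence $n_\partial^{(1)}(t) = b\beta \cdot \tfrac{m_1\beta_{N,1}}{\beta}\cdot e^{-bm_1\lambda_N t}$ — wait, one must be careful: the decay rate $bN(1-\mathcal{L}(g)(\beta/N))$ equals $bm_1\beta_{N,1}$ by the definition~\eqref{eq:approximation_beta}. So $n_\partial^{(1)}(t) = bm_1\beta_{N,1}\,e^{-bm_1\beta_{N,1}t}$. Finally, plugging this into the definition of $\widehat{n}_0^{(1)}$ in~\eqref{eq:definitions_estimators} and using $\tilde b\tilde m_1 = bm_1$ (Eq.~\eqref{eq:equality_transport_terms}) gives $\widehat{n}_0^{(1)}(x) = \tfrac{1}{bm_1}n_\partial^{(1)}\left(\tfrac{x}{bm_1}\right) = \beta_{N,1}e^{-\beta_{N,1}x}$, which is~\eqref{eq:explicit_estimator_h1beta} for $d=1$.

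For $d = 2k$, the strategy is the same but with the product structure. Since $n_0 = h_{1,\beta}$, the initial condition $n^{(2k)}(0,x) = \prod_{i=1}^{2k}\beta e^{-\beta x_i} = \beta^{2k}e^{-\beta\sum x_i}$ is again of exponential form on $\mathbb{R}_+^{2k}$. By Proposition~\ref{prop:explicit_solution_expo} applied to the first line of~\eqref{eq:rescaled_PDE_model_telomeres_several_telos} (the jump part has total mass $bN$ and the relevant Laplace exponent is $bN\mathcal{L}(\mu)(\beta/N) = bN(\mathcal{L}(g)(\beta/N))^k$ by~\eqref{eq:laplace_transform_mu}), one gets $n^{(2k)}(t,x) = \beta^{2k}e^{-\beta\sum x_i}\exp\left[-bN(1-\mathcal{L}(\mu)(\beta/N))t\right]$, and the decay rate equals $kbm_1\beta_{N,2k}$ by~\eqref{eq:approximation_beta}. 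Then, rather than computing $n_\partial^{(2k)}$ directly through the second line (which involves the awkward set $\{Ny - v\notin\mathbb{R}_+^{2k}\}$), I would use the mass conservation identity in~\eqref{eq:link_density_lengths_cemetery}: $\int_t^{+\infty}n_\partial^{(2k)}(s)\,\mathrm{d}s = \int_{\mathbb{R}_+^{2k}}n^{(2k)}(t,x)\,\mathrm{d}x = \beta^{2k}\cdot\beta^{-2k}\cdot e^{-kbm_1\beta_{N,2k}t} = e^{-kbm_1\beta_{N,2k}t}$. Differentiating gives $n_\partial^{(2k)}(t) = kbm_1\beta_{N,2k}\,e^{-kbm_1\beta_{N,2k}t}$. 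Now I substitute both this and the tail integral into the definition of $\widehat{n}_0^{(2k)}$ in~\eqref{eq:definitions_estimators}: at $t = \tfrac{2x}{\tilde b\tilde m_1} = \tfrac{2x}{bm_1}$,
\[
\widehat{n}_0^{(2k)}(x) = \frac{1}{kbm_1}\cdot\frac{kbm_1\beta_{N,2k}\,e^{-kbm_1\beta_{N,2k}\cdot\frac{2x}{bm_1}}}{\left(e^{-kbm_1\beta_{N,2k}\cdot\frac{2x}{bm_1}}\right)^{1-\frac{1}{2k}}} = \beta_{N,2k}\,e^{-kbm_1\beta_{N,2k}\cdot\frac{2x}{bm_1}\cdot\frac{1}{2k}} = \beta_{N,2k}\,e^{-\beta_{N,2k}x},
\]
which is~\eqref{eq:explicit_estimator_h1beta} for $d = 2k$.

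The main obstacle is not conceptual but bookkeeping: one must verify carefully that the exponential ansatz really solves the \emph{exact} rescaled equations~\eqref{eq:rescaled_PDE_model_telomeres_one_telo} and~\eqref{eq:rescaled_PDE_model_telomeres_several_telos} (not merely the transport approximations), which requires correctly identifying the jump measures and matching the decay rates $bN(1-\mathcal{L}(g)(\beta/N)) = bm_1\beta_{N,1}$ and $bN(1-\mathcal{L}(\mu)(\beta/N)) = kbm_1\beta_{N,2k}$ against the definitions in~\eqref{eq:approximation_beta}, together with the identity~\eqref{eq:laplace_transform_mu}. One should also double-check the integration-by-parts / mass-conservation step and the fact that $\int_0^{+\infty}h_{1,\beta}(x)\,\mathrm{d}x = 1$ so that the constant in front comes out exactly as $\beta_{N,d}$. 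Invoking Remark~\ref{rem:conservation_individual_several_telos} / Eq.~\eqref{eq:link_density_lengths_cemetery} for the $d=2k$ case avoids the only genuinely unpleasant computation (the explicit second line of the $2k$ model), so the proof stays short.
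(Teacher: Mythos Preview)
Your proof is correct. Both cases are handled soundly; the $d=1$ integration by parts and the $d=2k$ exponent bookkeeping all check out.

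The one genuine difference from the paper is in the $d=2k$ case: the paper computes $n_\partial^{(2k)}(t)$ directly from the second line of~\eqref{eq:rescaled_PDE_model_telomeres_several_telos}, writing $\mu(\{v:Ny-v\notin\mathbb{R}_+^{2k}\})$ as $1-\int 1_{\{\forall i:\,y_i\geq v_i/N\}}\,\mu(\dd v)$, integrating against the exponential density, and recognising the result as $bN(1-\mathcal{L}(\mu)(\beta/N))e^{-kbm_1\beta_{N,2k}t}$. You instead invoke the mass-conservation identity~\eqref{eq:link_density_lengths_cemetery} to read off $\int_t^\infty n_\partial^{(2k)}(s)\,\dd s$ directly and differentiate. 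Your route is shorter and has the pleasant side effect of giving both ingredients of $\widehat n_0^{(2k)}$ (the density and its tail integral) in one stroke, whereas the paper's direct computation still needs the tail integral separately. The paper's computation, on the other hand, is self-contained and does not rely on Remark~\ref{rem:conservation_individual_several_telos}. Either way the argument is a few lines.
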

\begin{proof}
The proof of this proposition when $d = 1$ follows the same steps as when $d=2k$, but is easier. Hence, we only give the proof when $d=2k$. Notice that by~\eqref{eq:PDE_model_telomeres_several_telos},~\eqref{eq:explicit_solution_cos_expo} with $\omega = 0$ and~\eqref{eq:approximation_beta}, we have that $n^{(2k)}(t,x) = \beta^{2k}\exp\left[-kbm_1 \beta_{N,2k}t - \beta \sum_{i = 1}^{2k}x_i\right]$ for all~\hbox{$t\geq0$},~\hbox{$x\in\mathbb{R}_+^{2k}$}. Our aim is to use this expression of $n^{(2k)}$ to obtain~\eqref{eq:explicit_estimator_h1beta}. To do so, we first rewrite $\mu\left(\left\{v\in\mathbb{R}_+^{2k}\,|\,Ny-v \notin \mathbb{R}_+^{2k}\right\}\right)$ in the second line of~\eqref{eq:rescaled_PDE_model_telomeres_several_telos} with an integral. Then, we switch the integrals and use the equality~$1_{\{Ny - v\notin\mathbb{R}_+^{2k}\}} = 1 - 1_{\left\{\forall i\in\llbracket1,2k\rrbracket:\,y_i \geq \frac{v_i}{N}\right\}}$. Finally, we apply the expression of $n^{(2k)}$ and~Eq.~\eqref{eq:approximation_beta}. We obtain for all~$t\geq0$  
\begin{equation}\label{eq:cemetery_exponential}
\begin{aligned}
	n_{\partial}^{(2k)}(t) &= bN\int_{y\in\mathbb{R}_+^{2k}} n^{(2k)}\left(t,y\right)\left[\int_{v\in\mathbb{R}_+^{2k}}1_{\left\{Ny-v \notin \mathbb{R}_+^{2k}\right\}}\mu(\dd v)\right]\mathrm{d}y \\
	&= bN\int_{v\in\mathbb{R}_+^{2k}}  \left[\int_{y\in\mathbb{R}_+^{2k}}    n^{(2k)}\left(t,y\right)\left(1 -1_{\left\{\forall i\in\llbracket1,2k\rrbracket:\,y_i \geq \frac{v_i}{N}\right\}}\right)\mathrm{d}y\right]\mu(\dd v) \\
	&=  bN\left[1 - \mathcal{L}(\mu)\left(\frac{\beta}{N}\right)\right]\exp\left[-kbm_1 \beta_{N,2k}t\right] = kbm_1 \beta_{N,2k}\exp\left[-kbm_1 \beta_{N,2k}t\right].
\end{aligned}
\end{equation}
Therefore, by plugging the above in~\eqref{eq:definitions_estimators}, and then using~\eqref{eq:equality_transport_terms} to replace $bm_1$ with $\tilde{b}\tilde{m}_1$, we obtain~\eqref{eq:explicit_estimator_h1beta}
\end{proof}
\noindent The second proposition we present is devoted to the more general case in which $n_0$ is an Erlang distribution, but not necessarily an exponential distribution. In this case, $n^{(1)}$ is explicit. One can then compute $n_{\partial}^{(1)}$ by using~\eqref{eq:PDE_model_telomeres_one_telo}, and then $\widehat{n}_{0}^{(1)}$ thanks to~\eqref{eq:definitions_estimators}. 
\begin{prop}[Explicit solutions for Erlang initial distributions]\label{prop:explicit_solutions_gamma}
	Assume that there exist~\hbox{$\ell\in\mathbb{N}^*$} and $\beta > 0$ such that $n_0 = h_{\ell,\beta}$, defined in~\eqref{eq:density_erlang_distribution}. We consider for all $(t,x)\in\mathbb{R}_+\times\mathbb{R}_+$ the function $\psi_{x,t}\in L^1\left(\mathbb{R}_+^*\right)$, defined for all $\alpha >0$ as $\psi_{x,t}(\alpha)  := -bN\left[1 - \mathcal{L}(g)\left(\frac{\alpha}{N}\right)\right]t - \alpha x$. Then, for all $(t,x)\in\mathbb{R}_+\times\mathbb{R}_+$, we have 
	\begin{equation}\label{eq:explicit_solutions_gamma_first}
		n^{(1)}(t,x)= (-1)^{\ell -1}\frac{\beta^\ell}{(\ell-1)!}B_{\ell-1}\Bigg[\frac{\dd}{\dd \alpha}\psi_{x,t}\left(\beta\right),\hdots, \frac{\dd^{\ell-1}}{\dd \alpha^{\ell-1}}\psi_{x,t}\left(\beta\right)\Bigg]\exp\left(\psi_{x,t}(\beta)\right), 
	\end{equation} 
	where $B_{\ell-1}$ is a complete Bell polynomial of order $\ell-1$, see~\cite[Eq. 3.c, p.~134]{comtet_advanced_1974}.
\end{prop}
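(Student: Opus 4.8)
\textbf{Proof plan for Proposition~\ref{prop:explicit_solutions_gamma}.}

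The plan is to solve the first line of~\eqref{eq:rescaled_PDE_model_telomeres_one_telo} explicitly when the initial condition is an Erlang density $h_{\ell,\beta}$, exploiting the fact that $h_{\ell,\beta}$ is obtained from an exponential density $h_{1,\beta}(x) = \beta e^{-\beta x}$ by differentiating with respect to the rate parameter $\beta$. Indeed, a direct computation gives the identity
$$
h_{\ell,\beta}(x) = \frac{\beta^{\ell}}{(\ell-1)!}x^{\ell-1}e^{-\beta x} = \frac{(-1)^{\ell-1}}{(\ell-1)!}\,\beta^{\ell}\,\frac{\partial^{\ell-1}}{\partial\beta^{\ell-1}}\!\left(e^{-\beta x}\right),
$$
so that $h_{\ell,\beta}$ is, up to the prefactor $\frac{(-1)^{\ell-1}\beta^{\ell}}{(\ell-1)!}$, the $(\ell-1)$-th $\beta$-derivative of a pure exponential. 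Since~\eqref{eq:rescaled_PDE_model_telomeres_one_telo} (first line) is linear with $\beta$-independent coefficients, and since Proposition~\ref{prop:explicit_solution_expo} (applied with $d=1$, $\tilde\xi = bN g$, so that $\tilde\xi(1) = bN$ and $\mathcal{L}(\tilde\xi)(\beta) = bN\mathcal{L}(g)(\beta/N)$ after rescaling) tells us that the solution starting from $e^{-\beta x}$ is exactly $\exp(\psi_{x,t}(\beta))$ with $\psi_{x,t}(\alpha) = -bN[1-\mathcal{L}(g)(\alpha/N)]t - \alpha x$, the solution starting from $h_{\ell,\beta}$ should be obtained by differentiating $\exp(\psi_{x,t}(\beta))$ exactly $\ell-1$ times with respect to $\beta$ and multiplying by the same prefactor.

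The key steps, in order, would be: (i) verify the elementary identity above expressing $h_{\ell,\beta}$ as a $\beta$-derivative of $h_{1,\beta}/\beta$; (ii) recall from Proposition~\ref{prop:explicit_solution_expo} that $(t,x)\mapsto\exp(\psi_{x,t}(\beta))$ solves the first line of~\eqref{eq:rescaled_PDE_model_telomeres_one_telo} with initial datum $e^{-\beta\cdot}$, and note that $\beta\mapsto\exp(\psi_{x,t}(\beta))$ is smooth (here one uses that $g$ has finite second moment, so $\mathcal{L}(g)$ is $C^2$ near any point, and in fact $C^\infty$ when one only needs $\ell-1$ derivatives and $g$ is compactly supported, which it is on $[0,\delta]$); (iii) differentiate the integro-differential equation $\ell-1$ times in $\beta$ under the integral sign — legitimate by dominated convergence since $g$ is supported on $[0,\delta]$ and all quantities are bounded on compact $\beta$-intervals — to conclude that $\partial_\beta^{\ell-1}\exp(\psi_{x,t}(\beta))$ is again a solution, now with initial datum $\partial_\beta^{\ell-1}e^{-\beta x} = (-1)^{\ell-1}x^{\ell-1}e^{-\beta x}$; (iv) multiply by $\frac{(-1)^{\ell-1}\beta^{\ell}}{(\ell-1)!}$ so that the initial datum becomes exactly $h_{\ell,\beta}$, and invoke the uniqueness part of Proposition~\ref{prop:well_definition_general_model} to identify this with $n^{(1)}$; (v) apply Faà di Bruno's formula for the $(\ell-1)$-th derivative of a composition $\exp\circ\psi_{x,t}$, which produces precisely the complete Bell polynomial $B_{\ell-1}$ evaluated at $\bigl(\frac{d}{d\alpha}\psi_{x,t}(\beta),\dots,\frac{d^{\ell-1}}{d\alpha^{\ell-1}}\psi_{x,t}(\beta)\bigr)$ times $\exp(\psi_{x,t}(\beta))$, giving~\eqref{eq:explicit_solutions_gamma_first}.

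The main obstacle is purely a bookkeeping one: making sure the differentiation under the integral sign in step~(iii) is rigorously justified and that the chain rule in step~(v) is applied with the correct combinatorial coefficients. Concretely, one must check that $\alpha\mapsto\mathcal{L}(g)(\alpha/N)$ is $(\ell-1)$-times differentiable with bounded derivatives on a neighbourhood of $\beta$ — immediate since $g\in L^1([0,\delta])$ implies $\mathcal{L}(g)(s) = \int_0^\delta e^{-su}g(u)\,\mathrm du$ is entire in $s$ — and then that Faà di Bruno applied to $\psi_{x,t}$ as the inner function and $\exp$ as the outer function yields the complete Bell polynomial in the derivatives $\frac{d^j}{d\alpha^j}\psi_{x,t}(\beta)$ (note $\frac{d}{d\alpha}\psi_{x,t}(\alpha) = \frac{b}{1}\mathcal{L}(g)'(\alpha/N)t - x$ and higher derivatives involve only $\mathcal{L}(g)^{(j)}$, all of which are explicit). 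The sign $(-1)^{\ell-1}$ in~\eqref{eq:explicit_solutions_gamma_first} is exactly the one coming from the $(\ell-1)$-fold $\beta$-differentiation of $e^{-\beta x}$ combined with the $(-1)^{\ell-1}$ in the prefactor for $h_{\ell,\beta}$; one should double-check these two signs do not cancel. Everything else is routine, and the stated formula for $n_\partial^{(1)}$ and $\widehat n_0^{(1)}$ then follows by plugging~\eqref{eq:explicit_solutions_gamma_first} into the second line of~\eqref{eq:PDE_model_telomeres_one_telo} and into~\eqref{eq:definitions_estimators}, as indicated in the statement.
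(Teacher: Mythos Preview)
Your proposal is correct and follows essentially the same route as the paper: define $f(\alpha,t,x)=\exp(\psi_{x,t}(\alpha))$, observe it solves the first line of~\eqref{eq:rescaled_PDE_model_telomeres_one_telo} for each $\alpha$, differentiate $\ell-1$ times in $\alpha$ (so that the result still solves the same linear, $\alpha$-independent equation), check the initial condition matches $h_{\ell,\beta}$ after multiplying by the prefactor $(-1)^{\ell-1}\beta^{\ell}/(\ell-1)!$, invoke uniqueness from Proposition~\ref{prop:well_definition_general_model}, and express $\partial_\alpha^{\ell-1}\exp(\psi_{x,t}(\alpha))$ via Fa\`a di Bruno in terms of the complete Bell polynomial. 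The paper verifies the initial condition through the identity $B_{\ell-1}[-x,0,\dots,0]=(-x)^{\ell-1}$ rather than through your direct observation $\partial_\beta^{\ell-1}e^{-\beta x}=(-x)^{\ell-1}e^{-\beta x}$, but these are the same computation; your sign bookkeeping is fine (there is only one factor of $(-1)^{\ell-1}$, coming from the prefactor linking $h_{\ell,\beta}$ to $\partial_\beta^{\ell-1}e^{-\beta x}$, not two).
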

\begin{proof}
We denote $f\in L^1\left(\mathbb{R}_+^*\times\mathbb{R}_+^2\right)$ the function defined for all $(\alpha,t,x)\in\mathbb{R}_+^*\times\mathbb{R}_+^2$ as $f(\alpha,t,x) := \exp\left[\psi_{x,t}\left(\alpha\right)\right]$. One can easily see that by integrating $f$ and computing its derivative, we have for all~$(\alpha,t,x)\in\mathbb{R}_+^*\times\mathbb{R}_+^2$
	\begin{equation}\label{eq:proof_prop_explicit_solutions_gamma_intermediate_first}
		\partial_t f(\alpha,t,x) = bN\int_0^{\delta}\left[f(\alpha,t,x+u)- f(\alpha,t,x)\right]g(u)\dd u .
	\end{equation}
	We also know by the Faà di Bruno's formula~\cite[Theorem C, p.~$139$]{comtet_advanced_1974} and the definition of a complete Bell polynomial~\cite[Eq. 3.c, p.~$134$]{comtet_advanced_1974}, that it holds for all~\hbox{$(\alpha,t,x)\in\mathbb{R}_+^*\times\mathbb{R}_+^2$}
	\begin{equation}\label{eq:proof_prop_explicit_solutions_gamma_intermediate_second}
		\frac{\dd^{\ell-1}}{\dd \alpha^{\ell-1}} f(\alpha,t,x)  = B_{\ell-1}\Bigg[\frac{\dd}{\dd \alpha}\psi_{x,t}\left(\alpha\right),\hdots, \frac{\dd^{\ell-1}}{\dd \alpha^{\ell-1}}\psi_{x,t}\left(\alpha\right)\Bigg]\exp\left(\psi_{x,t}(\alpha)\right).
	\end{equation}
	Then, by using Eq.~\eqref{eq:proof_prop_explicit_solutions_gamma_intermediate_second} and Eq.~\eqref{eq:proof_prop_explicit_solutions_gamma_intermediate_first}, we have that the function $\varphi_1\in L^1\left(\mathbb{R}_+^2\right)$ defined for all~\hbox{$(t,x)\in\mathbb{R}_+\times\mathbb{R}_+$} as \hbox{$\varphi_1(t,x) := (-1)^{\ell -1}\frac{\beta^\ell}{(\ell-1)!}\frac{\dd^{\ell-1}}{\dd \alpha^{\ell-1}} f(\beta,t,x)$} is equal to the right-hand side of~\eqref{eq:explicit_solutions_gamma_first}, and verifies the same equation as $n^{(1)}$, see the first line of~\eqref{eq:rescaled_PDE_model_telomeres_one_telo}. In addition, we have that $\varphi_1(0,x) = \frac{\beta^\ell }{(\ell-1)!}x^{\ell -1}e^{-\beta x} = h_{\ell,\beta}(x)$ for all $x\geq0$, in view of the fact that by Eq.~\eqref{eq:proof_prop_explicit_solutions_gamma_intermediate_second}, \hbox{\cite[Eq. $3.n'$, p.~$136$]{comtet_advanced_1974}}, and~\cite[Eq. $3.c$, p.~$134$]{comtet_advanced_1974},  it holds
	$$
	\frac{\dd^{\ell-1}}{\dd \alpha^{\ell-1}} f(\beta,0,x)  = B_{\ell-1}\left[-x,0,\hdots,0\right]\exp\left(-\beta x\right) = \left(-x\right)^{\ell - 1}\exp\left(-\beta x\right).
	$$
	As the first line of~\eqref{eq:rescaled_PDE_model_telomeres_one_telo} has a unique solution in $C\left(\mathbb{R},L^1\left(\mathbb{R}_+^2\right)\right)$ with initial condition $h_{\ell,\beta}$ by Proposition~\ref{prop:well_definition_general_model}, we obtain that~\hbox{$\varphi_1 = n^{(1)}$}, so that Eq.~\eqref{eq:explicit_solutions_gamma_first} is~true.
\end{proof}

\noindent The final proposition we present provides the value of the mean, the variance, and the coefficient of variation of an Erlang distribution. This result is quite standard, so we do not prove it. We however refer to~\cite[p.~$138$]{Ibe_2014} for a proof of the expression of the mean and the variance.
\begin{prop}[Moments of Erlang distributions]\label{prop:moments_erlang_distribution}
	Let us fix $\ell\in\mathbb{N}^*$ and $\beta > 0$. We consider
	$$
	m = \int_{0}^{+\infty} xh_{\ell,\beta}(x) \dd x\hspace{2mm} \text{ and } \hspace{2mm}\sigma^2 = \int_{0}^{+\infty} x^2h_{\ell,\beta}(x) \dd x - m^2,
	$$
	which correspond respectively to the mean and the variance of a random variable distributed according to $h_{\ell,\beta}$. We also consider $cv = \frac{\sigma}{\mu}$ the coefficient of variation of this distribution. Then, we have 
	$$
		m = \frac{\ell}{\beta}, \hspace{8mm} \sigma^2 = \frac{\ell}{\beta^2}, \hspace{4mm} \text{and}\hspace{4mm} cv = \frac{1}{\sqrt{\ell}}.
	$$
\end{prop}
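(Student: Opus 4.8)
The plan is to compute the first two moments of $h_{\ell,\beta}$ directly from the Gamma integral and then read off $m$, $\sigma^2$ and $cv$. Recall from the paper the elementary identity $\int_0^{+\infty} x^p e^{-\beta x}\,\dd x = \frac{\Gamma(p+1)}{\beta^{p+1}}$, which for integer $p$ reads $\int_0^{+\infty} x^{p} e^{-\beta x}\,\dd x = \frac{p!}{\beta^{p+1}}$. First I would plug the definition $h_{\ell,\beta}(x) = \frac{\beta^{\ell}}{(\ell-1)!}x^{\ell-1}e^{-\beta x}$ from~\eqref{eq:density_erlang_distribution} into $m = \int_0^{+\infty} x h_{\ell,\beta}(x)\,\dd x$, pull the constant out, and apply the identity with $p = \ell$, obtaining
$$
m = \frac{\beta^{\ell}}{(\ell-1)!}\int_0^{+\infty} x^{\ell} e^{-\beta x}\,\dd x = \frac{\beta^{\ell}}{(\ell-1)!}\cdot\frac{\ell!}{\beta^{\ell+1}} = \frac{\ell}{\beta}.
$$

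Next I would compute the second moment in the same way with $p = \ell+1$:
$$
\int_0^{+\infty} x^2 h_{\ell,\beta}(x)\,\dd x = \frac{\beta^{\ell}}{(\ell-1)!}\cdot\frac{(\ell+1)!}{\beta^{\ell+2}} = \frac{\ell(\ell+1)}{\beta^2},
$$
and then subtract $m^2 = \ell^2/\beta^2$ to get $\sigma^2 = \frac{\ell(\ell+1)}{\beta^2} - \frac{\ell^2}{\beta^2} = \frac{\ell}{\beta^2}$. Finally, taking square roots and dividing gives $cv = \frac{\sigma}{m} = \frac{\sqrt{\ell}/\beta}{\ell/\beta} = \frac{1}{\sqrt{\ell}}$, which completes the proof.

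There is essentially no obstacle here: the only mild point to be careful about is keeping track of the factorials when specialising $\Gamma(p+1)$ to integer arguments, and noting that $cv$ does not depend on $\beta$, only on $\ell$. As an alternative (and arguably cleaner) route, one may instead use that an Erlang law with parameters $(\ell,\beta)$ is the law of a sum of $\ell$ i.i.d.\ exponential random variables of rate $\beta$; additivity of the mean and of the variance over independent summands then immediately yields $m = \ell\cdot\frac{1}{\beta}$ and $\sigma^2 = \ell\cdot\frac{1}{\beta^2}$, hence $cv = 1/\sqrt{\ell}$. Either argument is short, and since the statement is standard one could also simply cite~\cite[p.~$138$]{Ibe_2014} for the mean and variance and deduce $cv$ from them.
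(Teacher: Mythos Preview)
Your proof is correct. The paper does not actually prove this proposition: it declares the result standard and merely refers to~\cite[p.~$138$]{Ibe_2014} for the mean and variance, deducing $cv$ from those. Your direct computation via the Gamma integral (and your alternative via the sum-of-exponentials representation) both supply a complete self-contained argument where the paper gives none; either route is perfectly adequate, and you even anticipated the paper's own citation as a third option.
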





\section{Proof of Propositions~\ref{prop:quality_estimator_random_variables_onetelo} and~\ref{prop:quality_estimator_random_variables_severaltelos}}\label{sect:impact_noise}

This section is devoted to the proof of Propositions~\ref{prop:quality_estimator_random_variables_onetelo} and~\ref{prop:quality_estimator_random_variables_severaltelos}. First, in Section~\ref{subsect:notations_impact_noise}, we introduce the notations and auxiliary results we need to obtain these propositions. Then, in Section~\ref{subsect:proof_practical_errors_estimators}, we prove one of these auxiliary results. Finally, in Section~\ref{subsect:proof_prop_random_variables_severaltelos}, we detail the proof of Proposition~\ref{prop:quality_estimator_random_variables_severaltelos} from this statement. The proof of Proposition~\ref{prop:quality_estimator_random_variables_onetelo} follows exactly the same steps as those given in Section~\ref{subsect:proof_prop_random_variables_severaltelos}, and is even slightly easier. Hence, we do not detail~it.

\subsection{Notations and auxiliary statements}\label{subsect:notations_impact_noise}

We begin by introducing the notations we use in this section. We first consider the following~set
$$
\mathcal{X}^{\infty}\left(\mathbb{R}\right) = \left\{f \in\mathcal{D}'(\mathbb{R})\,|\, \exists \varphi\in L^\infty(\mathbb{R}) \text{ s.t. } f = \varphi'\right\}.
$$
This set contains distributions that are not necessarily representable by a function. For example, as for all $x\in\mathbb{R}$ and $x_0\in \mathbb{R}$ it holds $\frac{\dd}{\dd x}\left(1_{x \leq x_0}\right) = \delta_{x_0}(\{x\})$, and as the shifted Heaviside functions are in $L^{\infty}\left(\mathbb{R}\right)$, we have $\delta_{x_0} \in \mathcal{X}^{\infty}\left(\mathbb{R}\right)$. We then introduce the functional~$\mathcal{K}:  \mathcal{X}^{\infty}\left(\mathbb{R}\right) \rightarrow \mathbb{R}_+$, defined for all $f\in \mathcal{X}^{\infty}\left(\mathbb{R}\right)$~as
\begin{equation}\label{eq:definition_norm_irregular_space}
\mathcal{K}(f) = \inf_{\varphi\in L^\infty(\mathbb{R})\text{ s.t. }f=\varphi'}\left( \left|\left|\varphi\right|\right|_{L^\infty(\mathbb{R})}\right).
\end{equation}
This functional allows us to control the distance between two distributions in $\mathcal{X}^{\infty}\left(\mathbb{R}\right)$ by using two functions. For example, we can control the distance between two Dirac measures by two shifted Heaviside functions. We finally define the following functions, for all \hbox{$d\in\{1,2k\}$, $y\in\mathbb{R}$}, 
\begin{equation}\label{eq:logarithm_theoretical}
	\begin{aligned}
		n_{\text{log}}^{(d)}(y) = \exp\left(y\right)\widehat{n}_0^{(d)}\left(\exp\left(y\right)\right),
	\end{aligned}
\end{equation}
and the following measures 
\begin{equation}\label{eq:logarithm_empirical}
	\begin{aligned}
		\overline{n}_{\text{log}}^{(1)}(\dd x) &= \frac{1}{n_{s}}\sum_{i = 1}^{n_{s}} \delta_{\log\left(\tilde{b}\tilde{m}_1T_{1,i}\right)}\left(\dd x\right),\\ 
		\overline{n}_{\text{log}}^{(2k)}(\dd x) &= \sum_{i = 0}^{n_s}\left(1-\frac{i}{n_s}\right)^{\frac{1}{2k}}\left[\delta_{\log\left(\frac{\tilde{b}\tilde{m}_1}{2}T_{2k,i+1}\right)}\left(\dd x\right) -\delta_{\log\left(\frac{\tilde{b}\tilde{m}_1}{2}T_{2k,i}\right)}\left(\dd x\right)\right], 
	\end{aligned}
\end{equation}
where  $\left(T_{1,i}\right)_{1 \leq i \leq n_s}$ and~$\left(T_{2k,i}\right)_{0 \leq i \leq n_s+1}$ are the sequences of random variables introduced in Sections~\ref{subsubsubsect:estimation_onetelo_probabilistic} and~\ref{subsubsect:estimator_severaltelos_probabilistic}. Since Dirac measures belong to $\mathcal{X}^{\infty}\left(\mathbb{R}\right)$, we have that $\overline{n}_{\text{log}}^{(d)} \in \mathcal{X}^{\infty}\left(\mathbb{R}\right)$ for all $d\in\{1,2k\}$. The function $n_{\text{log}}^{(d)}$, defined in~\eqref{eq:logarithm_theoretical}, is the distribution of the random variable $\log\left(Z^{(d)}\right)$, where $Z^{(d)}$ is distributed according to $\widehat{n}_0^{(d)}$. The measure $\overline{n}_{\text{log}}^{(1)}$, in the first line of~\eqref{eq:logarithm_empirical}, is the empirical estimator of~$n_{\text{log}}^{(1)}$. The measure~$\overline{n}_{\text{log}}^{(2k)}$, in the next line, is an estimator of $n_{\text{log}}^{(2k)}$. The latter has been constructed by computing the weak derivative of the empirical cumulative distribution function associated to $n_{\text{log}}^{(2k)}$, as done in the proof of Proposition~\ref{prop:derivative_power_empirical}.

We now present the two auxiliary statements that we need to prove Propositions~\ref{prop:quality_estimator_random_variables_onetelo} and~\ref{prop:quality_estimator_random_variables_severaltelos}. The first statement we provide is the following. It corresponds to an intermediate statement to obtain the second one. We briefly sketch its proof below but do not detail it, as the inequalities used in the proof are relatively classical, see~\cite{armiento_estimation_2016,bourgeron_estimating_2014,doumic_estimating_2013}.
\begin{lemm}[Bounds on the logarithm of the data]\label{lemm:bound_log_from_armiento}
Let us consider $d\in\{1,2k\}$, and~\hbox{$\alpha > 0$}. Assume that \hyperlink{assumption:H1}{$(H_1)-(H_4)$} hold, and that $\beta'_N = \left(\lambda + 2k\lambda'_N\right) - \left(\omega + (2k-1)\omega'_N\right) > 0$ when $d=2k$.  We denote for all~$x\in\mathbb{R}$: $\rho_{\alpha}(x) = \frac{1}{\alpha}\rho\left(\frac{x}{\alpha}\right)$. Then, it holds
\begin{equation}\label{eq:bound_log_from_armiento}
\left|\left|\rho_{\alpha}*\overline{n}_{\log}^{(d)} - n_{\log}^{(d)}\right|\right|_{L^{\infty}\left(\mathbb{R}\right)} \leq \alpha^{-1} \sqrt{\frac{2}{\pi}}\mathcal{K}\left(\overline{n}_{\log}^{(d)} - n_{\text{log}}^{(d)}\right) + \alpha \sqrt{\frac{2}{\pi}} \left|\left|\left(n_{\log}^{(d)}\right)'\right|\right|_{L^{\infty}\left(\mathbb{R}\right)}.
\end{equation}
\end{lemm}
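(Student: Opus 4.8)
The strategy is to split the error $\rho_\alpha * \overline{n}_{\log}^{(d)} - n_{\log}^{(d)}$ into a \emph{stochastic/approximation} part and a \emph{smoothing (bias)} part by inserting the intermediate term $\rho_\alpha * n_{\log}^{(d)}$:
$$
\rho_\alpha * \overline{n}_{\log}^{(d)} - n_{\log}^{(d)} = \underbrace{\rho_\alpha * \left(\overline{n}_{\log}^{(d)} - n_{\log}^{(d)}\right)}_{\text{(I)}} + \underbrace{\left(\rho_\alpha * n_{\log}^{(d)} - n_{\log}^{(d)}\right)}_{\text{(II)}}.
$$
For term (II), the idea is the classical mollifier estimate: writing $n_{\log}^{(d)}(x) - \rho_\alpha * n_{\log}^{(d)}(x) = \int \rho_\alpha(y)\left(n_{\log}^{(d)}(x) - n_{\log}^{(d)}(x-y)\right)\dd y$, bounding the increment by $|y|\,\|(n_{\log}^{(d)})'\|_{L^\infty}$ using the mean value theorem, and then computing $\int |y| \rho_\alpha(y)\dd y = \alpha \int |z| \rho(z)\dd z = \alpha \sqrt{2/\pi}$ (the first absolute moment of the standard Gaussian). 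This gives the second term $\alpha\sqrt{2/\pi}\,\|(n_{\log}^{(d)})'\|_{L^\infty(\mathbb{R})}$ on the right-hand side of~\eqref{eq:bound_log_from_armiento}. One should check here that $n_{\log}^{(d)}$ is indeed $W^{1,\infty}$; this follows from the smoothness and exponential decay of $\widehat{n}_0^{(d)}$ implied by the structure of the estimators in~\eqref{eq:definitions_estimators} under assumptions \hyperlink{assumption:H1}{$(H_1)-(H_4)$} (and the condition $\beta'_N > 0$ when $d=2k$, which guarantees the denominator does not destroy decay).

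For term (I), the point is that $\rho_\alpha * (\overline{n}_{\log}^{(d)} - n_{\log}^{(d)})$ is the convolution of a mollifier with an element of $\mathcal{X}^\infty(\mathbb{R})$, so we can write $\overline{n}_{\log}^{(d)} - n_{\log}^{(d)} = \varphi'$ for some $\varphi \in L^\infty(\mathbb{R})$ and move the derivative onto the kernel: $\rho_\alpha * \varphi' = (\rho_\alpha)' * \varphi$. Hence $\|\rho_\alpha * (\overline{n}_{\log}^{(d)} - n_{\log}^{(d)})\|_{L^\infty} \leq \|(\rho_\alpha)'\|_{L^1} \|\varphi\|_{L^\infty}$. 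A direct computation gives $\|(\rho_\alpha)'\|_{L^1(\mathbb{R})} = \alpha^{-1}\|\rho'\|_{L^1(\mathbb{R})} = \alpha^{-1}\cdot 2\rho(0) = \alpha^{-1}\sqrt{2/\pi}$ (since $\rho$ is the standard Gaussian, $\|\rho'\|_{L^1} = 2\max\rho = 2/\sqrt{2\pi} = \sqrt{2/\pi}$). Taking the infimum over all admissible $\varphi$ then yields $\alpha^{-1}\sqrt{2/\pi}\,\mathcal{K}(\overline{n}_{\log}^{(d)} - n_{\log}^{(d)})$, the first term on the right of~\eqref{eq:bound_log_from_armiento}. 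Summing the two bounds via the triangle inequality gives the claim.

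\textbf{Main obstacle.} The routine analytic estimates above are standard; the delicate point is the bookkeeping that makes the decomposition legitimate, namely verifying that $\overline{n}_{\log}^{(d)} - n_{\log}^{(d)} \in \mathcal{X}^\infty(\mathbb{R})$ so that term (I) makes sense, and that $n_{\log}^{(d)}$ genuinely lies in $W^{1,\infty}$ so term (II) is finite. The first holds because $\overline{n}_{\log}^{(d)}$ is (a signed combination of) Dirac masses and $n_{\log}^{(d)}$ is a bounded integrable density, both of which are weak derivatives of $L^\infty$ functions (shifted Heaviside functions, resp. the corresponding CDF), as already noted after~\eqref{eq:definition_norm_irregular_space}. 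The second requires unpacking the explicit forms of $\widehat{n}_0^{(1)}$ and $\widehat{n}_0^{(2k)}$ and using the exponential-type bounds on $n_{\partial}^{(d)}$ and its tail; this is where the hypotheses \hyperlink{assumption:H4}{$(H_4)$} and $\beta'_N > 0$ (for $d = 2k$) are essential. Once these membership facts are in place, the inequality~\eqref{eq:bound_log_from_armiento} follows by the two one-line convolution estimates described above, which is why we only sketch rather than detail the proof.
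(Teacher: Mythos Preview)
Your proposal is correct and follows essentially the same approach as the paper: the same triangle-inequality split into a stochastic part $\rho_\alpha*(\overline{n}_{\log}^{(d)}-n_{\log}^{(d)})$ and a bias part $\rho_\alpha*n_{\log}^{(d)}-n_{\log}^{(d)}$, with the former handled by writing the difference as $\varphi'$ and moving the derivative onto the kernel, and the latter by the standard mollifier/mean-value estimate, both yielding the Gaussian constant $\sqrt{2/\pi}$. Your discussion of the membership checks ($\overline{n}_{\log}^{(d)}-n_{\log}^{(d)}\in\mathcal{X}^\infty(\mathbb{R})$ and $n_{\log}^{(d)}\in W^{1,\infty}$) is in fact more explicit than the paper's own sketch.
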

\begin{proof}
First, notice that by the triangle inequality, it holds
\begin{equation}\label{eq:bound_log_from_armiento_intermediate_first}
\left|\left|\rho_{\alpha}*\overline{n}_{\log}^{(d)} - n_{\log}^{(d)}\right|\right|_{L^{\infty}\left(\mathbb{R}\right)} \leq \left|\left|\rho_{\alpha}*\left(\overline{n}_{\log}^{(d)} - n_{\log}^{(d)}\right)\right|\right|_{L^{\infty}\left(\mathbb{R}\right)} + \left|\left|\rho_{\alpha}*n_{\log}^{(d)} - n_{\log}^{(d)}\right|\right|_{L^{\infty}\left(\mathbb{R}\right)}.
\end{equation}
Then, notice that by adapting the proof of \hbox{\cite[Lemmas A.$1.(1)$ and A.$1.(4)$]{bourgeron_estimating_2014}},  we can obtain similar inequalities as in~\cite[Lemmas $3.iii)$ and $3.i)$]{armiento_estimation_2016}, for all $\varphi\in L^\infty(\mathbb{R})$ such that $\overline{n}_{\log}^{(d)} - n_{\log}^{(d)} = \varphi'$ and~$f\in W^{1,\infty}\left(\mathbb{R}\right)$, 
\begin{equation}\label{eq:bound_log_from_armiento_intermediate_second}
\begin{aligned}
\left|\left|\rho_{\alpha}*\left(\overline{n}_{\log}^{(d)} - n_{\log}^{(d)}\right)\right|\right|_{L^{\infty}\left(\mathbb{R}\right)} &= \left|\left|\rho_{\alpha}*\varphi'\right|\right|_{L^{\infty}\left(\mathbb{R}\right)} \leq \alpha^{-1}\left|\left|\rho'\right|\right|_{L^{1}(\mathbb{R})}\left|\left|\varphi\right|\right|_{L^{\infty}(\mathbb{R})},\\
\left|\left|\rho_{\alpha}*f - f\right|\right|_{L^{\infty}\left(\mathbb{R}\right)}&\leq \alpha\left|\left|\text{Id}.\rho\right|\right|_{L^{1}(\mathbb{R})}\left|\left|f'\right|\right|_{L^{\infty}(\mathbb{R})}.
\end{aligned}
\end{equation}
In view of~\eqref{eq:definition_norm_irregular_space}, by taking the infimum in the first line of~\eqref{eq:bound_log_from_armiento_intermediate_second}, we have that
$$
\left|\left|\rho_{\alpha}*\left(\overline{n}_{\log}^{(d)} - n_{\log}^{(d)}\right)\right|\right|_{L^{\infty}\left(\mathbb{R}\right)} \leq \alpha^{-1}\left|\left|\rho'\right|\right|_{L^{1}(\mathbb{R})}\mathcal{K}\left(\overline{n}_{\log}^{(d)} - n_{\text{log}}^{(d)}\right).
$$
Therefore, by first plugging the above and the second line of~\eqref{eq:bound_log_from_armiento_intermediate_second} in~\eqref{eq:bound_log_from_armiento_intermediate_first}, and then using that~$\left|\left|\rho'\right|\right|_{L^{1}(\mathbb{R})} = \left|\left|\text{Id}.\rho\right|\right|_{L^{1}(\mathbb{R})} = \sqrt{\frac{2}{\pi}}$ as $\rho$ is the density of a standard Gaussian distribution, we obtain~\eqref{eq:bound_log_from_armiento}. 
\end{proof}
\noindent The second statement we give is the following. It provides for all $d\in\{1,2k\}$ a bound on the supremum error between $\overline{n}_0^{(d,\alpha)}$ and $n_0$, when the error between $\overline{n}_{\text{log}}^{(d)}$ and $n_{\text{log}}^{(d)}$ is known. It is proved in Section~\ref{subsect:proof_practical_errors_estimators}. 
\begin{lemm}[Error bounds for simulated data]\label{lemm:practical_errors_estimators}
We recall the constants $C_{\widehat{n},1}$ and $C_{\widehat{n},2k}$ defined in~\eqref{eq:constant_randomvar_onetelo} and~\eqref{eq:constant_randomvar_severaltelos} respectively. Assume that the assumptions of Lemma~\ref{lemm:bound_log_from_armiento} hold, and there exists~$\varepsilon > 0$ such that~$\mathcal{K}\left(\overline{n}_{\text{log}}^{(d)} - n_{\text{log}}^{(d)}\right) \leq \varepsilon$. Then, there exists a sequence $\left(L_{d,n}\right)_{n\in\mathbb{N}}$ of positive real numbers such that \hbox{$\underset{n\rightarrow+\infty}{\limsup}\, L_{d,n} < +\infty$} and 
\vspace{-1.5mm}
\begin{equation}\label{eq:practical_errors_estimators}
\left|\left|\text{Id}\left[\overline{n}_0^{(d,\alpha)} - n_0\right]\right|\right|_{L^\infty\left(\mathbb{R}_+\right)} \leq \alpha^{-1} \sqrt{\frac{2}{\pi}}\varepsilon + \alpha \sqrt{\frac{2}{\pi}} C_{\widehat{n},d}  + \frac{L_{d,N}}{N}.
\end{equation}
\end{lemm}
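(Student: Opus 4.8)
The strategy is to transfer the bound on $\mathcal{K}\left(\overline{n}_{\text{log}}^{(d)} - n_{\text{log}}^{(d)}\right)$, which lives on the logarithmic scale, back to a bound on $\left|\left|\text{Id}\left[\overline{n}_0^{(d,\alpha)} - n_0\right]\right|\right|_{L^\infty(\mathbb{R}_+)}$ on the original scale, using Lemma~\ref{lemm:bound_log_from_armiento} as the key analytic input. First I would make the change of variables explicit: for $d\in\{1,2k\}$ and $x > 0$, setting $y = \log x$, the estimator $\overline{n}_0^{(d,\alpha)}(x)$ defined in~\eqref{eq:estimators_simulations_onetelo} (resp.~\eqref{eq:estimators_simulations_several_telos}) can be rewritten as $\frac{1}{x}\bigl(\rho_\alpha * \overline{n}_{\log}^{(d)}\bigr)(\log x)$, because convolving the smoothed Dirac kernel $\frac{1}{\alpha t}\rho\bigl(\frac{1}{\alpha}\log(\tfrac{t}{T})\bigr)$ corresponds exactly to the push-forward under $\exp$ of the Gaussian convolution of the empirical measures in~\eqref{eq:logarithm_empirical}. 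Symmetrically, by~\eqref{eq:logarithm_theoretical}, $\frac{1}{x}\,n_{\log}^{(d)}(\log x) = \widehat{n}_0^{(d)}(x)$. Hence, multiplying by $x = \text{Id}$, we get the identity
$$
\left|\left|\text{Id}\left[\overline{n}_0^{(d,\alpha)} - \widehat{n}_0^{(d)}\right]\right|\right|_{L^\infty(\mathbb{R}_+)} = \left|\left|\rho_\alpha * \overline{n}_{\log}^{(d)} - n_{\log}^{(d)}\right|\right|_{L^\infty(\mathbb{R})}.
$$

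Next I would apply Lemma~\ref{lemm:bound_log_from_armiento} directly to the right-hand side, together with the hypothesis $\mathcal{K}\left(\overline{n}_{\text{log}}^{(d)} - n_{\text{log}}^{(d)}\right) \leq \varepsilon$, to obtain
$$
\left|\left|\text{Id}\left[\overline{n}_0^{(d,\alpha)} - \widehat{n}_0^{(d)}\right]\right|\right|_{L^\infty(\mathbb{R}_+)} \leq \alpha^{-1}\sqrt{\tfrac{2}{\pi}}\,\varepsilon + \alpha\sqrt{\tfrac{2}{\pi}}\left|\left|\bigl(n_{\log}^{(d)}\bigr)'\right|\right|_{L^\infty(\mathbb{R})}.
$$
Then I would identify $\left|\left|\bigl(n_{\log}^{(d)}\bigr)'\right|\right|_{L^\infty(\mathbb{R})}$ with the constant $C_{\widehat{n},d}$ from~\eqref{eq:constant_randomvar_onetelo}–\eqref{eq:constant_randomvar_severaltelos}: differentiating $n_{\log}^{(d)}(y) = e^y \widehat{n}_0^{(d)}(e^y)$ gives $\bigl(n_{\log}^{(d)}\bigr)'(y) = e^y\widehat{n}_0^{(d)}(e^y) + e^{2y}\bigl(\widehat{n}_0^{(d)}\bigr)'(e^y)$, which, after the substitution $x = e^y$, is exactly $\bigl(\text{Id}\cdot\widehat{n}_0^{(d)} + \text{Id}^2\cdot(\widehat{n}_0^{(d)})'\bigr)(x)$; taking suprema gives $\left|\left|\bigl(n_{\log}^{(d)}\bigr)'\right|\right|_{L^\infty(\mathbb{R})} = C_{\widehat{n},d}$. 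This reproduces the first two terms of~\eqref{eq:practical_errors_estimators}.

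It then remains to control $\left|\left|\text{Id}\left[\widehat{n}_0^{(d)} - n_0\right]\right|\right|_{L^\infty(\mathbb{R}_+)}$, which is the deterministic approximation error already handled by Theorem~\ref{te:main_result}. For $d=1$, by part $(a)$ we have $\left|\widehat{n}_0^{(1)}(x) - n_0(x)\right| \leq \frac{c_1}{N}(x+1)e^{-\lambda_N x}$, so $x\left|\widehat{n}_0^{(1)}(x) - n_0(x)\right| \leq \frac{c_1}{N}\, x(x+1)e^{-\lambda_N x}$, whose supremum over $x\geq0$ is $\frac{1}{N}$ times a constant depending only on $c_1$ and $\lambda_N$; since $\lambda_N \to \lambda > 0$, this constant stays bounded as $N\to+\infty$, which defines an admissible sequence $\bigl(L_{1,N}\bigr)$ with $\limsup_N L_{1,N} < +\infty$. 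For $d=2k$, the same computation using Theorem~\ref{te:main_result}-$(b)$ and the hypothesis $\beta'_N > 0$ (needed so that $x(k^2x+k+1)e^{-\beta'_N x}$ is bounded) gives $\left|\left|\text{Id}\left[\widehat{n}_0^{(2k)} - n_0\right]\right|\right|_{L^\infty(\mathbb{R}_+)} \leq \frac{L_{2k,N}}{N}$ with $\limsup_N L_{2k,N} < +\infty$, using that $\beta'_N \to (2k+1)\lambda - 2k\omega$. Combining the two displays above by the triangle inequality yields~\eqref{eq:practical_errors_estimators}. The only genuinely delicate point is the change-of-variables bookkeeping in the first step — making sure that the $1/\alpha t$ Jacobian factors in the definitions of $\overline{n}_0^{(d,\alpha)}$, the weights $(1-j/n_s)^{1/2k}$ in the several-telomere case, and the push-forward of Dirac masses under $\exp$ all line up so that the convolution on $\mathbb{R}$ matches the estimator on $\mathbb{R}_+$ after multiplication by $\text{Id}$; everything else is a routine application of the two preceding lemmas and Theorem~\ref{te:main_result}.
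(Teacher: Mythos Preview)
Your proposal is correct and follows essentially the same route as the paper: the triangle inequality splitting into $\|\text{Id}[\overline{n}_0^{(d,\alpha)} - \widehat{n}_0^{(d)}]\|_{L^\infty}$ and $\|\text{Id}[\widehat{n}_0^{(d)} - n_0]\|_{L^\infty}$, the change of variable $y=\log x$ identifying the first term with $\|\rho_\alpha*\overline{n}_{\log}^{(d)} - n_{\log}^{(d)}\|_{L^\infty(\mathbb{R})}$, the application of Lemma~\ref{lemm:bound_log_from_armiento}, the computation $\|(n_{\log}^{(d)})'\|_{L^\infty}=C_{\widehat{n},d}$, and the use of Theorem~\ref{te:main_result} for the deterministic residual. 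The only difference is the order of presentation (the paper performs the triangle-inequality split first, you do it last), which is immaterial.
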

\begin{rem}\label{rem:minimum_smoothing_parameter}
The minimum of the function $\alpha\in\mathbb{R}_+^* \mapsto \alpha^{-1}\sqrt{\frac{2}{\pi}}\varepsilon + \alpha\sqrt{\frac{2}{\pi}} C_{\widehat{n},d}$ can be computed by analysing the sign of its derivative, and is in \hbox{$\alpha^* = \left(\frac{\varepsilon}{C_{\widehat{n},d}}\right)^{\frac{1}{2}}$}. Then, in view of~\eqref{eq:practical_errors_estimators}, the smoothing parameter providing the best bound on the error is~$\alpha^*$. 
\end{rem}
\noindent We now conclude this section by proving Lemma~\ref{lemm:practical_errors_estimators} and Proposition~\ref{prop:quality_estimator_random_variables_severaltelos}.


\subsection{Proof of Lemma~\ref{lemm:practical_errors_estimators}}\label{subsect:proof_practical_errors_estimators}
First, decomposing $\left|\left|\text{Id}\left[\overline{n}_0^{(d,\alpha)} - n_0\right]\right|\right|_{L^\infty\left(\mathbb{R}_+\right)}$ into two terms thanks to the triangle inequality, and then using~Theorem~\ref{te:main_result} to bound the second term, yields that there exists a sequence~$\left(L_{d,n}\right)_{n\in\mathbb{N}}$ of positive real numbers such that $\underset{n\rightarrow+\infty}{\limsup}\, L_{d,n} < +\infty$ and
\begin{equation}\label{eq:prop_impact_noise_intermediate_zero}
		\left|\left|\text{Id}\left[\overline{n}_0^{(d,\alpha)} - n_0\right]\right|\right|_{L^\infty\left(\mathbb{R}_+\right)} \leq \left|\left|\text{Id}\left[\overline{n}_0^{(d,\alpha)} - \widehat{n}_0^{(d)}\right]\right|\right|_{L^\infty\left(\mathbb{R}_+\right)} + \frac{L_{d,N}}{N}.
	\end{equation}
	We thus only have to bound the term $\left|\left|\text{Id}\left[\overline{n}_0^{(d,\alpha)} - \widehat{n}_0^{(d)}\right]\right|\right|_{L^\infty\left(\mathbb{R}_+\right)}$, and~\eqref{eq:practical_errors_estimators} will be true. To do so, notice that by the change of variable $y = \log(x)$, the definition of~$\overline{n}_0^{(1,\alpha)}$ and~$\overline{n}_0^{(2k,\alpha)}$ given in \eqref{eq:estimators_simulations_onetelo}-\eqref{eq:estimators_simulations_several_telos},  and the ones of $n_{\log}^{(d)}$ and $\overline{n}_{\log}^{(d)}$ given in~\eqref{eq:logarithm_theoretical}-\eqref{eq:logarithm_empirical}, we have (recall that~$\rho_{\alpha} = \frac{1}{\alpha}\rho\left(\frac{.}{\alpha}\right)$) 
	
	$$
		\begin{aligned}
			\left|\left|\text{Id}\left[\overline{n}_0^{(d,\alpha)} - \widehat{n}_0^{(d)}\right]\right|\right|_{L^\infty\left(\mathbb{R}_+\right)} &= \sup_{x\in\mathbb{R}_+}\left[x\left|\overline{n}_0^{(d,\alpha)}\left(x\right)- \widehat{n}_0^{(d)}\left(x\right)\right|\right]\\ 
			&= \sup_{y\in\mathbb{R}}\left[\exp\left(y\right)\left|\overline{n}_0^{(d,\alpha)}\left(\exp\left(y\right)\right)- \widehat{n}_0^{(d)}\left(\exp\left(y\right)\right)\right|\right]\\
			&=\left|\left|\rho_{\alpha}*\overline{n}_{\log}^{(d)} - n_{\log}^{(d)}\right|\right|_{L^{\infty}\left(\mathbb{R}\right)}.
		\end{aligned}
	$$
	Therefore, by first using Lemma~\ref{lemm:bound_log_from_armiento}, and then the fact that $\mathcal{K}\left(\overline{n}_{\text{log}}^{(d)} - n_{\text{log}}^{(d)}\right)\leq \varepsilon$, we obtain
	$$
	\left|\left|\text{Id}\left[\overline{n}_0^{(d,\alpha)} - \widehat{n}_0^{(d)}\right]\right|\right|_{L^\infty\left(\mathbb{R}_+\right)} \leq \alpha^{-1} \sqrt{\frac{2}{\pi}}\varepsilon + \alpha \sqrt{\frac{2}{\pi}} \left|\left|\left(n_{\log}^{(d)}\right)'\right|\right|_{L^{\infty}\left(\mathbb{R}\right)}.
	$$
	We thus only have to plug the above in Eq.~\eqref{eq:prop_impact_noise_intermediate_zero}, and then using the following equality which comes from~\eqref{eq:logarithm_theoretical}, the change of variable $x = \log(y)$, and~\eqref{eq:constant_randomvar_onetelo}-\eqref{eq:constant_randomvar_severaltelos}, to conclude the proof of the lemma
	$$
	\begin{aligned}
		\left|\left|\left(n_{\log}^{(d)}\right)'\right|\right|_{L^\infty\left(\mathbb{R}\right)} &= \sup_{y\in\mathbb{R}}\left|\exp\left(2y\right)\left(\widehat{n}_0^{(d)}\right)'\left(\exp\left(y\right)\right) + \exp\left(y\right)\widehat{n}_0^{(d)}\left(\exp\left(y\right)\right)\right| \\
		&= \sup_{x\in\mathbb{R}_+}\left|x^2\left(\widehat{n}_0^{(d)}\right)'\left(x\right) + x\widehat{n}_0^{(d)}\left(x\right)\right| = C_{\widehat{n},d}. \mathrlap{\phantom{xxxxxxxxxxxxxxxx}\hspace{1.14mm}\qed}
	\end{aligned}
	$$ 
\subsection{Proof of Proposition~\ref{prop:quality_estimator_random_variables_severaltelos}}\label{subsect:proof_prop_random_variables_severaltelos} 

Our objective here is to apply Lemma~\ref{lemm:practical_errors_estimators}. To do so, we begin by computing the probability that $\mathcal{K}\left(\overline{n}_{\text{log}}^{(2k)} - n_{\text{log}}^{(2k)}\right) \leq \varepsilon_p$, where~\hbox{$\varepsilon_p  = \left(\frac{\log\left(\frac{2}{p}\right)}{2n_s}\right)^{\frac{1}{4k}}$}. We denote for all~$y\in\mathbb{R}$
	\begin{equation}\label{eq:definition_R_function}
		R(y) := \left[1-\frac{1}{n_s}\sum_{i = 1}^{n_s} 1_{\left\{\log\left(\frac{\tilde{b}\tilde{m}_1}{2}T_{2k,i}\right) \leq y\right\}}\right]^{\frac{1}{2k}} - \int_{y}^{+\infty} n_{\log}^{(2k)}(s)\dd s,
	\end{equation}
	and for all $t\geq0$: $N_{\partial}(t) = \int_{t}^{\infty} n_{\partial}^{(2k)}(s) \dd s$. In view of~\eqref{eq:logarithm_theoretical}, the change of variable $s' = \exp(s)$, and the fact that $\widehat{n}_0^{(2k)}$ is the derivative of $x\mapsto -\left(N_{\partial}\right)^{\frac{1}{2k}}\left(\frac{2 x}{\tilde{b}\tilde{m}_1}\right)$ (see~\eqref{eq:definitions_estimators}), we have for all~\hbox{$y\in\mathbb{R}$} 
	\begin{equation}\label{eq:proof_cor_confident_interval_intermediate_first}
		\int_{y}^{+\infty} n_{\log}^{(2k)}(s)\dd s = \int_{\exp(y)}^{+\infty} \widehat{n}_0^{(2k)}(s')\dd s' = N_{\partial}\left(\frac{2e^y}{\tilde{b}\tilde{m}_1}\right)^{\frac{1}{2k}}.
	\end{equation}
	Our aim is to use the above equality to bound the probability that~\hbox{$\sup_{y\in\mathbb{R}}\left|R(y)\right| > \varepsilon_p$}. To do so, we first use that $\left|a^{\frac{1}{2k}}-c^{\frac{1}{2k}}\right| \leq |a - c|^{\frac{1}{2k}}$ for all $a,\,c\geq0$ to bound from above~$|R|$. This inequality is a consequence of the reverse triangle inequality that holds for the quasi-distance~\hbox{$D(a,c) := |a-c|^{\frac{1}{2k}}$}, where $(a,c)\in\mathbb{R}^2$. Then, we apply~\eqref{eq:proof_cor_confident_interval_intermediate_first} to simplify the expression in the measure. Finally, we use the \hbox{Dvoretzky–Kiefer–Wolfowitz–Massart inequality}~\cite{dvoretzky_asymptotic_1956,massart_tight_1990}, in view of the fact that~\hbox{$x\mapsto 1- N_{\partial}\left(\frac{2e^x}{\tilde{b}\tilde{m}_1}\right)$} is the cumulative distribution function of the random variables $\left(\log\left(\frac{\tilde{b}\tilde{m}_1}{2}T_{2k,i}\right)\right)_{1\leq i \leq n_s}$ (as $1 - N_{\partial}$ is the one of $\left(T_{2k,i}\right)_{1\leq i \leq n_s}$). We obtain 
	\begin{equation}\label{eq:bound_massart}
		\begin{aligned}
			\mathbb{P}\left[\sup_{y\in\mathbb{R}}\left|R(y)\right| > \varepsilon_p\right] &\leq \mathbb{P}\left[\sup_{y\in\mathbb{R}}\left|1 -\frac{1}{n_s}\sum_{i = 1}^{n_s} 1_{\left\{\log\left(\frac{\tilde{b}\tilde{m}_1}{2}T_{2k,i}\right) \leq y\right\}}- \left[\int_{y}^{+\infty} n_{\log}^{(2k)}(s)\dd s\right]^{2k}\right|^{\frac{1}{2k}} >  \varepsilon_p\right]\\ 
			&= \mathbb{P}\left[\sup_{y\in\mathbb{R}}\left|\left[1-N_{\partial}\left(\frac{2e^y}{\tilde{b}\tilde{m}_1}\right)\right] -\frac{1}{n_s}\sum_{i = 1}^{n_s} 1_{\left\{\log\left(\frac{\tilde{b}\tilde{m}_1}{2}T_{2k,i}\right) \leq y\right\}}\right| >  \left(\varepsilon_p\right)^{2k}\right] \\ 
			&\leq 2e^{-2n_s(\varepsilon_p)^{4k}}.
		\end{aligned}
	\end{equation}
	In addition, in view of~\eqref{eq:definition_R_function},~\eqref{eq:logarithm_empirical}, and the reasoning that allowed us to obtain~Proposition~\ref{prop:derivative_power_empirical}, we have that $R' =  n_{\log}^{(2k)} - \overline{n}_{\text{log}}^{(2k)}$. This yields, in view of the fact that $R\in L^{\infty}\left(\mathbb{R}\right)$ and the infimum in~\eqref{eq:definition_norm_irregular_space}, that it holds $\mathcal{K}\left(n_{\text{log}}^{(2k)} - \overline{n}_{\text{log}}^{(2k)}\right)\leq ||R||_{L^{\infty}\left(\mathbb{R}\right)}$. Then, by combining this inequality with~\eqref{eq:bound_massart}, we obtain that 
	$$
	\mathbb{P}\left[\mathcal{K}\left(n_{\text{log}}^{(2k)} - \overline{n}_{\text{log}}^{(2k)}\right)\leq  \varepsilon_p\right] \geq \mathbb{P}\left[\sup_{y\in\mathbb{R}}\left|R(y)\right| \leq  \varepsilon_p\right] \geq  1 - 2e^{-2n_s(\varepsilon_p)^{4k}} = 1-p.
	$$
	We thus only have to combine the above with Lemma~\ref{lemm:practical_errors_estimators} for $\varepsilon = \varepsilon_p $ and $\alpha = \alpha_p$ to conclude that~\eqref{eq:bounds_confidence_interval} is true with probability at least $1- p$ (we notice that $\alpha_p$ is of the form given in Remark~\ref{rem:minimum_smoothing_parameter}, so is optimal). \qed 

\paragraph{Acknowledgements.} This work was partially funded by the Fondation Mathématique Jacques Hadamard and by the European Union ERC-2024-COG MUSEUM-101170884. Additional support was provided by the France 2030 program, administered by the French National Research Agency (ANR), under grant ANR-23-EXMA-0005. Views and opinions expressed are however those of the author(s) only and do not necessarily reflect those of the European Union or the European Research Council Executive Agency (ERCEA). Neither the European Union nor the granting authority can be held responsible for them. The author warmly thanks Marie Doumic for her guidance during the realisation of this work, and her careful proofreading of the article. He also thanks Milica Tomašević for her careful proofreading of the introduction. Finally, he thanks the anonymous referee for their comments and questions that allow him to improve this manuscript.

\printbibliography

\end{document}